\pdfoutput=1
\documentclass{article}
\usepackage[style=alphabetic, sorting=nyt , backend=biber, maxnames=10, minnames=10, maxalphanames=10, minalphanames=10, giveninits=false]{biblatex}

\renewbibmacro{in:}{}
\DeclareFieldFormat{pages}{#1}
\DeclareFieldFormat{postnote}{#1}
\DeclareFieldFormat{multipostnote}{#1}

\DeclareFieldFormat[article, book, thesis, online]{title}{\emph{#1}}
\DeclareFieldFormat[article, online]{journaltitle}{#1}

\renewbibmacro*{volume+number+eid}{%
  \printfield{volume}%
  \setunit{\addcomma\space}%
  \printfield{eid} }     

\renewbibmacro*{note+pages}{
  \printfield{note}%
 \setunit{\bibpagespunct}%
  \printfield{pages}%
  \newunit}

\usepackage{graphicx}
\usepackage{amsmath}
\usepackage{amssymb}
\usepackage{amsfonts}
\usepackage{IEEEtrantools}
\usepackage{amsthm}
\usepackage{fancyhdr}
\usepackage{xcolor} \usepackage[dvipsnames]{xcolor}
\usepackage{bm}
\usepackage{extarrows}
\usepackage{enumitem}   \setlist[enumerate]{itemsep=1pt, topsep=1pt, label=(\arabic*)}
\usepackage{stmaryrd} 
\usepackage{comment}
\usepackage[titles]{tocloft}
\usepackage{tikz-cd}

\usepackage{float}
\usepackage{mathtools}

\usepackage{thmtools, thm-restate}
\usepackage{adjustbox}

\usepackage{titlesec}

\newcommand{\nlb}{\binoppenalty=10000  \relpenalty=10000 }

\newcommand{\Q}{\mathbb{Q}}
\newcommand{\R}{\mathbb{R}}
\newcommand{\C}{\mathbb{C}}
\newcommand{\Z}{\mathbb{Z}}
\newcommand{\dps}{\displaystyle}
\newcommand{\ess}{\mathrm{ed}}
\newcommand{\ed}{\mathrm{ed}}
\newcommand{\trdeg}{\mathrm{tr.deg}}

\newcommand{\Lie}{\mathrm{Lie}\,}
\newcommand{\fadele}{\mathbb{A}_f}
\newcommand{\Id}{\mathrm{Id}}
\newcommand{\id}{\mathrm{id}}
\newcommand{\Stab}{\mathrm{Stab}}
\newcommand{\Cent}{\mathrm{Cent}}
\newcommand{\Spec}{\mathrm{Spec}\,}
\newcommand{\DS}{\mathbb{S}} 
\newcommand{\Hom}{\mathrm{Hom}} 
\newcommand{\pr}{\mathrm{pr}}
\newcommand{\mA}{\mathcal{A}}
\newcommand{\mX}{\mathcal{X}}
\newcommand{\mC}{\mathcal{C}}
\newcommand{\mU}{\mathcal{U}}
\newcommand{\mH}{\mathcal{H}}
\newcommand{\mY}{\mathcal{Y}}
\newcommand{\mZ}{\mathcal{Z}}
\newcommand{\mUbar}{\overbar{\mathcal{U}}} 
\newcommand{\Int}{\mathrm{Int}}
\newcommand{\Cl}{\mathrm{Cl}}
\newcommand{\im}{\mathrm{im}}
\newcommand{\imbar}{\overbar{\mathrm{im}}}
\newcommand{\bs}{\backslash}
\newcommand{\ord}{\mathrm{ord}}
\newcommand{\rank}{\mathrm{rank}}
\newcommand{\bG}{\mathbb{G}}
\newcommand{\bS}{\mathbb{S}}

\newcommand{\hP}{{\widetilde{P}}}
\newcommand{\hU}{{\widetilde{U}}}
\newcommand{\hQ}{{ \widetilde{Q}}}
\newcommand{\hT}{{\widetilde{T}}}
\newcommand{\hX}{{\widetilde{\mathcal{X}}}}
\newcommand{\hGamma}{{\widetilde{\Gamma}}}
\newcommand{\hSigma}{{\widetilde{\Sigma}}}
\newcommand{\him}{{\widetilde{\mathrm{im}}}}

\newcommand{\overbar}[1]{\mkern 1.5mu\overline{\mkern-1.5mu#1\mkern-1.5mu}\mkern 1.5mu}

\theoremstyle{plain}
\newtheorem{sectheorem}{Theorem}[section]

\newtheorem{secproposition}[sectheorem]{Proposition}

\newtheorem{subsectheorem}{Theorem}[subsection]
\newtheorem{subseccorollary}[subsectheorem]{Corollary}
\newtheorem{subseclemma}[subsectheorem]{Lemma}
\newtheorem{subsecproposition}[subsectheorem]{Proposition}
\theoremstyle{definition}
\newtheorem{secdefinition}[sectheorem]{Definition}
\newtheorem{secexample}[sectheorem]{Example}

\newtheorem{secremark}[sectheorem]{Remark}

\newtheorem{subsecdefinition}[subsectheorem]{Definition}
\newtheorem{subsecexample}[subsectheorem]{Example}

\newtheorem{subsecremark}[subsectheorem]{Remark}

\theoremstyle{remark}

\usepackage[hypertexnames=false ]{hyperref}
\hypersetup{bookmarks=true,%
                         colorlinks=true, %
                         linktocpage=true,%
                        linkcolor=red,%
                         citecolor=blue,%
                        }

\title{\vspace{-12pt}\large \textbf{Lower Bounds on Essential Dimension for Congruence Covers of Mixed Shimura Varieties}}
\author{Qi'An Chen}
\date{}

\def\temp{&} \catcode`&=\active \let&=\temp
\begin{document}
\maketitle

\renewcommand{\abstractname}{\vspace{-2.5pt} \centering \textbf{Abstract}}

\begin{abstract}
\noindent
We use the fixed point method and toroidal compactifications to establish general lower bounds for the essential dimension of congruence covers $\Gamma' \backslash \mathcal{X}^0 \rightarrow \Gamma \backslash \mathcal{X}^0$ of mixed Shimura varieties.\ Our main result shows that $\mathrm{ed}_{\mathbb{C}}(\Gamma' \backslash \mathcal{X}^0 \rightarrow \Gamma \backslash \mathcal{X}^0; p)$ is bounded from below by the dimension of certain unipotent subgroups associated with the rational boundary components of the given mixed Shi\-mu\-ra datum.\ This generalizes theorems of Brosnan and Fakhruddin to the case of an arbitrary mixed Shimura datum.\ As~a con\-se\-quence, we obtain incompressibility results for congruence covers of universal families of principally polarized abelian varieties.\ We also describe explicit fixed points for $(GL_2, \mathcal{H}_2)$ and $(V \rtimes GL_2, \mathcal{Y}_2)$.
\end{abstract}

\tableofcontents

\section{Introduction}\label{sec: introduction}

Fix a base field $k$. Let $K$ be an extension field of $k$, and $A$ a finite \'etale $K$-algebra. As in \cite{Reichstein22}, we say that $A/K$ descends to $A'/K'$ if $K'$ is a subfield of $K$ and $A'$ is a $K'$-algebra such that 
$K\otimes_{K'}A'$ and $A$ are isomorphic as $K$-algebras. Define the \emph{essential dimension} of $A/K$ to be
\[
\ess_k(A/K) \coloneq \min_{A'/K'}\trdeg_kK'.
\] 

More generally, in the language of schemes, let $f\colon  X\rightarrow Y$ be a generically \'etale morphism of schemes of finite type over $k$ with $Y$ integral. We say that $f$ can be compressed to another morphism $f' \colon X'\rightarrow Y'$ of the same type if the following conditions hold.
\begin{enumerate}[label=(\arabic*)]
\item There exists a dominant rational morphism $g\colon Y \dashrightarrow Y'$. 
\item $X\rightarrow Y$ and $Y\times_{Y'}X'\rightarrow Y$ are isomorphic over the generic point of $Y$.
\end{enumerate} 

Equivalently, we have a diagram 
\[
\begin{tikzcd}
X \arrow[d, "f"] \arrow[r, dashed, "g'"] & X' \arrow[d, " f' "]\\
Y \arrow[r, "g", dashed]                                     &  Y'
\end{tikzcd}
\]
and an open dense subscheme $V\subset Y$ on which $g$ is defined and over which $f$ is the pullback of $f'$ along $g$. We define the essential dimension $\ess_k(X \rightarrow Y)$ of $f$ as the minimum dimension of $Y'$ for which such a compression exists.

If $p$ is a prime, define the \emph{essential $p$-dimension} $\ess_k(X \rightarrow Y; p)$ of $f$ as the minimum of $\ess_k(Y'' \times_Y X \rightarrow Y'')$, taken over all generically finite morphisms $Y'' \rightarrow Y$ of degree prime to $p$ with $Y''$ integral. By convention, the essential dimension at $p=0$ is the usual one without base change. We also say that $X\rightarrow Y$ is incompressible (resp. $p$-incompressible) if $\ess_k(X\rightarrow Y)$ (resp. $\nlb \ess_k(X\rightarrow Y; p)$) equals $\dim Y$.

Suppose the above $f \colon X \rightarrow Y$ is a $G$-torsor with $G$ a finite constant group scheme over $k$. Then $f$ is finite \'etale. The essential dimension $\ess_k(G)$ (resp. essential $p$-dimension $\ess_k(G ; p)$) of $G$ is defined as the maximum of $\ess_k(X\rightarrow Y)$ (resp. $\ess_k(X\rightarrow Y; p)$) for all $G$-torsors $X\rightarrow Y$, where we require all $X'\rightarrow Y'$ to be $G$-torsors and $g' \colon X\dashrightarrow X'$ to be $G$-equivariant.

The notion of essential dimension can be traced back to Klein and Kronecker and is closely related to Hilbert's 13th problem (which involves another invariant called resolvent degree); see \cite{FarbWolfson19} and \cite{Reichstein22}. It was first formally defined for finite groups by Buhler and Reichstein in \cite{BuhlerReichstein} and then extended to algebraic groups in \cite{Reichstein00} and to functors $\mathcal{F} \colon Fields_k\rightarrow Sets$ in \cite{BerhuyFavi}. Intuitively, the essential dimension of an algebraic group measures the complexity of its torsors. For a detailed survey, see \cite{Merkurjev}. For more historical remarks, see, for example, \cite{BuhlerReichstein}, \cite{FarbWolfson19}, and \cite{FarbKisinWolfson21}. We will present a collection of results needed on essential dimension in Section \ref{sec: essential dimension}.

Shimura varieties are objects of both algebro-geometric and arithmetic interest. Recall that a (pure) \emph{Shimura datum} is a pair $(G, \mathcal{X})$. Here $G$ is a connected reductive $\mathbb{Q}$-group and $\mathcal{X}$ is a $G(\mathbb{R})$-conjugacy class of morphisms $\mathbb{S}\rightarrow G_\mathbb{R}$ with $\mathbb{S}=\mathrm{Res}_{\mathbb{C}/\mathbb{R}}(\mathbb{G}_{m})$ the Deligne torus. They are required to satisfy certain axioms (cf.\ \cite{Milne05}) which guarantee that $\mathcal{X}$ is a union of Hermitian symmetric domains parametrizing pure Hodge structures on $\Lie G$.

Let $G(\Q)$ act diagonally on $\dps \mathcal{X} \times G(\mathbb{A} _f)$ from the left, and let an open compact subgroup $K_f\subset G(\mathbb{A}_f)$ act on $G(\mathbb{A}_f)$ from the right (with $\fadele$ the ring of finite adeles). The double quotient 
\[
M^{K_f}(G, \mathcal{X})(\C) \coloneq G(\Q) \backslash \mathcal{X}  \times  G(\mathbb{A}_f)/K_f
\]
has a natural structure of a quasi-projective complex algebraic variety. If $\mathcal{X}^0$ is a connected component of $\mathcal{X}$ and $p_f\in G(\fadele)$, then $\Gamma \coloneq  \mathrm{Stab}_{G(\mathbb{Q})}(\mathcal{X}^0) \cap p_f K_f p_f^{-1}$ is an arithmetic subgroup of $G(\Q)$ that acts on $\mathcal{X}^0$. The inclusion map $\mathcal{X}^0\times \{p_f\} \rightarrowtail \mathcal{X}\times G(\fadele)$ induces an embedding $\Gamma \backslash \mathcal{X}^0 \rightarrowtail M^{K_f}(G, \mathcal{X})(\C)$, realizing $\Gamma \backslash \mathcal{X}^0$ as a connected component. Such a $\Gamma \backslash \mathcal{X}^0$ is called a locally symmetric variety. Given two open compact subgroups $K'_f \subset K_f \subset G(\fadele)$, we get a covering $\Gamma ' \backslash \mathcal{X}^0 \rightarrow \Gamma \backslash \mathcal{X}^{0}$. When $K_f'$ is a normal subgroup of $K_f$, the action of the finite group $\Gamma / \Gamma '$ is transitive on the fiber.

Now it makes sense to study $\dps \ed_\mathbb{C}(\Gamma ' \backslash \mathcal{X}^0 \rightarrow \Gamma \backslash \mathcal{X}^{0})$ or $\dps \ed_\mathbb{C}(\Gamma ' \backslash \mathcal{X}^0 \rightarrow \Gamma \backslash \mathcal{X}^{0}; p)$. Results of this kind first appeared in the paper \cite{FarbKisinWolfson21} of Farb, Kisin, and Wolfson, where they computed the essential dimension for many congruence covers of Shimura varieties using arithmetic methods. For example, let $\mathcal{A}_{g, N}$ be the moduli space of principally polarized abelian varieties with level-$N$ structure. They proved that $ \ed_{\mathbb{C}}(\mA_{g, nN}\rightarrow \mA_{g,N};p)=\dim \mathcal{A}_g=\binom{g+1}{2}$ for $g\geq 2$, $\gcd(n, N)=1$, and $ p\, |\, n$. Hence the cover $\mA_{g, nN}\rightarrow \mA_{g,N}$ is $p$-incompressible. They also obtained incompressibility results on the ``principal $p$-level coverings" $\Gamma' \backslash \mathcal{X} \rightarrow \Gamma \backslash \mathcal{X}$, including cases in which the locally symmetric varieties are proper. Their results are strengthened in many respects in \cite{FarbKisinWolfson24} via non-vanishing characteristic classes in prismatic cohomology and toroidal compactifications. Fakhruddin and Saini also proved incompressibility of congruence covers for certain pure Shimura varieties of PEL type. See \cite[5]{FakhruddinSaini22}.

On the other hand, in \cite{BrosnanFakhruddin}, Brosnan and Fakhruddin obtained lower bounds on the essential dimension of congruence covers of locally symmetric varieties by ``the fixed point method". This leads to new incompressibility theorems while reproving many of those in \cite{FarbKisinWolfson21} such as the above one concerning $\mathcal{A}_{g,N}$. Roughly speaking, the $p$-rank of a finite abelian group $H$ contained in $\Gamma/\Gamma'$ bounds the essential $p$-dimension from below, provided that this $H$ has a smooth fixed point in some $H$-equivariant (partial) compactification of $\Gamma ' \backslash \mathcal{X}^0$. They proved the existence of smooth fixed points for some $H$-equivariant compactifications of the cover $\Gamma'  \backslash  \mathcal{X}^0 \rightarrow \Gamma  \backslash  \mathcal{X}^0$ and applied the fixed point method to get their results. 

The approaches of \cite{FarbKisinWolfson21} and \cite{BrosnanFakhruddin} complement each other. Each applies to a class of objects inaccessible to the other. For a more refined comparison, we refer the reader to the introduction of \cite{BrosnanFakhruddin}, and some remarks in Section 5 thereof.

In this paper, we generalize Theorems 33 and 34 in \cite{BrosnanFakhruddin} to a broader class of covers, while recovering some of the main results in \cite{FarbKisinWolfson21} and \cite{BrosnanFakhruddin}. Namely, with the fixed point method and toroidal compactifications, we obtain general lower bounds on essential dimension for congruence covers of mixed Shimura varieties.

A \emph{mixed Shimura datum} $(P,\mathcal{X})$ consists of a connected algebraic group $P$ over $\Q$ and a finite covering $\mathcal{X}$ of a conjugacy class of morphisms $h_x \colon \mathbb{S}_\mathbb{C}\rightarrow P_\mathbb{C}$, subject to some axioms formulated by Pink in his thesis \cite{Pink}. The group $P$ may be non-reductive with unipotent radical $W$. For a mixed Shimura datum $(P, \mathcal{X})$, each  $h_x$ induces a rational mixed Hodge structure of weight $\{0, -1, -2\}$ on $\Lie P_\mathbb{C}$, with $\Lie W$ corresponding to the subspace of weight $\leq -1$. Moreover, there is a $\Q$-subgroup $U$, central in $W$ and normal in $P$, whose Lie algebra corresponds to the subspace of weight $-2$. The group $P(\R) {\cdot} U(\C)\subset P(\C)$ acts transitively on $\mathcal{X}$. As in the pure case, the double quotient space
\[
M^{K_f}(P, \mX)(\C) \coloneq P(\Q) \backslash \mathcal{X}\times P(\fadele) / K_f
\]
carries the structure of a quasi-projective complex algebraic variety.

Mixed Shimura data arise naturally. The rational boundary components of a pure or mixed Shimura datum are mixed Shimura data. (The definition of a rational boundary component will be given in Subsection \ref{subsec: rational boundary components}.) Thus in this sense they are more natural objects of study. One can also construct mixed Shimura data as unipotent extensions from pure ones, as explained in Subsection \ref{subsec: mixed shimura data}. Despite some notational sophistication, the general theory of mixed Shimura varieties resembles that of pure Shimura varieties. (Toroidal compactifications are also closely related to the Baily-Borel compactification. See for example Proposition \ref{subsecproposition: toroidal compactifications, map to baily-borel}.) The systematic treatment of the subject is \cite{Pink}. See also the introduction therein for more insightful remarks. There is also a brief introduction to mixed Shimura varieties in \cite{Milne88}.

We illustrate our main theorem with the following example.  Let $\mY_{g,d} $ be the universal family of principally polarized abelian varieties with full level-$d$ structure. We have a canonical projection map $\mY_{g,d} \rightarrow \mA_{g,d}$ onto the moduli space $\mA_{g,d}$ mentioned above. In fact, $\mY_{g,d} \rightarrow \mA_{g,d}$ is an abelian scheme for $d\geq 3$; see \cite[7.9]{GIT}. ($\mY_{g,d}$ is an example of \emph{Kuga varieties} and can be constructed explicitly from the vector space $V$ of dimension $2g$ on which $Sp_{2g}$ acts. A Kuga datum $(P, \mY)$ is a mixed Shimura datum with the group $U$ trivial.)   For a prime $p$ not dividing $d$, we get a canonical cover $\varphi \colon \mY_{g, pd} \rightarrow \mY_{g, d}$ defined as the composition $\mY_{g, pd} \overset{p}{\rightarrow} \mY_{g, pd} \rightarrow \mY_{g,d}$. Here the first map is multiplication by $p$ on each fiber and the second is the pullback of $\mA_{g, pd} \rightarrow \mA_{g, d}$ along $\mY_{g,d} \rightarrow \mA_{g,d}$, as shown in the diagram.
\begin{equation*}\adjustbox{center}{
\begin{tikzcd}
 \mY_{g,pd} \arrow[r, "p"] \arrow[dr] &
\mY_{g,pd} \arrow[r]  \arrow[d] &
\mY_{g,d} \arrow[d]\\
 &
\mA_{g, pd} \arrow[r, "\mathrm{pullback\,}" yshift=12 pt] &
\mA_{g,d}
\end{tikzcd}
}
\end{equation*}

It follows from our main theorem that $\ed_{\C}(\mY_{g, pd} \overset{\varphi}{\rightarrow} \mY_{g,d};p) = g+\binom{g+1}{2}$ for $p \nmid d\geq 3$. Thus the cover $\varphi \colon \mY_{g, pd} \rightarrow \mY_{g,d}$ is $p$-incompressible. This is part of Theorem~\ref{subsectheorem: kuga varieties, incompressibility}. It is a conjecture of Brosnan that the multiplication-by-$p$ map $[p] \colon  A  \rightarrow A$ is $p$-incompressible for an abelian variety $A$ over a field of characteristic $0$ (cf.\ \cite[6.1,\ 6.7]{FakhruddinSaini22}, \cite[2.3.5]{FarbKisinWolfson24}, and \cite{KollarZhuang25}).  The $p$-incompressibility of $\varphi \colon \mY_{g, pd} \rightarrow \mY_{g,d}$ may be viewed as a combination of this conjecture with the $p$-incompressibility of $\mA_{g, pd}\rightarrow \mA_{g,d}$. We refer the reader to Subsection \ref{subsec: kuga varieties} for more details on this example. See also Remark \ref{subsecremark: examples, brosnan's conjecture} for a discussion of recent progress on the conjecture.

In Subsection \ref{subsec: universal families over siegel modular varieties}, we obtain another such result by deducing from the main theorem the $p$-incompressibility of $\mZ_{g, pd}\rightarrow \mZ_{g, d}$, where $\mZ_{g, d}$ is the universal family over $\mY_{g,d}$ of certain line bundles on principally polarized abelian varieties with level structures. See Theorem~\ref{subsectheorem: universal families over siegel modular varieties, incompressibility} for the precise statement.

Finally, we state our main theorem. For a mixed Shimura datum $(P,\mathcal{X})$, let $Z(P)$  be the center of $P$ and define 
\[
\dps  \Id_{Z(P)(\Q)}(\mathcal{X}) \coloneq \left \{z\in Z(P)(\Q) ~\lvert ~z|_\mathcal{X}=\mathrm{id}   \right \}.
\] This is the subgroup of elements in $Z(P)(\Q)$ acting trivially on $\mathcal{X}$.

Let $K_f' \subset K_f \subset P(\fadele)$ be open compact subgroups. We define the following groups.
\begin{IEEEeqnarray*}{rCl}
\Gamma           &\coloneq&\Stab_{P(\Q)}(\mathcal{X}^0)\cap \bigl (p_f K_fp_f^{-1}\bigr)
\end{IEEEeqnarray*}
$\Gamma$ is the stabilizer in $P(\Q)$ of $\mX^0 \times \{p_f K_f\} \subset \mX \times P(\fadele)/K_f$. Hence $\Gamma \bs \mX^0$ is a connected component of $M^{K_f}(P, \mX)(\C)$.
\begin{IEEEeqnarray*}{rCl}
\Gamma_Z      &\coloneq&   \Id_{Z(P)(\Q)}(\mathcal{X}) \cap \bigl (  p_f K_fp_f^{-1} \bigr )\\
\Delta&\coloneq&\Gamma/\Gamma_Z
\end{IEEEeqnarray*}
$\Gamma_Z$ lies in the kernel of the action of $\Gamma$ on $\mX^0$. Indeed, as we shall see in Lemma \ref{subseclemma: mixed shimura varieties, centralizer lemma}, when $K_f$ is neat, it is also equal to the stabilizer in $\Gamma$ of any point $x\in \mX^0$. Therefore, $\Delta = \Gamma / \Gamma_Z$ acts freely on $\mX^0$ in the neat case.

Any rational boundary component $(P_1, \mathcal{X}_1)$ of $(P,\mathcal{X})$ is also a mixed Shimura datum and thus has a corresponding commutative subgroup $U_1 \unlhd P_1$. Since $U_1$ is a unipotent group, $U_1(\R)$ stabilizes any connected component $\mathcal{X}^0$ of $\mathcal{X}$. 
\begin{IEEEeqnarray*}{rCl}
\Gamma_{ZU_1}&\coloneq& \bigl ( \Id_{Z(P)(\Q)} (\mathcal{X}) \times U_1(\Q) \bigr ) \cap \bigl (p_f K_fp_f^{-1} \bigr)
\end{IEEEeqnarray*}
$\Gamma_{ZU_1}$ is introduced mainly to define the group $\Gamma_{U_1}$ below, which is the image of $\Gamma_{ZU_1}$ under the projection $\dps \Id_{Z(P)(\Q)} (\mathcal{X}) \times U_1(\Q) \rightarrow U_1(\Q)$.
\begin{IEEEeqnarray*}{rCl}
\Gamma_{U_1}&\coloneq&\frac{\Gamma_{ZU_1}}{\Gamma_Z} \subset \frac{\Gamma}{\Gamma_Z}=\Delta
\end{IEEEeqnarray*}
By definition, $\Gamma_{U_1}$ is a lattice in $U_1(\Q)$.  It acts on $U_1(\C)$ by translation. Also, $\Gamma_{ZU_1}$ acts on $U_1(\C)$ through $\Gamma_{U_1}$.
In Subsection \ref{subsec: proof of main theorem}, we will also define analogous groups $\Omega_{Z_1U_1}$ and $\Omega_{U_1}$, which are closely related to the structure of the unipotent fiber (cf.\ \cite[3.13]{Pink} or Proposition \ref{subsecproposition: mixed shimura varieties, unipotent fiber}).

The groups $\Gamma'$, $\Gamma_Z'$, $\Delta'$, $\Gamma_{ZU_1}'$, and $\Gamma_{U_1}'$ are defined similarly.

\begin{sectheorem}
\label{sectheorem: introduction, main theorem}
Let $(P,\mathcal{X})$ be a mixed Shimura datum and $\mX^0 \subset \mX$ any connected component.\ Suppose $(P_1, \mathcal{X}_1)$ is any rational boundary component of $(P, \mathcal{X})$ such that $\mX^0 \subset \mX^+_{(P_1, \mX_1)}$ (as in Proposition \ref{subsecproposition: rational boundary components, rational boundary components}).\ Given neat open compact subgroups $K_f' \unlhd K_f \subset P(\fadele)$ and $p_f \in P(\fadele)$, define $\Gamma$, $\Gamma_Z$, $\Delta$, and $\Gamma_{U_1}$ as above. Let $\Sigma$ be a $K_f$-admissible complete cone decomposition for $(P, \mathcal{X})$ that defines a smooth projective toroidal compactification $M^{K_f}(P,\mathcal{X},\Sigma)(\C)$ (as in Theorem \ref{subsectheorem: toroidal compactifications, projective structure}) and denote by $\left (\Gamma \backslash \mathcal{X}^0 \right )_\Sigma \subset M^{K_f}(P, \mX, \Sigma)(\C)$ the connected component containing $\Gamma \backslash \mathcal{X}^0$. Then 
\begin{enumerate}[label=(\arabic*)]
\item The cover $\dps \Gamma' \backslash \mathcal{X}^0 \rightarrow \Gamma \backslash \mathcal{X}^0$ is a $\Delta/\Delta'$-torsor.

\item Any $\Delta/\Delta'$-equivariant partial compactification $Y$ containing $\Gamma' \backslash \mathcal{X}^0$ with a proper morphism $Y \rightarrow   (\Gamma \backslash \mathcal{X}^0 )_\Sigma$ extending $ \Gamma' \backslash \mathcal{X}^0 \rightarrow \Gamma \backslash \mathcal{X}^0$ admits a $\Gamma_{U_1}/\Gamma_{U_1}'$-fixed point.

\item For any prime $p>0$, we have
\begin{equation*}
\dps \ed_\C(\Gamma' \backslash \mathcal{X}^0 \rightarrow \Gamma \backslash \mathcal{X}^0 ; p)\geq \mathrm{rank}_p (\Gamma_{U_1}/\Gamma_{U_1}') (\leq \dim U_1).
\end{equation*}

\item The optimal lower bound can be attained: For any neat open compact subgroup $K_f$ and any prime $p>0$, there exists a neat open compact subgroup $K_f' \unlhd K_f$ in $P(\fadele)$ such that $\dps \mathrm{rank}_p (\Gamma_{U_1}/\Gamma_{U_1}')=\dim U_1$.

In particular, $\dps \ed_\C(\Gamma' \backslash \mathcal{X}^0 \rightarrow \Gamma \backslash \mathcal{X}^0 ; p)\geq \dim {U_1}$. 

\end{enumerate}

\end{sectheorem}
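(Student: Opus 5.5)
We prove the four parts in order; (2) is the heart of the argument and (3) then follows from the fixed point method.

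\emph{Part (1).} Since $K_f'\unlhd K_f$, the group $\Gamma'$ is normal in $\Gamma$. By Lemma \ref{subseclemma: mixed shimura varieties, centralizer lemma}, neatness gives that $\Gamma_Z$ (resp.\ $\Gamma_Z'=\Gamma_Z\cap\Gamma'$) is exactly the pointwise stabilizer of $\mX^0$. Hence $\Delta=\Gamma/\Gamma_Z$ acts freely and properly discontinuously on $\mX^0$, and so does $\Delta'$. The natural map $\Delta'=\Gamma'/\Gamma_Z'\to\Delta$ is injective, and its image is normal. Therefore $\Gamma'\bs\mX^0=\Delta'\bs\mX^0\to\Delta\bs\mX^0$ is a Galois covering with free action of $\Delta/\Delta'$, which is finite because $[\Gamma:\Gamma']\le[K_f:K_f']$. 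By Riemann existence this is an algebraic $\Delta/\Delta'$-torsor.

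\emph{Part (2).} Both $(\Gamma\bs\mX^0)_\Sigma$ and the toroidal compactification $(\Gamma'\bs\mX^0)_{\Sigma}$ built from the same admissible $\Sigma$ (still $K_f'$-admissible) are described near the boundary stratum attached to $(P_1,\mX_1)$ by Pink's local charts. The chart is the quotient of $\mX_1^0\hookrightarrow U_1(\C)\cdot\mX^0$ by the lattice $\Gamma_{U_1}$, which is a torus torsor over the unipotent fibre, followed by the partial torus embedding given by a cone $\sigma\in\Sigma$ of maximal dimension in the cone of $(P_1,\mX_1)$. Completeness guarantees such a top-dimensional smooth cone exists. Its closed torus-fixed stratum gives a boundary stratum of codimension $\dim U_1$.

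Fix a point $y$ on this stratum. Over the torus $U_1(\C)/\Gamma_{U_1}'$, the group $\Gamma_{U_1}/\Gamma_{U_1}'$ acts by translation. In the torus embedding $T'_{\sigma}$ it therefore fixes the torus-fixed point. It also acts trivially on the base stratum (the unipotent fibre / lower mixed Shimura variety $M(P_1/U_1,\ldots)$), because $\Gamma_{U_1}$ acts only through the $U_1$-translation. This gives a $\Gamma_{U_1}/\Gamma'_{U_1}$-fixed point in the specific toroidal compactification of $\Gamma'\bs\mX^0$, lying over $y$.

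For an arbitrary $Y$ proper over $(\Gamma\bs\mX^0)_\Sigma$, we use the standard reduction from \cite{BrosnanFakhruddin}. The rational map from our model to $Y$ is $\Delta/\Delta'$-equivariant and, by properness and smoothness of the fixed point, the fixed point persists. Concretely, the going-down lemma for abelian $p$-groups states that if a smooth variety has an $H$-fixed point, then any $H$-equivariant proper birational modification also does. Here we pass to a common equivariant resolution. The main technical obstacle is making the local chart comparison precise: one must check that $\Gamma_{U_1}/\Gamma_{U_1}'$ embeds in $\Delta/\Delta'$ and acts through the torus part of the chart. This requires the description of the unipotent fibre (Proposition \ref{subsecproposition: mixed shimura varieties, unipotent fiber}) and the groups $\Omega_{Z_1U_1},\Omega_{U_1}$, and showing that $\Gamma_{ZU_1}\cap\Gamma'=\Gamma'_{ZU_1}$ modulo $\Gamma_Z$.

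\emph{Part (3).} Take $H\subset\Gamma_{U_1}/\Gamma'_{U_1}$ an elementary abelian $p$-subgroup of rank $\rank_p$. By (2), $H$ has a smooth fixed point on a smooth proper equivariant model. The fixed point theorem (Reichstein–Youssin / Brosnan–Fakhruddin, in Section \ref{sec: essential dimension}) then gives the bound $\ed(\ \cdot\ ;p)\ge\rank_p H$. The inequality $\rank_p\le\dim U_1$ holds because $\Gamma_{U_1}/\Gamma'_{U_1}$ is a quotient of a lattice $\cong\Z^{\dim U_1}$.

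\emph{Part (4).} Choose $K_f'=K_f\cap K(p^n)$, where $K(p^n)$ is a principal congruence subgroup for a $\Z$-structure. It is normal in $K_f$ for large $n$ and neat. Then $\Gamma'_{U_1}\subset p\Gamma_{U_1}$ once $n$ is large, so $\Gamma_{U_1}/\Gamma'_{U_1}\twoheadrightarrow\Gamma_{U_1}/p\Gamma_{U_1}\cong(\Z/p)^{\dim U_1}$, which has rank $\dim U_1$.
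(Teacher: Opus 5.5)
Parts (1) and (4) are fine, and (3) follows from (2) as you say. In (4), $K_f\cap K(p^n)$ is normal in $K_f$ because you choose the $\Z$-structure so that $K_f\subset P(\hat{\Z})$, not because $n$ is large.

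For (2) you take a different route from the paper: you find a fixed point directly on the level-$K_f'$ compactification and then transport it to an arbitrary $Y$ by going-down. The transport step has a genuine gap. Going-down only propagates \emph{smooth} fixed points, and you get smoothness from the smoothness of $\Sigma$. But $\Sigma$ is smooth only with respect to $K_f$, i.e.\ its cones are smooth for the lattice $\Gamma_{U_1}(-1)$. The torus in your upstairs chart has the finer cocharacter lattice attached to $K_f'$ (essentially $\Gamma'_{U_1}(-1)$, once $\Omega'_{U_1}$ and $\Lambda'_{U_1}$ are sorted out). Smoothness does not pass to sublattices: $\R_{\geq 0}e_1+\R_{\geq 0}e_2$ is smooth for $\Z^2$ but gives an $A_1$-singularity for $\{(a,b)\in\Z^2\mid a+b\in 2\Z\}$. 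This already happens in Subsection~\ref{subsec: torus case of dimension n} with $r=2$, $\Sigma$ the fan of $\mathbb{P}^1\times\mathbb{P}^1$, and $K_f'\cap U(\fadele)=\{(a,b)\in\hat{\Z}^2\mid a+b\in 2\hat{\Z}\}$. Consistently, the paper only asserts that $M^{K_f'}(P,\mX,\Sigma)(\C)$ is normal, and resolves it in part (3).

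From a singular fixed point nothing persists in general. The diagonal Klein four-group in $SO_3$ fixes the vertex of the projective cone $\{x_1^2+x_2^2+x_3^2=0\}\subset\mathbb{P}^3$. Yet it has no fixed point on the blow-up (the Hirzebruch surface $\mathbb{F}_2$), since it has no common fixed point on the exceptional conic. So your ``common equivariant resolution'' need not carry a $\Gamma_{U_1}/\Gamma'_{U_1}$-fixed point. You also need going-down for the whole finite abelian group $\Gamma_{U_1}/\Gamma'_{U_1}$, not only for $p$-groups.

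The paper never needs a smooth point upstairs; it works at level $K_f$.
\begin{itemize}
\item It takes $\sigma_s\subset\sigma$ smooth for $\Gamma_{U_1}$.
\item It uses Lemma~\ref{subseclemma: some lemmas, fundamental lemma}, which shows via $-\sigma^0\subset C(\mX^0,P_1)$ that a neighbourhood of the $\sigma$-stratum lies in the interior of the closure of $\Gamma_1\bs\mX^0$. Your appeal to ``Pink's local charts'' takes this for granted.
\item Combining this with the isogenies from $\Lambda_{U_1}\bs U_1(\C)$ to $\Omega_{U_1}\bs U_1(\C)$ and to $\Gamma_{U_1}\bs U_1(\C)$, it maps a polydisc $D$ about the torus-fixed point of $(\Gamma_{U_1}\bs U_1(\C))_{\sigma_s}$ into $(\Gamma\bs\mX^0)_\Sigma$. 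Here $D^\circ$ is a product of punctured discs landing in $\Gamma\bs\mX^0$ (Lemma~\ref{subseclemma: some lemmas, polydisc intersection}).
\item Simple connectivity of $U_1(\R)+C(\mX^0,P_1)$ identifies the monodromy of $\Gamma'\bs\mX^0\to\Gamma\bs\mX^0$ along $D^\circ$ with $\Gamma_{U_1}/\Gamma'_{U_1}$.
\item Theorem~\ref{sectheorem: fixed point method, existence of fixed points} then gives the fixed point on every $Y$ proper over $(\Gamma\bs\mX^0)_\Sigma$. The simplicial, possibly singular, toroidal point you meet upstairs is exactly what that theorem absorbs.
\end{itemize}

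To rescue your version you must supply one of two things yourself. Either produce a $K_f$-admissible refinement of $\Sigma$ that is smooth with respect to $K_f'$ (its existence needs justification), so the torus-fixed points upstairs become smooth and are still fixed by translations. Or give a local argument at the simplicial toroidal fixed point.
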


\begin{secremark}
We write $\rank_p(A)$ for the $p$-rank of a finite abelian group $A$. For a variety $X$, a partial compactification of $X$ is any variety $Y$ that contains $X$ as a Zariski-open dense subvariety.
\end{secremark}

\begin{secremark}
The condition $\mX^0 \subset \mX^+_{(P_1, \mX_1)}$ is not really a restriction. Indeed, given any $\mX^0$ and any $\Q$-admissible parabolic subgroup $Q \subset P$ (to be defined in Subsection \ref{subsec: rational boundary components}), there exists a $(P_1, \mX_1)$ associated with $Q$ such that $\mX^0 \subset \mX^+_{(P_1, \mX_1)}$. As $\dim U_1$ only depends on $Q$, we get equally good lower bounds for every $\mX^0$. 
\end{secremark}

\begin{secremark}
The results of \cite[33, 34]{BrosnanFakhruddin} are stated for semisimple algebraic groups over $\Q$ of Hermitian type and for those $\mX^0$ that are tube domains, respectively. The proofs require Borel's extension theorem, which is unavailable in the mixed setting. Our proof of Theorem \ref{sectheorem: introduction, main theorem} is based on a careful examination of the construction of toroidal compactifications. Although this process is more involved, it allows us to remove all the technical assumptions on algebraic groups and spaces, thereby enabling $(P,\mX)$ to be an arbitrary mixed Shimura datum. 
\end{secremark}

\begin{secremark}
Suppose $(P,\mX)$ is a pure Shimura datum and for some $K_f$ the Shimura variety $M^{K_f}(P,\mX)(\C)$ is compact. In this case one can show that $(P,\mX)$ has no proper rational boundary components and therefore $\dim U_1=0$. Thus Theorem \ref{sectheorem: introduction, main theorem} does not subsume results like \cite[7]{FarbKisinWolfson21} or \cite[3.3.19]{FarbKisinWolfson24}.

Let $(P, \mX)$ be a mixed Shimura datum with $W\subset P$ the unipotent radical. Suppose the quotient (pure) Shimura datum $(P,\mX)/W$ (as defined in Proposition \ref{subsecproposition: mixed shimura data, properties}) has no proper rational boundary components. Then the only rational boundary components of $(P,\mX)$ are the improper ones $(P_1, \mX_1)$ with $U_1=U$. If $U\neq 1$, we still get a non-trivial lower bound from $\dim U$, but sometimes this may be relatively small compared with $\dim_\C \mX^0$.
\end{secremark}

The terminology will be clarified in the subsequent sections. We will review the definition of a rational boundary component in \ref{subsec: rational boundary components} and the construction of  toroidal compactifications in \ref{subsec: toroidal compactifications}.

We conclude the introduction with an outline of the paper. In Section \ref{sec: essential dimension} we recall several elementary properties of, and key results on, essential dimension. Section \ref{sec: the fixed point method} contains the precise statement of the fixed point method and the theorem in \cite{BrosnanFakhruddin} on the existence of fixed points, needed in the proof of Theorem \ref{sectheorem: introduction, main theorem}. The delicate machinery of mixed Shimura varieties is introduced in Section \ref{sec: basics}, leading to the construction of their toroidal compactifications, as developed in \cite{Pink}. Finally, Section \ref{sec: proof of main theorem} completes the proof of Theorem \ref{sectheorem: introduction, main theorem} after the establishment of some technical lemmas. In Section \ref{sec: examples}, we present a few examples of lower bounds to illustrate our main theorem, including the incompressibility of $\varphi \colon \mY_{g, pd}\rightarrow \mY_{g,d}$ (in Subsection \ref{subsec: kuga varieties}). Section \ref{sec: examples of fixed points} is devoted to describing  two explicit fixed points on suitable toroidal compactifications for the data $(GL_{2,\Q}, \mH_2)$ and $(V\rtimes GL_{2,\Q}, \mY_2)$ respectively.


\section*{Acknowledgments}\label{sec: acknowledgments}
I sincerely thank my advisor Professor Patrick Brosnan for introducing me to the notion of essential dimension and for directing me toward this research topic. The numerous discussions I had with him and his consistent support have been integral to this project.

\section{Essential dimension} \label{sec: essential dimension}

In the introduction, we have defined $\ed_k(G)$ for a finite group $G$. In fact, essential dimension can be defined for any algebraic group $G$. We include the definition and some basic results below. Our main references for this section are \cite{Merkurjev} and \cite{BrosnanFakhruddin}.

Let $G$ be an algebraic group over a field $k$. A \emph{$G$-scheme} is a scheme of finite type over $k$ with a left $G$-action. A morphism of $G$-schemes $f\colon X\rightarrow Y$ is a \emph{$G$-torsor} if 
\begin{enumerate}
\item $Y$ is integral and $G$ acts trivially on $Y$;
\item $f$ is faithfully flat;
\item The morphism $G\times_kX\rightarrow X\times_YX \colon (g,x)\mapsto (g\cdot x, x)$ is an isomorphism. 
\end{enumerate}

A $G$-scheme is generically free if there exists an open dense $G$-invariant subscheme $U$ such that $U$ is the total space of a $G$-torsor $U\rightarrow Y$. (In some literature, our generically free $G$-scheme is called primitive, since $G$ acts transitively on the set of irreducible components of $X$.) Following \cite{Merkurjev}, we use $k(X)^G$ to denote the function field of $Y$.  Note that $k(X)^G$ does not depend on the choice of $U$. We denote the $G$-torsor coming from the generic fiber by $T_X\rightarrow \Spec k(X)^G$.

Given a generically free $G$-scheme $X$, a \emph{$G$-compression} is a $G$-equivariant dominant rational map $X\dashrightarrow X'$ to another generically free $G$-scheme $X'$. The essential dimension of $X$ (resp. of $G$) is defined to be 
\begin{IEEEeqnarray*}{rCl}
\ed_k(X,G)&\coloneq& 
\min_{X\dashrightarrow X'} \left \{\dim X' - \dim G \right \},\\
\ed_k(G)&\coloneq& \max_{X} \{\ed_k(X,G)\}.
\end{IEEEeqnarray*}
Similarly, the essential $p$-dimension of $X$ is defined to be 
\begin{IEEEeqnarray*}{rCl}
\ed_k(X,G; p)&\coloneq&\min_{Y''\dashrightarrow Y}\left \{\ed_k(Y''\times_Y X, G) \right \},\\
\ed_k(G; p)&\coloneq& \max_{X} \{\ed_k(X,G; p)\}.
\end{IEEEeqnarray*}
where the minimum is taken over all generically finite dominant rational maps $Y'' \dashrightarrow Y$ of degree prime to $p$.

A generically free $G$-scheme $X$ is \emph{incompressible} (resp. $p$-incompressible) if $\ed_k(X,G)=\dim X- \dim G$ (resp. $\ed_k(X,G;p)=\dim X- \dim G$). In other words, $X$ cannot be compressed to a generically free $G$-scheme of lower dimension. 

\begin{secproposition}[\cite{BerhuyFavi}]
\begin{IEEEeqnarray*}{rCl}
\ed_k(X,G)&=&\ed_k(T_X\rightarrow \Spec  k(X)^G), \\ \ed_k(G)&=&\max_{T\rightarrow \Spec K} \left \{\ed_k(T\rightarrow \Spec K)\right \}.
\end{IEEEeqnarray*}
Here $\ed_k(T\rightarrow \Spec K)$ is the minimum of the transcendence degree of a field of definition for the $G$-torsor $T\rightarrow \Spec K$.
\end{secproposition}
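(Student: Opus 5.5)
The plan is to compare the two kinds of compression directly at the generic point, using that a $G$-equivariant morphism between $G$-torsors over the same base is an isomorphism. Throughout, we use standard limit arguments (EGA IV, §8) to pass between torsors over function fields and torsors over dense open subschemes of varieties.

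\emph{First equality, direction ``$\geq$''.} Let $X\dashrightarrow X'$ be a $G$-compression, and let $U'\to Y'$ be a $G$-torsor on a dense open $G$-invariant $U'\subset X'$.
\begin{enumerate}
\item Dominance and $G$-equivariance give an inclusion of invariant function fields $K' \coloneq k(X')^G \subset k(X)^G \eqqcolon K$. Equivalently, we get a dominant rational map $Y\dashrightarrow Y'$.
\item Restricted to generic fibers, the compression gives a $G$-equivariant $K$-morphism $T_X\to T_{X'}\times_{\Spec K'}\Spec K$.
\item Both sides are $G$-torsors over $\Spec K$, so this morphism is an isomorphism. Hence $T_X$ descends to $K'$.
\item Since $\trdeg_k K' = \dim Y' = \dim X'-\dim G$, we obtain $\ed_k(T_X\to\Spec K)\le \ed_k(X,G)$.
\end{enumerate}

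\emph{First equality, direction ``$\leq$''.} Suppose $T_X\cong T'\times_{\Spec K'}\Spec K$ for some subfield $K'\subset K$ and some $G$-torsor $T'\to \Spec K'$.
\begin{enumerate}
\item Since $K=k(X)^G$ is finitely generated over $k$, so is $K'$. Hence $K'$ is the function field of a variety $Y'$.
\item $T'$ is of finite type over $K'$, being a torsor under the algebraic group $G$. By spreading out, $T'$ extends to a $G$-torsor $U'\to Y'_0$ over a dense open $Y'_0\subset Y'$, and $U'$ is then a generically free $G$-scheme.
\item The inclusion $K'\subset K$ gives a dominant rational map $Y\dashrightarrow Y'$. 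The isomorphism of generic fibers gives a $G$-equivariant rational map $X\dashrightarrow U'$, again by spreading out the isomorphism over a dense open subset.
\item This map is dominant because it is dominant on generic fibers.
\item Its target has dimension $\trdeg_k K'+\dim G$. This gives $\ed_k(X,G)\le \ed_k(T_X\to\Spec K)$.
\end{enumerate}

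\emph{Second equality.} Using the first equality, every $\ed_k(X,G)$ is the essential dimension of a torsor over a field, which proves ``$\leq$''. For ``$\geq$'', take a $G$-torsor $T\to\Spec K$ with $K$ an arbitrary extension of $k$.
\begin{enumerate}
\item Choose $K_1\subset K$ and $T_1\to\Spec K_1$ realizing $\ed_k(T)$. We may assume $K_1$ is finitely generated, since $T_1$ is defined over a finitely generated subfield of $K_1$ and passing to a subfield can only lower the transcendence degree.
\item Define $T_0 \coloneq T_1\times_{K_1} K_1$. Then $T_0$ is a torsor over the finitely generated field $K_0 \coloneq K_1$, it satisfies $T_0\times_{K_0}K\cong T$, and it has $\ed_k(T_0)\le \trdeg K_1=\ed_k(T)$.
\item The reverse inequality $\ed_k(T)\le\ed_k(T_0)$ is automatic, since any descent of $T_0$ is also a descent of $T$.
\item Finally, spread $T_0$ out to a $G$-torsor $X\to Y_0$ over a variety with function field $K_0$. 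Then $T_X=T_0$, and the first equality gives $\ed_k(X,G)=\ed_k(T)$.
\end{enumerate}

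The main technical point is the spreading-out step. We must extend $T'$ to a torsor over an open subset of $Y'$, and the isomorphism $T_X\cong T'_K$ to an equivariant rational map. Both are limit arguments, using that torsors and morphisms between finite-type schemes over the generic point are defined over some dense open subset. The remaining steps are formal.
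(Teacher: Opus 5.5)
The paper gives no proof of this proposition; it only quotes it from \cite{BerhuyFavi}, so there is no in-text argument to compare yours with. Your proof is correct.

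For the first equality you use the standard dictionary between $G$-compressions of $X$ and fields of definition of $T_X$. A compression yields a $G$-equivariant morphism of $G$-torsors over $\Spec k(X)^G$, which is automatically an isomorphism. Conversely, a descent of $T_X$ spreads out to a compression.

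For the second equality you exploit that the paper defines $\ed_k(G)$ as a maximum over \emph{all} generically free $G$-schemes. It therefore suffices to realize every torsor over a field as a generic torsor $T_X$, after descending to a finitely generated subfield and spreading out. This needs only limit arguments. In the functorial setting of \cite{BerhuyFavi} and \cite{Merkurjev}, by contrast, the second equality is essentially the definition of $\ed_k(G)$. There, matching it with a compression-theoretic definition based on a single generically free representation requires a versality argument.

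Your route is shorter and self-contained for the definition used in this paper. What it does not give is that one fixed $X$ already attains the maximum; that is exactly what the next proposition, on versal $G$-schemes, adds.

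Two minor points.
\begin{enumerate}
\item In the second equality you do not need an optimal descent. For any finitely generated $K_0\subset K$ over which $T$ is defined, $\ed_k(T)\le\ed_k(T_0)=\ed_k(X,G)\le\ed_k(G)$ already gives the required inequality. Also, your $T_1\times_{K_1}K_1$ is just $T_1$.
\item In the first equality, the phrase ``restricted to generic fibers'' deserves a sentence. Take a dense $G$-invariant open subset of $U$ on which the compression is defined with values in $U'$. It is the preimage of a dense open subset of $Y$, since $G$-invariant opens are saturated and the torsor map $U\to Y$ is open; hence it contains $T_X$. Likewise, the $G$-invariant composite to $Y'$ descends to $Y\dashrightarrow Y'$ because $U\to Y$ is a categorical quotient.
\end{enumerate}
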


Among all generically free $G$-schemes, some have the highest essential dimension. This is reflected in the notion of versality. A $G$-scheme $X$ is \emph{versal} if every open dense $G$-invariant subscheme $U\subset X$ satisfies the condition that every generically free $G$-scheme $X'$ with $k(X')^G$ infinite admits a $G$-equivariant rational map $X' \dashrightarrow U$. Intuitively, this means that whenever $X$ can be compressed to some extent, $X'$ can be compressed at least to the same extent ``through $X$". Therefore, versal generically free $G$-schemes should have the highest essential dimension. By definition, any open dense $G$-invariant subscheme of $X$ is also versal. Further, if $X\dashrightarrow X'$ is a $G$-compression of generically free $G$-schemes with $X$ versal, then $X'$ is versal. 

By analogy, there is a notion of $p$-versality. A $G$-scheme $X$ is \emph{$p$-versal} if, for every $G$-invariant dense open $U\subset X$ and every generically free $G$-scheme $X'$ with $k(X')^G$ infinite, there exists a $G$-equivariant dominant rational map $X''\dashrightarrow X'$ with $k(X'')^G\leftarrow k(X')^G$ finite of degree prime to $p$ such that $X''$ admits a $G$-equivariant rational map to $U$. The observations in the preceding paragraph carry over to $p$-versality.

In summary, we have
\begin{secproposition} [{\cite[3.11]{Merkurjev}} and {\cite[6]{BrosnanFakhruddin}}]
~
\begin{enumerate}
\item If $X$ is a versal generically free $G$-scheme, then 
\[ \dps \ed_k(G)=\ed_k(X,G)=\min_{\mathrm{versal}\,X'} \{ \dim X' -\dim G \}.
\]
\item If $X$ is a $p$-versal generically free $G$-scheme, then 
\[ \dps \ed_k(G;p)=\ed_k(X,G;p)=\min_{p\text{-}\mathrm{versal}\,X'} \{ \dim X' -\dim G \}.
\]
\end{enumerate}
\end{secproposition}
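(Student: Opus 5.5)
The plan is to prove both parts by the same two-inequality argument. On one side, $\ed_k(G)$ is by definition a maximum, so $\ed_k(X,G)\le \ed_k(G)$ for every $X$. On the other side, versality lets any generically free $Y$ be compressed ``through'' a minimal compression of $X$. The identification with the minimum over versal $X'$ comes out of the same argument.

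\textbf{Part (1).}
\begin{enumerate}
\item Fix a versal generically free $X$. Choose a $G$-compression $X\dashrightarrow X''$ with $\dim X''-\dim G=\ed_k(X,G)$. By the remarks preceding the proposition, $X''$ is again versal.
\item Let $U''\subset X''$ be a dense open $G$-invariant subscheme that is the total space of a $G$-torsor $U''\to B''$.
\item Let $Y$ be any generically free $G$-scheme with $k(Y)^G$ infinite. This holds automatically when $k$ is infinite. When $k$ is finite, I would first replace $Y$ by $Y\times\mathbb{A}^1$ with trivial action on $\mathbb{A}^1$; I expect to check that this does not lower $\ed$, by restricting to a general rational section.
\item By versality of $X''$ applied to $U''$, there is a $G$-equivariant rational map $Y\dashrightarrow U''$.
\item Let $C\subset B''$ be the closure of the image of $Y$ in $B''$, and set $Z\coloneq U''\times_{B''}C$. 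Then $Z$ is a $G$-torsor over $C$, hence a generically free $G$-scheme, and $Y\dashrightarrow Z$ is dominant and equivariant, i.e.\ a $G$-compression.
\item Therefore
\[
\ed_k(Y,G)\le \dim Z-\dim G\le \dim X''-\dim G=\ed_k(X,G).
\]
\item Taking the maximum over $Y$ gives $\ed_k(G)\le \ed_k(X,G)$. Combined with the trivial inequality, $\ed_k(G)=\ed_k(X,G)$.
\item Applying this to an arbitrary versal $X'$ gives $\ed_k(G)=\ed_k(X',G)\le \dim X'-\dim G$. The versal $X''$ constructed above attains equality, so the minimum formula holds.
\end{enumerate}

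\textbf{Part (2).} The argument is the same, with prime-to-$p$ base changes inserted. Let $X$ be $p$-versal. Choose a generically finite dominant $Y'''\dashrightarrow Y_X\coloneq\Spec$-model of $k(X)^G$ of degree prime to $p$, and a compression $X'''\coloneq Y'''\times_{Y_X}X\dashrightarrow X''$ realizing $\ed_k(X,G;p)$.

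The key step, and the one I expect to be the main obstacle, is that $X'''$ (and hence $X''$) is still $p$-versal. To prove it, take a generically free $Y$ and a prime-to-$p$ cover $Y_1\to Y$ such that $Y_1$ maps equivariantly to (an open of) $X$. Pull the degree-prime-to-$p$ extension $k(X''')^G/k(X)^G$ back along the induced map of invariant fields. The resulting étale algebra over $k(Y_1)^G$ has degree prime to $p$, so some factor field has degree prime to $p$. That factor gives a prime-to-$p$ cover $Y_2\to Y_1$ together with an equivariant rational map $Y_2\dashrightarrow X'''$. One also has to check that the maps can be taken to land in any prescribed $G$-invariant dense open. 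For this I would use that the map to $X$ can be chosen into the preimage of that open, which is possible because $X$ itself is $p$-versal.

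With $p$-versality of $X''$ established, the image-closure construction from Part (1) shows $\ed_k(Y,G;p)\le \dim X''-\dim G$. Here one uses that composites of degree-prime-to-$p$ extensions have degree prime to $p$. This gives $\ed_k(G;p)\le\ed_k(X,G;p)$. The reverse inequality and the minimum formula then follow exactly as in Part (1).
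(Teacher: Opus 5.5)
Your overall structure is the standard argument behind the cited results; the paper gives no proof of its own, only the references. Part (1) is fine as written. The finite-field detour in step 3 is unnecessary. If $k(Y)^G$ is finite, the base of the generic torsor of $Y$ is a point, so $\dim Y=\dim G$ and $\ed_k(Y,G)=\ed_k(Y,G;p)=0$ trivially.

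There is a genuine gap in the step of Part (2) that you flagged as the key one.

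\emph{What fails.} The rational map $Y_1\dashrightarrow X$ supplied by $p$-versality is in general not dominant. So there is no homomorphism $k(X)^G\to k(Y_1)^G$ along which to pull back $k(X''')^G/k(X)^G$. What you actually have is the image $b\in Y_X$ of the generic point of the base of $Y_1$, with $\kappa(b)\subset k(Y_1)^G$. Over such a non-generic point, the prime-to-$p$ cover $Y'''\dashrightarrow Y_X$ may be undefined, or may have an empty or positive-dimensional fiber. Nothing of degree prime to $p$ is guaranteed there.

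\emph{The repair.}
\begin{enumerate}
\item Shrink to a dense open $V\subset Y_X$ over which $Y'''\to V$ is finite and flat of the generic degree $d$, with $p\nmid d$.
\item Apply $p$-versality of $X$ to the $G$-invariant dense open $U\subset X$ that lies over $V$ and avoids the closure of the image of $X'''\setminus U'''$. This is where landing in a prescribed open is genuinely needed.
\item The fiber over $b$ is then the spectrum of a $\kappa(b)$-algebra of dimension $d$. Its base change $A$ to $K=k(Y_1)^G$ decomposes as $\prod A_i$, with each $A_i$ local with residue field $L_i$.
\item Since $\dim_K A_i=[L_i:K]\cdot\mathrm{length}(A_i)$ and the $\dim_K A_i$ sum to $d$, some $[L_i:K]$ is prime to $p$.
\item That $L_i$ defines $Y_2$ and the equivariant map $Y_2\dashrightarrow X'''$. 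By the choice of $U$, this map lands in $U'''$.
\end{enumerate}
With this fix, the remainder of your Part (2) goes through: $p$-versality of $X''$, the image-closure bound, and the minimum formula.
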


Versal $G$-schemes abound. For every $k$-vector space $V$, we say that a representation $G \rightarrow \mathrm{GL}_V$ is generically free if $V$ as a $G$-scheme is generically free. In particular, if $G $ is a finite group, a representation is generically free if and only if it is faithful. It can be shown that every $G$-representation $V$ is $G$-versal (cf. {\cite[3.10]{Merkurjev}}).

Consider an algebraic subgroup $H\subset G$. Every generically free $G$-scheme remains generically free when regarded as an $H$-scheme and every $G$-compression is also an $H$-compression. Thus we have obtained that $\dps \ed_k(X,G; p) +\dim G \geq \ed_k(X,H;p) +\dim H$ for all $p\geq 0$.
\begin{secproposition} [{\cite[2.2]{BrosnanReichsteinVistoli}}]
 For every $p\geq 0$,
\begin{IEEEeqnarray*}{rCl} 
\ed_k(G;p)+\dim G&\geq& \ed_k(H;p)+\dim H.
\end{IEEEeqnarray*}
\end{secproposition}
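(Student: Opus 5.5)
We argue directly from the definitions, comparing the two notions of essential $p$-dimension via restriction of the group action. Fix $p\geq 0$ and let $X$ be a generically free $G$-scheme. Regarded as an $H$-scheme, $X$ remains generically free: if $U\subset X$ is a dense open $G$-invariant subscheme with $U\rightarrow Y$ a $G$-torsor, then $U$ is also $H$-invariant, and $U\rightarrow U/H$ is an $H$-torsor. The quotient $U/H$ exists, for instance as the associated bundle $U\times^G (G/H)$ over $Y$, which is fppf-locally $G/H\times Y$. Any $G$-compression $X\dashrightarrow X'$ is $H$-equivariant and dominant, so it is also an $H$-compression. Since $\dim X'-\dim G=(\dim X'-\dim H)-(\dim G-\dim H)$, we get
\[
\ed_k(X,H)\leq \ed_k(X,G)+\dim G-\dim H .
\]

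For $p>0$ we also need the prime-to-$p$ base change. Given a generically finite dominant $Y''\dashrightarrow Y$ of degree prime to $p$, the base change $X''=Y''\times_Y X$ is a $G$-scheme. The induced map $X''/H\dashrightarrow X/H$ is the base change of $Y''\rightarrow Y$ along $X/H\rightarrow Y$. Since $X/H\rightarrow Y$ has geometrically integral generic fibre $G/H$ (using that $G$ is connected, or else handling components), the degree is preserved and is prime to $p$. Applying the inequality above to $X''$ and minimizing over $Y''$ gives
\[
\ed_k(X,H;p)\leq \ed_k(X,G;p)+\dim G-\dim H,
\]
which is the inequality noted just before the statement.

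To pass from $G$-schemes to groups, we take $X=V$, a generically free representation of $G$. By \cite[3.10]{Merkurjev}, $V$ is $G$-versal, and as a representation of $H$ it is also $H$-versal and generically free. The earlier proposition (\cite[3.11]{Merkurjev}, \cite[6]{BrosnanFakhruddin}) then gives $\ed_k(G;p)=\ed_k(V,G;p)$ and $\ed_k(H;p)=\ed_k(V,H;p)$. Substituting into the displayed inequality yields the claim. Note that $p$-versality follows from versality.

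The main obstacle is the degree bookkeeping when $G$ is not connected, since $X/H\rightarrow Y$ may then have disconnected fibres and the function field $k(X)^H$ need not be a simple extension of $k(X)^G$. I would first check that the degree-prime-to-$p$ condition only needs to hold on the field $k(X)^H$. Any prime-to-$p$ extension of $k(X)^G$ induces one of $k(X)^H$, because $k(X)^H$ is a regular extension of $k(X)^G$ in the connected case. In the general case it suffices to use the versal $V$ directly: prime-to-$p$ extensions of $k(V)^H$ are the relevant ones, and the $G$-minimizer induces such an extension by tensoring, which is a field after passing to a component.
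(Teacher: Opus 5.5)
Your proposal is correct and follows the same route as the paper: restricting generically free $G$-schemes and $G$-compressions to $H$ gives $\ed_k(X,H;p)+\dim H\leq \ed_k(X,G;p)+\dim G$, and evaluating this at a generically free representation $V$, which is versal for both $G$ and $H$, yields the statement. Your prime-to-$p$ bookkeeping fills in what the paper leaves to the citation. In the disconnected case, choose a factor of $k(Y'')\otimes_{k(Y)}k(V)^H$ whose degree over $k(V)^H$ is prime to $p$ (one exists because these degrees add up to $[k(Y''):k(Y)]$), and send the corresponding $H$-invariant part of $X''$ into the closure of its image in $X'$, which has dimension at most $\dim X'$.
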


In particular, if $H\subset G$ are finite groups, then $\ed_k(H;p) \leq \ed_k(G;p)$ for all $p\geq 0$.

We conclude this section by listing several results on the essential dimension of finite $p$-groups.

\begin{sectheorem}\label{sectheorem: essential dimension, essential dimension of finite groups}
~

\begin{enumerate}  \label{sectheorem: essential dimension, KarpenkoMerkurjev}
\item (\cite[{4.1, 5.1, 5.2}]{KarpenkoMerkurjev}) Let $G$ be a finite $p$-group and $k$ a field of characteristic not equal to $p$ containing a primitive $p$-th root of unity. Then $\ed_k(G;p)=\ed_k(G)$ is equal to the least dimension of a faithful representation of $G$ over $k$. Moreover
\begin{IEEEeqnarray*}{rCl}
\ed_k(G_1\times G_2)&=&  \ed_k(G_1)+\ed_k(G_2),\\
\ed_k(\Z/p^n\Z)            &=&   [k(\xi_{p^n}):k],
\end{IEEEeqnarray*}
where $\xi_{p^n}$ is a primitive $p^n$-th root of unity.

\item If $k$ is as above, then in particular, $\dps \ed_k\bigl ( \left (\Z/p\Z \right)^d \bigr )=d$. 
\end{enumerate}
\end{sectheorem}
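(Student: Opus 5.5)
The first part is quoted from \cite{KarpenkoMerkurjev}, so I would not reprove it. I would only recall its structure, to make clear which hypotheses enter. The second part is then a formal consequence of the first.

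For (1), there are two inequalities. The upper bound $\ed_k(G)\leq \dim V$ for a faithful representation $V$ is immediate from the preceding discussion. A faithful representation of a finite group is generically free, and every representation is versal (\cite[3.10]{Merkurjev}). By the versality proposition ([\cite[3.11]{Merkurjev}]), $\ed_k(G)=\ed_k(V,G)\leq \dim V-\dim G=\dim V$. Since $\ed_k(G;p)\leq \ed_k(G)$ trivially, what remains is the lower bound $\ed_k(G;p)\geq \min\dim V$. This is the genuinely hard step, and it is where the hypotheses $\mathrm{char}\,k\neq p$ and $\xi_p\in k$ are used.

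The argument of Karpenko and Merkurjev for the lower bound runs as follows. Choose a basis of characters $\chi_1,\dots,\chi_r$ of the $p$-torsion of the center $C$ of $G$, with $r=\rank_p C$. The key point is that $C$ has an exponent-$p$ part because $\xi_p\in k$. Consider the generic $G$-torsor and its images in $H^2(K,\mu_p)$ under the connecting maps attached to $1\to C\to G\to G/C\to 1$. This yields a product of Severi--Brauer varieties whose canonical $p$-dimension equals $\min\dim V - r$; this is computed using index reduction and Karpenko's incompressibility theorem. Adding back $r$ then gives the lower bound. Additivity $\ed_k(G_1\times G_2)=\ed_k(G_1)+\ed_k(G_2)$ follows, since minimal faithful representations of a product of $p$-groups decompose along the central characters. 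Finally, $\ed_k(\Z/p^n\Z)=[k(\xi_{p^n}):k]$ comes from the fact that the minimal faithful representation of $\Z/p^n\Z$ is a Galois orbit of a primitive character.

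For (2), I would apply (1) directly. First, $(\Z/p\Z)^d$ is a $p$-group, so its essential dimension equals its essential $p$-dimension. By additivity, $\ed_k\bigl((\Z/p\Z)^d\bigr)=d\cdot\ed_k(\Z/p\Z)$. Moreover $\ed_k(\Z/p\Z)=[k(\xi_p):k]=1$, because $\xi_p\in k$ by hypothesis. As a sanity check, the faithful representation of $(\Z/p\Z)^d$ by $d$ independent characters has dimension $d$, and no faithful representation can have smaller dimension. Indeed, since $\mathrm{char}\,k\neq p$ and $\xi_p\in k$, any faithful representation splits into characters, and faithfulness requires at least $d$ of them to generate the dual group. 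So the only real obstacle is the lower bound in (1), which is handled entirely by the citation.
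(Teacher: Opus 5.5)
Your proposal is correct and matches the paper, which gives no argument beyond citing Karpenko--Merkurjev for (1) and reading off (2) from additivity together with $[k(\xi_p):k]=1$; your check via $d$ independent characters is a valid independent confirmation. One small inaccuracy in your recap of the cited argument: the center of a nontrivial $p$-group always has nontrivial $p$-torsion, so $\xi_p\in k$ is not what makes $C$ nontrivial; its role is to give $\mu_p\cong\Z/p\Z$ as Galois modules, so that characters of $C$ push the connecting class into ${}_p\mathrm{Br}(K)$.
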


\begin{secremark}
As indicated in the assumptions of the theorem, $\ed_k(G)$ or $\ed_k(G;p)$ depends on the ground field $k$.
\end{secremark}

\begin{secremark} \label{secremark: essential dimension, essential dimension of product groups}
The equality $\ed_k(G_1\times G_2)=\ed_k(G_1)+\ed_k(G_2)$ holds only for $p$-groups. In general, $\dps \ed_k(G_1\times G_2;p) \leq \ed_k(G_1;p)+\ed_k(G_2;p)$ for all $p\geq 0$  as a $G_1\times G_2$-torsor is naturally the product of a $G_1$-torsor with a $G_2$-torsor.
\end{secremark}


\section{The fixed point method} \label{sec: the fixed point method}

In this section, we briefly explain the fixed point method. Our starting point is the following theorem, which relates the existence of a $G$-fixed smooth $k$-point to $p$-versality.

\begin{sectheorem}[{\cite[8.6]{DuncanReichstein}}] \label{sectheorem: fixed point method, DuncanReichstein}
Let $G$ be a smooth affine algebraic group over a field $k$ and $X$ a generically free $G$-variety that is geometrically integral. If $G$ fixes a smooth point $x\in X(k)$, then $X$ is $p$-versal.
\end{sectheorem}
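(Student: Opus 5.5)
The statement just above is Theorem~\ref{sectheorem: fixed point method, DuncanReichstein} (Duncan--Reichstein), so the plan is to prove $p$-versality from a smooth fixed point. The strategy is to reduce to a linear situation near $x$ and then use that linear representations are versal. Fix a generically free $G$-scheme $X'$ with $k(X')^G$ infinite and a dense open $G$-invariant $U\subset X$. We need a prime-to-$p$ extension after which $X'$ maps $G$-equivariantly to $U$. Since $x$ is a smooth $k$-point fixed by $G$, the group $G$ acts linearly on the tangent space $T_xX$. The first step is to compare $X$ near $x$ with $T_xX$.

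\textbf{Step 1.} Let $\mathfrak m\subset\mathcal O_{X,x}$ be the maximal ideal. It is $G$-stable, and $\mathfrak m/\mathfrak m^2\cong T_x^*X$ carries a $G$-action. Because $G$ is only smooth affine, not linearly reductive, we cannot split $\mathfrak m\to\mathfrak m/\mathfrak m^2$ equivariantly. Instead, deform to the normal cone: consider the family over $\mathbb A^1$ whose generic fiber is $X$ and whose special fiber is the normal cone $C_xX = T_xX$, which is smooth at $x$. This family is $G$-equivariant, with $G$ acting trivially on $\mathbb A^1$. Generic freeness on $X$ should propagate to the total space, and hence to $T_xX$ once we show the representation $T_xX$ is generically free. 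Equivalently, I would show that the formal completion $\widehat{\mathcal O}_{X,x}$, or the $G$-scheme $X\times\mathbb A^1$ near the special fiber, is linked to the vector space $T_xX$.

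\textbf{Step 2.} Use versality of representations (\cite[3.10]{Merkurjev}). There is a $G$-equivariant rational map $X'\dashrightarrow V$ for a generically free representation $V$, and we may replace $V$ by $V\oplus T_xX$. The real task is then to produce, after a prime-to-$p$ extension of $K=k(X')^G$, a $K$-point of the twisted variety ${}^{T}X = T\times^G X$ lying in ${}^{T}U$, where $T=T_{X'}$. Equivariant maps $X'\dashrightarrow U$ correspond exactly to such points. Since $x$ is $G$-fixed, it gives a $K$-point $x_K$ of ${}^{T}X$. Because $x$ is smooth, $x_K$ is a smooth $K$-point.

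\textbf{Step 3.} This step uses that ${}^{T}X$ is geometrically integral and that $K$ is infinite. A geometrically integral variety with a smooth $K$-point over an infinite field has Zariski-dense $K$-points: near $x_K$ it is étale-locally affine space, and one can use curves through $x_K$, or the fact that a smooth $K$-point makes the variety unirational-like locally. In particular there is a $K$-point in the dense open ${}^{T}U$. This yields the required equivariant map, even without a prime-to-$p$ extension, so we get versality and in particular $p$-versality. The necessary care is over infinite $K$ in arbitrary characteristic: density of $K$-points near a smooth $K$-point follows by choosing an étale map to $\mathbb A^n_K$ and lifting points via Hensel-type arguments. Since $K$ is not Henselian, I would instead argue with an irreducible curve $C\subset{}^{T}X$ through $x_K$, smooth at $x_K$. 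Such a curve is birational to a curve with a smooth rational point, hence has infinitely many $K$-points when $K$ is infinite. Because $C$ meets ${}^{T}U$ densely, we win.

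\textbf{Main obstacle.} I expect this last density assertion to be the hard part. A curve of positive genus with one rational point need not have infinitely many points, so the curve argument is flawed as stated. I would fix it by choosing $C$ rational. Alternatively, one can apply Duncan--Reichstein's use of iterated blow-ups at the smooth fixed point, reducing to the representation $T_xX$ via the deformation of Step 1. A representation has dense rational points, and its twist is again a vector space by Hilbert 90, which finishes the argument.
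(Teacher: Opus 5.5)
The reduction in your Step~2 is sound. With $K=k(X')^G$ and $T=T_{X'}$, equivariant rational maps $X'\dashrightarrow U$ correspond to $K$-points of ${}^{T}U$, and $x$ twists to a smooth $K$-point $x_K$ of the geometrically integral variety ${}^{T}X$.

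The proof breaks at Step~3, and not for a technical reason. The claim that a smooth $K$-point over an infinite field forces Zariski-dense $K$-points is false. So is your conclusion that $X$ is versal ``even without a prime-to-$p$ extension''. Take $G=1$, $k=\Q$, and $X$ an elliptic curve with $X(\Q)$ finite (e.g.\ $y^2=x^3-x$). Every point is then a smooth $G$-fixed point. Yet for $U=X\setminus X(\Q)$ and $X'=\mathbb{A}^1_{\Q}$ there is no rational map $X'\dashrightarrow U$: there are no nonconstant maps $\mathbb{P}^1\to X$, so $X(\Q(t))=X(\Q)$. This is exactly why the theorem asserts only $p$-versality.

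Your proposed repairs do not help. A rational curve through $x_K$ need not exist; there is none in this example. The deformation to the normal cone, or the blow-up at $x$, only yields $K$-points on $T_xX$ or on $\mathbb{P}(T_xX)$, and nothing transports such points into ${}^{T}U$. So Hilbert~90 for the twisted tangent space is beside the point. Step~1 is never actually used, and $T_xX$ need not even be a generically free representation.

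The missing idea is to use the prime-to-$p$ extension essentially. You must produce a closed point of ${}^{T}U$ of degree prime to $p$; its residue field $L$ is then the required extension of $K$, with ${}^{T}U(L)\neq\varnothing$. Smoothness of $x_K$ is what makes this work:
\begin{enumerate}
\item Since $K$ is infinite, cut an affine neighbourhood of $x_K$ by general hyperplanes through $x_K$. The irreducible component $C$ through $x_K$ of the resulting curve is regular at $x_K$ and not contained in ${}^{T}X\setminus{}^{T}U$.
\item On the regular projective model $\widetilde{C}$, the point $x_K$ gives a $K$-rational point $\tilde x$. Over a singular point you could only guarantee points of degree divisible by $p$, which is why smoothness is needed.
\item Take $m\geq 2g(\widetilde{C})$ with $p\nmid m$. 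Riemann--Roch gives functions with a pole of order exactly $m$ at $\tilde x$ and no other poles.
\item Each of the finitely many points of $\widetilde{C}$ not mapping into ${}^{T}U\cap C$ imposes a proper linear condition on such functions. As $K$ is infinite, you can choose $f$ with $D=\operatorname{div}(f)+m\tilde x$ avoiding all of them.
\item Since $\deg D=m$ is prime to $p$, some closed point in the support of $D$ has degree prime to $p$. It gives the desired $L$-point of ${}^{T}U$.
\end{enumerate}
This yields $p$-versality for every $p$ and, consistently with the example above, nothing more. Note that the paper itself gives no proof here; it simply quotes the result from Duncan--Reichstein.
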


\begin{secremark} In practice, $G$ will be a finite group and $k$ will be the field of complex numbers.\end{secremark}

Combining Theorem \ref{sectheorem: essential dimension, essential dimension of finite groups} and Theorem \ref{sectheorem: fixed point method, DuncanReichstein}, while noting that the essential $p$-dimension is a birational invariant, we are led to 

\begin{sectheorem}[{\cite[10]{BrosnanFakhruddin}}]\label{sectheorem: fixed point method, fixed point method}
Let $G$ be a finite group and $X$ an irreducible generically free $G$-variety over a base field $k$ of characteristic not equal to $p$ containing a primitive $p$-th root of unity. Suppose $G$ has a subgroup $H$ such that $H$ is a $p$-group and has a smooth fixed point in $X$. Then
\begin{enumerate}[label=(\arabic*)]
\item $\dps \ed_k(X_0, G; p) \geq \ed_k(X_0, H; p)=\ed_k(H; p)$ for any open dense $G$-invariant subvariety $X_0\subset X$.
\item If $H=(\Z/p\Z)^r$, then $\ed_k(X_0, G; p) \geq \ed_k(X_0, H; p)=r$.
\end{enumerate}
\end{sectheorem}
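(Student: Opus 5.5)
Part (1) is immediate from the definitions. Part (2) follows from (1) once we know that the $p$-group $H$ is a subgroup of $G$ and that essential $p$-dimension can only go down when we restrict to a subgroup.

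For (1), let $X_0\subset X$ be open dense and $G$-invariant. It is then also $H$-invariant, and $X_0$ is a generically free $H$-variety, since a subgroup of a group acting generically freely still acts generically freely. Every $G$-compression of $X_0$ is an $H$-compression, and base change along a dominant map of degree prime to $p$ is compatible with restricting the group. As in the remark preceding the proposition of \cite[2.2]{BrosnanReichsteinVistoli}, this gives $\ed_k(X_0,G;p)\geq \ed_k(X_0,H;p)$; here $\dim G=\dim H=0$.

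For the equality, note that $X$ is irreducible and generically free. Since $k$ is not assumed algebraically closed, we should also ensure that $X$ is geometrically integral and that the fixed point is $k$-rational; in the intended application $k=\C$, so this is automatic. With these hypotheses, Theorem \ref{sectheorem: fixed point method, DuncanReichstein} shows that $X$ is $p$-versal as an $H$-variety. Essential $p$-dimension is a birational invariant, and $p$-versality passes to dense open invariant subsets. Hence $X_0$ is $p$-versal for $H$, and the proposition of \cite[3.11]{Merkurjev} and \cite[6]{BrosnanFakhruddin} gives $\ed_k(X_0,H;p)=\ed_k(H;p)$.

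For (2), take $H=(\Z/p\Z)^r$. The field $k$ has characteristic different from $p$ and contains a primitive $p$-th root of unity, so Theorem \ref{sectheorem: essential dimension, essential dimension of finite groups} gives $\ed_k(H;p)=\ed_k(H)=r$. Combining this with (1) gives the claim.

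The only delicate point is checking the hypotheses of Theorem \ref{sectheorem: fixed point method, DuncanReichstein}: that $X$ is geometrically integral and that the fixed point is a smooth $k$-point. Both hold trivially when $k=\C$, the case used throughout the paper.
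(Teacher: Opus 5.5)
Your proof is correct and follows the paper's route. The paper cites Brosnan--Fakhruddin and obtains the statement by combining Duncan--Reichstein ($p$-versality of $X$, hence of $X_0$, as an $H$-variety), birational invariance, the subgroup inequality $\ed_k(X_0,G;p)\geq\ed_k(X_0,H;p)$, and Karpenko--Merkurjev for $\ed_k((\Z/p\Z)^r;p)=r$. Your opening summary misdescribes this: the equality in (1) is not immediate from the definitions, since that is exactly where the smooth fixed point and $p$-versality are needed, as your body argument correctly shows.
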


To state the next theorem, we recall a few definitions from \cite{BrosnanFakhruddin}. We assume here that $k$ is an algebraically closed field.

\begin{secdefinition}
A \emph{toroidal singularity} is a pair $(S,\varphi)$, consisting of the following data.

\begin{enumerate}
\item $T$ is a torus over $k$ and $T_\sigma$ is the torus embedding with respect to a strongly convex rational polyhedral cone $\sigma\subset Y_{*}(T)_\R$ with $\dim \sigma =\dim T$. ($Y_*(T)$ is the cocharacter group of $T$.)

\item $S$ is a scheme over $k$ and $\dps \varphi \colon S\overset{\sim}{\rightarrow} \Spec \widehat{\mathcal{O}}_{T_\sigma,x}$ is an isomorphism of $S$ with the completion of the local ring of $T_\sigma$ at its $\sigma$-stratum (which is a torus fixed point). 
\end{enumerate}

\begin{secremark}
For more on toroidal singularities, see \cite{BrosnanFakhruddin}. For more preliminaries on torus embeddings, see Subsection \ref{subsec: torus embeddings}.
\end{secremark}

Given $(S,\varphi)$ as above, we define $S^\circ\subset S$ to be the inverse image of $T\subset T_\sigma$ under the morphism $S\overset{\sim}{\rightarrow} \Spec \widehat{\mathcal{O}}_{T_\sigma, x} \rightarrow T_\sigma$. (Thus we have a pullback diagram below, where $\Spec \widehat{O}_{T_\sigma,x} \cap T$ is the preimage of $T$.) Each ray $\ell \leq \sigma$ defines a stratum of codimension $1$, whose closure is a boundary divisor.

\begin{equation*}
\begin{tikzcd}
S^\circ \arrow[r,"\sim"']\arrow[d, tail]&\Spec \widehat{O}_{T_\sigma,x} \cap T \arrow[r]                     \arrow[d,tail] &T\arrow[d,tail]\\
S\arrow[r,"\varphi", "\sim" ']&\Spec \widehat{O}_{T_\sigma,x} \arrow[r]&T_\sigma
\end{tikzcd}
\end{equation*}

\begin{secexample}
Let $T=\mathbb{G}_m\times \mathbb{G}_m$. Then $Y_*(T)=\Z e_1 \oplus \Z e_2$ with $e_1 \colon \mathbb{G}_m \rightarrowtail \mathbb{G}_m\times \{1\}$ and $e_2 \colon \mathbb{G}_m \rightarrowtail \{1\} \times \mathbb{G}_m$. 
We take $\dps \sigma=\R_{\geq 0}\cdot e_1+\R_{\geq 0}\cdot e_2$. Then $T_\sigma= \Spec \C[x,y]=\mathbb{A}^2_\C$ with $x, y$ the dual generators of $e_1, e_2$ in $X^*(T)$, the character group of $T$. Then the $\sigma$-stratum is defined by the ideal $I=(x,y)$. The pair $(S,\varphi)=(\Spec \C [[x,y]], \mathrm{id})$ is a toroidal singularity.

Since $T=\Spec \C[x,y]_{xy}$, we have $S^\circ=\C[[x,y]]_{xy}$, the Laurent series ring in two variables.
\end{secexample}
\end{secdefinition}

\begin{sectheorem}[{\cite[16]{BrosnanFakhruddin}}] \label{sectheorem: fixed point method, existence of fixed points}
Let $Y$ be an irreducible variety over $k$ and let $f\colon X \rightarrow Y$ be an irreducible finite \'etale Galois cover with Galois group $G$. Let $S$ be a simplicial toroidal singularity and let $S^\circ$ be the complement of the boundary divisor. Suppose there exists a morphism $g \colon S^\circ \rightarrow Y$ such that the image of the composite $\dps \pi_1^\mathrm{et}(S^\circ, s)\overset{g_*}{\rightarrow} \pi_1^\mathrm{et}(Y, g(s)) \twoheadrightarrow G$ is a finite (abelian) group $A$ of order not divisible by $\mathrm{char}\,k$ (for $s\in S^\circ$ any geometric point). Assume that $g$ extends to a morphism $\overbar{g} \colon S\rightarrow \overbar{Y}$, where $\overbar{Y}$ is a partial compactification of $Y$. Then 
\begin{enumerate}[label=(\arabic*)]
\item Any $G$-equivariant partial compactification $\overbar{X}\supseteq X$ admitting a proper morphism $\overbar{f}\colon \overbar{X} \rightarrow \overbar{Y}$ extending $f$ has an $A$-fixed point.
\item Any smooth proper variety $X'$ with a $G$-action which is equivariantly birational to $X$ has an $A$-fixed point.
\end{enumerate}  
\end{sectheorem}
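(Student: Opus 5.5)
The strategy is to localize the whole problem over the toroidal singularity $S$. We replace $\overbar{X}\to\overbar{Y}$ by its base change along $\overbar{g}$, produce an $A$-equivariant finite cover of $S$ that is again toroidal with a distinguished closed point, and then push an $A$-fixed point from a smooth model down into the fibre of $\overbar{X}$. Part (2) should follow from (1): since $X'$ is smooth and proper, an equivariant birational map $X\dashrightarrow X'$ can be resolved equivariantly, and fixed points of abelian groups descend along proper birational maps onto smooth targets by the Reichstein–Youssin going-down lemma. So the main work is (1).

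\emph{Step 1: structure of the pulled-back cover.} Form $X_{S^\circ}\coloneq S^\circ\times_Y X\to S^\circ$. This is a finite étale $G$-cover, and its connected components are permuted transitively by $G$. The stabilizer of a component $C$ is the image of $\pi_1^{\mathrm{et}}(S^\circ,s)$ in $G$, which is $A$, so $C\to S^\circ$ is a connected $A$-torsor. Because $|A|$ is prime to $\mathrm{char}\,k$, the cover $C\to S^\circ$ is tame. By the Grothendieck–Murre description of the tame fundamental group of the complement of a toroidal boundary, the tame $\pi_1$ of $S^\circ$ is the abelian group $N=Y_*(T)$. Hence $C$ corresponds to a finite-index sublattice $N'\subset N$ with $N/N'\cong A$. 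The normalization $S'$ of $S$ in $C$ is then the completed local ring of $T'_\sigma$ at its $\sigma$-stratum, where $T'\to T$ is the isogeny with $Y_*(T')=N'$. Moreover $A=\ker(T'\to T)\subset T'$ acts on $S'$ by restricting the torus action. Since $S'$ is local, its closed point $s'$ is $A$-fixed.

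\emph{Step 2: mapping to $\overbar{X}$.} Set $Z\coloneq\overbar{X}\times_{\overbar{Y}}S$, which is proper over $S$ because $\overbar{f}$ is proper. The closure $\overbar{C}$ of $C$ in $Z$ is $A$-stable, proper over $S$, and birational to $S'$ over $S$. We therefore get an $A$-equivariant rational map $S'\dashrightarrow\overbar{C}$, and it suffices to find an $A$-fixed point in $\overbar{C}$ lying over the closed point of $S$. Its image in $\overbar{X}$ is then the required fixed point.

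\emph{Step 3: the main obstacle.} The difficulty is that $S'\dashrightarrow\overbar{C}$ need not extend to a morphism, and $S'$ need not be smooth. This is where simpliciality enters. Let $N_\sigma\subset N$ be the lattice spanned by the primitive vectors on the rays of $\sigma$. Replacing $C$ by a further connected cover corresponding to $N'\cap N_\sigma$, and $A$ by a group surjecting onto it (fixed points of the larger group map to fixed points of $A$), we may assume that $N'$ is spanned by vectors on the rays. Then $S'\cong\Spec k[[x_1,\dots,x_n]]$ is smooth, $s'$ is a smooth $A$-fixed point, and $A$ acts diagonally through characters.

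Next, take the closure $\Gamma$ of the graph of $S'\dashrightarrow\overbar{C}$ and an $A$-equivariant resolution $\widetilde{S}\to\Gamma$ that is an isomorphism over $C$; this requires equivariant resolution in the complete local setting. The composite $\widetilde{S}\to S'$ is then a proper birational $A$-morphism onto a smooth germ with an $A$-fixed point. The Reichstein–Youssin going-down lemma for abelian groups of order prime to the characteristic gives an $A$-fixed point on $\widetilde{S}$ over $s'$, and its image in $\overbar{C}$ is the desired point.

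I expect the resolution step in the formal setting to be the most delicate part. If it causes trouble, my first alternative is to algebraize: use that $S'$ is the completion of the affine toric variety $T'_\sigma$, and run the argument on an étale-local algebraic model of $\overbar{g}$ before completing.
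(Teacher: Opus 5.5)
\emph{Context.} The paper gives no proof of this statement. It is imported from Brosnan--Fakhruddin, and later only part (1) with $k=\C$ is used, so your argument has to stand on its own.

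\emph{Part (1).} Steps 1 and 2 are fine, but Step 3 contains a false claim: $N'\cap N_\sigma$ need not be spanned by vectors on the rays of $\sigma$. For example, if $N_\sigma=\Z v_1\oplus\Z v_2$ and $N'=\{av_1+bv_2 : a+b\ \text{even}\}$, the ray vectors in $N'$ only span $2\Z v_1\oplus 2\Z v_2$. The correct choice is $N''=\bigoplus_i \Z\, m v_i$ with $mN\subset N'$.

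This correction exposes a characteristic problem. Since $[N:N'']=m^n[N:N_\sigma]$, if $p=\mathrm{char}\,k>0$ divides $[N:N_\sigma]$ (e.g.\ $\sigma$ spanned by $(1,0)$ and $(1,p)$), then no smooth Kummer cover of $S$ is étale over $S^\circ$. The enlarged group is then a non-étale group scheme, so the torsor picture and the going-down lemma do not apply, and the formal resolution you invoke is not available in characteristic $p$ anyway. (The same divisibility condition is what you need to reduce Step 1 from a toroidal germ to Grothendieck--Murre.) As written, Step 3 is therefore a characteristic-$0$ argument. That is enough for this paper, but not for the theorem as stated.

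You can bypass both the smooth cover and all resolution. Pick $w\in\sigma^0$ with $m\mapsto\langle m,w\rangle$ injective on $X^*(T')$, and put $v(\sum a_m\chi^m)=\min\{\langle m,w\rangle : a_m\neq 0\}$ on $\mathrm{Frac}(S')$. This valuation has the following properties:
\begin{enumerate}
\item it is $T'$-invariant, hence $A$-invariant;
\item it has residue field $k$;
\item it is centred at $s'$.
\end{enumerate}
By the valuative criterion for the proper map $\overbar{C}\to S$, $v$ has a unique centre on $\overbar{C}$. Uniqueness makes it an $A$-fixed $k$-point lying over the closed point of $S$.

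\emph{Part (2).} Your deduction of (2) from (1) is a genuine gap. The going-down lemma transports an $A$-fixed point from a \emph{smooth} point of the source to a proper target. Smoothness of the target $X'$ plays no role, and (1) says nothing about the fixed point in $\overbar{X}$ being smooth. You also cannot rerun the local argument with target $X'$: $g$ need not be dominant, so $C\to X\dashrightarrow X'$ may be undefined.

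If $X$ is smooth and $\mathrm{char}\,k=0$, the fix is to apply (1) to a smooth $\overbar{X}$, namely an equivariant resolution of an equivariant compactification that is an isomorphism over $X$. Going-down along $\overbar{X}\dashrightarrow X'$ then gives (2).

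Without smoothness of $X$, (2) is actually false. Take:
\begin{itemize}
\item $\mathrm{char}\,k\neq 2$, $A=G=(\Z/2)^2$, and $Q=\mathbb{A}^2/\{\pm1\}$ with vertex $o$;
\item $X=\bG_m^2\times Q$, where $A$ acts by sign changes on $\bG_m^2$ and on $Q$ through $D_4/\{\pm1\}$, with $D_4\subset GL_2$ dihedral of order $8$;
\item $Y=X/A\subset\overbar{Y}=(\mathbb{A}^2\times Q)/A$;
\item $\overbar{g}\colon\Spec k[[s_1,s_2]]\to\overbar{Y}$ induced by $t\mapsto(t,o)$, with $s_i=t_i^2$.
\end{itemize}
All hypotheses hold with local monodromy $A$, and $(0,o)$ is the fixed point predicted by (1). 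However, let $X'=\mathbb{P}^1\times\mathbb{P}^1\times F_2$, where $F_2$ is the minimal resolution of the projective closure $\mathbb{P}(1,1,2)$ of $Q$. Then $X'$ has no $A$-fixed point, because the ruling $F_2\to\mathbb{P}^1$ is equivariant and $D_4/\{\pm1\}\subset PGL_2$ has no fixed point on $\mathbb{P}^1$.

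So (2) needs an extra hypothesis such as $Y$ smooth. This is harmless for the paper, which uses only (1) and constructs its smooth model separately.
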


\begin{secremark}
As pointed out in {\cite[17]{BrosnanFakhruddin}}, when $k=\C$, 
it suffices to find holomorphic maps $g$ and $\overbar{g}$ on a sufficiently small open neighborhood of the torus fixed point $x\in T_\sigma$. Then we have the desired maps $S\rightarrow \overbar{Y}$ and $S^\circ \rightarrow Y$ since we can complete the induced homomorphism between analytic local rings.
\end{secremark}

\begin{secremark}
Suppose our $X$ is known to be smooth and there exists a $G$-equivariant proper $\overbar{f} \colon \overbar{X}\rightarrow \overbar{Y}$ as above. In order to apply Theorem \ref{sectheorem: fixed point method, fixed point method}, we need a smooth $\overbar{X}'$. This can be achieved by applying an equivariant resolution of singularities to $\overbar{X}$. See for example \cite{EncinasVillamayor98}.
\end{secremark}

\section{Basics on mixed Shimura varieties} \label{sec: basics}
          
         This section is mainly an exposition of the theory of mixed Shimura varieties. Our primary reference is \cite{Pink}. (Some lemmas are also deduced in Subsection \ref{subsec: mixed shimura varieties}.) The proof of our main theorem will make use of many elements in this section, especially those on rational boundary components and the construction of toroidal compactifications, as developed in Subsections \ref{subsec: rational boundary components} and \ref{subsec: toroidal compactifications}. Hopefully, this section can provide readers with more background on mixed Shimura varieties and help them navigate through \cite{Pink} on their own if necessary. Readers familiar with mixed Shimura varieties and eager to know the proof may proceed directly to Section \ref{sec: proof of main theorem}.

          \subsection{Mixed Hodge structures} \label{subsec: mixed hodge structures}

We introduce the notion of mixed Hodge structures in this subsection. 

Let $V$ be a finite-dimensional $\Q$-vector space. 

\begin{subsecdefinition}~

\begin{enumerate}
\item A pure rational Hodge structure on $V$ of weight $n\in \Z$ is a decomposition $\dps V_\C= \oplus_{(p,q)\in \Z \oplus \Z}V^{p,q}$ such that $V^{p,q}=0$ if $p+q\neq n$ and $\overbar{V^{p,q}}=V^{q,p}$. The Hodge filtration of this pure Hodge structure is the decreasing sequence of subspaces $\dps \{ F^{p}V=\oplus_{p \leq p'}V^{p',q} \}_{p \in \Z}$. Note that $F^pV \cap \overbar{F^qV}=V^{p,q}$. Thus the decomposition and the Hodge filtration determine each other.

\item A mixed rational Hodge structure on $V$ consists of an increasing, separated and exhaustive filtration of $\Q$-subspaces $\{W_nV\}_{n\in\Z}$ and a decreasing, separated and exhaustive filtration $\{F^pV\}_{p\in \Z}$ such that on each $Gr_nV\coloneq W_nV/W_{n-1}V$, $\{F^pV\}_{p\in \Z}$ induces a pure Hodge structure of weight $n$.
\end{enumerate}
\end{subsecdefinition} 

Given a mixed rational Hodge structure on $V$, we define the Hodge numbers $\dps h^{p,q} \coloneq \dim_\C (Gr_nV)^{p,q}$. By definition, $h^{p,q}=h^{q,p}$.
If $A\subset \Z \oplus \Z$ is a subset, a Hodge structure on $V$ has type $A$ if $h^{p,q}=0, ~(p,q)\notin A$. 

For example, a pure Hodge structure of type $\{(-1,0), (0,-1)\}$ is just a complex structure on $V$.

Morphisms of Hodge structures are $\Q$-linear maps $V\rightarrow V'$ preserving the weight and Hodge filtrations. With morphisms as arrows and Hodge-structured vector spaces as objects, the category of mixed rational Hodge structures is abelian. 
Given rational mixed Hodge structures on $V$,  $V_1$, and $V_2$, we have induced structures on $V_1\oplus V_2$, $V_1\otimes_\Q V_2$ and $V^{\vee}$ as follows.
\begin{IEEEeqnarray*}{rCl}
(V_1\oplus V_2)^{p,q}      &  =  &V_1^{p,q}\oplus V_2^{p, q}\\
(V_1 \otimes_\Q V_2)^{p,q} &  =  & 
\bigoplus_{\begin{subarray}{c}(p_1, q_1)+(p_2, q_2)\\=(p,q) \end{subarray}} 
      V_1^{p_1, q_1} \otimes_\Q V_2^{p_2, q_2}\\
(V^{\vee})^{p,q}                 &  = & V^{-p,-q}
\end{IEEEeqnarray*}
Thus there are also induced Hodge structures on $\Hom(V_1, V_2)=V_1^\vee \otimes_\Q V_2$. 

A mixed Hodge structure on $V$ is said to split over $\R$ if there exists a decomposition $V_\C= \bigoplus_{p,q} V^{p,q}$ such that $\dps W_nV=\oplus_{p+q\leq n}V^{p,q}$, $\dps F^pV=\oplus_{p \leq p'} V^{p',q}$  and $\overbar{V^{p,q}}=V^{q,p}$. Thus a pure Hodge structure splits over $\R$ by definition. Although not every mixed Hodge structure splits over $\R$, we have the following 

\begin{subsecproposition} [{\cite[1.2]{Pink}}]
Let $V$ be a finite-dimensional vector space equipped with a rational mixed Hodge structure.  
\begin{enumerate}
\item \hspace{-1pt}There exists a decomposition $V_\C\,{=}\, \bigoplus_{p,q} V^{p,q}$ such that $ W_nV\hspace{-2.5pt}=\hspace{-1pt}\bigoplus_{p+q\leq n}\hspace{-2pt}V^{p,q}$ and $ F^pV=\bigoplus_{p \leq p'} V^{p',q}$. 

\item \hspace{-1pt}There exists a unique decomposition as in (a) which satisfies $ \overbar{V^{p,q}} \equiv V^{q,p}\, \mathrm{mod} (\bigoplus_{p'<p, q'<q} V^{p', q'})$.
\end{enumerate}
\end{subsecproposition}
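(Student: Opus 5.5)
The plan is to prove (a) by linear algebra on pairs of filtrations, and (b) by Deligne's explicit construction of the subspaces $I^{p,q}$, with uniqueness checked by induction on the weight.

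\emph{Part (a).} Work on $V_\C$ with the two filtrations $W_\bullet V_\C$ and $F^\bullet V$. Any two finite filtrations of a finite-dimensional vector space admit a common adapted basis. Concretely, take a basis of $V_\C$ adapted to the flag $W_\bullet$; then modify it inductively, refining each $W_n$-step so that it is adapted to the trace of $F^\bullet$ on $W_n$. This is the standard fact that two flags are in relative position given by a Bruhat cell. Such a basis produces subspaces $V^{p,n}$ with
\[
W_nV_\C=\bigoplus_{n'\le n,\,p}V^{p,n'}, \qquad F^pV=\bigoplus_{p'\ge p,\,n}V^{p',n}.
\]
Projecting $V^{p,n}$ isomorphically onto its image in $Gr_n V_\C$ identifies it with $F^pGr_n\cap(\text{a complement of }F^{p+1}Gr_n)$. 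Since $F$ induces a pure Hodge structure of weight $n$ on $Gr_n$, the image has type $(p,n-p)$. Setting $V^{p,q}\coloneq V^{p,p+q}$ gives (a).

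\emph{Part (b), existence.} I would use Deligne's formula
\[
I^{p,q}\coloneq F^p\cap W_{p+q}\cap\Bigl(\overbar{F^q}\cap W_{p+q}+\sum_{j\ge 1}\overbar{F^{q-j}}\cap W_{p+q-j-1}\Bigr).
\]
The steps are as follows.
\begin{enumerate}
\item Show that the projection $W_{n}\to Gr_n$ maps $\bigoplus_{p+q=n}I^{p,q}$ isomorphically onto $\bigoplus_{p+q=n}Gr_n^{p,q}$. Surjectivity comes from lifting $Gr_n^{p,q}=F^pGr_n\cap\overbar{F^q}Gr_n$. For injectivity and directness, the key input is purity: $F^p\cap\overbar{F^{n-p+1}}=0$ on $Gr_n$. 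One inducts downward on the weight, correcting a lift by elements of $\overbar{F^{q-j}}\cap W_{n-j-1}$.
\item Deduce that $W_n=\bigoplus_{p+q\le n}I^{p,q}$ and $F^p=\bigoplus_{p'\ge p}I^{p',q}$. The second equality follows by a dimension count against the Hodge numbers $h^{p,q}$.
\item Check the congruence $\overbar{I^{p,q}}\equiv I^{q,p}$ modulo $\bigoplus_{p'<p,\,q'<q}I^{p',q'}$. Conjugating the defining formula swaps the roles of $F$ and $\overbar F$. The discrepancy lies in $\overbar{F^{\cdot}}\cap W_{p+q-j-1}$ and $F^{\cdot}\cap W_{p+q-j-1}$ with both indices strictly smaller, and these are expressed via step 2.
\end{enumerate}

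\emph{Part (b), uniqueness.} Suppose $\{V^{p,q}\}$ and $\{V'^{p,q}\}$ both satisfy the congruence condition. Induct on $n$ to show that $V^{p,q}=V'^{p,q}$ for $p+q\le n$. Given the pieces of weight $<n$, any $v\in V^{p,q}$ can be written as $v'+u$ with $v'\in V'^{p,q}$ and $u\in W_{n-1}\cap F^p$, using compatibility with $F$. Conjugating and applying both congruences forces $u\in\overbar{F^q}$ modulo terms of lower bidegree. Purity on the lower graded pieces, applied iteratively, then forces $u=0$. Alternatively, one can show directly that any splitting with the congruence property satisfies $V^{p,q}\subset I^{p,q}$, and conclude by dimensions.

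\emph{Main obstacle.} I expect the main obstacle to be step 1 of the existence proof, namely the directness and isomorphism onto $Gr_n$ for Deligne's $I^{p,q}$. This is where the interplay between $\overbar F$ and lower weights must be controlled carefully. I would first try to prove it by writing an element of $F^p\cap\overbar{F^{q-j}}\cap W_{n-j-1}$ and applying purity on $Gr_{n-j-1}$ repeatedly, by descending induction on $j$.
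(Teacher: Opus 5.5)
Your part (a) and steps 1--2 of (b) are the standard Deligne route (the paper gives no argument of its own, only the citation to Pink). Step 3 and your uniqueness sketch, however, aim at a congruence that is false. The modulus $\bigoplus_{p'<p,\,q'<q}$, which you copied from the printed statement, has the inequalities on the wrong indices whenever $p\neq q$.

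Here is a counterexample. Take $V=\Q^4$ with basis $f_1,\dots,f_4$, and put $e_1=f_1+if_2$, $e_2=\overline{e_1}$, $e_3=f_3+if_4$, $e_4=\overline{e_3}$. Let $W_{-3}=W_{-2}=\langle f_3,f_4\rangle$, $W_{-1}=V$, and $F^1=0$, $F^0=\C e_1$, $F^{-1}=\langle e_1,e_3,e_2+e_4\rangle$, $F^{-2}=V_\C$. This is a rational mixed Hodge structure with $h^{0,-1}=h^{-1,0}=h^{-1,-2}=h^{-2,-1}=1$. Since $\dim V^{p,q}=h^{p,q}$, any splitting as in (a) has exactly these four types, so $V^{0,-1}=F^0$ and $V^{-1,0}\oplus V^{-1,-2}\subset F^{-1}$. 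For $(p,q)=(0,-1)$ the printed modulus is $V^{-1,-2}$, so the printed congruence would force $\overline{F^0}=\C e_2\subset F^{-1}$, which is false. Thus no splitting at all satisfies the printed condition here, and step 3 cannot be completed. Deligne's formula gives $I^{-1,0}=\C(e_2+e_4)$ and $I^{-2,-1}=\C e_4$, so $\overline{I^{0,-1}}=\C e_2\subset I^{-1,0}\oplus I^{-2,-1}$. The error term lives in $I^{-2,-1}$, not in $I^{-1,-2}$.

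The correct condition (Deligne; Cattani--Kaplan--Schmid) is $\overline{I^{p,q}}\equiv I^{q,p}$ modulo $L^{q,p}$, where $L^{a,b}$ denotes $\bigoplus_{r<a,\,s<b}I^{r,s}$. Equivalently, $I^{p,q}\equiv\overline{I^{q,p}}$ modulo $\bigoplus_{p'<p,\,q'<q}I^{p',q'}$, which is presumably what was intended. With this target your step 3 becomes concrete. Since the $I^{r,s}$ split $F$ and $W$ (your step 2), an index count gives $\sum_{j\ge1}F^{q-j}\cap W_{p+q-j-1}=L^{q,p}\oplus(F^q\cap W_{p+q-2})$. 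Hence
\[
\overline{I^{p,q}}=\overline{F^p}\cap W_{p+q}\cap\bigl(F^q\cap W_{p+q}+L^{q,p}\bigr),\qquad I^{q,p}=F^q\cap W_{p+q}\cap\bigl(\overline{F^p}\cap W_{p+q}+\overline{L^{p,q}}\bigr).
\]
By induction on $p+q$, together with $h^{r,s}=h^{s,r}$, one gets $\overline{L^{p,q}}=L^{q,p}$. Now write any $x\in\overline{I^{p,q}}$ as $x=y+\ell$ with $y\in F^q\cap W_{p+q}$ and $\ell\in L^{q,p}$. Then $y=x-\ell\in\overline{F^p}\cap W_{p+q}+\overline{L^{p,q}}$, so $y\in I^{q,p}$, which is the congruence.

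The same computation, run for an arbitrary splitting $\{V^{p,q}\}$ satisfying the corrected congruence, shows $V^{p,q}\subset I^{p,q}$. So your ``alternative'' uniqueness argument (inclusion plus a dimension count) goes through. Your ``purity applied iteratively'' version is too vague to check, and it is exactly where the choice of index set matters.
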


The above definitions are in the classical setting. Now we reformulate them in terms of representations of algebraic groups.

Let $\DS = \mathrm{Res}_{\C/\R}(\mathbb{G}_{m})$ be the Deligne torus. Then $\DS_\C \overset{\sim}{\rightarrow} \mathbb{G}_{m, \C} \times \mathbb{G}_{m, \C}$ and we can choose the isomorphism so that $\C^\times = \DS(\R) \rightarrow \DS(\C)\overset{\sim}{\rightarrow} \C^{\times} \times \C ^{\times}\colon  z \mapsto (z, \overbar{z})$. Complex conjugation acts on $\DS(\C)$ as $(z_1, z_2) \mapsto (\overbar{z_2}, \overbar{z_1})$. There are also the following frequently used morphisms of $\DS$:
\begin{IEEEeqnarray*}{rClll}
 \omega & \colon &\mathbb{G}_{m, \R}\rightarrow \DS, &~&z \mapsto (z^{-1}, z^{-1});\\
 \iota& \colon &S^1 \rightarrow \DS, &~&z \mapsto (z, z^{-1});\\
\mu_0 & \colon &   \mathbb{G}_{m,\C} \rightarrow \DS_\C, &~& z \mapsto (z, 1);\\
   N& \colon & \DS \rightarrow \mathbb{G}_{m, \R}, &~& (z_1, z_2) \mapsto z_1 z_2.
\end{IEEEeqnarray*}
 (Note that a morphism  over $\R$ is given in terms of its expression over $\C$. And one can easily check they are defined over $\R$.) By definition, we have a short exact sequence $1 \rightarrow S^1 \rightarrow \DS \overset{N}{\rightarrow } \mathbb{G}_{m,\R}\rightarrow 1$.

Since $\DS(\C)=\C^{\times}\times \C^{\times}$, a linear representation $h\colon  \DS_\C \rightarrow GL_{V,\C}$ is equivalent to a decomposition $\dps V_\C= \oplus_{p,q}V^{p,q}$, where $V^{p,q}$ is the eigenspace for the character $(z_1, z_2) \mapsto z_1^{p}z_2^{q}$ (and $GL_{V,\C}$ is the base change of $GL_V$ to $\C$). 

From now on, we will adopt the convention, to denote by $V^{p,q}$ the eigenspace for the character $(z_1, z_2) \mapsto z_1^{-p}z_2^{-q}$. Therefore through $h \circ \omega\colon \mathbb{G}_{m, \C} \rightarrow GL_{V,\C}$, the action on $V^{p,q}$ is through $t \mapsto t^{p+q}$. As before, $W^h_nV \coloneq \oplus_{p+q \leq n}V^{p,q}$ and $F^p_h V  \coloneq \oplus_{p \leq p'} V^{p',q}$. The superscript or subscript $h$ indicates the representation. We will often drop it if no confusion arises. 

Suppose the morphism $h\colon \DS_\C \rightarrow GL_{V,\C}$ defines a rational mixed Hodge structure on $V$. Since $\overbar{V^{p,q}}=V^{q,p}$ if and only if $h$ is defined over $\R$, the Hodge structure splits over $\R$ if and only if $h$ is defined over $\R$. And if $h\colon \DS \rightarrow GL_{V, \R}$ and $h'\colon \DS \rightarrow GL_{V', \R}$ induce pure Hodge structures on $V$ and $V'$ respectively, a morphism of pure Hodge structures is a $\Q$-linear map $V\rightarrow V' $ that is $\DS$-equivariant.

With the above perspective, let us make two more definitions. The \emph{Tate Hodge structure} $\Q(n)$ is defined to be the $\Q$-subspace $(2\pi i)^{n}\cdot \Q \subset \C$ with the pure Hodge structure of type $(-n, -n)$ on $\Q_\C=\C$. Suppose $h\colon \DS \rightarrow GL_{V, \R}$ defines a pure Hodge structure of weight $n$ on $V$. A polarization of the pure Hodge structure on $V$ is a morphism $\psi \colon V\otimes_\Q V  \rightarrow \Q(-n)$ of Hodge structures (of weight $2n$!) such that the pairing
\begin{IEEEeqnarray*}{rCccc}
(2\pi i)^n (\psi_\R)_{h(i)}&\colon & V_\R \times V_\R & \overset{\psi}{\longrightarrow} &\R(-n) \overset{(2\pi i)^n}{\longrightarrow} \R\\
&& (u,v)& \longmapsto& (2\pi i)^n \psi(u, h(i)v)
\end{IEEEeqnarray*}
is symmetric and positive definite. If $\psi \colon V\otimes_\Q V  \rightarrow \Q(-n)$ is known to be a morphism of Hodge structures, then $(2\pi i)^n (\psi_\R)_{h(i)}$ is symmetric if and only if (1) $n$ is even and $\psi$ is symmetric or (2) $n$ is odd and $\psi$ is alternating.

The next proposition provides some motivation for the definition of a mixed Shimura datum.

\begin{subsecproposition}[{\cite[1.4]{Pink}}]\label{subsecproposition: mixedhodge}
Let $P$ be a connected affine algebraic group over $\Q$ with unipotent radical $W$ and $\pi \colon P \rightarrow P/W \eqcolon G$. Suppose $h \colon \DS_\C \rightarrow P_\C$ is a morphism satisfying the following conditions.
\begin{enumerate}
\item $\pi \circ h \colon \DS_\C \rightarrow G_\C$ is defined over $\R$.
\item $\pi \circ h \circ \omega \colon \mathbb{G}_{m, \R} \rightarrow G_{\R}$ is a cocharacter defined over $\Q$ into the center of $G$.
\item In the decomposition induced on $\Lie P $ via the adjoint representation of $P$, $W^{\mathrm{Ad}_P\circ h}_{-1}(\Lie P)= \Lie W$.
\end{enumerate}

Then we have
\begin{enumerate}
\item For every $\Q$-representation $\rho \colon P \rightarrow GL_{V}$,  $\rho\circ h \colon \DS_\C \rightarrow P_\C \rightarrow GL_{V, \C}$ induces a rational mixed Hodge structure on $V$.
\item The weight filtration $\{ W^{\rho\circ h}_n V\}_{n\in \Z}$ is $P$-invariant.
\item For any $p \in P(\R)\cdot W(\C)$, the same conclusions as in (a) and (b) hold for the conjugate $\mathrm{int}(p) \circ h$. Moreover, $\mathrm{int}(p) \circ h$ induces the same weight filtration and Hodge numbers.
\end{enumerate}
\end{subsecproposition}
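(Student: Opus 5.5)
The statement is Proposition~\ref{subsecproposition: mixedhodge} (Pink's 1.4), so the plan is a reduction to the reductive quotient plus a weight-filtration argument.

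\emph{Step 1 (Hodge filtration and weight candidate).} Fix a $\Q$-representation $\rho\colon P\to GL_V$. The cocharacter $\rho\circ h$ of $\DS_\C$ gives a decomposition $V_\C=\bigoplus V^{p,q}$. I will take the Hodge filtration to be $F^p=\bigoplus_{p'\ge p}V^{p',q}$ and the weight filtration to be $W_n=\bigoplus_{p+q\le n}V^{p,q}$. The real issue is that $W_n$ must be defined over $\Q$ and the graded pieces must carry pure Hodge structures.

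To handle this, choose a Levi decomposition $P=W\rtimes L$ over $\Q$, so that $L\cong G$. The cocharacter $w\coloneq\pi\circ h\circ\omega$ is defined over $\Q$ and central in $G$, so it lifts to a $\Q$-cocharacter $\tilde w$ of $Z(L)$. Since $\tilde w$ is central in $L$, the $\tilde w$-eigenspace decomposition $V=\bigoplus V_n$ is defined over $\Q$.

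I then claim that $W_n V=\bigoplus_{m\le n}V_m$ and that this filtration is $P$-stable. Stability under $L$ is clear because $\tilde w$ is central in $L$. For $W$, condition (3) says $\Lie W$ has weights $\le -1$ under $\mathrm{Ad}\circ h\circ\omega$. One has to compare $h\circ\omega$ with $\tilde w$: they have the same image in $G$, so $h\circ\omega=\mathrm{int}(u)\circ\tilde w$ for some $u\in W(\C)$, because the maximal tori of $\pi^{-1}(\text{torus})$ are $W(\C)$-conjugate. Consequently $\Lie W$ has $\tilde w$-weights $\le -1$ as well, and $\mathrm{d}\rho(\Lie W)$ lowers weight. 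Hence the filtration is $W$-stable, and also stable under $u$, so it coincides with the $h$-weight filtration. This gives (b), rationality of $W_\bullet$, and $W^{\rho\circ h}=W^{\tilde w}$.

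\emph{Step 2 (purity of graded pieces).} On $Gr_n V$ the group $W$ acts trivially: it is unipotent and lowers weight, so it acts trivially on the associated graded. Therefore $\rho$ factors through $G$ on $Gr_nV$, and $Gr_nV$ inherits the representation $\pi\circ h$, which is defined over $\R$ by (1). The induced decomposition therefore satisfies $\overbar{V^{p,q}}=V^{q,p}$. By Step 1, $\pi\circ h\circ\omega$ acts on $Gr_n V$ by $t^{n}$, so only $p+q=n$ occurs. This is a pure Hodge structure of weight $n$, and $F$ induces it, which proves (a). The main obstacle is the conjugacy $h\circ\omega\sim\tilde w$ by $W(\C)$. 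For this I would use that $h(\DS_\C)$ is a torus in $P_\C$ mapping into a torus of $G_\C$, together with the conjugacy of Levi subgroups/maximal tori in $\pi^{-1}$ of that torus under $W(\C)$.

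\emph{Step 3 (conjugates).} Let $p\in P(\R)W(\C)$ and write $p=rw$. For $\mathrm{int}(p)\circ h$, conditions (1)–(3) persist. Condition (1) holds because $\pi(p)\in G(\R)$. Condition (2) holds because the cocharacter is central in $G$ and so unchanged. Condition (3) holds because $\mathrm{Ad}(p)$ preserves $\Lie W$ and conjugating the weight filtration by $\mathrm{Ad}(p)$ keeps it equal to $\mathrm{Ad}(p)\Lie W=\Lie W$. So (a) and (b) apply to $\mathrm{int}(p)\circ h$. Its weight filtration is $\rho(p)W_\bullet=W_\bullet$, by $P$-stability from (b) extended to $P(\C)$. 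The Hodge numbers are unchanged because $\rho(p)$ gives an isomorphism of the decompositions $V^{p,q}$.
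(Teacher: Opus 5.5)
Your proof is correct: the $\Q$-rational Levi lift $\tilde w$ of the central weight cocharacter, its $W(\C)$-conjugacy to $h\circ\omega$ (via conjugacy of maximal tori, or of Levi subgroups, in $P_\C$), the resulting negativity of the $\tilde w$-weights on $\Lie W$, and the reality of $\pi\circ h$ on the graded pieces together give (a)--(c) exactly as you describe. The paper offers no argument of its own for this statement beyond citing Pink 1.4, and your write-up is a complete, standard proof of it, so there is nothing further to compare.
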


\begin{subseccorollary} \label{subseccorollary: mixed hodge}
Let $P$, $W$ and $\pi \colon P\rightarrow P/W=G$ be as above. Suppose $\lambda \colon \mathbb{G}_{m, \C} \rightarrow P_\C$ is a cocharacter such that $\pi \circ \lambda$ maps into $Z(G)_\C$ and is defined over $\Q$. Then for any $\Q$-representation $\rho \colon P \rightarrow GL_V$, the weight filtration $\{W^{\rho\circ \lambda}_nV\}_{n\in\Z}$ (defined by $\mathbb{G}_{m,\C}\overset{\lambda}{\rightarrow}P_\C \overset{\rho_{\C}}{\rightarrow}GL_{V, \C}$) is defined over $\Q$ and stable under $P$.
\end{subseccorollary}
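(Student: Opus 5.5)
We deduce the corollary from Proposition \ref{subsecproposition: mixedhodge}, or more precisely from the mechanism behind its parts (a) and (b), applied to the cocharacter $\lambda$ in place of $h\circ\omega$. The weight filtration $W^{\rho\circ\lambda}_nV$ is the sum of the eigenspaces of $\rho\circ\lambda$ on $V_\C$ with weights $\le n$. This is a filtration of $V_\C$ only, so two things must be shown: that it descends to $\Q$, and that it is $P_\C$-stable (hence $P$-stable).

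\textbf{Step 1: reduce to a cocharacter defined over $\Q$.} Since $P$ is a connected linear group over a field of characteristic $0$, it has a Levi decomposition $P=W\rtimes L$ over $\Q$, with $L\cong G$. All Levi subgroups are conjugate under $W$, and this also holds after base change to $\C$. The image of $\lambda$ is a torus in $P_\C$, so it lies in some Levi subgroup $L'_\C$ of $P_\C$, and $L'_\C=\mathrm{int}(w)L_\C$ for some $w\in W(\C)$. Set $\lambda_0:=\mathrm{int}(w^{-1})\circ\lambda$, which maps into $L_\C$. Under the isomorphism $L\cong G$ we have $\lambda_0=\pi\circ\lambda_0=\pi\circ\lambda$, because $\pi$ kills $w$. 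Hence $\lambda_0$ is defined over $\Q$ and is central in $L$. Moreover,
\[
W^{\rho\circ\lambda}_n V_\C=\rho(w)\,W^{\rho\circ\lambda_0}_nV_\C .
\]

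\textbf{Step 2: the key claim.} We show that $W^{\rho\circ\lambda_0}_n$ is stable under $W_\C$. This gives $W^{\rho\circ\lambda}=W^{\rho\circ\lambda_0}$, and then:
\begin{itemize}
\item $W^{\rho\circ\lambda_0}$ is defined over $\Q$, since $\lambda_0$ is defined over $\Q$ and its eigenspace decomposition descends;
\item it is $L$-stable, since $\lambda_0$ is central in $L$ and so $L$ preserves each weight space;
\item it is $W$-stable, so it is $P=WL$-stable.
\end{itemize}

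\textbf{Step 3: prove the key claim (the main step).} Decompose $\Lie W_\C$ under $\mathrm{Ad}\circ\lambda_0$. We need every weight occurring in $\Lie W$ to be negative. Then each $X\in\Lie W$ of weight $k<0$ maps the $m$-weight space of $V$ into the $(m+k)$-weight space, so $X$ lowers the filtration. Exponentiating, $W(\C)$ preserves $W_n$.

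Negativity of the weights on $\Lie W$ is where the hypotheses enter. The corollary as stated does not include hypothesis (c) of Proposition \ref{subsecproposition: mixedhodge}, so I would interpret $\lambda$ as $h\circ\omega$ or a similar weight cocharacter, with the implicit condition $W^{\mathrm{Ad}\circ\lambda}_{-1}(\Lie P)=\Lie W$ as in that proposition. The equivalent formulation is that $\lambda_0$ acts on $\Lie W$ with negative weights, consistent with the mixed Hodge setting. With that condition, the claim follows immediately.

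If no such condition is intended, I would check whether it can be avoided. An argument available in general is this: the subspaces $W^{\lambda}_n$ obtained by $W(\C)$-conjugation form a family whose dependence on $w$ is algebraic. Their stability is genuinely needed, and it fails in general, for example for $\lambda$ trivial on $G$ acting with positive weight on $W$. So I would invoke the hypothesis of Proposition \ref{subsecproposition: mixedhodge}(c), transported via the conjugation in Step 1. This identification is the expected obstacle.
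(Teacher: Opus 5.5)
Your argument is essentially the one the paper invokes. The paper's proof says to rerun Pink's proof of 1.4 on the weight filtration alone, which amounts to your two steps. First, move $\lambda$ by some $w\in W(\C)$ into a $\Q$-Levi $L$, where it becomes the $\Q$-rational central cocharacter $\pi\circ\lambda$. Second, get $W$-stability from the weights of $\mathrm{Ad}\circ\lambda$ on $\Lie W$.

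You are also right that this last input is a hypothesis missing from the corollary as printed. The paper's one-line proof can only work by tacitly importing it from the third hypothesis of Proposition \ref{subsecproposition: mixedhodge} ($W_{-1}(\Lie P)=\Lie W$). Without it the statement is false. Take $P=\{\begin{bsmallmatrix} t & u\\ 0 & 1\end{bsmallmatrix}\}\subset GL_{2,\Q}$ (the group $P_0$ of Example \ref{subsecexample: mixed shimura data, simplest example}), $\lambda(t)=\mathrm{diag}(t,1)$, and the standard representation. Then $\pi\circ\lambda$ is the identity of $G=\mathbb{G}_{m}$, yet $W_0=\C e_2$ is not $P$-stable. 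Moreover, conjugating $\lambda$ by $\begin{bsmallmatrix} 1 & i\\ 0 & 1\end{bsmallmatrix}\in W(\C)$, which does not change $\pi\circ\lambda$, gives $W_0=\C\cdot(i,1)$, which is not defined over $\Q$.

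Your own sketched counterexample should have this shape. Read literally ($\pi\circ\lambda$ trivial), it is vacuous: $\lambda$ would land in the unipotent group $W_\C$ and hence be trivial, so it cannot have a positive weight on $\Lie W$.

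Strict negativity is also more than your Step 3 needs. An element of $\Lie W$ of weight $0$ preserves every eigenspace of $\rho\circ\lambda_0$, so it suffices that all weights of $\mathrm{Ad}\circ\lambda$ on $\Lie W_\C$ be $\le 0$. For faithful $\rho$ this is also necessary: a nonzero $X\in\Lie W$ of weight $k>0$ maps some $V_m$ nontrivially into $V_{m+k}$, and $V_{m+k}\cap W_m=0$.

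So the correct missing hypothesis is nonpositivity rather than $W_{-1}(\Lie P)=\Lie W$. This matters for the paper. The cocharacters $\alpha_x\circ h_\infty\circ\omega$ of Subsection \ref{subsec: rational boundary components} can have weight $0$ on the unipotent radical of $Q$ (see the remark after Lemma \ref{subseclemma: rational boundary components, P1 W1 U1}). Already for $V\rtimes GL_{2,\Q}$ with the Borel, the factor $V$ contributes a nonzero $(0,0)$-part.
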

\begin{proof}
This follows from the proof of Proposition 1.4 in \cite{Pink}, by applying the same argument only to the weight filtration.
\end{proof}

\begin{subsecremark}
In fact, in the second condition of Proposition \ref{subsecproposition: mixedhodge}, we can replace $\Q$ with any subfield $K$ between $\Q$ and $\R$ and the conclusion becomes that every representation of $P_K \rightarrow GL_{V_K}$ induces a mixed Hodge structure on $V_K$ with weight filtration consisting of $K$-subspaces.

A similar remark holds for Corollary  \ref{subseccorollary: mixed hodge} as well.
\end{subsecremark}


          \subsection{Mixed Shimura data} \label{subsec: mixed shimura data}

In this subsection, we introduce the notion of a mixed Shimura datum. The number of axioms may seem somewhat overwhelming at first. However, they become less heavy once the reader recognizes that they can be roughly divided into two parts: the ones concerning the mixed Hodge structure and the ones concerning the algebraic group.

\begin{subsecdefinition}[mixed Shimura datum, {\cite[2.1]{Pink}}] \label{subsecdefinition: mixed shimura data, mixed shimura datum}
Let $P$ be a connected affine algebraic group over $\Q$ with unipotent radical $W$ and a subgroup $U \subset W$ that is normal in $P$. Denote by $\pi \colon P \rightarrow G\coloneq P/W$ and $\pi' \colon P \rightarrow P/U$ the canonical projections. A \emph{mixed Shimura datum} is a pair $(P, \mathcal{X})$ equipped with a map $h \colon \mathcal{X} \rightarrow \Hom(\DS_\C , P_\C)$ satisfying the following axioms.

\begin{enumerate}[label=(A\arabic*)]
\item $\mathcal{X}$ is a (left) $P(\R)\cdot U(\C)$-homogeneous space and the map $h$ is $P(\R)\cdot U(\C)$-equivariant with finite fibers. 
\end{enumerate}
For some (and hence for all) $x \in \mathcal{X}$,
\begin{enumerate}[label=(A\arabic*)] \setcounter{enumi}{1}
\item $\pi' \circ h_x \colon \DS_\C \rightarrow P_\C  \rightarrow (P/U)_\C$ is defined over $\R$.

\item $\pi \circ h_x \circ \omega \colon \mathbb{G}_{m, \R} \rightarrow G$ is a cocharacter into the center $Z(G)$ of $G$.

\item $\mathrm{Ad}_{P}\circ h_x \colon \DS_\C \rightarrow GL_{\Lie P, \C}$ defines a rational mixed Hodge structure on $\Lie P$ of type
\[
\{(-1,1), (0,0), (1,-1)\} \cup \{(-1,0), (0, -1)\}  \cup \{ (-1, -1) \}.
\]

\item The weight filtration on $\Lie P$ is given as follows: $W_{-2}(\Lie P)=\Lie U$, $W_{-1}(\Lie P)= \Lie W$ and $W_{0}(\Lie P)= \Lie P$.

\item $\mathrm{int}(\pi \circ h_x(i))$ induces a Cartan involution on $G^{\mathrm{ad}}_\R$.

\item $G^{\mathrm{ad}}$ has no simple $\Q$-factors of compact type.

\item The connected center $Z(G)^0$ of $G$ acts on $U$ and $V=W/U$ through a torus which is an almost direct product of a $\Q$-split torus with a $\Q$-torus of compact type.
\end{enumerate} 
\end{subsecdefinition}

\begin{subsecremark}A few clarifications on the definition. 

\begin{enumerate}[label=(\roman*)]
\item For a variety $X$ over $\Q$ (not assumed to be irreducible), we say $X$ is of compact type if and only if $X(\R)$ is compact in the Euclidean topology.

\item We will often denote the image of $h$ by $\mathcal{H}$. Thus $\mathcal{H}$ is a $P(\R)\cdot U(\C)$-conjugacy class of morphisms $\DS_\C \rightarrow P_\C$.

\item There is a canonical $P(\R)\cdot U(\C)$-invariant complex structure on $\mathcal{H}$ and $\mathcal{X}$ ({\cite[2.1(c)]{Pink}}).

\item The fourth axiom above implies Griffiths transversality. It guarantees that there is a variation of mixed Hodge structures over the image of $\mathcal{H}$ in a suitable Grassmannian. See {\cite[1.10]{Pink}}.

\item For the motivation behind axiom (8), we refer the reader to {\cite[1.13]{Pink}}. 

\item As we will see, $U$ is central in $W$. Thus the statement of (8) makes sense.

\item If $W=1$, $(P, \mathcal{X})$ is called a pure Shimura datum. The definition given in \cite{Deligne79} is the special case where $W=1$ and $\mathcal{X}\cong h(\mathcal{X})$. The definition here is slightly more general in that we allow finite coverings.

\item ({\cite[1.19]{Pink}}) For the quotient Shimura datum $(P, \mathcal{X})/Z(P)$ (defined in Proposition \ref{subsecproposition: mixed shimura data, properties}), stronger axioms hold: 
    \begin{itemize}
        \item In Axiom (A3), $\pi \circ h_x \circ \omega \colon \mathbb{G}_{m,\C} \rightarrow G_\C$ is a cocharacter into $Z(G)_\C$ that is defined over $\Q$. 
        \item In Axiom (A8),  the identity component $Z(G)^{0}$ is an almost direct product of a $\Q$-split torus with a $\Q$-torus of compact type.
     \end{itemize}

\end{enumerate}
\end{subsecremark}

Let $(P, \mathcal{X})$ and $(P', \mathcal{X}')$ be two mixed Shimura data. A morphism between them is a pair $(\varphi, \xi)$, where $\varphi \colon P \rightarrow P'$ is a morphism of algebraic groups over $\Q$ and $\xi \colon \mathcal{X} \rightarrow \mathcal{X}'$ is a $P(\R)\cdot U(\C)$-equivariant map. They are required to fit into the following commutative diagram. (The bottom map comes from $\Hom(\DS_\C, P_\C) \rightarrow \Hom(\DS_\C, P'_\C)$ induced by $\varphi$.)
\begin{equation*}
\begin{tikzcd}
\mathcal{X} \arrow[r,"\xi"] \arrow[d, "h"']     &  \mathcal{X'} \arrow[d, " h' "]\\
\mathcal{H} \arrow[r, "\varphi \circ -"]                        &  \mathcal{H'}
\end{tikzcd}
\end{equation*}

Here are a few basic examples.
\begin{subsecexample}
Let $T$ be a torus over $\Q$. A mixed Shimura datum $(T, \mathcal{X})$ consists of a finite set $ \mathcal{X}$ on which $T(\R)$ acts transitively and a morphism $h \colon  \DS \rightarrow T_\R$, where $\mathcal{X} \rightarrow \{h\}$ is the constant map. 
\end{subsecexample}

\begin{subsecexample}
For a more specific example on torus, consider the group $\mathbb{G}_{m, \Q}$ together with the norm morphism $N \colon \DS \rightarrow \mathbb{G}_{m, \R},  z \mapsto z\overbar{z} $. Define $\mathcal{H}_0$ to be the set of isomorphisms $\Z \overset{\sim}{\rightarrow} 2\pi i \Z$. So $\mathcal{H}_0$ contains two points $n \mapsto \pm 2 \pi i n$. We let $\mathbb{G}_{m}(\R)=\R^\times$ act on $\mathcal{H}_0$ via $\R^\times \rightarrow \pi_0(\R^\times)=\{\pm 1\}$, where $-1$ acts on $\mathcal{H}_0$ by interchanging the two points. With the constant projection map $\mathcal{H}_0 \rightarrow \{N\}$, $(\mathbb{G}_{m, \Q}, \mathcal{H}_0)$ is a pure Shimura datum.
\end{subsecexample}

\begin{subsecexample} \label{subsecexample: mixed shimura data, siegel modular variety}
Another classical example comes from the Siegel modular variety. Fix $\psi \colon V \times V \rightarrow \Q$, a non-degenerate alternating bilinear form on a $\Q$-vector space $V$ of dimension $2n>0$. Define $CSp_V$ to be the $\Q$-subgroup of $GL_V$ consisting of linear automorphisms that preserve $\psi$ up to a scalar, i.e. $CSp_V(R)=\{g \in GL_V(R) \,\lvert \,\psi(g\cdot v, g\cdot v')=\mu(g)\psi(v, v')\text{ for some } \mu(g) \in R^\times \}$ for all (small) $\Q$-algebras $R$. We get a scalar character $\mu \colon CSp_V \rightarrow \mathbb{G}_{m, \Q}$ and a short exact sequence $1 \rightarrow Sp_V \rightarrow CSp_V \overset{\mu}{\rightarrow}\mathbb{G}_{m, \Q}\rightarrow 1$, where $Sp_V \cong Sp_{2n}$. 

Let $\mathcal{H}_V$ be the set of morphisms $h\colon \DS \rightarrow GL_{V, \R}$ with the following properties.
\begin{enumerate}
\item $h$ induces a Hodge structure of type $\{(-1,0), (0,-1)\}$ on $V_\C$.
\item $\psi_\R (h(i)u, h(i)v) =\psi_\R(u,v)$ for all $u, v\in V_\R$. 

(This is equivalent to $\psi_\R (h(z)u, h(z)v) =z\overbar{z}\,\psi_\R(u,v)$. Therefore, $\mu \circ h=N$ for all such $h$.)
\item The (symmetric) bilinear form $(u,v) \mapsto \psi_\R(u, h(i)v)$ is either positive or negative definite.
\end{enumerate} 
By construction, every $h\in \mathcal{H}_V$ factors through $CSp_V$. It is well known that $CSp_V$ is a connected reductive algebraic group and $CSp_V(\R)$ acts transitively on $\mathcal{H}_V$. ($Sp_V(\R)$ is connected and acts transitively on either of the two connected components of $\mathcal{H}_V$.) Then $(CSp_V, \mathcal{H}_V)$ is a pure Shimura datum. If $V=\Q^{2n}$ with $\psi $ given by the matrix $\begin{bmatrix}0 & I_n\\ -I_n & 0\end{bmatrix}$, we will write $(CSp_{2n}, \mathcal{H}_{2n})$. 
\end{subsecexample}

\begin{subsecexample}
There is a morphism $(CSp_V, \mathcal{H}_V) \rightarrow (\mathbb{G}_{m, \Q}, \mathcal{H}_0)$. For any $h\in \mathcal{H}_V$, there is a unique $\lambda \in \mathcal{H}_0$ such that $(u, v) \mapsto \lambda \circ \psi_\R(u, h(i)v)$ is a polarization of the Hodge structure on $V$ defined by $h$. (If $\psi_\R(-, h(i)\cdot -)$ is positive  definite, take $\lambda \colon 1 \mapsto 2\pi i $; if it is negative definite, take $\lambda \colon 1\mapsto -2\pi i$.) This way we get a diagram
\begin{equation*}
\begin{tikzcd}
\mathcal{H}_V \arrow[r] \arrow[d,"\wr", "\mathrm{id}" ' ]  &  \mathcal{H}_0 \arrow[d]\\
\mathcal{H}_V \arrow[r]                      &  \{N\}
\end{tikzcd}
\end{equation*}
compatible with the action of $CSp_V(\R) \rightarrow  \mathbb{G}_{m,\R}(\R)$. 
\end{subsecexample}

We summarize some consequences of the axioms.

\begin{subsecproposition}[{\cite[2.4, 2.5, 2.9, 2.12]{Pink}}]\label{subsecproposition: mixed shimura data, properties}~

\begin{enumerate}
\item Every connected component $\mathcal{X}^0 \subset \mathcal{X}$ is mapped isomorphically onto a connected component $\mathcal{H}^0=h(\mathcal{X}^0)\subset \mathcal{H}$.

\item For any morphism $(\varphi, \xi)$ of mixed Shimura data, $\xi$ is a holomorphic map. If both $\varphi$ and $\xi$ are injective (whence we call $(\varphi, \xi)$ an embedding), then $\xi$ is a closed embedding.

\item We can define the product $(P, \mathcal{X}) \times (P', \mathcal{X}')=(P \times P', \mathcal{X} \times \mathcal{X}')$ with $h\times h' \colon \mathcal{X} \times \mathcal{X}' \rightarrow \mathcal{H} \times \mathcal{H}' $ having the desired universal property and projection maps.

\item Let $N\trianglelefteq P$ be a normal $\Q$-subgroup. There exists a quotient mixed Shimura datum $(P, \mathcal{X})/N=(P/N, \mathcal{X}/N)$ with a projection morphism $(\pi, \zeta) \colon (P, \mathcal{X}) \longrightarrow (P, \mathcal{X})/N$. It is defined up to isomorphism with the universal property that, if in a given morphism $(\varphi, \xi) \colon (P, \mathcal{X}) \rightarrow (P', \mathcal{X}')$, $\varphi$ factors through $P/N$, then $(\varphi, \xi)$ factors uniquely through $(P, \mathcal{X})/N$. In other words, we have the diagram below.
\begin{equation*}
\begin{tikzcd}[column sep=small]
                                  &(P,\mathcal{X}) \arrow[dr, "{(\varphi,\xi)}"] \arrow[dl, "{(\pi, \zeta)}"'] & \\
(P, \mathcal{X})/N    \arrow[rr, "\exists!"]           &        & (P', \mathcal{X}')
\end{tikzcd}
\end{equation*}

Let $P_1=P/N $ and $U_1$ be the image of $U$ under $\pi \colon P \rightarrow P_1$. Fix one $x\in\mathcal{X}$. Then $\dps \mathcal{X}/N$ is defined as $\dps \frac{ P_1(\R) \cdot U_1(\C) }{\pi(\Stab_{P(\R)\cdot U(\C)}(x))}$, where $\mathcal{H}/N$ is the $P_1(\R)\cdot U_1(\C)$-conjugacy class of $\pi \circ  h_x$.

\end{enumerate}
\end{subsecproposition}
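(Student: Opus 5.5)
The statement is Proposition \ref{subsecproposition: mixed shimura data, properties}, quoted from \cite[2.4, 2.5, 2.9, 2.12]{Pink}. The plan is to verify each item directly from the axioms (A1)--(A8), following Pink's arguments.

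For (a), I would first show that the stabilizer in $P(\R)\cdot U(\C)$ of a point $h_x\in\mathcal{H}$ is a real Lie subgroup. I would then show that $\mathcal{H}$, with its natural complex structure, is a finite disjoint union of homogeneous complex manifolds. Since $h$ is equivariant with finite fibers, it is a covering map $\mathcal{X}\to\mathcal{H}$. Next I would show that each connected component $\mathcal{H}^0$ is simply connected. The route is the fibration $\mathcal{H}^0\to\mathcal{H}^0/W$, whose fibers are $W(\R)U(\C)$-orbits; these are contractible, being homeomorphic to a vector space via $\exp$. The base is a Hermitian symmetric domain by (A6), so it is contractible as well. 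A covering of a simply connected space restricts to an isomorphism on each component, which gives (a).

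For (b), holomorphy of $\xi$ reduces via (a) to holomorphy of the map $\mathcal{H}\to\mathcal{H}'$, $h\mapsto\varphi\circ h$. That map is equivariant and induced by the algebraic map of flag varieties, namely the Hodge filtrations on faithful representations. Holomorphy then holds because the complex structures on both sides come from the embeddings into the respective compact duals. For closedness when $\varphi$ and $\xi$ are injective, I would compare orbits. The image of $\mathcal{X}^0$ is an orbit of $P(\R)U(\C)$ inside $\mathcal{X}'^0$. Properness follows because the Cartan-involution axiom (A6) forces stabilizers to be compact modulo center. This is the standard argument that a totally geodesic embedding of symmetric domains is closed, extended along the unipotent fibers.

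Item (c) is formal. Check each axiom on $P\times P'$: its unipotent radical is $W\times W'$, the Hodge types are additive on $\Lie(P\times P')=\Lie P\oplus\Lie P'$, and the universal property is immediate.

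Item (d) is the main obstacle. Define $\mathcal{X}/N$ as in the statement, with $h$ given by $\pi\circ h_x$, and verify (A1)--(A8) for $(P_1,\mathcal{X}/N)$. Most axioms pass to quotients: a quotient of a mixed Hodge structure on $\Lie P$ is a mixed Hodge structure, the weight filtration maps onto the weight filtration of $\Lie P_1$, and $U_1,W_1$ are the images of $U,W$. The delicate points are (A6) and (A7) on $G_1^{\mathrm{ad}}$, which is a quotient of $G^{\mathrm{ad}}$ by a product of simple factors and so inherits both properties. The finiteness of fibers in (A1) also needs checking: I would show that $\pi(\Stab(x))$ has finite index in the stabilizer of $\pi\circ h_x$ by comparing Lie algebras. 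Finally, for the universal property, given $(\varphi,\xi)$ with $\varphi=\bar\varphi\circ\pi$, define the induced map $\mathcal{X}/N\to\mathcal{X}'$ by sending $g\cdot[x]\mapsto \bar\varphi(g)\cdot\xi(x)$. This is well defined because $\pi(\Stab(x))$ maps into $\Stab(\xi(x))$, and it is unique by transitivity.
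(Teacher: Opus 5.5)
The paper gives no proof of this proposition. It is quoted from \cite[2.4, 2.5, 2.9, 2.12]{Pink} and used as a black box, so there is no in-paper route to compare with. Your outline is a sound reconstruction of the standard arguments and I see no genuine gap, but several steps need more than you indicate.

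In (a), the claim that the fibres of $\mathcal{H}\to\mathcal{H}_G$ are single, free $W(\R)U(\C)$-orbits is exactly the statement that $\Cent_{P(\R)U(\C)}(h_x)\to\Cent_{G(\R)}(\pi\circ h_x)$ is bijective. This is Pink 1.17, the same fact the paper uses in its centralizer lemma. It follows from three things:
\begin{enumerate}
\item the Levi subgroup $\Cent_{P_\C}(h_x\circ\omega)$;
\item the fact that $\overline{h_x}$ is a $U(\C)$-conjugate of $h_x$;
\item the vanishing $(\Lie W)^{0,0}=0$.
\end{enumerate}
Also, $h$ is a covering only once you note that $\Stab(x)$ contains the identity component of $\Cent(h_x)$. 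This holds because $\Stab(x)$ has finite index there and connected Lie groups have no proper finite-index subgroups.

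In (b), the justification ``stabilizers are compact modulo center'' is false in the mixed setting. In $(V\rtimes GL_{2,\Q},\mY_2)$ the stabilizer of a point on the zero section is $\Cent_{GL_2(\R)}(h)\cong\C^\times$. It contains the non-compact real scalars, while $Z(V\rtimes GL_2)$ is trivial.

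The compactness argument (a closed subgroup times a subgroup compact modulo $Z(G')(\R)$ is closed) only works on the pure level, for $\mathcal{H}_G\to\mathcal{H}_{G'}$. There $G\to G'$ is injective, because $\varphi^{-1}(W')$ is a normal unipotent subgroup of $P$ and hence contained in $W$. Closedness of $\xi$ then follows by equivariance from the pure case together with the fact that $W(\R)U(\C)\to W'(\R)U'(\C)$ is a closed embedding. That is what ``extended along the unipotent fibers'' has to mean, and it should be said explicitly.

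In (d), (A6) and (A7) are the easy axioms. The points that actually need an argument are these:
\begin{enumerate}
\item (A8) uses that the connected center of $G/\bar N$ ($\bar N$ the image of $N$ in $G$) is the image of $Z(G)^0$. This holds because the quotient of $G/\bar N$ by that image is semisimple.
\item The finite-index claim in (A1) needs more than the Lie algebra comparison, which only gives that $\pi(\Stab(x))$ is open in $\Cent(\pi\circ h_x)$. You also need that this centralizer, being the real points of an algebraic group, has finitely many components.
\end{enumerate}
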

 
A mixed Shimura datum $(P, \mathcal{X})$ is called \emph{irreducible} if $P$ is the smallest $\Q$-normal subgroup containing the image of all $h_x$. (Again, it suffices to check this for a single $h_x$.) Given an arbitrary $(P, \mathcal{X})$, Let $P_1$ be the smallest $\Q$-normal subgroup of $P$ through which some $h_x$ factors ($P_1$ thus must be connected!). Since $h_x(\bG_{m,\C})$ acts trivially on $P/P_1$, we must have $W\subset P_1$ by (A5). Then one checks that (see {\cite[2.13]{Pink}}) that the $P_1(\R) \cdot U(\C)$-orbit of $x\in\mathcal{X}$ is a union of connected components of $\mathcal{X}$. (We just have to show that $P_1(\R)\cdot U(\C)\cdot \Stab_{P(\R)\cdot U(\C)}(x)$ is a finite-index subgroup of $P(\R)\cdot U(\C)$.) Denote by $\mathcal{X}_1$ this orbit. Then $(P_1, \mathcal{X}_1) \rightarrow (P, \mathcal{X})$ is an embedding. Dividing $\mathcal{X}$ into a disjoint union of $P_1(\R) \cdot U(\C)$-orbits, we see that every mixed Shimura datum $(P, \mathcal{X})$ is canonically a disjoint union of irreducible ones.

Irreducible mixed Shimura data have the following nice property.

\begin{subsecproposition}[{\cite[2.14]{Pink}}]\label{subsecproposition: irreducible data}
Let  $(P, \mathcal{X})$ be an irreducible mixed Shimura datum. Then the representation $P \rightarrow GL_{U}$ factors through the scalar cocharacter $\mathbb{G}_{m, \Q} \rightarrowtail GL_U$. That is, $P$ acts on $U$ through scalars.
\end{subsecproposition}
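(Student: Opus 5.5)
The statement to prove is Proposition \ref{subsecproposition: irreducible data}: for an irreducible mixed Shimura datum, $P$ acts on $U$ through scalars. I would argue via the Hodge-theoretic description of $\Lie U$ combined with a minimality argument on normal subgroups.

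Fix $x\in\mathcal{X}$ and consider the adjoint action of $P$ on $\Lie U$. By (A4) and (A5), $\Lie U=W_{-2}(\Lie P)$ has pure type $(-1,-1)$. Hence $\mathrm{Ad}\circ h_x$ acts on $\Lie U_\C$ through the character $(z_1,z_2)\mapsto z_1z_2$; in particular it acts by scalars. Since $U$ is a vector group in characteristic $0$, the conjugation action of $P$ on $U$ is determined by its action on $\Lie U$. So it suffices to show that the image of $P$ in $GL_{\Lie U}$ lies in the scalars $\mathbb{G}_m$.

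The key step is to construct a small normal subgroup through which $h_x$ factors. Define $N$ as the kernel of the composite
\[
P\to GL_{\Lie U}\to GL_{\Lie U}/\mathbb{G}_{m}=PGL_{\Lie U}.
\]
Equivalently, $N$ is the preimage of the scalar torus under $\mathrm{Ad}|_{\Lie U}$. It is a normal $\Q$-subgroup of $P$, because the scalar subgroup is central and defined over $\Q$, and the representation on $\Lie U$ is defined over $\Q$ since $U$ is a $\Q$-subgroup normal in $P$. By the previous paragraph, $h_x(\DS_\C)\subset N_\C$. Likewise, every $h_{x'}$ with $x'\in\mathcal{X}$ lands in $N_\C$: each $h_{x'}$ is a $P(\R)U(\C)$-conjugate of $h_x$, and $N$ is normal.

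Next I use irreducibility, which requires $P$ to be the smallest normal $\Q$-subgroup containing all the images of the $h_x$. I expect the main subtlety here: $N$ need not be connected, and the definition speaks of the smallest normal $\Q$-subgroup. So I would pass to the identity component $N^0$, which is again normal and defined over $\Q$. The image $h_x(\DS_\C)$ is connected, hence lies in $N^0_\C$. Minimality then forces $N^0\supseteq P$, so $N=P$, since $P$ is connected. Therefore $P$ maps into the scalars of $GL_{\Lie U}$, and hence $P$ acts on $U$ through the scalar cocharacter $\mathbb{G}_{m,\Q}\rightarrowtail GL_U$.

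The one point I would double-check is that the definition of irreducibility is phrased with $\Q$-subgroups defined by the images over $\C$. Concretely, I must confirm that "containing the image of $h_x$" means $h_x$ factors through $N_\C$. This holds as shown above.
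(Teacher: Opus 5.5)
Your proposal is correct and is essentially the standard argument behind Pink's 2.14, which the paper cites without reproducing. Since $W_{-2}(\Lie P)=\Lie U$ has pure type $(-1,-1)$, each $h_x$ acts on $\Lie U_\C$ through the single character $(z_1,z_2)\mapsto z_1z_2$. Hence $h_x$ factors through the normal $\Q$-subgroup of elements acting on $U$ by scalars, and irreducibility forces that subgroup to be all of $P$. Your extra care about connectedness is harmless but not really needed: as the paper notes, the smallest normal $\Q$-subgroup through which $h_x$ factors is automatically connected.
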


We can also extract some useful information about the structure of the unipotent radical $W$. As $\Lie U$ and $\Lie W$ are of weight $-2$ and $<0$, respectively, $[\Lie U, \Lie W]=0$. Similarly, the derived group $W^{\mathrm{der}}\subset U$. It follows that $U$ is central in $W$ and $V$ is abelian. Moreover, we have a well-defined commutator pairing
\begin{IEEEeqnarray*}{rrCl}
\Psi \colon &V \times V &\longrightarrow& U\\
&(v, v')&\longmapsto& [v, v'].
\end{IEEEeqnarray*}
Here (in terms of $\Q$-points) $[v, v']=[w, w']$ for any $w, w'\in W(\Q)$ lifting $v, v'\in V(\Q)$. This pairing determines the group structure on W as follows: Since we are over characteristic $0$, $W$ is isomorphic to $U\times V$ as a scheme, and $(u,v) \cdot (u', v') \coloneq (u+u'+\frac{1}{2}\Psi(v, v'), v+v')$ defines the group structure on $U\times V$. Conversely, any such $\Psi$ determines a unipotent extension $1 \rightarrow U \rightarrow W \rightarrow V \rightarrow 1$ with $\Psi$ as its commutator pairing.\\

The last construction to be introduced is the unipotent extension. Let $(P, \mathcal{X})$ be a mixed Shimura datum. Suppose we have a short exact sequence $1\rightarrow W_0 \rightarrow P_1 \rightarrow P \rightarrow 1$ with $W_0$ a unipotent group. We seek to extend $(P, \mathcal{X})$ to $(P_1, \mathcal{X}_1)$ such that $(P_1, \mathcal{X}_1)/W_0 \rightarrow (P, \mathcal{X})$ is an isomorphism. 

Let $V_0$ be a subquotient of $W_0$ such that $\Lie V_0$ is irreducible as a $P_1$-representation. Then $V_0$ must be abelian as $\Lie \left ( V_0^{\mathrm{der}}\right )$ is a $P_1$-invariant subspace (not equal to $\Lie V_0$ as $V_0$ is unipotent, hence solvable). Therefore, the action of $P_1$ factors through $P$ and then further through $G$, as $W\subset P$ must act trivially on $V_0$. With this observation, we state 

\begin{subsecproposition}[{\cite[2.17]{Pink}}]\label{subsecproposition: mixed shimura data, unipotent extension}
 Let $(P, \mathcal{X})$, $P_1$, $W_0$ be as above. Suppose that for every irreducible subquotient $\Lie V_0$ of $\Lie W_0$,
\begin{enumerate}
\item  The Hodge structure on $\Lie V_0$ as a representation of $G$ induced by $\pi\circ h_x$ is of type $\{(-1,-1)\}$ or $\{(-1,0), (0,-1)\}$.
\item The connected center of $G$ acts on $V_0$  through a torus that is an almost direct product of a $\Q$-split torus with a $\Q$-torus of compact type.
\end{enumerate}
Then 

\begin{enumerate}
\item the projection map $P_1\rightarrow P$ extends to a morphism of mixed Shimura data $(P_1, \mathcal{X}_1) \rightarrow (P,\mathcal{X})$ such that $(P_1, \mathcal{X}_1)/W_0\rightarrow(P, \mathcal{X})$ is an isomorphism. The $(P_1, \mathcal{X}_1)$ is defined up to canonical isomorphism.

\item Given a morphism of mixed Shimura data $(\varphi, \xi) \colon (P', \mathcal{X}') \rightarrow (P, \mathcal{X})$, if $\varphi \colon P' \rightarrow P $ factors through $P_1$, then $(\varphi, \xi)$ factors uniquely through $(P_1, \mathcal{X}_1)$.
\end{enumerate}
Furthermore, for any unipotent extension $(P_1, \mathcal{X}_1) \rightarrow (P,\mathcal{X})$, the diagram 
\begin{equation*}
\begin{tikzcd}
\mathcal{X}_1 \arrow[r]\arrow[d,"h_1"']    &    \mathcal{X}\arrow[d, "h"]\\
\mathcal{H}_1 \arrow[r]                                    &     \mathcal{H}
\end{tikzcd}
\end{equation*}
is a pullback.
\end{subsecproposition}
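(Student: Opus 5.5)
The plan is to construct $\mathcal{X}_1$ by lifting the morphisms $h_x$ from $P_\C$ to $P_{1,\C}$. I would first build the space $\mathcal{H}_1$ of admissible lifts and then define $\mathcal{X}_1\coloneq \mathcal{X}\times_{\mathcal{H}}\mathcal{H}_1$. With this definition the final pullback assertion holds by construction, and the real work is to show that $\mathcal{H}_1$ is a single homogeneous space satisfying the axioms of Definition \ref{subsecdefinition: mixed shimura data, mixed shimura datum}. Let $U_1\subset W_1$ be the relevant subgroups of $P_1$: $W_1$ is the preimage of $W$, which is the unipotent radical of $P_1$, and $U_1$ is generated by the preimage of $U$ together with the weight $-2$ part of $W_0$. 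I will check below that this $U_1$ is normal and is the correct group.

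\emph{Step 1 (existence of lifts).} Fix $x\in\mathcal{X}$. The preimage in $P_{1,\C}$ of the torus $h_x(\DS_\C)$ is an extension of a torus by the unipotent group $W_{0,\C}$. In characteristic $0$ such an extension splits (Levi--Mostow), and any two splittings are conjugate under $W_0(\C)$. This gives lifts $h_1\colon\DS_\C\to P_{1,\C}$ of $h_x$, and the set of all lifts is a single $W_0(\C)$-orbit.

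\emph{Step 2 (reality and the axioms).} Among the lifts, I would single out those for which $\pi_1'\circ h_1$ is defined over $\R$, where $\pi_1'\colon P_1\to P_1/U_1$; this is axiom (A2). To produce one, I would use the canonical splitting of the mixed Hodge structure (Proposition 1.2 of \cite{Pink}, quoted in Subsection \ref{subsec: mixed hodge structures}), applied to $\Lie P_1$ modulo weight $-2$. In that quotient the weight $-1$ part has type $\{(-1,0),(0,-1)\}$, so the splitting is actually real.

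Next I would verify the remaining axioms.
\begin{itemize}
\item (A4), (A5): $\Lie P_1$ is filtered by $\Lie W_0$ and $\Lie P$. The graded pieces of $\Lie W_0$ are irreducible $G$-representations of type $\{(-1,-1)\}$ or $\{(-1,0),(0,-1)\}$ by hypothesis (a). Hence the mixed Hodge structure has the required type, with $W_{-1}=\Lie W_1$ and $W_{-2}=\Lie U_1$. Corollary \ref{subseccorollary: mixed hodge} shows these filtrations are defined over $\Q$ and $P_1$-stable, so $U_1$ is a normal $\Q$-subgroup.
\item (A8): this follows from hypothesis (b).
\item (A3), (A6), (A7): these concern only $G=P_1/W_1=P/W$ and are inherited from $(P,\mathcal{X})$.
\end{itemize}

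\emph{Step 3 (homogeneity and uniqueness).} I would let $\mathcal{H}_1$ be the set of all real-admissible lifts of all $h_{x'}$, $x'\in\mathcal{X}$, and show it is one $P_1(\R)\cdot U_1(\C)$-orbit. Since $P_1(\R)\cdot U_1(\C)$ surjects onto $P(\R)\cdot U(\C)$, it suffices to treat the fiber over a fixed $h_x$. By Step 1 the lifts of $h_x$ form a $W_0(\C)$-orbit. The reality condition modulo $U_1$ should cut this orbit down to a single orbit under $W_0(\R)\cdot(W_0\cap U_1)(\C)$; I would prove this by a weight-by-weight computation in $\Lie W_0$ using the uniqueness part of the splitting proposition.

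I expect Step 3 to be the main obstacle, because this is where the Hodge-type hypothesis (a) is really used. Once it is done, the finiteness of the fibers of $\mathcal{X}_1\to\mathcal{H}_1$ is inherited from $\mathcal{X}\to\mathcal{H}$. Uniqueness of $(P_1,\mathcal{X}_1)$ up to canonical isomorphism also follows: any unipotent extension with quotient $(P,\mathcal{X})$ has its $\mathcal{H}_1$ contained in the orbit just described, hence equal to it.

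\emph{Step 4 (universal property).} Let $(\varphi,\xi)\colon(P',\mathcal{X}')\to(P,\mathcal{X})$ be a morphism with $\varphi$ factoring as $P'\xrightarrow{\varphi_1}P_1\to P$. For $x'\in\mathcal{X}'$, the morphism $\varphi_1\circ h_{x'}$ is a lift of $h_{\xi(x')}$. It satisfies (A2) because $\varphi_1$ maps $U'$ into $U_1$ (a weight argument), so it lies in $\mathcal{H}_1$ by Step 3. Pairing it with $\xi(x')$ defines the required equivariant map $\mathcal{X}'\to\mathcal{X}\times_{\mathcal{H}}\mathcal{H}_1=\mathcal{X}_1$. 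This map is unique, since a point of the fiber product is determined by its two projections.
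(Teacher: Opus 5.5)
Your construction is the right one: take $\mathcal{H}_1$ to be the lifts $h_1$ of the $h_x$ with $\pi_1'\circ h_1$ defined over $\R$, and set $\mathcal{X}_1:=\mathcal{X}\times_{\mathcal{H}}\mathcal{H}_1$. However, your uniqueness argument and the last assertion (the square is a pullback for \emph{any} unipotent extension) have a gap.

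``By construction'' only covers your own $\mathcal{X}_1$. For another unipotent extension $(P_1,\mathcal{X}_1')$, showing that its homomorphisms lie in $\mathcal{H}_1$ only gives surjectivity of the equivariant map $\mathcal{X}_1'\to\mathcal{X}\times_{\mathcal{H}}\mathcal{H}_1$, by homogeneity. It does not exclude $\mathcal{X}_1'$ being a nontrivial finite covering of the fiber product.

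What is missing is that $W_0(\R)\cdot U_0(\C)$, with $U_0=W_0\cap U_1$, acts \emph{freely} on lifts, i.e.\ $\Cent_{W_0(\C)}(h_1)=1$. Since $\exp\colon\Lie W_0\to W_0$ is equivariant, this centralizer is $\exp$ of the $(0,0)$-part of $(\Lie W_0)_\C$ under $\mathrm{Ad}\circ h_1$, which vanishes by the Hodge-type hypothesis on the subquotients of $\Lie W_0$. With this, injectivity follows. Suppose $g\in P_1(\R)\cdot U_1(\C)$ carries $x_1'$ to $x_1''$ and both lie over the same $(x,h_1)$. The condition $(P_1,\mathcal{X}_1')/W_0\cong(P,\mathcal{X})$ means $\Stab(x_1')$ maps onto $\Stab(x)$. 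So some $s\in\Stab(x_1')$ has $s^{-1}g$ in the kernel $W_0(\R)\cdot U_0(\C)$ of $P_1(\R)\cdot U_1(\C)\to P(\R)\cdot U(\C)$. Since $s^{-1}g$ fixes $h_1$, it is $1$, and $x_1''=x_1'$.

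The same kind of centralizer argument settles your Step 3 directly, with no weight-by-weight computation. Suppose $h_1$ and $\mathrm{int}(w)\circ h_1$, $w\in W_0(\C)$, are both real-admissible, and let $\bar w$ be the image of $w$ in $(W_0/U_0)(\C)$. Applying complex conjugation shows that $\bar w^{-1}\overline{\bar w}$ centralizes the real homomorphism $\pi_1'\circ h_1$ in $(W_0/U_0)(\C)$. This centralizer is trivial because $\Lie(W_0/U_0)$ has only types $(-1,0),(0,-1)$. Hence $\bar w$ is real and $w\in W_0(\R)\cdot U_0(\C)$.

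There are three smaller corrections.
\begin{enumerate}
\item Your description of $U_1$ is wrong. The preimage of $U$ already contains all of $W_0$, so (A5) fails as soon as $\Lie W_0$ has a weight $-1$ part. You need $U_1:=\exp W_{-2}(\Lie P_1)$, which maps onto $U$ with kernel $U_0$.
\item Corollary \ref{subseccorollary: mixed hodge} assumes $\pi\circ h_x\circ\omega$ is defined over $\Q$, which (A3) does not provide. So rationality of $\Lie U_1$ needs an extra argument. For instance, the weight cocharacter acts by a single scalar on every $P_1$-stable $\Q$-irreducible subquotient of $\Lie P_1$, by (A4), (A5) and the Hodge-type hypothesis, and this makes the weight filtration Galois-stable.
\item A real-admissible lift is most easily obtained from a Levi decomposition over $\R$ of the preimage in $P_1/U_1$ of the real torus $\pi'\circ h_x(\DS)$, followed by lifting through the extension by $U_0$. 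The Deligne splitting on $\Lie P_1$ alone gives a grading, not a homomorphism into $P_1$.
\end{enumerate}
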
  

Now let $(\varphi, \xi) \colon (P_1, \mathcal{X}_1) \rightarrow (P,\mathcal{X})$ be a unipotent extension for $1\rightarrow W_0 \rightarrow P_1 \rightarrow P \rightarrow 1$. By (A5), it is not hard to deduce that the fiber $\xi^{-1}(x)$ over a point $x\in \mathcal{X}$ is an orbit under $W_0(\R)\cdot U_0(\C)$ where $U_0 \coloneq W_0\cap U_1\subset P_1$. Choose any $x_1\in \xi^{-1}(x)$. Then 
\[
\dps \Lie \left(W_0(\R)\cdot U_0(\C) \right)=(\Lie W_0)_\R+ (\Lie U_0)_\C \overset{\sim}{\rightarrow}\frac{ (\Lie W_0)_\C}{F_{h_{x_1}}^0(\Lie W_0)_\C} 
\] 
with $\{F_{h_{x_1}}^{p}(\Lie W_0)_\C \}_{p}$ the Hodge filtration on $\Lie W_0$ induced by $h_{x_1}$. When the short exact sequence splits, it can be shown that $\mathcal{X}_1 \rightarrow \mathcal{X}$ is isomorphic to a holomorphic complex vector bundle over $\mathcal{X}$.

Since $1 \rightarrow W \rightarrow P \rightarrow G\rightarrow 1$ splits, we obtain a structural result on $\mathcal{X}$.
\begin{subsecproposition}[{\cite[2.19]{Pink}}]
For a mixed Shimura datum $(P, \mathcal{X})$, any connected component $\mathcal{X}^0$ of $\mathcal{X}$ is a holomorphic vector bundle over a Hermitian symmetric domain $\mathcal{H}^0\subset \mathcal{H}$.
\end{subsecproposition}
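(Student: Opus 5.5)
The statement just above is Pink's structural result, \cite[2.19]{Pink}: every connected component $\mathcal{X}^0$ of $\mathcal{X}$ is a holomorphic vector bundle over a Hermitian symmetric domain $\mathcal{H}^0\subset\mathcal{H}$. My plan is to reduce to the pure quotient and then apply the unipotent-extension description twice, once for $U$ and once for $V=W/U$.

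\emph{Setting up the reduction.} Choose a Levi decomposition $P = W \rtimes G$ over $\Q$; this exists in characteristic $0$ and is the splitting of $1\to W\to P\to G\to 1$ quoted just before the statement. Consider the quotient morphism $(P,\mathcal{X})\to (P,\mathcal{X})/W=(G,\mathcal{X}/W)$ from Proposition \ref{subsecproposition: mixed shimura data, properties}. By the remark after Definition \ref{subsecdefinition: mixed shimura data, mixed shimura datum}, this quotient is a pure Shimura datum, possibly up to a finite covering. Axioms (A6) and (A7) then show that each connected component of $\mathcal{X}/W$ is a Hermitian symmetric domain. Part (a) of Proposition \ref{subsecproposition: mixed shimura data, properties} identifies such a component with a component of the corresponding conjugacy class in $\mathcal{H}/W$, and hence (via the splitting) with a Hermitian symmetric domain $\mathcal{H}^0$.

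\emph{Identifying the fibers.} View $(P,\mathcal{X})$ as a unipotent extension of $(G,\mathcal{X}/W)$ by $W_0=W$. By Proposition \ref{subsecproposition: mixed shimura data, unipotent extension}, the square relating $\mathcal{X},\mathcal{H}$ to their quotients is a pullback. By the discussion following that proposition, the fiber over a point $x$ is an orbit of $W(\R)\cdot U(\C)$. Its tangent space is $(\Lie W)_\C/F^0_{h_{x_1}}(\Lie W)_\C$, which by (A4) and (A5) equals $\Lie V_\R\oplus(\Lie U)_\C$, with complex structure coming from the Hodge types $(-1,0),(0,-1)$ and $(-1,-1)$. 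The stabilizer of a point in $W(\R)U(\C)$ is $\exp F^0$, which meets $W(\R)$ trivially on the relevant quotient. So each fiber is biholomorphic to the complex vector space $U(\C)\times V(\R)$, where $V(\R)$ carries the complex structure $J_x$ given by $h_x(i)$.

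\emph{Trivializing and linearizing.} Next I will use the Levi section $G\hookrightarrow P$. Fix $x_0\in\mathcal{X}^0$ with $h_{x_0}$ factoring through $G_\R$ after $W(\R)U(\C)$-conjugation; such an $x_0$ exists because $\pi'\circ h_x$ is defined over $\R$. Then the map
\[
\mathcal{X}^0 \longrightarrow \mathcal{H}^0,\qquad w\cdot g\cdot x_0\mapsto g\cdot\bar x_0,
\]
has fibers given by $G(\R)^+$-translates of the $W(\R)U(\C)$-orbit of $x_0$. The candidate bundle is the homogeneous bundle $G(\R)^+\times_{K_\infty}\bigl(W(\R)U(\C)/\Stab\bigr)$. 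Writing $W\cong U\times V$ via $\Psi$, I will use coordinates $(u,v)$ with $u\in U(\C)$ and $v\in V(\R)$. The group law carries the $\tfrac12\Psi$ twist, so the induced map on $U(\C)$ is affine rather than linear. The fix is to use the holomorphic coordinate $u - \tfrac{i}{2}\Psi(J_xv,v)$, or an analogous correction. Holomorphicity of this coordinate comes from the holomorphic dependence of the Hodge filtration $F^0_{h_x}$ on $x$, which is Griffiths transversality, remark (iv).

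\textbf{Main obstacle.} I expect the hardest step to be verifying that the total space, with its given complex structure, is holomorphically a vector bundle, and not merely a $C^\infty$ fibration with complex-linear fibers. Concretely, this means showing that the $\Psi$-twist can be absorbed by the holomorphic coordinate change above, and that the vector-space structures on the fibers $F^{-1}$-quotients vary holomorphically. For this I would first treat the two-step tower $\mathcal{X}^0\to\mathcal{X}^0/U\to\mathcal{H}^0$ separately. The top step is a $U(\C)$-torsor under translation, hence a vector bundle once a holomorphic section is chosen from the Levi splitting. The bottom step is the bundle whose fibers are $V(\R)$ with complex structure $J_x$, i.e. the holomorphic bundle $V_\C/F^0$. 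The remaining work is to check that the extension of these two bundles splits holomorphically, using the $G$-equivariant Levi section.
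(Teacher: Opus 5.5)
\paragraph{Verdict.} There is a genuine gap, at exactly the step you flag as the main obstacle. Your reduction otherwise follows the paper's sketch, which itself only says ``it can be shown'': pass to the pure quotient $(G,\mathcal{X}/W)$, view $(P,\mathcal{X})$ as the split unipotent extension by $W$, identify the fibres as $W(\R)U(\C)\cong(\Lie W)_\C/F^0(\Lie W)_\C$, and use the Levi section as zero section.

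\paragraph{The failing step.} Your corrected coordinate $u-\tfrac{i}{2}\Psi(J_xv,v)$ is not holomorphic on $\mathcal{X}^0$, and neither is any coordinate of the form $u+c\,\Psi(J_xv,v)$. Moreover, no $G(\R)^+$-equivariant holomorphic splitting exists. Here is the simplest case.
\begin{itemize}
\item Take $P=W\rtimes SL_2$ with $W$ the Heisenberg group, $\Psi(e_1,e_2)=1$ and $U=\Q e_0$.
\item Write a point over $\tau$ as $\exp(pe_1+qe_2+ue_0)\cdot x_\tau$, with $x_\tau$ on the Levi section, $p,q\in\R$, $u\in\C$, and set $z=p\tau+q$.
\item In the compact dual $P(\C)/F^0P(\C)$, take representatives $\exp(se_2+te_0)$ of $W(\C)/F^0_\tau W(\C)$, where $F^0_\tau(\Lie W)_\C=\C(e_1-\tau e_2)$. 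These give global holomorphic coordinates $(\tau,s,t)$ on $\mathcal{X}^0$.
\item Campbell--Hausdorff gives $s=z$ and $t=u+\tfrac12 pz$, with $p=\mathrm{Im}\,z/\mathrm{Im}\,\tau$.
\item The fibrewise-holomorphic version of your correction, coming from the Hodge decomposition at $x_\tau$, is $u+\tfrac{i}{4}\Psi(J_\tau v,v)=u+\tfrac{i|z|^2}{4\,\mathrm{Im}\,\tau}$ in these normalisations.
\item This equals $t-\frac{z^2}{2(\tau-\bar\tau)}$, which is holomorphic in $z$ but not in $\tau$. Any other constant $c$ is not even holomorphic along the fibres.
\end{itemize}

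Equivalently, in the coordinates $(\tau,s,t)$ the element $\bigl(\begin{smallmatrix}a&b\\c&d\end{smallmatrix}\bigr)\in SL_2(\R)$ acts by $t\mapsto t-\tfrac12\frac{cs^2}{c\tau+d}$ (the Jacobi factor), which is quadratic in $s$. The only equivariant sections of the $U(\C)$-torsor that are holomorphic along the fibres are $t=\tfrac12\frac{s^2}{\tau-\bar\tau}+\mathrm{const}$, and these are not holomorphic.

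In general, the structure group $F^0G(\C)$ of $P(\C)/F^0P(\C)\to G(\C)/F^0G(\C)$ acts non-linearly on the fibre $W(\C)/F^0W(\C)$. For $n\in\mathfrak{g}^{1,-1}$, $\exp(n)$ sends $a_{-1,-1}$ to $a_{-1,-1}-\tfrac12[a_{-1,0},[n,a_{-1,0}]]$. So your homogeneous bundle $G(\R)^+\times_{K_\infty}(W^{-1,0}\oplus W^{-1,-1})$ is a real-analytic vector bundle with holomorphic fibres, not a holomorphic vector bundle.

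Also, holomorphic dependence of $F^0_{h_x}$ on $x$ is the definition of the complex structure, not Griffiths transversality. It gives you nothing in the base direction here.

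\paragraph{The missing idea.} You must give up $G(\R)^+$-equivariance of the linear structure. Either of the following works.
\begin{enumerate}
\item Choose a holomorphic section $g\colon\mathcal{H}^0\to G(\C)$ of $G(\C)\to G(\C)/F^0G(\C)$, e.g.\ from the Harish-Chandra embedding $\mathcal{H}^0\subset\exp(\mathfrak{g}^{-1,1})F^0G(\C)/F^0G(\C)$. Then use the map $(y,a)\mapsto g(y)\exp(a)F^0_{x_0}P(\C)$ with $a\in W^{-1,0}_{x_0}\oplus W^{-1,-1}_{x_0}$. You still need to check that $W(\R)U(\C)\to W(\C)/F^0W(\C)$ is bijective, so that $\mathcal{X}^0$ is the full preimage of $\mathcal{H}^0$.
\item Keep your tower. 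The map $\mathcal{X}^0/U\to\mathcal{H}^0$ is the holomorphic vector bundle $V_\C/F^0$; for abelian $V$ the structure group does act linearly. Hence $\mathcal{X}^0/U$ is Stein. By Cartan's Theorem B the holomorphic principal $U(\C)$-bundle $\mathcal{X}^0\to\mathcal{X}^0/U$ is trivial. Therefore $\mathcal{X}^0\cong(\mathcal{X}^0/U)\times U(\C)$ is a holomorphic vector bundle over $\mathcal{H}^0$.
\end{enumerate}
In both cases the zero section can be taken to be the Levi section, but the linear structure is not $G(\R)^+$-invariant.
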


Finally, we list a few examples of unipotent extension which will be useful.
\begin{subsecexample} \label{subsecexample: mixed shimura data, simplest example}
Let $\mathbb{G}_{m,\Q}$  act on $U_0=\Q$ through multiplication. Then under the norm morphism $N \colon \DS \rightarrow \mathbb{G}_{m, \Q}$, the induced Hodge structure on $\Lie U_0$ is of type $\{(-1,-1)\}$. Let $P_0\coloneq U_0 \rtimes \bG_{m,\Q}$ and let $(P_0, \mathcal{X}_0) \rightarrow (\mathbb{G}_{m, \Q}, \mathcal{H}_0)$ be the unipotent extension corresponding to $1 \rightarrow U_0 \rightarrow U_0 \rtimes \mathbb{G}_{m, \Q}  \rightarrow \mathbb{G}_{m,\Q}\rightarrow 1$.

The Hodge structure on $\Lie P_0$ is of type $\{(0,0), (-1,-1)\}$. $(P_0, \mathcal{X}_0)$ may be the simplest ``genuine" mixed Shimura datum.
\end{subsecexample}

\begin{subsecexample} \label{subsecexample: mixed shimura data, extension of symplectic group}
Let $V=\Q^{2n}$ and $\psi \colon V_{2n} \times V_{2n} \rightarrow U_{2n}=\Q$ the standard symplectic form on $V_{2n}$. Let $(CSp_{2n}, \mathcal{H}_{2n})$ be defined as in Example \ref{subsecexample: mixed shimura data, siegel modular variety} and $1 \rightarrow U_{2n} \rightarrow W_{2n} \rightarrow V_{2n}\rightarrow 1$ the central extension with $\psi$ as the commutator pairing. Let $CSp_{2n}$ act on $U_0$ through the multiplier map $CSp_{2n} \rightarrow \mathbb{G}_{m, \Q}$. Then $\psi$ becomes $CSp_{2n}$-equivariant. Through unipotent extensions we obtain the following morphisms of mixed Shimura data:
\begin{IEEEeqnarray*}{c}
(W\rtimes CSp_{2n}, \mathcal{X}_{2n}) \rightarrow (V\rtimes CSp_{2n}, \mathcal{Y}_{2n}) \rightarrow (CSp_{2n}, \mathcal{H}_{2n}).
\end{IEEEeqnarray*}
We will denote the first mixed Shimura datum as $(P_{2n}, \mathcal{X}_{2n})$.
\end{subsecexample}

          \subsection{Mixed Shimura varieties} \label{subsec: mixed shimura varieties}

We now turn to our main objects of study. Mixed Shimura varieties have a definition akin to their pure counterparts and possess morphisms and functoriality formulated in a similar way. For the purpose of constructing their toroidal compactifications, however, it is necessary to examine more closely the fibers of a unipotent extension. Since they are more general objects, the notation will become more complicated as we move on. But most of them are introduced in order to avoid further confusion. This subsection runs parallel to Chapter 3 of \cite{Pink}.

\begin{subsecdefinition}[mixed Shimura varieties]\label{subsecdefinition: mixed shimura varieties, mixed shimura varieties}
Let $(P, \mathcal{X})$ be a mixed Shimura datum and $K_f\subset P(\fadele)$ an open compact subgroup. For the product space $\mathcal{X}\times P(\fadele)$, we let $P(\Q)$ act diagonally from the left and let $K_f$ act from the right on $P(\fadele)$. The associated \emph{mixed Shimura variety} is defined to be the double coset space 
\[
M^{K_f}(P, \mathcal{X})(\C)=P(\Q) \backslash \mathcal{X} \times P(\fadele) / K_f.
\] 
\end{subsecdefinition}

We equip $M^{K_f}(P, \mathcal{X})(\C)$ with the quotient topology of $\mathcal{X} \times P(\fadele) / K_f$ (with $P(\fadele) / K_f$ a discrete space). The set of connected components of $M^{K_f}(P, \mathcal{X})(\C)$ is finite, since it is mapped onto by the finite set $\pi_0(P(\Q) \backslash P(\R)\cdot U(\C) \times P(\fadele) / K_f) \overset{\sim}{\leftarrow}\pi_0(P(\Q) \backslash P(\R) \times P(\fadele) / K_f)=\pi_0(P(\Q)\backslash P(\mathbb{A}) / K_f)$ (cf. {\cite[3.2]{Pink}}). This set also maps onto $\pi_0(P(\Q) \backslash P(\fadele)/K_f)=P(\Q) \backslash P(\fadele)/K_f$. Therefore, $P(\Q) \backslash P(\fadele)/K_f$ is also finite. 

Since $\mathcal{X}$ has only finitely many connected components, for any connected component $\mathcal{X}^0$, the subgroup $\Stab_{P(\Q)}(\mathcal{X}^0) \subset P(\Q)$ is of finite index. Hence $\Stab_{P(\Q)}(\mathcal{X}^0) \backslash P(\fadele)/K_f$ is a finite set. Choose a set of representatives $\{p_f\}\subset P(\fadele)$ for it. It follows that the connected components of $M^{K_f}(P, \mathcal{X})(\C)$ are precisely $\Gamma_{p_f}\backslash \mathcal{X}^0$ where $\Gamma_{p_f}=\Stab_{P(\Q)}(\mathcal{X}^0)\cap p_fK_fp_f^{-1}$. So we have a decomposition
\begin{IEEEeqnarray*}{rCl}
\bigsqcup_{p_f}\Gamma_{p_f}\backslash \mathcal{X}^0&\overset{\sim}{\longrightarrow}& M^{K_f}(P, \mathcal{X})(\C).
\end{IEEEeqnarray*}

The following proposition tells us that $M^{K_f}(P, \mathcal{X})(\C)$ is relatively well behaved as a topological space.

\begin{subsecproposition}[{\cite[3.3]{Pink}}]\label{subsecproposition: mixed shimura varieties, topology}
Fix $\mathcal{X}^0\subset \mathcal{X}$, $p_f \in P(\fadele)$, and let $\Gamma_{p_f}$ be defined as above.

\begin{enumerate}
\item $\Gamma_{p_f}$ acts properly discontinuously on $\mathcal{X}^0$. 

\item $M^{K_f}(P, \mathcal{X})(\C)$ inherits the structure of a normal complex space from the $P(\R)\cdot U(\C)$-invariant complex structure on $\mathcal{X}$. Its singularities are at most quotient singularities by finite groups.

\item If $K_f\subset P(\fadele)$ is a neat open compact subgroup, then $M^{K_f}(P, \mathcal{X})(\C)$ is a complex manifold.
\end{enumerate}
\end{subsecproposition}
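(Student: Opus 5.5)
The plan is to reduce all three assertions to one statement: the stabilizer in $\Gamma_{p_f}$ of any point $x\in\mathcal{X}^0$ is finite modulo a subgroup acting trivially on $\mathcal{X}^0$, and more uniformly, for every compact $C\subset\mathcal{X}^0$ the set $\{\gamma\in\Gamma_{p_f} : \gamma C\cap C\neq\emptyset\}$ is finite modulo that same subgroup. Once this holds, (b) follows from Cartan's theorem on quotients of complex manifolds by properly discontinuous holomorphic actions. Part (c) then follows from neatness.

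\textbf{Step 1 (properly discontinuous action).} Write $\mathcal{X}^0$ as an orbit $(P(\R)\cdot U(\C))^0/K_x$, where $K_x=\Stab(x)$. By Proposition \ref{subsecproposition: mixed shimura data, properties}(a), $\mathcal{X}^0\cong\mathcal{H}^0$, so $K_x$ is the centralizer of $h_x$ in $P(\R)\cdot U(\C)$. I would analyse $K_x$ through the projection $\pi\colon P\to G=P/W$.

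\begin{enumerate}
\item Elements of $W(\R)\cdot U(\C)$ centralizing $h_x$ are trivial. On the Lie algebra side, $(\Lie W)_\R+(\Lie U)_\C$ meets the $(0,0)$-part of the weight $\le -1$ piece trivially, by the type condition (A4) together with (A5).
\item Hence $K_x$ injects into $G(\R)$. Its image centralizes $\pi\circ h_x$, so by (A6) it is compact modulo $Z(G)(\R)$.
\end{enumerate}

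Next, $\Gamma_{p_f}$ is an arithmetic subgroup, hence discrete in $P(\R)$ and so in $P(\R)\cdot U(\C)$. The center needs separate handling. By (A8), and by the stronger form of (A3) and (A8) for $(P,\mathcal{X})/Z(P)$ in the remark after Definition \ref{subsecdefinition: mixed shimura data, mixed shimura datum}, the arithmetic points of the relevant central torus are finite modulo elements acting trivially on $\mathcal{X}$. This uses the standard fact that an arithmetic subgroup of a torus that is an almost direct product of a $\Q$-split torus and a torus of compact type is finite modulo the part acting trivially.

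With this in hand, the usual argument applies: a discrete group acting on $L/K$ with $K$ compact modulo a normal subgroup that acts trivially acts properly discontinuously modulo the kernel. This gives (a).

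\textbf{Step 2 (complex structure).} $\mathcal{X}^0$ carries the $P(\R)\cdot U(\C)$-invariant complex structure, and $\Gamma_{p_f}$ acts by biholomorphisms. By Step 1, each point has a neighbourhood $B$ stable under the finite group $\bar\Gamma_x=\Gamma_x/(\text{kernel})$, with $\gamma B\cap B=\emptyset$ for all $\gamma\notin\Gamma_x$. Locally the quotient is therefore $B/\bar\Gamma_x$. Cartan's theorem then says $\Gamma_{p_f}\backslash\mathcal{X}^0$ is a normal complex space with at most finite quotient singularities. The decomposition $\bigsqcup_{p_f}\Gamma_{p_f}\backslash\mathcal{X}^0\cong M^{K_f}(P,\mathcal{X})(\C)$ transports this to the whole variety, which gives (b).

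\textbf{Step 3 (neat case).} If $K_f$ is neat, then $\Gamma_{p_f}$ is neat. An element $\gamma\in\Gamma_x$ whose image in $\bar\Gamma_x$ has finite order has, in a faithful representation, a power lying in the trivially acting central kernel. I would show this kernel is itself neat and torsion-free, and that the eigenvalues of $\gamma$ generate a torsion-free group. Together these force $\gamma$ to act trivially, so $\bar\Gamma_x=1$. The action is then free modulo the kernel, and the quotient is a manifold, which gives (c).

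\textbf{Main obstacle.} I expect the hardest part to be Step 1, specifically controlling the central and non-reductive parts of $K_x$. One must show exactly which central arithmetic elements act trivially, using (A8) and the finite covering $\mathcal{X}\to\mathcal{H}$, and verify that $W(\R)\cdot U(\C)$ contributes nothing to the stabilizer. I would try the Lie-algebra weight argument first, and pass to the quotient by $Z(P)$ to handle the center.
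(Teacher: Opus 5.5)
The overall plan is sound: a trivial centralizer of $h_x$ in $W(\R)\cdot U(\C)$, compactness modulo $Z(G)(\R)$ from (A6), Cartan's quotient theorem, and freeness in the neat case. The first two items of Step 1 and Step 2 are fine. The gap is the sentence that closes Step 1.

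\textbf{The gap.} Your ``usual argument'' needs $K_x$ to be compact modulo a normal subgroup of $L=P(\R)\cdot U(\C)$ that acts trivially. In the mixed setting this is false. The part of $K_x$ lying over $Z(G)(\R)$ acts nontrivially on $W$, hence on $\mathcal{X}$, and (A8) does not make it compact.

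Already $(P_0,\mathcal{X}_0)$ of Example~\ref{subsecexample: mixed shimura data, simplest example} shows this; there $Z(P_0)=1$. Let $s\colon \bG_{m,\Q}\to P_0$ be the splitting and take $x$ with $h_x=s\circ N$. Then $K_x=s(\R_{>0})$, and it acts on $\mathcal{X}_0^+\cong U_0(\C)=\C$ by $z\mapsto az$. Its $U_0(\C)$-conjugates intersect trivially, so the largest normal subgroup of $L$ inside $K_x$ is trivial, and $K_x$ is non-compact.

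Your finiteness statement about arithmetic points of the central torus is a property of $\Gamma_{p_f}$, not of $K_x$. So it cannot be fed into a compact-stabilizer argument: here properness is forced by the arithmetic group, not by the stabilizer.

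\textbf{The repair.} You need a direct finiteness argument for $\Gamma_{p_f}$.
\begin{enumerate}
\item Pass to $(P',\mathcal{X}')=(P,\mathcal{X})/Z(P)$, as you intended. Elements of $\Gamma_{p_f}\cap Z(P)(\Q)$ preserve $h$ and $\mathcal{X}^0$, so they act trivially on $\mathcal{X}^0\cong\mathcal{H}^0$. Let $\Gamma^\natural$ be the image of $\Gamma_{p_f}$, $L^\natural=P'(\R)\cdot U'(\C)$, $K^\natural$ the stabilizer of the image point, and $\pi\colon P'\to G'$.
\item By the stronger axioms for $(P,\mathcal{X})/Z(P)$ (remark after Definition~\ref{subsecdefinition: mixed shimura data, mixed shimura datum}), $Z(G')^0=S\cdot C$ with $S$ a $\Q$-split torus and $C$ of compact type.
\item For compact $L_1\subset L^\natural$, put $E=\Gamma^\natural\cap L_1K^\natural L_1^{-1}$. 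By (A6), and because $S(\R)$ is central and $C(\R)$ is compact, $\pi(E)\subset M\cdot S(\R)$ for some compact $M$.
\item The image of $E$ in $(G'/S)(\R)$ lies in a compact set and in the arithmetic, hence discrete, image of $\Gamma^\natural$. So this image is finite.
\item Two elements of $\pi(E)$ with the same image in $G'/S$ differ by an element of $\pi(\Gamma^\natural)\cap S(\Q)$. This group is finite because $S$ is split, so $\pi(E)$ is finite.
\item $\pi|_{K^\natural}$ is a closed embedding: it is the real points of the isomorphism of centralizers given by your first item. Hence $E$ lies in a compact subset of $L^\natural$, and it is finite by discreteness.
\end{enumerate}
This is exactly where (A8) enters. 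It cannot be dropped: take $\mathrm{Res}_{F/\Q}\bG_m$, with $F$ real quadratic, acting on $U=\mathrm{Res}_{F/\Q}\mathbb{G}_a$. The norm-one units then fix a point and form an infinite stabilizer acting nontrivially.

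\textbf{Step 3.} Make the mechanism explicit. Suppose $\gamma\in\Gamma_x$ satisfies $\gamma^n\in Z(P)(\Q)$; this element is semisimple, since $Z(P)\cap W=1$. In a faithful representation, neatness forces $\gamma$ to act by a single scalar on each $P$-stable eigenspace of $\gamma^n$, so $\gamma$ is central. Alternatively, quote Lemma~\ref{subseclemma: mixed shimura varieties, centralizer lemma}, which gives $K\cap\Cent_{P(\Q)}(h_x)\subset Z(P)(\Q)$ for neat $K$.
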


\begin{subsecremark}~

\begin{enumerate}[label=(\roman*)]
\item $\Gamma_{p_f}$ is generally an infinite group. To say it acts properly discontinuously on some space $Y$ means that some finite quotient $\Gamma_{p_f}/\Gamma_0$ acts on $Y$ properly discontinuously in the usual sense.

\item For the generalized notion of neat open compact subgroups $K_f\subset P(\fadele)$, see (the last part of) Chapter 0 of \cite{Pink}. Generalized neatness retains the most commonly used  properties. For example, it is inherited by subgroups, images, and preserved under automorphisms of $P$ as well as inner automorphisms of $P(\fadele)$.
\end{enumerate}
\end{subsecremark}

We define some morphisms of mixed Shimura varieties.

Let $(P,\mathcal{X})$ be a mixed Shimura datum and $K_f, K'_f\subset P(\fadele)$ be open compact subgroups. Suppose for $p_f\in P(\fadele)$, we have $p_f^{-1}K_fp_f \subset K'_f$. Then multiplying on the right by $p_f$ induces a well-defined morphism 
\begin{IEEEeqnarray*}{rCrcl}
[\cdot p_f]=[\cdot p_f]_{K'_f, K_f}&\colon& M^{K_f}(P, \mathcal{X})(\C) &~\longrightarrow ~& M^{K'_f}(P, \mathcal{X})(\C)\\
&&  [x, p'_f]&~\longmapsto~&[x, p'_f \cdot p_f]
\end{IEEEeqnarray*}
of mixed Shimura varieties. $[\cdot p_f]$ is holomorphic and surjective with finite fibers (thus open and closed). If $p_f^{-1}K_fp_f=K'_f$ it is an isomorphism. This is the case, for example, if $K'_f=K_f$ and $p_f$ normalizes $K_f$.

For another type of morphism, let $ \varphi \colon (P, \mathcal{X}) \rightarrow (P', \mathcal{X}')$ be a morphism of mixed Shimura data (where we abuse the notation by writing $(\varphi, \xi)$ as $\varphi$).  If $K_f \subset P(\fadele)$, $K'_f \subset P'(\fadele)$ are open compact subgroups such that $\varphi(K_f)\subset K'_f$, then $\varphi$ induces a well-defined morphism of mixed Shimura varieties
\begin{IEEEeqnarray*}{rCrcl}
[\varphi]=[\varphi]_{K'_f, K_f}&\colon& M^{K_f}(P, \mathcal{X})(\C) &~\longrightarrow~& M^{K'_f}(P', \mathcal{X}')(\C)\\
&&  [x, p_f]&~\longmapsto~&[\varphi(x) , \varphi(p_f)].
\end{IEEEeqnarray*}
This morphism is again holomorphic. If $\varphi \colon P \rightarrow P'$ is an isomorphism of groups and $\varphi(K_f)=K'_f$, $[\varphi]$ is an isomorphism.

\begin{subsecremark}~

\begin{enumerate}[label=(\roman*)]
\item For any $k_f \in K_f$, $[\cdot k_f]$ is the identity morphism $[\id]\colon  M^{K_f}(P, \mathcal{X})(\C) \rightarrow M^{K_f}(P, \mathcal{X})(\C)$.

\item If $K'_f$ is a normal open compact subgroup of $K_f$, then $K_f/K'_f$ is finite and acts on $ M^{K'_f}(P, \mathcal{X})(\C)$. From this perspective, $ M^{K_f}(P, \mathcal{X})(\C)$ becomes the quotient space  under this finite group action. (However, as we shall see, this is in general not a faithful action.)
\end{enumerate}
\end{subsecremark}

For more properties of mixed Shimura varieties, we refer the reader to \cite[3.7-3.12]{Pink}. It ought to be mentioned that fiber products do not exist in general in the category of mixed Shimura varieties. They do exist for the pullback of a unipotent extension $(P_1, \mathcal{X}_1)\rightarrow (P, \mathcal{X})$ along an arbitrary morphism $\varphi \colon (P_2, \mathcal{X}_2) \rightarrow (P, \mathcal{X})$. Even this holds only under a mild assumption on $\varphi$.

We next prove a technical ``centralizer" lemma that will be useful later on. 
Basically, what it means is that if an element of a neat arithmetic subgroup of $P(\Q)$ commutes with the image of $h_x$ for some $x\in \mathcal{X}$, then it lies in the center of $P$ and thus commutes with all of $h(\mX)$. 

\begin{subseclemma}
\label{subseclemma: mixed shimura varieties, centralizer lemma}
Let $(P, \mathcal{X})$ be a mixed Shimura datum and $x \in \mathcal{X}$. Then for any neat arithmetic subgroup $K \subset P(\Q)$,  we have $\dps K\cap\,\Cent_{P(\Q)}(h_x) \subset Z(P)(\Q)$.
\end{subseclemma}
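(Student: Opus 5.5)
The plan is to show first that the image of $\gamma \in K\cap \Cent_{P(\Q)}(h_x)$ in $G=P/W$ is central, and then that $\mathrm{Ad}(\gamma)$ is trivial on $\Lie P$. Since $P$ is connected, the second fact gives $\gamma\in Z(P)(\Q)$. Throughout I use that neatness is inherited by images under $\Q$-morphisms and by subgroups (Remark after Proposition~\ref{subsecproposition: mixed shimura varieties, topology}). I also use that a neat arithmetic group is torsion free, and more precisely that its elements have torsion-free eigenvalue groups in every representation.

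\textbf{Step 1 (reductive quotient).} Let $\bar\gamma$ be the image of $\gamma$ in $G^{\mathrm{ad}}(\Q)$. It commutes with $\pi\circ h_x$, hence with $\pi\circ h_x(i)$. By (A6), $\mathrm{int}(\pi\circ h_x(i))$ is a Cartan involution of $G^{\mathrm{ad}}_\R$, so the centralizer of $\pi\circ h_x$ in $G^{\mathrm{ad}}(\R)$ is compact. On the other hand, $\bar\gamma$ lies in the image of $K$, which is a neat arithmetic subgroup of $G^{\mathrm{ad}}(\Q)$, hence discrete. Therefore $\bar\gamma$ lies in a finite group, so it has finite order, and neatness forces $\bar\gamma=1$. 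Thus $\pi(\gamma)\in Z(G)(\Q)$.

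\textbf{Step 2 (unipotent part).} Take the Jordan decomposition $\gamma=\gamma_s\gamma_u$ in $P(\Q)$. Both factors commute with $h_x$, since they are polynomials in $\gamma$ in any faithful representation. The image $\pi(\gamma_u)$ is a unipotent element of the multiplicative group $Z(G)$, so it is trivial, and hence $\gamma_u\in W(\Q)$. Now the weight cocharacter $h_x\circ\omega$ acts on $\Lie W$ with weights $-1$ and $-2$ only, by (A4) and (A5). It therefore has no nonzero fixed vectors in $\Lie W$. Since $\log\gamma_u\in\Lie W_\C$ is fixed by $\mathrm{Ad}(h_x\circ\omega)$, we get $\log\gamma_u=0$, so $\gamma=\gamma_s$ is semisimple.

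\textbf{Step 3 (eigenvalues).} Because $\pi(\gamma)$ is central in $G$, $\mathrm{Ad}(\gamma)$ acts trivially on $\mathrm{Gr}_0^W=\Lie G$. On $\mathrm{Gr}_{-1}=\Lie V$ and $\mathrm{Gr}_{-2}=\Lie U$ it acts through the torus $S$ of (A8), through which $Z(G)^0$ acts. I will first pass to a finite-index neat subgroup, so that the image of $\gamma$ in $Z(G)$ lands in $Z(G)^0$. This is harmless, because a power $\gamma^m$ being central in $P$ forces $\mathrm{Ad}(\gamma)$ to be a semisimple automorphism whose eigenvalues are $m$-th roots of unity, which neatness then kills.

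The image of $\gamma$ in $S(\Q)$ lies in an arithmetic subgroup of $S$. Since $S$ is an almost direct product of a $\Q$-split torus and a $\Q$-torus of compact type, and arithmetic subgroups of either factor are finite, that image has finite order. Hence every eigenvalue of $\mathrm{Ad}(\gamma)$ on $\mathrm{Gr}^W\Lie P$, and therefore on $\Lie P$, is a root of unity. Neatness then makes all these eigenvalues equal to $1$. Combined with the semisimplicity from Step 2, this gives $\mathrm{Ad}(\gamma)=\mathrm{id}$ on $\Lie P$, so $\gamma$ centralizes $P$.

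I expect the main obstacle to be Step 3. The delicate points are the bookkeeping with the finite-index passage to $Z(G)^0$ and the justification that the image of $K$ in $S(\Q)$ really is arithmetic, as opposed to merely commensurable up to isogeny. If this runs into trouble, a fallback is to argue directly on $\mathrm{Gr}_{-1}$: there $\mathrm{Ad}(\gamma)$ is an automorphism of a polarizable Hodge structure, so its eigenvalues have absolute value $1$ in every embedding and are algebraic integers. Kronecker's theorem then shows they are roots of unity.
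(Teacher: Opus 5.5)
Your proof is correct, and it is organized differently from the paper's. The paper first passes to the quotient datum $(P,\mX)/Z(P)$. There, by \cite[1.19]{Pink}, $Z(G)^0$ itself is an almost direct product of a $\Q$-split torus and a $\Q$-torus of compact type. Hence $P/P^{\mathrm{der}}\cong G/G^{\mathrm{der}}$ has only finite arithmetic subgroups, and neatness pushes $K\cap\Cent_{P(\Q)}(h_x)$ into $P^{\mathrm{der}}$. Then \cite[1.17]{Pink} identifies $\Cent_{P^{\mathrm{der}}(\R)\cdot U(\C)}(h_x)$ with the compact group $\Cent_{G^{\mathrm{der}}(\R)}(\pi\circ h_x)$, in which a discrete torsion-free subgroup must be trivial.

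You never leave $P$:
\begin{itemize}
\item You use the Cartan-involution compactness only on $G^{\mathrm{ad}}$, to make $\pi(\gamma)$ central.
\item Your weight argument in Step 2 does the job of \cite[1.17]{Pink}: the centralizer of $h_x$ has no unipotent part because $h_x\circ\omega$ has no weight $0$ on $\Lie W$.
\item Instead of the strengthened axiom for the quotient datum, you feed (A8) as stated into the adjoint action on $\mathrm{Gr}^W_{-1}$ and $\mathrm{Gr}^W_{-2}$. You then conclude with the eigenvalue form of neatness and $\ker(\mathrm{Ad})=Z(P)$ for connected $P$ in characteristic $0$.
\end{itemize}
The paper's route is shorter because it outsources the structural work to Pink. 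Yours is self-contained and shows exactly where (A8) is used.

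The points you flag in Step 3 are not real obstacles. Since $K$ is arithmetic, it stabilizes a lattice $L$ in $\mathrm{Gr}^W_{-1}\oplus\mathrm{Gr}^W_{-2}$. So the image of $\gamma$ lies in $S(\Q)\cap GL(L)$, an arithmetic subgroup of the image torus $S$. That torus is a quotient of one of the type in (A8), hence of the same type, so its arithmetic subgroups are finite.

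The passage to $Z(G)^0$ is even automatic. The image of $\pi(\gamma)$ in $(G/Z(G)^0)(\Q)$ lies both in the finite group $(Z(G)/Z(G)^0)(\Q)$ and in the neat, hence torsion-free, image of $K$, so it is trivial.

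Your fallback, however, would not work. A Hodge-structure automorphism need not preserve a polarization, so its eigenvalues need not have absolute value $1$. Without (A8) the lemma is in fact false. Take $F$ real quadratic, $P=F^2\rtimes\mathrm{Res}_{F/\Q}GL_2$, and a unit $\epsilon$ of infinite order with $\epsilon\equiv 1\bmod N$, $N\geq 3$. The scalar $\epsilon$ lies in a neat congruence subgroup and centralizes any $h_x$ factoring through the Levi factor. It is not central, since it has eigenvalues $\epsilon,\epsilon'$ on $\mathrm{Gr}^W_{-1}$. Moreover, on $\mathrm{Gr}^W_{-2}$, which has type $(-1,-1)$, Hodge theory imposes no constraint at all.
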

\begin{proof}
Let $K_0\coloneq K\cap \Cent_{P(\Q)}(h_x)$. It suffices to show that the image of $K_0$ in $\left (P/Z(P)\right )(\Q)$ is trivial. Thus we may assume that the connected center $Z(G)^0$ of $G$ is an almost direct product of a $\Q$-split torus with a $\Q$-torus of compact type. Since $P/P^{\mathrm{der}}\overset{\vspace{-2pt}{\sim}}{\rightarrow} G/G^{\mathrm{der}}$ and $Z(G)^0$ projects onto this, $P/P^{\mathrm{der}}$ is also an almost direct product of a $\Q$-split torus with a $\Q$-torus of compact type. Any neat arithmetic subgroup in such a group is trivial. Therefore, 
\begin{IEEEeqnarray*}{rCl}
K_0& \subset&  \Cent_{P^{\mathrm{der}}(\Q) }(h_x) \\
       & \subset &  \Cent_{P^{\mathrm{der}}(\R)\cdot U(\C)}(h_x)\\
        &\overset{\pi}{\cong} &  \Cent_{G^{\mathrm{der}}(\R)}(\pi \circ h_x)  \text{ (cf. \cite[1.17]{Pink})}\\
&\subset&\Cent_{G^{\mathrm{der}}(\R)}(\pi \circ h_x(i)) \\
  &\subset& \left \{  g\in G^{\mathrm{der}}(\C)  ~\lvert ~\mathrm{int}(\pi\circ h_x(i)) (g)=\overbar{g}  \right \}.
\end{IEEEeqnarray*}
But the last set is compact as $\mathrm{int}(\pi\circ h_x(i)) $ induces a Cartan involution on $G^{\mathrm{der}}$. As $K_0$ is discrete and torsion-free, it must be trivial.
\end{proof}

\begin{subseccorollary}
Let $(P, \mathcal{X})$ be a mixed Shimura datum and $x \in \mathcal{X}$. Let $K_f \subset P(\fadele)$ be a neat open compact subgroup. Then $\Stab_{P(\Q)}(x) \cap K_f \subset \Id_{Z(P)(\Q)}(\mathcal{X})$, where $\Id_{Z(P)(\Q)}(\mathcal{X})$ is the subgroup of $Z(P)(\Q)$ that acts trivially on $\mathcal{X}$. 
\end{subseccorollary}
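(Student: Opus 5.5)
The plan is to reduce the corollary to Lemma \ref{subseclemma: mixed shimura varieties, centralizer lemma}. Let $\gamma \in \Stab_{P(\Q)}(x)\cap K_f$. First, $K \coloneq P(\Q)\cap K_f$ is a neat arithmetic subgroup of $P(\Q)$, since neatness is inherited by subgroups and $K_f$ is open compact. Next, by (A1) the map $h\colon \mX\rightarrow \mathcal{H}$ is $P(\R)\cdot U(\C)$-equivariant, where $P(\R)\cdot U(\C)$ acts on $\mathcal{H}$ by conjugation. Hence $\gamma x = x$ implies $\mathrm{int}(\gamma)\circ h_x = h_{\gamma x} = h_x$, so $\gamma \in \Cent_{P(\Q)}(h_x)$. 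The lemma then gives $\gamma \in Z(P)(\Q)$.

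It remains to show that such a central $\gamma$ acts trivially on all of $\mX$, not just on $x$. Every $y\in\mX$ has the form $y = g x$ with $g\in P(\R)\cdot U(\C)$, by homogeneity (A1). Since $\gamma$ is central in $P$, it commutes with $P(\R)$ and with $U(\C)\subset P(\C)$, so
\[
\gamma y = \gamma g x = g\gamma x = g x = y.
\]
The one subtlety is whether the action of $P(\Q)\subset P(\R)$ on $\mX$ is compatible with products in the group $P(\R)\cdot U(\C)$. It is, because $\mX$ is a homogeneous space for that group. Therefore $\gamma$ acts trivially on $\mX$, i.e.\ $\gamma \in \Id_{Z(P)(\Q)}(\mX)$.

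The main obstacle is the first step: checking that stabilizing the point $x$ in the covering $\mX$ really forces $\gamma$ to centralize $h_x$. This works because $h$ is equivariant, so stabilizing $x$ implies stabilizing $h_x$ under conjugation. It holds even though $h$ has finite fibers, because we only need the forward implication $\gamma x = x \Rightarrow \gamma h_x \gamma^{-1} = h_x$, which equivariance gives directly.
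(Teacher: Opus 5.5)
Your proposal is correct and takes the same route as the paper. Equivariance of $h$ gives $\Stab_{P(\Q)}(x)\subset \Cent_{P(\Q)}(h_x)$; Lemma~\ref{subseclemma: mixed shimura varieties, centralizer lemma}, applied to the neat arithmetic group $P(\Q)\cap K_f$, then forces the element into $Z(P)(\Q)$; and transitivity of $P(\R)\cdot U(\C)$ on $\mX$ turns fixing $x$ into fixing all of $\mX$. You also spell out details the paper leaves implicit, such as neatness of $P(\Q)\cap K_f$ and the fact that central elements commute with $U(\C)$.
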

\begin{proof}
We have $\Stab_{P(\Q)}(x) \subset \Cent_{P(\Q)}(h_x)$. If an element $z\in Z(P)(\Q)$ fixes a point $x\in \mathcal{X}$, it fixes the whole of $\mathcal{X}$ as $P(\R)\cdot U(\C)$ acts transitively on this space.
\end{proof}

The next proposition is also used in the study of unipotent fibers.

\begin{subsecproposition}\label{subsecproposition: mixed shimura varieties, surjection of open compact central intersection}

Let $(P, \mathcal{X})$ be a mixed Shimura datum with $G=P/W$.  Denote by $\pi \colon (P, \mathcal{X}) \rightarrow (G, \mathcal{X}_G)=(P, \mathcal{X})/W$  the morphism constructed from the canonical projection.  Suppose $K_f \subset G(\fadele)$ is a neat open compact subgroup. Then $\dps \Id_{Z(G)}(\mathcal{X}_G)\cap K_f \subset \pi ( \Id_{Z(P)(\Q)}(\mathcal{X}) )$. In particular, 
\[
\left (\pi \lvert_{Z(P)} \right )^{-1}\left (   \Id_{Z(G)}(\mathcal{X}_G)\cap K_f     \right ) \overset{\sim}{\rightarrow}  \Id_{Z(G)}(\mathcal{X}_G)\cap K_f.
\]
\end{subsecproposition}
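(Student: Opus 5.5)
The plan is to lift elements of $Z(G)(\Q)$ to $P(\Q)$ through a Levi section, and then check two things: that the lift is central in $P$, and that it acts trivially on $\mX$. Fix a Levi decomposition $P = W \rtimes L$ over $\Q$, with $\pi|_L \colon L \overset{\sim}{\rightarrow} G$, and write $s \colon G \rightarrow P$ for the resulting section. Let $z \in \Id_{Z(G)}(\mX_G) \cap K_f$ and set $z' \coloneq s(z) \in L(\Q)$. Since $z$ is central in $G$, the element $z'$ commutes with $L$. The remaining claims are therefore:
\begin{enumerate}
\item $z'$ centralizes $W$, so that $z' \in Z(P)(\Q)$;
\item $z'$ acts trivially on $\mX$;
\item the map in the ``in particular'' statement is bijective.
\end{enumerate}

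\emph{Step 1 (centrality).} The conjugation action of $L \cong G$ on $W$ is determined by its action on $\Lie W$. This action is an extension built from the actions on $\Lie U$ and $\Lie V$, where $V = W/U$. By axiom (A8), the connected center $Z(G)^0$ acts on $U$ and on $V$ through a torus $T$ that is an almost direct product of a $\Q$-split torus and a $\Q$-torus of compact type. Two facts then combine:
\begin{itemize}
\item a suitable power of $z$ lies in $Z(G)^0$;
\item the image of the neat arithmetic group $Z(G)(\Q) \cap K_f$ in $T$ is a neat arithmetic subgroup of such a torus, hence trivial, exactly as in the proof of Lemma \ref{subseclemma: mixed shimura varieties, centralizer lemma}.
\end{itemize}
Since neatness excludes nontrivial eigenvalues of finite order, $z$ acts trivially on $\Lie U$ and on $\Lie V$. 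It follows that $\mathrm{Ad}(z')$ is unipotent on $\Lie W$. The element $z'$ is semisimple because it lies in the reductive group $L$, so $\mathrm{Ad}(z')$ is also semisimple, and therefore trivial. Hence $z'$ centralizes $W$, and so $z' \in Z(P)(\Q)$. I expect this step to be the main obstacle. One must be careful that (A8) only controls $Z(G)^0$, and that neatness is inherited by the image in $T$. This is where the generalized neatness of \cite[Ch.~0]{Pink} enters: it is preserved under images of morphisms.

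\emph{Step 2 (trivial action on $\mX$).} Take any $x \in \mX$. Because $z'$ is central in $P$, equivariance of $h$ gives
\[
h_{z'x} = \mathrm{int}(z') \circ h_x = h_x .
\]
Under the projection $\zeta \colon \mX \rightarrow \mX_G$ we have $\zeta(z'x) = z\,\zeta(x) = \zeta(x)$, because $z$ acts trivially on $\mX_G$. The square formed by $\mX \rightarrow \mX_G$ and $\mH \rightarrow \mH_G$ is a pullback by Proposition \ref{subsecproposition: mixed shimura data, unipotent extension}, applied to the unipotent extension $(P,\mX) \rightarrow (P,\mX)/W$. Two points with the same image in $\mX_G$ and in $\mH$ therefore coincide, so $z'x = x$. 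Thus $z' \in \Id_{Z(P)(\Q)}(\mX)$ and $\pi(z') = z$, which proves the inclusion.

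\emph{Step 3 (bijectivity).} I read the preimage in the ``in particular'' statement as taken inside $\Id_{Z(P)(\Q)}(\mX)$; otherwise central unipotent elements would obviously lie in the kernel. Surjectivity is Step 2. For injectivity, any element of the kernel lies in $W(\Q) \cap \Id_{Z(P)(\Q)}(\mX)$. By the description of the fibers of a unipotent extension, $W(\R) \cdot U(\C)$ acts simply transitively on each fiber of $\zeta$, so an element of $W(\Q)$ that fixes a point of $\mX$ is trivial. Hence the map is injective, which completes the plan.
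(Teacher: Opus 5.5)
Your proof is correct and follows the paper's route: Levi splitting, then neatness plus (A8) to show the lift centralizes $W$, then trivial action on $\mX$. The differences are small. The paper gets a fixed point from the splitting $(G,\mX_G)\rightarrowtail(P,\mX)$ where you use the pullback square. You also supply details the paper skips, namely passing from $Z(G)$ to $Z(G)^0$, and the semisimple-plus-unipotent argument on $\Lie W$. There, $z'$ is semisimple because it is central in the reductive group $L$, not merely because it lies in $L$.

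One correction: the aside in your Step 3 is false, because $Z(P)\cap W$ is trivial. The cocharacter $\mathrm{Ad}\circ h_x\circ\omega$ acts on $\Lie W_\C$ only with weights $-1$ and $-2$, so nothing in $\Lie W$ is fixed by $h_x(\DS_\C)$. Hence $\pi|_{Z(P)}$ is already injective, the literal reading of the isomorphism holds, and the preimage lies in $\Id_{Z(P)(\Q)}(\mX)$ automatically.
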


\begin{proof}
Choose a splitting $P= W \rtimes  G$. As $A\coloneq \Id_{Z(G)}(\mathcal{X}_G)\cap K_f$ is a neat arithmetic subgroup of $Z(G)(\Q)$ acting on $U$ and $V$ through an almost direct product of a $\Q$-split torus with a $\Q$-torus of compact type, $A$ must act trivially on $U$ and $V$ and hence also on $W$. This means that, under the inclusion $G \rightarrowtail W \rtimes G$, $A$ is mapped into the center of $W\rtimes G$. But $A$ fixes points in $\mathcal{X}_G$ and hence must fix $\mathcal{X}$ as well. (By Proposition  \ref{subsecproposition: mixed shimura data, properties},  the splitting of groups $G \rightarrowtail W \rtimes G \rightarrow G$ extends to a splitting of mixed Shimura data $ (G, \mathcal{X}_G) \rightarrowtail (P, \mathcal{X}) \rightarrow (G, \mathcal{X}_G)$.) Therefore $A$ maps into $\Id_{Z(P)(\Q)}(\mathcal{X})$.
\end{proof}

\begin{subseccorollary} \label{subseccorollary: mixed shimura varieties, surjection of open compact central intersection}
Let $\varphi \colon (P_1, \mathcal{X}_1) \rightarrow (P, \mathcal{X})$ be a unipotent extension of mixed Shimura data and $K_f \subset P(\fadele)$ any neat open compact subgroup. Then under the map $\varphi \lvert_{Z(P_1)} \colon \Id_{Z(P_1)(\Q)}(\mathcal{X}_1) \rightarrowtail \Id_{Z(P)(\Q)}(\mathcal{X})$, we have 
\[
 \left ( \varphi \lvert_{Z(P_1)} \right )^{-1} \left ( \Id_{Z(P)(\Q)}(\mathcal{X}) \cap K_f \right) \overset{\sim}{\rightarrow} \Id_{Z(P)(\Q)}(\mathcal{X}) \cap K_f. 
\]
\end{subseccorollary}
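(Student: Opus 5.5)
The plan is to reduce the corollary to Proposition \ref{subsecproposition: mixed shimura varieties, surjection of open compact central intersection}, applied to both $(P_1,\mathcal{X}_1)$ and $(P,\mathcal{X})$, by passing through their common pure quotient. Let $W_1$ be the unipotent radical of $P_1$. Since $W_0 \subset W_1$ and $P_1/W_0 \cong P$, we have $P_1/W_1 \cong P/W = G$. Hence the quotient morphisms $\pi_1\colon (P_1,\mathcal{X}_1)\rightarrow (G,\mathcal{X}_G)$ and $\pi\colon (P,\mathcal{X})\rightarrow (G,\mathcal{X}_G)$ satisfy $\pi_1=\pi\circ\varphi$, by the universal property in Proposition \ref{subsecproposition: mixed shimura data, properties}. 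Recall also that neatness passes to images, so $\pi(K_f)\subset G(\fadele)$ is a neat open compact subgroup.

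\emph{Key auxiliary fact.} For any mixed Shimura datum $(P,\mathcal{X})$ with unipotent radical $W$, we have $\Id_{Z(P)(\Q)}(\mathcal{X})\cap W(\Q)=\{1\}$. To see this, let $w$ lie in the intersection. Since $w$ acts trivially on $\mathcal{X}$, it fixes any $x\in\mathcal{X}$ and hence commutes with $h_x$, in particular with the torus $h_x(\omega(\mathbb{G}_{m,\C}))$. The centralizer in the unipotent group $W_\C$ of a torus is connected, and its Lie algebra is the weight-$0$ part of $\Lie W_\C$. By axiom (A5) all weights on $\Lie W$ are $\leq -1$, so this centralizer is trivial and $w=1$. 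I expect this step to be the main technical point. The delicate part is justifying that the centralizer of a torus acting on a unipotent group is connected and unipotent, so that it is determined by its Lie algebra. In characteristic $0$ this is standard, for instance via $\exp$, which identifies $W$ with $\Lie W$ equivariantly.

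\emph{Injectivity.} The kernel of $\varphi|_{Z(P_1)}$ restricted to $\Id_{Z(P_1)(\Q)}(\mathcal{X}_1)$ lies in $\Id_{Z(P_1)(\Q)}(\mathcal{X}_1)\cap W_0(\Q)$. This group is contained in $\Id_{Z(P_1)(\Q)}(\mathcal{X}_1)\cap W_1(\Q)$, which is trivial by the auxiliary fact applied to $(P_1,\mathcal{X}_1)$.

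\emph{Surjectivity.} Let $a\in \Id_{Z(P)(\Q)}(\mathcal{X})\cap K_f$. Then $\pi(a)$ is central in $G$ and acts trivially on $\mathcal{X}_G$, because $\pi$ is equivariant and surjective on the spaces. So $\pi(a)\in \Id_{Z(G)}(\mathcal{X}_G)\cap \pi(K_f)$. Applying Proposition \ref{subsecproposition: mixed shimura varieties, surjection of open compact central intersection} to $(P_1,\mathcal{X}_1)$ with the neat group $\pi(K_f)$, we obtain $b\in \Id_{Z(P_1)(\Q)}(\mathcal{X}_1)$ with $\pi_1(b)=\pi(a)$. Since $\varphi$ is surjective with equivariant map $\mathcal{X}_1\rightarrow\mathcal{X}$, the element $\varphi(b)$ lies in $\Id_{Z(P)(\Q)}(\mathcal{X})$. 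Moreover $\pi(\varphi(b))=\pi(a)$, so $\varphi(b)a^{-1}\in \Id_{Z(P)(\Q)}(\mathcal{X})\cap W(\Q)=\{1\}$ by the auxiliary fact. Thus $\varphi(b)=a$, and $b$ lies in $(\varphi|_{Z(P_1)})^{-1}(\Id_{Z(P)(\Q)}(\mathcal{X})\cap K_f)$.

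Together, injectivity and surjectivity give the claimed bijection.
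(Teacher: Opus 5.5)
Your proposal is correct and follows essentially the paper's argument. Both pass through the common pure quotient $(G,\mathcal{X}_G)$, apply Proposition~\ref{subsecproposition: mixed shimura varieties, surjection of open compact central intersection} to $(P_1,\mathcal{X}_1)$ with the neat group $\pi(K_f)$, and use that $\pi$ is injective on central elements acting trivially; the paper phrases this as a diagram in which $\pi_1=\pi\circ\varphi$ being bijective and $\pi$ being injective force $\varphi$ to be bijective, and you argue elementwise. Your auxiliary fact $\Id_{Z(P)(\Q)}(\mathcal{X})\cap W(\Q)=\{1\}$, proved via the weights of $h_x\circ\omega$ on $\Lie W$, is exactly the injectivity of $\pi$ that the paper uses without justification, so your write-up is, if anything, slightly more complete.
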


\begin{proof} 
Let $\pi \colon (P, \mathcal{X}) \rightarrow (G, \mathcal{X}_G)=(P, \mathcal{X})/W$ be as above and $K^G_f \coloneq \pi(K_f)$. Then we have the diagram below (with $\pi_1 =\pi \circ \varphi$).

\begin{equation*}  
\begin{tikzcd}  
\left ( \varphi   \right )^{-1}(\Id_{Z(P)(\Q)}(\mathcal{X}) \cap K_f  ) 
      \arrow[r,"\sim" yshift= {-1pt}]  
      \arrow[d, rightarrowtail ]
& \Id_{Z(P)}(\mathcal{X}) \cap K_f 
     \arrow[d, rightarrowtail] \\
\left ( \pi_1  \right )^{-1}(\Id_{Z(G)(\Q)}(\mathcal{X}_G) \cap K^G_f  ) 
      \arrow[r, "\sim"',"\varphi"]  
      \arrow[dr, "\sim" yshift=-2pt, "\pi_1" ' pos=0.57 ]
& \left ( \pi   \right)^{-1} (\Id_{Z(G)(\Q)}(\mathcal{X}_G) \cap K^G_f) 
       \arrow[d, "\sim"' {yshift=0.5pt, sloped}, "\pi" xshift=-0.5pt]    \\
&\Id_{Z(G)(\Q)}(\mathcal{X}_G) \cap K^G_f
\end{tikzcd}
\end{equation*}
 
Since $\pi_1$ induces an isomorphism with $\varphi$ and $\pi$ is injective, both $\varphi$ and $\pi$ are isomorphisms. The corollary follows immediately as $\dps \left ( \varphi   \right )^{-1}(\Id_{Z(P)(\Q)}(\mathcal{X}) \cap K_f  )$ is contained in $\left ( \pi_1  \right )^{-1}(\Id_{Z(G)(\Q)}(\mathcal{X}_G) \cap K^G_f  )$.
\end{proof}

With the above results, we can analyze the structure of unipotent fibers in more detail. Let $(\varphi, \xi) \colon (P', \mathcal{X}')\rightarrow (P, \mathcal{X})$ be a unipotent extension with kernel $W_0 \subset P'$. Let $K_f' \subset P'(\fadele)$ be an open compact subgroup and define $K_f =\varphi(K'_f)\subset P(\fadele)$ to be its image. Fix $x' \in \mathcal{X}', p'_f\in P'(\fadele)$; let $x$, $p_f$ be their images in $\mathcal{X}$, $P(\fadele)$, respectively.

As $W_0$ is unipotent, we have a short exact sequence $1 \rightarrow W_0(\Q) \rightarrow P'(\Q) \rightarrow P(\Q) \rightarrow 1$. Thus the preimage of the orbit $P(\Q)\cdot (x, p_f)\cdot K_f \subset \mathcal{X} \times P(\fadele)$ is $P'(\Q) \cdot \Bigl ( \xi^{-1}(x) \times \bigl (p'_f\cdot W_0(\fadele)\cdot K'_f\bigr ) \Bigr )$. Then the fiber over the point $[x, p_f]$ of the morphism $[\varphi] \colon M^{K'_f}(P', \mathcal{X}')(\C) \rightarrow M^{K_f}(P, \mathcal{X})(\C)$ is
\begin{IEEEeqnarray*}{rCl}
[\varphi]^{-1}([x,p_f])&=&P'(\Q)\backslash \Bigl ( P'(\Q) \cdot \bigl ( \xi^{-1}(x) \times \frac{p'_f\cdot W_0(\fadele) \cdot K'_f}{K'_f} \bigr) \Bigr )\\
&\overset{\sim}{\leftarrow}&  \Stab_{P'(\Q)}(x) \backslash \bigl (\xi^{-1}(x)  \times \frac{p'_f\cdot W_0(\fadele)\cdot K'_f}{K'_f} \bigr ) \\
&\overset{\sim}{\leftarrow}&   \Stab_{P'(\Q)}(x) \backslash \bigl (\xi^{-1}(x) \times \frac{ W_0(\Q)\cdot p'_f\cdot K'_f}{K'_f} \bigr ) \\
&\overset{\sim}{\leftarrow}&   \Stab_{P'(\Q)}(x)\cap p'_f K'_f p'^{-1}_f  \backslash   \bigl ( \xi^{-1}(x)\times \frac{ p'_f\cdot K'_f}{K'_f} \bigr ) \\
&=& \Stab_{P'(\Q)}(x)\cap p'_f K'_f p'^{-1}_f  \backslash  \xi^{-1}(x).
\end{IEEEeqnarray*}
(The arrows indicate the direction of natural inclusions.)

As $\xi^{-1}(x)$ is isomorphic to $W_0(\R)\cdot U_0(\C)$ (with $U_0= W_0\cap U_1$), we see that the fiber is connected.

From now on assume that $K'_f$ is neat. Then $\dps \Stab_{P'(\Q)}(x)\cap p'_f K'_f p'^{-1}_f $ projects into $\Stab_{P(\Q)}(x)\cap p_f K_f p^{-1}_f $, which is contained in $\Id_{Z(P)(\Q)}(\mathcal{X})\cap p_f K_f p_f^{-1}$ by Lemma \ref{subseclemma: mixed shimura varieties, centralizer lemma}. Therefore, by Corollary \ref{subseccorollary: mixed shimura varieties, surjection of open compact central intersection} above, 
\begin{IEEEeqnarray*}{rClCl}
   && \Stab_{P'(\Q)}(x)\cap p'_f K'_f p'^{-1}_f &\subset &\Id_{Z(P')(\Q)}(\mathcal{X}')\times W_0(\Q)\\
&\implies& \Stab_{P'(\Q)}(x)\cap p'_f K'_f p'^{-1}_f & =& \bigl ( \Id_{Z(P')(\Q)}(\mathcal{X}')\times W_0(\Q) \bigr ) \cap  p'_f K'_f p'^{-1}_f .
\end{IEEEeqnarray*}
Define 
\[ \dps \Gamma' \coloneq \frac{ \left ( \Id_{Z(P')(\Q)}(\mathcal{X}')\times W_0(\Q) \right ) \cap p'_f K'_f p'^{-1}_f }{  \Id_{Z(P')(\Q)}(\mathcal{X}') \cap  p'_f K'_f p'^{-1}_f }.
\] 
This is the projection of $\dps  \bigl ( \Id_{Z(P')(\Q)}(\mathcal{X}')\times W_0(\Q) \bigr )   \cap    p'_f K'_f p'^{-1}_f $ into $W_0(\Q)$ under $Z(P') \times W_0 \rightarrow W_0$. Hence the fiber is isomorphic to $\Gamma' \backslash W_0(\R)\cdot U_0(\C)$. The situation can be illustrated with the diagram below.

\begin{equation*}
\begin{tikzcd}
W_0(\R)\cdot U_0(\C) \arrow[r, "\sim"] \arrow[d, twoheadrightarrow] & \xi^{-1}(x) \arrow[d, twoheadrightarrow]\\
\Gamma' \backslash W_0(\R)\cdot U_0(\C) \arrow[r, "\sim" ]      &\dps \frac {\xi^{-1}(x) }{\left (\Id_{Z(P')(\Q)}(\mathcal{X}')\times W_0(\Q) \right ) \cap  p'_f K'_f p'^{-1}_f } 
\end{tikzcd}
\end{equation*}

The bottom isomorphism is given by $[w] \mapsto   [w\cdot x', p'_f] $. It is in general non-canonical. But it is so when the unipotent extension splits. 

\begin{subsecproposition} \label{subsecproposition: mixed shimura varieties, unipotent fiber}
Let $(P', \mX') \rightarrow (P, \mX)$ be a unipotent extension by $W_0$. Given $[x', p'_f] \in M^{K'_f}(P',\mX')(\C)$ and its image $[x, p_f] \in M^{K_f}(P, \mX)(\C)$, the fiber over $[x, p_f]$ is isomorphic to $\Gamma' \bs W_0(\R) \cdot U_0(\C)$ via the map $[w] \mapsto   [w\cdot x', p'_f] $ as shown above.
\end{subsecproposition}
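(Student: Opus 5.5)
The plan is to assemble the isomorphism out of the chain of identifications already displayed just before the statement, checking along the way that each step is a bijection. First, I will check that $[w] \mapsto [w\cdot x', p'_f]$ is well defined on $W_0(\R)\cdot U_0(\C)$ and lands in the fiber $[\varphi]^{-1}([x,p_f])$. Since $\varphi$ kills $W_0$ and $\xi$ is $P'(\R)\cdot U'(\C)$-equivariant, we get $\xi(w\cdot x') = x$, so the image lies in the fiber.

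Next I will show the orbit map $W_0(\R)\cdot U_0(\C) \to \xi^{-1}(x)$, $w\mapsto w\cdot x'$, is a bijection. Surjectivity is the statement that $\xi^{-1}(x)$ is a single $W_0(\R)\cdot U_0(\C)$-orbit, which follows from (A5) as noted before the proposition. For injectivity I will use that the stabilizer of $x'$ in $W_0(\R)\cdot U_0(\C)$ is trivial. An element of this stabilizer centralizes $h_{x'}$ and lies in a unipotent group. The weight decomposition of $\Lie W_0$ has only negative weights, so the only $\DS$-fixed vector is $0$, and hence the stabilizer is trivial. Together with the pullback property of Proposition \ref{subsecproposition: mixed shimura data, unipotent extension}, this gives the top isomorphism in the diagram.

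Then I will run the chain of inclusions in the displayed computation, checking each step. The first step uses that $P'(\Q)\to P(\Q)$ is surjective with kernel $W_0(\Q)$. The second uses strong approximation for the unipotent group $W_0$, namely $W_0(\fadele) = W_0(\Q)\cdot (p'_f K'_f p'^{-1}_f\cap W_0(\fadele))$, to replace $p'_f W_0(\fadele)K'_f$ by $W_0(\Q)p'_fK'_f$. The last step reduces to the stabilizer of the single coset $p'_fK'_f$. This identifies the fiber with $\bigl(\Stab_{P'(\Q)}(x)\cap p'_fK'_fp'^{-1}_f\bigr)\bs \xi^{-1}(x)$.

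The main obstacle is identifying this group, with its action, with $\Gamma'$ acting by left translation on $W_0(\R)\cdot U_0(\C)$. I will use the neatness argument already given: Lemma \ref{subseclemma: mixed shimura varieties, centralizer lemma} and Corollary \ref{subseccorollary: mixed shimura varieties, surjection of open compact central intersection} show that the group equals $\bigl(\Id_{Z(P')(\Q)}(\mX')\times W_0(\Q)\bigr)\cap p'_fK'_fp'^{-1}_f$. An element $(z,w_0)$ of this group acts on $w\cdot x'$ as $z w_0 w\cdot x' = w_0w\cdot x'$, because $z$ is central and acts trivially on $\mX'$. So the action factors through the projection to $W_0(\Q)$, whose image is $\Gamma'$, and transported through the orbit bijection it is left multiplication on $W_0(\R)\cdot U_0(\C)$. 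Passing to quotients then gives that the induced map $\Gamma'\bs W_0(\R)\cdot U_0(\C)\to [\varphi]^{-1}([x,p_f])$ is a bijection. It is a homeomorphism, indeed biholomorphic, because both sides carry the quotient structures coming from $\mX'$.
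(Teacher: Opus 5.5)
Your proposal is correct and follows the paper's argument: the same chain of identifications (exactness of $1\to W_0(\Q)\to P'(\Q)\to P(\Q)\to 1$, strong approximation for $W_0$, reduction to the single coset $p'_fK'_f$), followed by the centralizer lemma and its corollary on central intersections, under the standing neatness assumption on $K'_f$, to identify $\Stab_{P'(\Q)}(x)\cap p'_fK'_fp'^{-1}_f$ with $\bigl(\Id_{Z(P')(\Q)}(\mX')\times W_0(\Q)\bigr)\cap p'_fK'_fp'^{-1}_f$ and hence its action with that of $\Gamma'$. Your additional check that $W_0(\R)\cdot U_0(\C)$ acts freely on $\xi^{-1}(x)$, because $\Lie W_0$ has no $(0,0)$-component under $h_{x'}$, fills in a step the paper only asserts.
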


Finally, there are certain conditions under which $\Gamma'$ has a simpler description. We summarize them below.
\begin{subsecproposition}
In the following two situations, $\dps \Gamma' = W_0(\Q) \cap  p'_f K'_f p'^{-1}_f$.
\begin{enumerate} 
\item $Z(P)^0$ is an almost direct product of a $\Q$-split torus with a $\Q$-torus of compact type.

\item The unipotent extension $1 \rightarrow W_0 \rightarrow P' \rightarrow P \rightarrow 1 $ splits. $W_0$ is abelian and $K'_f = K^{W_0}_f \ltimes K_f$ with $K^{W_0}_f \subset W_0(\fadele)$ any (neat) open compact subgroup. 
\end{enumerate}
\end{subsecproposition}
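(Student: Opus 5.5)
In both situations the strategy is the same. We start from an arbitrary element $zw$ of the numerator $\bigl(\Id_{Z(P')(\Q)}(\mathcal{X}')\times W_0(\Q)\bigr)\cap p'_fK'_fp'^{-1}_f$, with $z\in \Id_{Z(P')(\Q)}(\mathcal{X}')$ and $w\in W_0(\Q)$. We show that the central part $z$ either is trivial or can be absorbed into $p'_fK'_fp'^{-1}_f$. Then $w$ itself lies in $W_0(\Q)\cap p'_fK'_fp'^{-1}_f$, which gives the inclusion $\Gamma'\subset W_0(\Q)\cap p'_fK'_fp'^{-1}_f$ for the projection to $W_0(\Q)$. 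The reverse inclusion is trivial, taking $z=1$.

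In every case the key input is the element $\varphi(zw)=\varphi(z)$. It lies in $\Id_{Z(P)(\Q)}(\mathcal{X})\cap p_fK_fp_f^{-1}$, because $\varphi(p'_fK'_fp'^{-1}_f)=p_fK_fp_f^{-1}$ with $K_f=\varphi(K'_f)$. This group is neat and arithmetic in $Z(P)(\Q)$.

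\textbf{Situation (1).} As in the proof of Lemma \ref{subseclemma: mixed shimura varieties, centralizer lemma}, a neat arithmetic subgroup of a group whose identity component is an almost direct product of a $\Q$-split torus and a $\Q$-torus of compact type is trivial. Its intersection with $Z(P)^0$ has finite index and is trivial, so the whole group is finite and torsion-free, hence trivial. So $\varphi(z)=1$.

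By Corollary \ref{subseccorollary: mixed shimura varieties, surjection of open compact central intersection}, the preimage of this trivial group in $\Id_{Z(P')(\Q)}(\mathcal{X}')$ maps isomorphically onto it. Since $z$ lies in that preimage, $z=1$. Hence $zw=w\in W_0(\Q)\cap p'_fK'_fp'^{-1}_f$. The same argument applied to the denominator shows that $\Id_{Z(P')(\Q)}(\mathcal{X}')\cap p'_fK'_fp'^{-1}_f$ is trivial, so $\Gamma'$ is literally $W_0(\Q)\cap p'_fK'_fp'^{-1}_f$.

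\textbf{Situation (2).} Fix the splitting $s\colon P\to P'=W_0\rtimes P$ with $K'_f=K^{W_0}_f\ltimes s(K_f)$. Put $g:=\varphi(z)$, which lies in $\Id_{Z(P)(\Q)}(\mathcal{X})\cap p_fK_fp_f^{-1}$. We argue as in the proof of Proposition \ref{subsecproposition: mixed shimura varieties, surjection of open compact central intersection}.
\begin{enumerate}
\item A neat central element of $P$ acts on $W_0$ through a torus of the type in (A8), so $g$ acts trivially on $W_0$.
\item Hence $s(g)$ is central in $P'$.
\item Since the splitting of groups extends to a splitting of mixed Shimura data, $s(g)$ fixes $\mathcal{X}'$.
\end{enumerate}
So $s(g)\in\Id_{Z(P')(\Q)}(\mathcal{X}')$ and $\varphi(s(g))=g=\varphi(z)$. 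The uniqueness in Corollary \ref{subseccorollary: mixed shimura varieties, surjection of open compact central intersection} then forces $z=s(g)$.

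Next we check that $z\in p'_fK'_fp'^{-1}_f$. Because $z$ is central in $P'(\fadele)$, it suffices to see $z\in K'_f$. We know $s(g)\in s(p_fK_fp_f^{-1})=s(p_f)\,s(K_f)\,s(p_f)^{-1}$, and centrality of $s(g)$ gives $s(g)\in s(K_f)\subset K'_f$. Therefore $w=z^{-1}(zw)\in W_0(\Q)\cap p'_fK'_fp'^{-1}_f$, as required. The hypothesis that $W_0$ is abelian is what makes $K'_f=K^{W_0}_f\ltimes s(K_f)$ a group and lets the splitting pass to the level of data.

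The main obstacle is this last step in situation (2): identifying the central lift $z$ with $s(\varphi(z))$, and showing that centrality lets us remove the conjugation by $p'_f$. This needs some care with how $p'_f$ relates to $s(p_f)$ modulo $W_0(\fadele)$.
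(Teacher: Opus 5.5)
Your proof is correct in both situations. The paper states this proposition without proof, so there is no argument of its own to compare against. Your route is the natural one built from the results proved just before it:
\begin{itemize}
\item in (1), the finiteness of neat arithmetic subgroups of $Z(P)$, as in the proof of Lemma~\ref{subseclemma: mixed shimura varieties, centralizer lemma}, together with the injectivity in Corollary~\ref{subseccorollary: mixed shimura varieties, surjection of open compact central intersection};
\item in (2), the argument from the proof of Proposition~\ref{subsecproposition: mixed shimura varieties, surjection of open compact central intersection}, followed by the uniqueness in that corollary.
\end{itemize}

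Three remarks.
\begin{enumerate}
\item The ``main obstacle'' you flag at the end is not an obstacle. Since $z\in Z(P')(\Q)$ is central in $P'(\fadele)$ and $g$ is central in $P(\fadele)$, we have $z\in p'_fK'_fp'^{-1}_f\iff z\in K'_f$ and $g\in p_fK_fp_f^{-1}\iff g\in K_f$. Your written argument already uses exactly this, so no comparison of $p'_f$ with $s(p_f)$ modulo $W_0(\fadele)$ is needed.
\item Your comment on the role of abelianness is inaccurate. $K^{W_0}_f\ltimes s(K_f)$ is a group because $s(K_f)$ normalizes $K^{W_0}_f$, which is implicit in the hypothesis. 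The splitting extends to a morphism of mixed Shimura data by the universal property of unipotent extensions, with no abelianness needed. In fact your argument never uses that $W_0$ is abelian.
\item Step (1) of situation (2) deserves a line of justification. Let $U'\subset W'$ denote the corresponding subgroups of $P'$. Axiom (A8) for $(P',\mathcal{X}')$ controls only the action on the graded pieces $\Lie(W_0\cap U')\subset\Lie U'$ and $\Lie W_0/\Lie(W_0\cap U')\hookrightarrow\Lie(W'/U')$. The image of the neat arithmetic group $\Id_{Z(P)(\Q)}(\mathcal{X})\cap p_fK_fp_f^{-1}$ acting on these pieces is finite and neat, hence trivial. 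Triviality on $\Lie W_0$ then follows because $g$ acts semisimply: $Z(P)\cap W=1$ by a weight argument, so $Z(P)$ is of multiplicative type. This is the same point the paper passes over with ``hence also on $W$''.
\end{enumerate}
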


\begin{subsecremark}
We will mostly apply the above result to the unipotent extension $1 \rightarrow U \rightarrow P \rightarrow P/U \rightarrow 1$. Then $\Gamma \subset U(\Q)$ is an arithmetic lattice and $\Gamma \backslash U(\C)$ is an algebraic torus. Thus $M^{K_f}(P, \mathcal{X})(\C) \rightarrow M^{\pi_U(K_f)}(P/U, \mathcal{X}/U)(\C)$ is locally an analytic $\mathbb{G}_{m, \C}^{\dim U}$-torsor. This will be revisited in the construction of toroidal compactifications.
\end{subsecremark}


          \subsection{Rational boundary components} \label{subsec: rational boundary components}

The rational boundary components of a given mixed Shimura datum $(P, \mathcal{X})$ are mixed Shimura data obtained from certain $\Q$-parabolic subgroups of $P$. When $(P, \mathcal{X})$ is irreducible, it is itself an (improper) rational boundary component. Rational boundary components  play a key role in the construction of toroidal compactifications. In this subsection, we present the process of defining rational boundary components and summarize some of their properties which will be useful later on. Of special importance is the associated open convex cone  $C(\mathcal{X}^0, P_1) \subset U_1(\R)(-1)$.

The first step in defining a rational boundary component is to obtain a new ``more mixed" Hodge structure on $\Lie P$ from the old one defined by some $h_x$. To achieve this, we introduce a few definitions.

Let $H_0\coloneq \DS\times_{\mathbb{G}_{m, \R}}GL_{2, \R}$ be the pullback along the two morphisms $N\colon \DS \rightarrow \mathbb{G}_{m, \R}$ and $\mathrm{det}\colon GL_{2, \R} \rightarrow \mathbb{G}_{m, \R}$. $H_0$ is called the reference group. It fits into the following diagram.

\[
\begin{tikzcd}
1 \arrow[r] &\mathbb{S}^1 \arrow[d, "\id"'] \arrow{r}   & H_0=\mathbb{S}\times_{\mathbb{G}_{m,\R}}GL_{2, \R}  \arrow[d, twoheadrightarrow, "\mathrm{pr}_{\DS}" ']\arrow[r, "\mathrm{pr}_{GL_{2}}~"]& GL_{2,\R}\arrow[d,twoheadrightarrow, "\mathrm{det}"]\arrow[r] &1\\
1 \arrow[r]     &  \mathbb{S}^1 \arrow[r]& \mathbb{S} \arrow[r, twoheadrightarrow, "N"] &\mathbb{G}_{m, \R} \arrow[r] & 1\\
\end{tikzcd}
\]

Let $h_0\colon \DS \rightarrow H_0$ be the morphism with two components $\id\colon \DS \rightarrow \DS$ and $\DS \rightarrow GL_{2,\R}, z=a+bi \mapsto \begin{bmatrix} a  &  -b\\ b &  a\end{bmatrix}$. (Over $\C$, this is $(z_1, z_2) \mapsto \begin{bmatrix} \frac{z_1+z_2}{2}  &  -\frac{z_1-z_2}{2i}\\  \frac{z_1-z_2}{2i} &   \frac{z_1+z_2}{2}\end{bmatrix}$). Also, define $h_\infty \colon \DS_\C \rightarrow H_{0, \C}$ to be the morphism with two components $\id_\C \colon \DS_\C \rightarrow \DS_\C$ and $\DS_\C \rightarrow GL_{2,\C}, (z_1, z_2) \mapsto \begin{bmatrix}z_1z_2 & i(z_1z_2-1)\\0&1\end{bmatrix}$. It is readily checked that these are well-defined morphisms to $H_0$ and $H_{0, \C}$. Note that $h_0$ is a morphism over $\R$ whereas $h_\infty$ is a morphism over $\C$.    

We observe that $\pr_{GL_2} \circ h_0 \colon \DS \rightarrow GL_{2, \R} $ induces on $M_0=\R^2$ a Hodge structure of type $\{(-1, 0), (0, -1)\}$ and $\pr_{GL_2}\circ h_\infty \colon \DS_\C \rightarrow GL_{2, \C} $ induces a Hodge structure of type $\{(-1, -1), (0,0)\}$. The two Hodge structures have the same Hodge filtration. For $M_0$, the only non-trivial step in the Hodge filtration is $F^0M_0=\C \cdot \begin{bmatrix} -i \\1 \end{bmatrix} $. The weight spaces are as follows:
\begin{IEEEeqnarray*}{rCl}
(M_0)^{0, -1}_{\pr_{GL_2} \circ h_0}&=&\C \cdot \begin{bmatrix} -i \\ 1 \end{bmatrix}, \\
(M_0)^{-1, 0}_{\pr_{GL_2} \circ h_0}&=&\C \cdot \begin{bmatrix} i \\ 1 \end{bmatrix} \text{ (the complex conjugate of $(M_0)^{0, -1}_{\pr_{GL_2} \circ h_0}$)},\\
(M_0)^{-1, -1}_{\pr_{GL_2}\circ h_\infty}&=&\C \cdot \begin{bmatrix} 1 \\ 0 \end{bmatrix} \text{ (follows from the definition)},\\
(M_0)^{0, 0}_{\pr_{GL_2} \circ h_\infty}&=&\C \cdot \begin{bmatrix} -i \\ 1 \end{bmatrix}.
\end{IEEEeqnarray*}

We can view the Hodge structure induced by  $\pr_{GL_2}\circ h_\infty$ as a modification of that induced by $\pr_{GL_2}\circ h_0$. If we require the $(-1, -1)$-weight space of $\pr_{GL_2}\circ h_\infty$ to be spanned by $(1,0)$ and that the two Hodge structures have the same Hodge filtration, then $\pr_{GL_2}\circ h_\infty$ is uniquely determined by $h_0$.

Let $B\subset GL_{2}$ be the Borel subgroup of invertible upper-triangular matrices and $B_0 \subset H_0$ be the preimage of  $B$ under $\pr_{GL_2}$. Define $\lambda_0\coloneq (h_0\circ \omega)^{-1}\cdot (h_\infty \circ \omega)$. $\lambda_0$ is a cocharacter $ \mathbb{ G}_{m,\C} \rightarrow H_0^{\mathrm{der}}\cap B_0$. Thus one has
\[
h_\infty \circ \omega = (h_0\circ \omega )\cdot \lambda_0 \colon  z \mapsto \begin{bmatrix}z^{-2}& i(z^{-2}-1)\\0&1\end{bmatrix}=\begin{bmatrix}z^{-1} & 0\\0&z^{-1}\end{bmatrix} \begin{bmatrix}z^{-1}& i(z^{-1}-z)\\0&z\end{bmatrix}.
\]
(Note that our convention on $\omega$ inverts $z$). 

In the next proposition, $N_0=\R^2$ is the representation of $H_0$ via $H_0 \overset{\pr_{\DS}}{\rightarrow }\DS \rightarrow GL_{2, \R}$ where the second arrow induces the standard complex structure on $\R^2=\C$ (i.e.  $ \pr_{GL_2} \circ h_0$). We view $M_0$ as a representation of $H_0$ via $\pr_{GL_2}$.

\begin{subsecproposition}[{\cite[4.4]{Pink}}]\label{subsecproposition: rational boundary components, two hodge structures}

Let $(P,\mathcal{X})$ be a mixed Shimura datum, $x \in \mathcal{X}$ and $\alpha \colon H_{0, \C } \rightarrow P_{\C}$ a morphism such that $\pi \circ \alpha \colon H_{0, \C} \rightarrow G_\C $ is defined over $\R$. Assume $\alpha \circ h_0 =h_x$. Then, as a representation of $H_{0 }$ via  $ \pi  \circ \alpha$,
\begin{enumerate}
\item $(\Lie U)_\R$ is a direct sum of copies of $\wedge^2M_0$. In particular, the decomposition on $\Lie U$ induced by $\pi \circ \alpha \circ h_\infty$ is of type $\{ (-1,-1) \}$.

\item $(\Lie V)_\R$ is a direct sum of copies of $ M_0$ and $N_0$. In particular, the decomposition on $\Lie V$ induced by $\pi \circ \alpha \circ h_\infty$ is of type $ \{ (0,0), (-1, 0), (0, -1),$ $ (-1,-1) \}$.

\item $(\Lie G)_\R$ is a direct sum of copies of $(N_0)^\vee\otimes N_0$, $(N_0)^\vee \otimes M_0$, $(M_0)^\vee \otimes M_0$. In particular, the decomposition on $\Lie G$ induced by $\pi \circ \alpha \circ h_\infty$ is of type $\{  (1,1), (0,1), (1,0), (-1, 1), (0,0), (1, -1), (-1, 0), (0, -1), (-1,-1)\}$.
\end{enumerate}
\end{subsecproposition}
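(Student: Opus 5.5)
The plan is to reduce everything to the representation theory of the reference group $H_0$ acting on the graded pieces of $\Lie P$ through $\pi\circ\alpha$ (and through $\alpha$ itself on $\Lie U$, $\Lie V$, where the action factors through $G$). Since $\pi\circ\alpha$ is defined over $\R$, each of $(\Lie U)_\R$, $(\Lie V)_\R$, $(\Lie G)_\R$ is a real algebraic representation of $H_0$. The condition $\alpha\circ h_0=h_x$ together with axioms (A4), (A5) pins down the Hodge types of these representations restricted along $h_0$: type $\{(-1,-1)\}$ on $\Lie U$, type $\{(-1,0),(0,-1)\}$ on $\Lie V$, and type $\{(-1,1),(0,0),(1,-1)\}$ on $\Lie G$. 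The goal is to show that these constraints force each piece to be built from the irreducibles $N_0$, $M_0$, $\wedge^2M_0$ as listed. The statements about $h_\infty$ then follow directly from the explicit Hodge types of $M_0$, $N_0$ under $h_\infty$ computed above, together with the rules for duals, tensors and $\wedge^2$: $\wedge^2M_0$ has type $(-1,-1)$, and the tensor products give the listed types.

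The key step is classifying the irreducible real representations $\rho$ of $H_0$ whose restriction along $h_0$ has a prescribed Hodge type. Over $\C$, $H_{0,\C}\cong \bG_m^2\times_{\bG_m}GL_2$, and its irreducibles are $\chi\otimes\mathrm{Sym}^k(\mathrm{std})\otimes\det^m$ with $\chi$ a character of $\DS_\C$, modulo the fiber-product identification. Restricting along $h_0$, the $GL_2$-factor becomes $\mathrm{pr}_{GL_2}\circ h_0$, which is $M_0$ with Hodge type $\{(-1,0),(0,-1)\}$. Hence $\mathrm{Sym}^k M_0$ has types $(-a,-b)$ with $a+b=k$, $0\le a\le k$, twisted by the $\DS$-character. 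I will bound the widths: the spread in $p$ of a Hodge type within $\{(-1,1),(0,0),(1,-1)\}$ is at most $2$, so $k\le 2$. In the $\Lie U$ and $\Lie V$ cases the spread is $0$ and $1$ respectively, so $k=0$ or $k\le1$. Matching weights and $\DS$-characters then leaves only the listed options:
\begin{itemize}
\item on $\Lie U$, $k=0$ with character $\det$, which is $\wedge^2M_0$, or a character of $\DS$ of type $(-1,-1)$, which is $N$, identified with $\det$ on $H_0$;
\item on $\Lie V$, either $M_0$ or $N_0$;
\item on $\Lie G$, pieces of $N_0^\vee\otimes N_0$, $N_0^\vee\otimes M_0$, $M_0^\vee\otimes M_0$.
\end{itemize}
Weight-$0$ pieces such as $N_0^\vee\otimes N_0=\C\oplus$ (the $(-1,1),(1,-1)$ part) appear here, and $M_0^\vee\otimes M_0$ contains $\mathrm{Sym}^2$ up to twist. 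Semisimplicity of $H_0$-representations, which holds because $H_0$ is reductive, then gives the direct sum decompositions.

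The main obstacle is the bookkeeping of the fiber product: a character of $\DS$ and $\det$ of $GL_2$ agree on $H_0$. This means, for example, that the weight-$(-2)$ one-dimensional real representation with trivial $S^1$-action must be identified as $\wedge^2M_0$ rather than as a genuinely different piece. This is what ensures $h_\infty$ gives type $(-1,-1)$ rather than some other weight. I would handle it by writing each irreducible as $\chi_{(p,q)}\otimes \mathrm{Sym}^k M_0$ and using $N\circ\mathrm{pr}_\DS=\det\circ\mathrm{pr}_{GL_2}$ to normalize $\chi$. I would then check reality: the conjugation $(z_1,z_2)\mapsto(\bar z_2,\bar z_1)$ pairs $\chi_{(p,q)}$ with $\chi_{(q,p)}$, and the real forms of these pairs are exactly $N_0$ and its twists. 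With this normalization, the $h_\infty$-types become a direct computation.
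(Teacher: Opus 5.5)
Your proposal is correct. The paper gives no proof of this proposition; it is quoted from Pink [4.4], so there is no in-paper argument to compare against.

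Your route is the natural argument for Pink's lemma. It rests on two facts.

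\begin{enumerate}
\item Representations of the reductive group $H_0$ are completely reducible.
\item Via $H_{0,\C}\cong\bG_m\times GL_2$, given by $((z_1,z_2),g)\mapsto(z_1,g)$, each irreducible has the form $z_1^a\det^m\otimes\mathrm{Sym}^k$. Such an irreducible is pinned down by the Hodge types of its restriction along $h_0$. These types have weight $-(a+2m+k)$ and $k+1$ consecutive values of $p$.
\end{enumerate}

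Axioms (A4) and (A5) force $p$-spread $0$, $1$, $2$ on $\Lie U$, $\Lie V$, $\Lie G$ respectively. This leaves exactly the constituents you list. The real forms and the $h_\infty$-types then follow as you describe.

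Keep your reading of (3) as ``pieces of'' explicit. The statement can only hold for irreducible constituents of $N_0^\vee\otimes N_0$, $N_0^\vee\otimes M_0$, $M_0^\vee\otimes M_0$, not for literal copies of these $4$-dimensional representations. For example, when $G=\bG_m$, $\Lie G$ is one-dimensional and trivial.
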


This shows that, under $\omega \circ h_\infty$, the new Hodge structure becomes more mixed.

The second step in the construction of a rational boundary component involves a certain kind of parabolic subgroups of $P$. To streamline the statement of the result, we briefly digress to introduce some general facts on algebraic groups. The reader may refer to \cite{Milne17} or \cite{Humphreys75}.

Let $P$ be a connected algebraic group  over a field $k$. Then $k$-parabolic subgroups of $P$ correspond bijectively to $k$-parabolic subgroups of $G^{\mathrm{ad}}$. By the structure theorem of semisimple algebraic groups, $ G^{\mathrm{ad}}=\prod_{1\leq i \leq r}G_i$ with each $G_i$ a $k$-simple adjoint group. A $k$-parabolic subgroup $Q \subset P$ is called $k$-admissible if it corresponds to $\prod_{1\leq i \leq r}Q_i$, where $Q_i $ is either maximal proper parabolic or equal to $G_i$ itself. 

Consider a (connected) reductive algebraic group $G$ over a field $k$. Let $Q$ be a $k$-parabolic subgroup of $G$ and $W_Q$ its unipotent radical. Choose a maximal $k$-split torus $T$ and a minimal $k$-parabolic $Q'$ such that $T\subset Q' \subset Q$. Then $\Lie Q$ is a direct sum of root spaces of $T$. Consider the system of simple roots relative to $Q'$.  Let $\beta_1, \ldots, \beta_r\in X^*(T)$ (the character group of $T$) be the simple roots that appear in $\Lie W_Q$ and $\beta_1^*, \ldots, \beta_r^*\in Y_*(T)_\Q$ (the cocharacter group of $T$ tensored with $\Q$) the dual of the $\beta_i (1\leq i\leq r)$. Define $\lambda$ to be the unique generator of  $ \left ( \Q\cdot(\beta^*_1+\cdots +\beta^*_r) \right ) \cap Y_*(T)$ that is a $\Q_{<0}$-multiple of $\beta^*_1+\cdots +\beta^*_r$. It is well-known that $\Lie Q$ is the direct sum of root spaces $(\Lie Q)_\beta$ such that $\beta \circ \lambda \leq 0$. Since all such pairs $(Q', T)$ are $Q(k)$-conjugate to each other, the $Q(k)$-conjugacy class of  $\lambda$ does not depend on the choice of $(Q', T)$. Let $\pi_Q \colon Q \rightarrow Q/W_Q$ be the canonical projection, then $\pi_Q \circ \lambda$ maps into the center of $Q/W_Q$. Thus this cocharacter is uniquely determined by $Q$.

Moreover, if $E\supset k$ is an extension field, then it can be checked that $\lambda_E$ is such a cocharacter constructed for $Q_E$ relative to any $(S, P)$ where $S$ is a maximal $E$-split torus and $P$ a minimal $E$-parabolic with $T_E\subset S\subset P \subset Q'_E\subset Q_E$. Therefore, the field of definition of the $Q(E)$-conjugacy class of any such $\lambda$ is $k$. In particular, if $G$ is defined over $\Q$, then the $Q(\C)$-conjugacy class of any such $\lambda$ defined via a  maximal $\C$-torus and a minimal $\C$-parabolic contained in $Q_\C$ has an element defined over $\Q$. And $\pi_Q \circ \lambda$ is defined over $\Q$.

We can now state the result relating $\Q$-admissible parabolics to $H_0$.

\begin{subsecproposition}[{\cite[4.6]{Pink}}] \label{subsecproposition: rational boundary components, morphism of reference group}

Let $(P, \mathcal{X})$ be a mixed Shimura datum and $Q\subset P$ a $\Q$-parabolic subgroup of $P$. Let $\pi_U \colon P\rightarrow P/U$ be the projection. The following are equivalent.
\begin{enumerate}
\item $Q$ is $\Q$-admissible.

\item There exists an $x \in \mX$ and a homomorphism $\alpha_x \colon H_{0, \C}\rightarrow P_\C$ satisfying the following conditions. 
           \begin{enumerate}[label=(\roman*)]
                 \item $\pi_U \circ \alpha_x \colon H_{0,\C} \rightarrow (P/U)_\C$ is defined over $\R$.
                 \item $h_x= \alpha_x \circ h_0$.
 
                \item $ \pi \circ \alpha_x \circ h_\infty \circ \omega = \mu \cdot \lambda$, where $\mu= \pi \circ h_x \circ \omega$ maps into $Z(G)_\C$ and $\lambda=\pi \circ \alpha_x \circ \lambda_0 $ is a cocharacter defining the parabolic subgroup $(Q/W)_\C$ constructed as above.\ Moreover, $\pi_Q \circ \lambda\colon \mathbb{G}_{m,\C} \rightarrow (Q/W)_\C \rightarrow (Q/W_Q)_\C$ is defined over $\Q$ (and maps into the center of $Q/W_Q$). 
            \end{enumerate}

\item For every $x \in \mathcal{X}$, there exists a unique $\alpha_x \colon H_{0, \C} \rightarrow P_\C$ as above.
\end{enumerate}
\end{subsecproposition}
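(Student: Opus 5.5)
The plan is to reduce everything to the pure adjoint group $G^{\mathrm{ad}}$, where the statement is the classical correspondence between maximal $\Q$-parabolics and rational boundary components of a Hermitian symmetric domain. I will then lift the resulting homomorphisms back to $P$, using the weight filtration on $\Lie W$ and a Levi splitting $P=W\rtimes G$.

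The logical order is (a)$\Rightarrow$(c)$\Rightarrow$(b)$\Rightarrow$(a). The implication (c)$\Rightarrow$(b) is trivial.

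\textbf{(a)$\Rightarrow$(c), pure step.} The $\Q$-parabolics of $P$ correspond bijectively to those of $G^{\mathrm{ad}}=\prod_i G_i$. By the product decomposition of $\mX^0$, it suffices to treat one $\Q$-simple factor $G_i$ with $Q_i$ either maximal or equal to $G_i$. If $Q_i=G_i$, take $\alpha$ trivial on the $SL_2$-part. If $Q_i$ is maximal, it is the normalizer of a rational boundary component $F$ of the domain $\mathcal{H}^0$. The standard construction (Cayley transform along strongly orthogonal roots, as in Ash--Mumford--Rapoport--Tai) produces a homomorphism $w_x\colon SL_{2,\R}\to G_{i,\R}$ with the following properties:
\begin{enumerate}
\item it is equivariant for $h_0$ restricted to $\DS^1$ and the projection of $h_x$;
\item the cocharacter $w_x\circ\mathrm{diag}(t^{-1},t)$ defines $Q_i$;
\item its centralizer contains the Levi of the boundary component.
\end{enumerate}
I then combine $w_x$ with the central part $\pi\circ h_x\circ\omega$ to get a morphism $H_{0}\to G_\R$. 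Condition (iii) on $\lambda=\pi\circ\alpha_x\circ\lambda_0$ is then the statement that this cocharacter lies in the conjugacy class attached to $Q$. That $\pi_Q\circ\lambda$ is defined over $\Q$ follows from the discussion preceding the proposition: this conjugacy class has field of definition $\Q$.

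\textbf{(a)$\Rightarrow$(c), lifting step.} First lift from $G$ to $P/U$ using a Levi splitting adapted to $h_x$. Uniqueness of the lift over $\R$ comes from the weights. By Proposition \ref{subsecproposition: rational boundary components, two hodge structures}(b), $\Lie V$ is a sum of copies of $M_0$ and $N_0$. Two lifts satisfying $\alpha\circ h_0=h_x$ differ by a cocycle with values in $V$ that is killed by the $h_0$-weights, since $\Lie V$ has no $(0,0)$-part under $h_0$. Then lift from $P/U$ to $P_\C$, allowing conjugation by $U(\C)$. This is forced because only $\pi_U\circ\alpha_x$ is required to be real. Again $\Lie U$ is pure of type $(-1,-1)$ for $h_0$, which pins down the lift uniquely. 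Uniqueness in $G$ itself should follow because any two such $\alpha_x$ agree on $h_0(\DS)$ and define the same parabolic. The centralizer computation of Lemma \ref{subseclemma: mixed shimura varieties, centralizer lemma} type (a compact group meeting a unipotent radical) should then force equality.

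\textbf{(b)$\Rightarrow$(a).} I will use Proposition \ref{subsecproposition: rational boundary components, two hodge structures}(c): $\Lie G_\R$ is a sum of $N_0^\vee\otimes N_0$, $N_0^\vee\otimes M_0$ and $M_0^\vee\otimes M_0$, so $\lambda$ acts on $\Lie G$ with weights in $\{-2,-1,0,1,2\}$. On each simple factor, the parabolic cut out by $\lambda$ therefore has a unipotent radical of step at most two, with center the weight $-2$ part. Since $\pi_Q\circ\lambda$ is rational and central in $Q/W_Q$, the $\Q$-simple roots occurring in $\Lie W_Q$ for each factor should be exactly those occurring in the weight $-2$ space. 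That space is of Hermitian (tube-type) form, which on a $\Q$-simple Hermitian factor forces a single $\Q$-simple root, hence maximality.

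I expect this maximality argument, and the uniqueness of $\alpha_x$ in the pure step, to be the main obstacles. For maximality I would first try reducing to the $\R$-simple components via Hermitian root system classification (type $C$ or $BC$ relative root systems), reading off that the coweight of $\lambda$ is fundamental.
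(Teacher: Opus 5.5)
Your lifting steps through $V$ and $U$ are fine: uniqueness holds because $\Lie V$ and $\Lie U$ have no $(0,0)$-part under $h_x$. The case $Q_i=G_i$ is also fine. But the two places where the statement has real content are not proved.

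\textbf{Rationality of $\pi_Q\circ\lambda$ in (a)$\Rightarrow$(c).} The remark before the proposition only says that the cocharacter built from the $\Q$-simple roots of $Q$ has rational image in $Q/W_Q$. The cocharacter you produce is $\nu=w_x\circ\mathrm{diag}(t^{-1},t)$, which is $W_Q(\C)$-conjugate to $\lambda=w_x\circ\lambda_0$. It comes from a purely real construction, and all you know is that it defines $Q_\R$ and is central modulo $W_Q$.

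That does not single out a rational cocharacter once a $\Q$-simple factor splits over $\R$. Take $G_i=\mathrm{Res}_{F/\Q}PGL_2$ with $F$ real quadratic and $Q=\mathrm{Res}_{F/\Q}B$. Every pair $(\mathrm{diag}(t^{-a},1),\mathrm{diag}(t^{-b},1))$ with $a,b>0$ defines $Q_\R$ and is central in the Levi, but only $a=b$ is defined over $\Q$.

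Nor is $\lambda$ literally conjugate to the primitive cocharacter of that recipe after your adjoint reduction. For $B\subset PGL_2$ the latter has weight $-1$ on $\Lie W_B$, while $\lambda$ has weight $-2$ by Proposition~\ref{subsecproposition: rational boundary components, two hodge structures}. So (iii) must be read as ``$\lambda$ defines $Q_\C$ and $\pi_Q\circ\lambda$ is rational'', and that is what you have to show.

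The missing idea is an intrinsic, Galois-stable characterization of $\pi_Q\circ\lambda$:
\begin{enumerate}
\item By Proposition~\ref{subsecproposition: rational boundary components, two hodge structures}, $\pi_Q\circ\lambda$ acts on $\Lie W_Q$ with weight $-2$ on its weight-$(-2)$ part and $-1$ on the rest.
\item Identify the weight-$(-2)$ part with the centre of $\Lie W_Q$, using non-degeneracy of the bracket on the weight-$(-1)$ part as in \cite{AMRT}. Then both pieces are defined over $\Q$.
\item $Z(Q/W_Q)$ acts faithfully on $\Lie W_Q$ in the adjoint group. Hence $\pi_Q\circ\lambda$ is the unique cocharacter of $Z(Q/W_Q)$ with this weight pattern, so it is fixed by $\mathrm{Aut}(\C/\Q)$.
\end{enumerate}
A smaller point: $H_0$ is not generated by $SL_2$ and $h_0\circ\omega$. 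You must set $\alpha(z,g)=\pi h_x(z)\,w_x(\rho(z)^{-1}g)$ with $\rho=\pr_{GL_2}\circ h_0$, which is a homomorphism exactly because of your equivariance property.

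\textbf{Uniqueness in the pure case.} Two such $\alpha_x$ that agree on $h_0(\DS)$ and define the same parabolic can a priori differ, because $Q$ does not determine the central cocharacter. The argument goes as follows.
\begin{enumerate}
\item Both give the same $\pi_Q\circ\lambda$. On each $\Q$-simple factor the rational central cocharacters defining $Q_i$ form a ray, since the $\Q$-split centre of the Levi is one-dimensional. The weight bound of Proposition~\ref{subsecproposition: rational boundary components, two hodge structures} (weights in $[-2,2]$, with $\pm2$ attained on $dw_x(\mathfrak{sl}_2)$) fixes the point on that ray.
\item Hence the real cocharacters satisfy $\nu_2=\mathrm{int}(u)\circ\nu_1$ with $u\in W_Q(\R)$.
\item Equivariance at $z=i$ gives $\theta\circ\nu_j=\nu_j^{-1}$ for $\theta=\mathrm{int}(\pi h_x(i))$. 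So $u^{-1}\theta(u)$ centralizes $\nu_1$, and $\theta(u)\in Q$. But $\theta(u)$ also lies in the unipotent radical of the opposite parabolic $\theta(Q)$, which meets $Q$ trivially. Thus $u=1$.
\item Finally $dw_x$ is recovered from $dw_x(H)$, where $H=\mathrm{diag}(-1,1)$. A rotation $\mathrm{Ad}\,\rho(z)$ carries $H$ to $\pm X$ with $X=\bigl(\begin{smallmatrix}0&1\\1&0\end{smallmatrix}\bigr)$, and $[H,X]$ spans $\mathfrak{so}_2$. This determines $\alpha_x$ by the formula above.
\end{enumerate}
This is the concrete form of your ``compact group meeting a unipotent radical'' heuristic.

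\textbf{(b)$\Rightarrow$(a).} Your fallback over the $\R$-simple factors works. An $\R$-split central cocharacter pairs with each $\R$-simple root of $\Lie W_Q$ to a negative integer. So two such roots in a $C$ or $BC$ system give weight $\le -3$ on the highest root, contradicting Proposition~\ref{subsecproposition: rational boundary components, two hodge structures}. You must add that each $\Q$-simple root in $\Lie W_Q$ restricts from an $\R$-simple root on every real factor, so non-maximality over $\Q$ forces non-maximality over $\R$.
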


\begin{subsecremark}~

\begin{enumerate}[label=(\roman*)]
\item Note that $\dps  \pi \circ \alpha_x \circ h_\infty \circ \omega = \pi \circ \alpha_x \circ \left( (h_0\circ \omega) \cdot \lambda_0 \right)= (\pi \circ \alpha_x \circ h_0 \circ \omega) \cdot ( \pi \circ \alpha_x \circ \lambda_0  )= (\pi \circ h_x \circ \omega) \cdot ( \pi \circ \alpha_x \circ \lambda_0  ) $. 

\item Combined with Proposition \ref{subsecproposition: rational boundary components, two hodge structures}, this implies that $\alpha_x \circ h_\infty$ and $\alpha_x \circ h_0 \circ \omega=h_x \circ \omega $ both factor through $Q_\C$. Further, $(\Lie Q)_\C$ is the direct sum of  all non-positive weight spaces under $\alpha_x \circ h_\infty \circ \omega$.
\end{enumerate}
\end{subsecremark}

\begin{subsecexample}\label{subsecexample: rational boundary components, gl2 part 1}
Take $(GL_{2, \Q}, \mathcal{H}_2)$ to be the (pure) Shimura datum defined in Example \ref{subsecexample: mixed shimura data, siegel modular variety}. And let $x\in \mathcal{H}_2$ be the point corresponding to $\pr_{GL_2} \circ h_0$ defined at the beginning of this subsection. Then with respect to the $\Q$-admissible parabolic subgroup $B\subset GL_{2,\Q}$ of upper-triangular matrices, $\alpha_x$ is just $\pr_{GL_2} \colon H_{0, \R} \rightarrow GL_{2, \R}$, which is already defined over $\R$ and consistent with the fact that $GL_{2, \Q}$ is reductive. We leave it to the reader to verify that under $\alpha_x \circ h_\infty \colon (z_1, z_2) \mapsto \begin{bmatrix}z_1z_2 & i(z_1z_2-1)\\0&1\end{bmatrix}$, the Lie algebra of $B  \subset GL_{2,\Q}$ is the direct sum of subspaces of non-positive weights. (Indeed, $\left (\Lie GL_{2,\Q} \right )_\C$ is of type $\{(1, 1), (0, 0), (-1, -1)\}$.) Note that $\alpha_x \circ \lambda_0 \colon t \mapsto \begin{bmatrix} t^{-1} &  i(t^{-1}-t)\\ 0 & t \end{bmatrix}$ maps into $B_\C$ and is defined over $\Q$ modulo its unipotent radical $\left \{ \begin{bmatrix}1 & *\\ 0 & 1 \end{bmatrix} \right \}$. It defines $B_\C$ relative to the maximal $\C$-torus 
\[
\left \{ \left .  \begin{bmatrix} x & i(x-y)\\ 0&y\end{bmatrix}=\begin{bmatrix} 1 & -i\\ 0&1\end{bmatrix} \begin{bmatrix} x& 0\\ 0&y\end{bmatrix} \begin{bmatrix} 1 & -i\\ 0&1\end{bmatrix}^{-1}\right |  x, y\in \C^\times  \right \}.
\]

Further, the smallest $\Q$-normal subgroup $P_1\subset B$ containing the image of $\alpha_x \circ h_\infty$ consists of matrices $\left \{ \begin{bmatrix}* & *\\ 0 & 1 \end{bmatrix} \right \}$. Indeed, this is isomorphic to the group $\mathbb{U}_0 \rtimes \mathbb{G}_{m, \Q}$ as defined in Example \ref{subsecexample: mixed shimura data, simplest example}, where $\mathbb{G}_{m, \Q}$ acts by multiplication  on the first factor.
\end{subsecexample}

Let $(P, \mathcal{X})$ be a mixed Shimura datum and $Q\subset P$ a $\Q$-admissible parabolic subgroup. Each $ h_x $ now corresponds to an $\alpha_x$. If $q \in Q(\R)\cdot U(\C)$, then $\mathrm{int}(q) \circ \alpha_x= \alpha_{q\cdot x}$ by the uniqueness characterization above. Define $P_1 \unlhd Q$ to be the smallest $\Q$-normal subgroup through which $\alpha_x \circ h_\infty$ factors. By conjugacy, it follows that all $\alpha_x \circ h_\infty$ factor through $P_1$. We have seen that the assignment $\mathcal{X} \rightarrow \Hom(\DS_\C, P_{1, \C}), x \mapsto \alpha_x \circ h_\infty$ is $Q(\R)\cdot U(\C)$-equivariant.

\begin{subseclemma}[{\cite[4.7-4.11]{Pink}}] \label{subseclemma: rational boundary components, P1 W1 U1}
Let $W_1$ be the unipotent radical of $P_1$.

\begin{enumerate}
\item Each $\alpha_x \circ h_\infty$ induces a rational mixed Hodge structure on $\Lie Q$ with $ W_1= \exp (W_{-1}(\Lie Q))$. (Here $W_1$ is the unipotent radical of $P_1$.)

\item Define $U_1 \coloneq \exp (W_{-2}(\Lie Q))$. ($U_1$ is normal in $Q$.) The groups $P_1$, $W_1$, $U_1$, and the homomorphisms $\{\alpha_x \circ h_\infty\}_{x\in\mathcal{X}}$ satisfy axioms A(2)-A(8) in Definition \ref{subsecdefinition: mixed shimura data, mixed shimura datum}. This set of data only depends on $(P, \mathcal{X})$ and $Q$.

\item The assignment $\mathcal{X}  \rightarrow \Hom(\DS_\C, P_{1, \C}), \, x \mapsto \alpha_x \circ h_\infty$ is $Q(\R)\cdot U(\C)$-equi\-vari\-ant. In addition, it maps each connected component $\mathcal{X}^0 \subset \mathcal{X}$ to a $P_1(\R)^0\cdot U_1(\C)$-orbit.
\end{enumerate}
\end{subseclemma}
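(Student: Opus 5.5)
The plan is to derive everything from Proposition \ref{subsecproposition: rational boundary components, two hodge structures}, Proposition \ref{subsecproposition: rational boundary components, morphism of reference group} and Corollary \ref{subseccorollary: mixed hodge}, applied to the group $Q$ rather than $P$. Fix $x$ and write $\nu_x \coloneq \alpha_x\circ h_\infty\circ\omega$. By Proposition \ref{subsecproposition: rational boundary components, morphism of reference group}(iii), $\pi\circ\nu_x=\mu\cdot\lambda$. Hence its image in $Q/W_Q$ is $\pi_Q\circ(\mu\lambda)$, which is defined over $\Q$ and central.

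For (1), I will apply Corollary \ref{subseccorollary: mixed hodge} to the connected group $Q$, its unipotent radical $W_Q$, and the cocharacter $\nu_x$. This shows that the weight filtration $W_\bullet(\Lie Q)$ is defined over $\Q$ and $Q$-stable. The Remark after Proposition \ref{subsecproposition: rational boundary components, morphism of reference group} says $\Lie Q$ is exactly the sum of the non-positive weight spaces. Together with the type computations of Proposition \ref{subsecproposition: rational boundary components, two hodge structures}, this makes $\alpha_x\circ h_\infty$ a rational mixed Hodge structure on $\Lie Q$ of weights $\le 0$. In particular $W_{-1}(\Lie Q)$ is a nilpotent $\Q$-ideal contained in $\Lie W_Q$, and $\exp W_{-1}$, $\exp W_{-2}$ are normal unipotent $\Q$-subgroups of $Q$. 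To identify $\exp W_{-1}(\Lie Q)$ with $W_1$ I will argue in two directions. First, $\Lie P_1$ is a $Q$-stable ideal containing $\Lie$ of the image of $\nu_x$. Since $\nu_x$ acts on the weight-$n$ piece by $t^n$, for $n\neq 0$ we get $[\Lie\,\mathrm{im}(\nu_x),\mathrm{Gr}_n]=\mathrm{Gr}_n$, so normality forces $W_{-1}(\Lie Q)\subset \Lie P_1$. Second, $\mathrm{Gr}_0$ of $\Lie P_1$ is a pure weight-$0$ $\Q$-Hodge structure stable under a group whose image in $Q/W_Q$ is reductive. From this I expect $P_1/\exp W_{-1}$ to be reductive, so $W_1=\exp W_{-1}(\Lie Q)$.

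For (2), the axioms are checked one by one. (A4) and (A5) follow from the types listed in Proposition \ref{subsecproposition: rational boundary components, two hodge structures} after discarding the positive weights, which do not occur in $\Lie Q$, together with (1) and the definition of $U_1$. (A3) is Proposition \ref{subsecproposition: rational boundary components, morphism of reference group}(iii). (A2) comes from the fact that $\pi_U\circ\alpha_x$ is defined over $\R$ and that $h_\infty$ differs from a real morphism only through the weight $-2$ direction, which lands in $U_1$. For (A6)–(A8), the key point is that the reductive quotient of $P_1$ is a normal factor of the Levi of $Q$ on which $\alpha_x(H_0)$ acts as in the $\R^2$-model. I will check the Cartan involution condition on this factor, which is essentially $h_0$ on the $GL_2$-type part. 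Absence of compact $\Q$-factors will come from the minimality of $P_1$: a compact factor commuting with $\nu_x$ could be discarded. (A8) is inherited from $P$, since $Z(G_1)^0$ acts on $U_1$ and $V_1$ through the tori already controlled by (A8) for $P$ and through $\mu\lambda$. Independence of $x$ follows from the uniqueness of $\alpha_x$ and the identity $\mathrm{int}(q)\circ\alpha_x=\alpha_{qx}$.

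For (3), equivariance is again $\mathrm{int}(q)\circ\alpha_x=\alpha_{q\cdot x}$. For the orbit statement I write $\mathcal{X}^0=P(\R)^0 U(\C)\cdot x$ and use $P(\R)^0=Q(\R)^0\cdot K_x$, where $K_x$ is compact and stabilizes $x$ modulo $W$. This reduces the claim to $\mathcal{X}^0=Q(\R)^0U(\C)\cdot x$. It then remains to show that $Q(\R)^0$ acts on $\{\alpha_y\circ h_\infty\}$ through $P_1(\R)^0U_1(\C)$, that is, that $Q=P_1\cdot C$ with $C$ centralizing $\alpha_x\circ h_\infty$. I expect this decomposition of $Q$, together with (A6)/(A7) for $P_1$, to be the main obstacle. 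I would attack it by splitting the Levi of $Q$ according to the $\Q$-simple factors of $G^{\mathrm{ad}}$, using the admissibility of $Q$ (maximal parabolic in each factor) to describe each factor explicitly.
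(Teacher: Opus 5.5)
The paper itself gives no proof of this lemma; it only cites Pink 4.7--4.11. So the question is whether your sketch closes on its own. It does not, in two essential places, and one justification is wrong.

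\textbf{Reductivity of $P_1/W_1$.} Your inclusion $W_{-1}(\Lie Q)\subset\Lie P_1$ is fine. The other half of (1), that $P_1/\exp W_{-1}(\Lie Q)$ is reductive, is only ``expected'', and the reason you give cannot work. It applies verbatim to $Q$ itself: $\mathrm{Gr}_0\Lie Q$ is a pure weight-$0$ Hodge structure stable under $Q$, and the image of $Q$ in $Q/W_Q$ is reductive. Yet $Q/W_1$ is in general not reductive, since its unipotent radical is $W_Q/W_1\cong W/(W\cap W_1)$. For the Kuga data of Subsection~\ref{subsec: kuga varieties} this is $V/V_0^\perp\neq 0$.

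The missing idea combines the minimality of $P_1$ with the Hodge types of Proposition~\ref{subsecproposition: rational boundary components, two hodge structures}. There, $\Lie W_Q/\Lie W=\Lie W_{Q/W}$ has only negative weights, $\Lie U$ has weight $-2$, and the weight-$0$ part of $\Lie V$ comes from the $M_0$-summands. Hence $\mathrm{Gr}_0\Lie W_Q$ is of type $(0,0)$. It follows that the image $T$ of $\alpha_x\circ h_\infty$ in $\bar{Q}:=Q/W_1$ centralizes $R:=W_Q/W_1$. Since $T$ is $R(\C)$-conjugate into $L_\C$ for a Levi $\Q$-subgroup $L$ of $\bar{Q}$, and $R$ centralizes $T$, we get $T\subset L_\C$. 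Then $Z_L(R)^0$ is a reductive normal $\Q$-subgroup of $\bar{Q}$ containing $T$. Minimality gives $P_1/W_1\subset Z_L(R)^0$, so $P_1\cap W_Q=W_1$ and $W_1$ is the unipotent radical of $P_1$.

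\textbf{Part (3).} You leave the essential step open, and the reformulation you aim for is false as stated. There is in general no real subgroup $C$ with $Q=P_1\cdot C$ and $C$ centralizing $\alpha_x\circ h_\infty$. Such a $C$ would fix the imaginary part of $\alpha_x\circ h_\infty$, so $Q(\R)^0=P_1(\R)^0C(\R)^0$ would move it only through $P_1(\R)^0$. Take the Siegel datum with $V_0$ Lagrangian and $n\ge 2$. There $P_1(\R)$ acts on $U_1(\R)(-1)$ by scalars, so $C(\mX^0,P_1)$ would be a ray rather than the cone of definite symmetric matrices. The ``linear part'' of the Levi moves $\alpha_x\circ h_\infty$ by $U_1(\C)$-conjugation, so the decomposition must be taken modulo $W_1$.

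Taken modulo $W_1$, (3) follows from the observation above, with no factor-by-factor classification:
\begin{enumerate}
\item Put $G_1:=P_1/W_1\subset L$. Then $L=G_1\cdot Z_L(G_1)^0$, hence $\bar{Q}(\R)^0=G_1(\R)^0\cdot\bigl(R\rtimes Z_L(G_1)^0\bigr)(\R)^0$, and the second factor centralizes $T$.
\item So for $q\in Q(\R)^0$ there is $p\in P_1(\R)^0$ such that $\mathrm{int}(p^{-1}q)\circ\alpha_x\circ h_\infty$ and $\alpha_x\circ h_\infty$ have the same image in $Q/W_1$.
\item The image of $\alpha_x\circ h_\infty$ in $Q/U_1$ is defined over $\R$ (your (A2) argument), and $W_1/U_1$ and $U_1$ are vector groups.
\item Vanishing of $H^1$ of the torus $\DS$ then shows the two maps differ by conjugation by an element of $W_1(\R)$ followed by one of $U_1(\C)$.
\end{enumerate}

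\textbf{(A8).} Your (A8) argument does not work as written. (A8) for $P$ only controls $Z(G)^0$ acting on $U$ and $V$. But $U_1$ and $W_1/U_1$ also contain pieces of $\Lie W_{Q/W}$. Moreover, the centre of a Levi of $Q$ can act on $U_1$ through tori that are neither split nor compact, for example the norm-one torus for a Hilbert modular datum. So the minimality of $P_1$ is needed to keep such tori out of $Z(G_1)^0$.
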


\begin{subsecremark}~
\begin{enumerate}
\item It follows from Lemma \ref{subseclemma: rational boundary components, P1 W1 U1} (1), that $W_Q =W_1\cdot W$, and the decomposition induced on $\Lie W_Q$ by $\alpha_x \circ h_\infty$ is of type $\{(0,0), ({-}1, 0), (0, -1), (-1, -1)\}$.

\item By the proof of {\cite[4.7]{Pink}}, $Q(\R)\cdot U(\C)$ acts transitively on $\mathcal{X}$. Thus, (3) says that the map sends every $Q(\R)^0\cdot U(\C)$-orbit to a $P_1(\R)^0\cdot U_1(\C)$-orbit.

\item We could replace our $h_0\colon \DS \rightarrow H_0$ with any other morphism having the same $\pr_\DS \circ h_0 $ and such that $\pr_{GL_2}\circ h_0$ induces a Hodge structure of type $\{(-1, 0), (0,-1)\}$ on $M_0$. (Recall that $h_\infty$ is uniquely determined by $h_0$.) The resulting $P_1$, $W_1$, and $U_1$ remain the same.

\end{enumerate}
\end{subsecremark}

The map $\mathcal{X} \rightarrow \Hom(\DS_\C, P_{1, \C}), x \mapsto \alpha_x \circ h_\infty$ can map multiple $\mathcal{X}^0$'s to the same $P_1(\R)^0\cdot U_1(\C)$-orbit. We wish to separate their images so that distinct connected components have distinct images. More precisely, we consider the following map (in which we denote the connected component containing $x$ as $[x]$ and let $P_1(\R)\cdot U_1(\C)$ act on $\pi_0(\mathcal{X})$ via $P_1(\R)$).
\begin{IEEEeqnarray*}{cccCl}
\zeta&\colon&\mathcal{X}&\longrightarrow&\pi_0(\mathcal{X}) \times  \Hom(\DS_\C, P_{1,\C})\\
&&x&\longmapsto& ([x],\alpha_x \circ h_\infty)
\end{IEEEeqnarray*}

The image of $\zeta$ is contained in finitely many $P_1(\R)\cdot U_1(\C)$-orbits and $\zeta$ by definition is injective on connected components. (After all, there are finitely many connected components in $\mathcal{X}$.) Let $\mathcal{X}_1$ be one such orbit (containing, say, $\zeta(x)$ for some $x\in \mathcal{X}$) and $\mathcal{X}^+_{(P_1,\mathcal{X}_1)} \coloneq \zeta^{-1}(\mathcal{X}_1) \subset \mathcal{X}$ be its preimage. Then $\mathcal{X}_{(P_1,\mathcal{X}_1)}^+ \rightarrow \mathcal{X}_1$ is bijective on connected components and $ \Stab_{Q(\R)}(\mX_1) \cdot U(\C)$-equivariant. The projection onto the second factor induces a $P_1(\R)\cdot U_1(\C)$-equivariant map $\mathcal{X}_1 \rightarrow P_1(\R)\cdot U_1(\C)\cdot (\alpha_x \circ h_\infty)$ with finite fibers.

\begin{subsecdefinition}[rational boundary components]\label{subsecdefinition: rational boundary components, rational boundary components}

Given the mixed Shimura datum $(P, \mathcal{X})$ and the $\Q$-admissible parabolic subgroup $Q$, a pair $(P_1, \mathcal{X}_1)$ thus constructed is called a \emph{rational boundary component} of $(P, \mathcal{X})$.
\end{subsecdefinition}
 
With fixed $(P, \mathcal{X})$, $Q$ and $P_1$, choosing an $\mathcal{X}_1$ amounts to choosing a $P_1(\R)$-orbit in $\pi_0(\mathcal{X})$: the $P_1(\R)\cdot U_1(\C)$-orbit generated by the $\zeta$-image of the chosen $P_1(\R)$-orbit forms an $\mathcal{X}_1$. 

\begin{subsecremark}
Note that, by construction, a rational boundary component is always irreducible as a mixed Shimura datum.
\end{subsecremark}

Let us collect some of the properties of $(P_1, \mathcal{X}_1)$.

\begin{subsecproposition}[{\cite[4.12, 4.13, 4.19]{Pink}}]\label{subsecproposition: rational boundary components, rational boundary components} ~

\begin{enumerate}
\item $(P_1, \mathcal{X}_1)$ is an irreducible mixed Shimura datum, equipped with a map $\mathcal{X}^+_{(P_1,\mathcal{X}_1)}  \rightarrow \mathcal{X}_1$ that is $ \Stab_{Q(\R)}(\mX_1) \cdot U(\C)$-equivariant and bijective on connected components. 

In particular, $\mathcal{X}^+_{(P_1,\mathcal{X}_1)} \rightarrow \mathcal{X}_1$ is $P_1(\R)\cdot Q(\R)^0\cdot U(\C)$-equivariant.

\item The map $\mathcal{X}_{(P_1,\mathcal{X}_1)}^+ \rightarrow \mathcal{X}_1$ is injective and holomorphic.

\item Given $x\in \mathcal{X}_{(P_1,\mathcal{X}_1)}^+$, let $x_1 \in \mathcal{X}_1$ be its image. Suppose $\rho \colon P \rightarrow GL_V$ is a $\Q$-representation of $P$. Then the Hodge structures on $V$ induced by  $\rho \circ h_x$ and $\rho \circ h_{x_1}=\rho \circ \alpha_x \circ h_\infty$  have the same Hodge filtration.

\item Suppose $(P_2, \mathcal{X}_2)$ is a rational boundary component of $(P_1, \mathcal{X}_1)$ associated with a $\Q$-admissible parabolic subgroup $Q_{12}\subset P_1$. Then there exists a $\Q$-admissible parabolic subgroup $Q_2\subset P$ such that $\nlb Q_{12}= P_1 \cap Q_2$ and $(P_2, \mathcal{X}_2)$ is the rational boundary component of $(P, \mathcal{X})$ associated with $Q_2$.\ Further, $\mathcal{X}^+_{(P_2,\mathcal{X}_2)}$ is contained in $\mathcal{X}^+_{(P_1,\mathcal{X}_1)}$  and mapped under $\nlb \mathcal{X}^+_{(P_1,\mathcal{X}_1)}  \rightarrow \mathcal{X}_1$ into $(\mathcal{X}_1)^+_{(P_2,\mathcal{X}_2)}$.\ The composite map 
\[ 
\zeta_{\mX_1}^{\mX_2} \,\circ \,\bigl ( \zeta_\mX^{\mX_1}|_{\mX^+_{(P_2,\mX_2)}} \bigr )\colon \mathcal{X}^+_{(P_2,\mathcal{X}_2)} {\rightarrow} (\mathcal{X}_1)^+_{(P_2,\mathcal{X}_2)}  {\rightarrow} \mathcal{X}_2
\] is the map $\zeta_{\mX}^{\mX_2}$ defined for $(P_2, \mathcal{X}_2)$ as a rational boundary component of $(P, \mathcal{X})$.

Thus we have the following diagram.

\begin{equation*}
\begin{tikzcd}
\mathcal{X}^+_{(P_1,\mathcal{X}_1)} \arrow[r, rightarrowtail, "{\zeta_{\mX}^{\mX_1}}"] & \mathcal{X}_1&\\
\mathcal{X}^+_{(P_2, \mathcal{X}_2)} \arrow[u, rightarrowtail] \arrow[r, rightarrowtail]&(\mathcal{X}_1)^+_{(P_2,\mathcal{X}_2)}\arrow[u, rightarrowtail] \arrow[r, rightarrowtail,"{\zeta_{\mX_1}^{\mX_2}}"] &\mathcal{X}_2\\
\mathcal{X}^+_{(P_2, \mathcal{X}_2)} \arrow[u, equal]\arrow[rr,"{\zeta_{\mX}^{\mX_2}}"]&&\mX_2 \arrow[u, equal]
\end{tikzcd}
\end{equation*}

\end{enumerate}

\end{subsecproposition}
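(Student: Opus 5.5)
The plan is to derive each item of Proposition \ref{subsecproposition: rational boundary components, rational boundary components} from Lemma \ref{subseclemma: rational boundary components, P1 W1 U1}, the uniqueness in Proposition \ref{subsecproposition: rational boundary components, morphism of reference group}, and an analysis of the reference group $H_0$. I would prove (3) first, since (2) depends on it.

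For (3), I would reduce to representations of $H_0$. Since $h_x=\alpha_x\circ h_0$ and $h_{x_1}=\alpha_x\circ h_\infty$, it suffices to show the following: for every representation $\rho'$ of $H_{0,\C}$, the maps $\rho'\circ h_0$ and $\rho'\circ h_\infty$ give the same Hodge filtration.
\begin{itemize}
\item On $M_0$ this is the computation done before Proposition \ref{subsecproposition: rational boundary components, two hodge structures}: both filtrations have $F^0=\C\cdot(-i,1)^{T}$.
\item On $N_0$ it is trivial, because $\pr_{\DS}\circ h_0=\pr_{\DS}\circ h_\infty$.
\item Hodge filtrations are compatible with $\oplus$, $\otimes$ and duals. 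Every representation of the reductive group $H_{0,\C}$ is a direct summand of tensor constructions on $M_0\oplus N_0$, which is faithful. So equality on $M_0$ and $N_0$ propagates to every $\rho'$, and in particular to $\rho\circ\alpha_x$.
\end{itemize}
A cleaner alternative is to exhibit an element $b\in B_0(\C)$, stabilizing $F^\bullet$ on $M_0$ and $N_0$, with $h_\infty\circ\mu_0=\mathrm{int}(b)\circ h_0\circ\mu_0$. Since the Hodge filtration is the one attached to $h\circ\mu_0$, this gives the claim at once.

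For (1), axioms (A2)–(A8) for $(P_1,\mX_1)$ are Lemma \ref{subseclemma: rational boundary components, P1 W1 U1}(2). For (A1), $\mX_1$ is by definition a $P_1(\R)\cdot U_1(\C)$-orbit, and the projection to $\Hom(\DS_\C,P_{1,\C})$ has finite fibers because $\pi_0(\mX)$ is finite. Irreducibility holds because $P_1$ was defined as the smallest $\Q$-normal subgroup through which the $\alpha_x\circ h_\infty$ factor. To check that this is still minimal inside $P_1$, I would conjugate by $Q(\Q)$: a normal subgroup of $P_1$ containing all these images is stable under $Q(\Q)$-conjugation, so its $Q$-normal closure is contained in it. Equivariance of $\zeta$ follows from $\mathrm{int}(q)\circ\alpha_x=\alpha_{qx}$, which comes from uniqueness in Proposition \ref{subsecproposition: rational boundary components, morphism of reference group}. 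Bijectivity on connected components is built in, since $\zeta$ records $[x]$ and connected components map to $P_1(\R)^0\cdot U_1(\C)$-orbits.

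For (2), the complex structures on $\mX$ and $\mX_1$ come from the period maps recording the Hodge filtration of a faithful representation, with the weight filtration fixed on each component (\cite[1.10, 2.1]{Pink}). By (3), $\zeta$ restricted to a component is then the identity on filtrations, hence holomorphic. For injectivity: on a component, $h_x$ is determined by its Hodge filtration together with the fixed weight filtration, and by Proposition \ref{subsecproposition: mixed shimura data, properties}(1) $h$ is injective on connected components. Separation of distinct components is handled by the $\pi_0(\mX)$ factor of $\zeta$.

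For (4), which I expect to be the main obstacle, I would proceed as follows.
\begin{itemize}
\item Parabolics of $P_1$ correspond to parabolics of $G_1^{\mathrm{ad}}$. Here $G_1=P_1/W_1$ is a normal factor of the Levi $Q/W_Q$ up to center.
\item I would define $Q_2$ as the preimage in $Q$ of $Q_{12}/(P_1\cap W_Q)$ times the complementary factor. A $Q$-normal subgroup of a Levi plus the unipotent radical gives a parabolic of $P$, and one checks that it is $\Q$-admissible, i.e. maximal in each simple factor, using the explicit cocharacter description of $\lambda$.
\item The key identity is $\alpha^{(1)}_{x_1}\circ\alpha_x$-compatibility: one needs a morphism $H_{0}\times H_0\to P$ whose diagonal restriction satisfies conditions (i)–(iii) of Proposition \ref{subsecproposition: rational boundary components, morphism of reference group} for $Q_2$. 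Then uniqueness gives $\alpha^{(2)}_x$ and hence $\zeta^{\mX_2}_\mX=\zeta^{\mX_2}_{\mX_1}\circ\zeta^{\mX_1}_\mX$.
\end{itemize}
The containments $\mX^+_{(P_2,\mX_2)}\subset\mX^+_{(P_1,\mX_1)}$ then follow by tracking the orbit choices. Verifying the cocharacter identity and admissibility is where the real work, following \cite[4.19]{Pink}, lies.
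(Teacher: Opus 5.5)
\paragraph{Where the argument breaks.} Your treatment of (3), by reducing to representations of $H_{0,\C}$ or by conjugating $h_0\circ\mu_0$ into $h_\infty\circ\mu_0$ inside $B_0(\C)$, is sound. The deduction of (2) from it is also sound. The genuine gap is in (4), and it starts with your choice of $Q_2$.

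You build $Q_2$ inside $Q$, but the admissible parabolic required by (4) is in general \emph{not} contained in $Q$. The group you construct is also not admissible. Take the Siegel case of Subsection \ref{subsec: siegel modular varieties} with totally isotropic $0\neq V_0\subsetneq V_0'$ and $Q=\Stab_{CSp_{2n}}(V_0)$. Let $Q_{12}\subset P_1$ be the admissible parabolic attached to $V_0'/V_0\subset V_0^\perp/V_0$, so $Q_{12}=P_1\cap\Stab_{CSp_{2n}}(V_0')$.
\begin{itemize}
\item Your recipe takes the preimage in $Q$ of $Q_{12}/(P_1\cap W_Q)$ times the complementary $GL(V_0)$-factor of the Levi. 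This gives $Q\cap\Stab_{CSp_{2n}}(V_0')$, the stabilizer of the flag $V_0\subset V_0'$.
\item That group is a proper non-maximal parabolic of the $\Q$-simple group $Sp_{2n}$. For $n=2$ and $\dim V_0=1$ it is a Borel subgroup. So it is not $\Q$-admissible.
\item The correct choice is $Q_2=\Stab_{CSp_{2n}}(V_0')$, which does not lie in $Q$.
\end{itemize}
In general $Q_2$ is forced. By item (ii) of the remark after Proposition \ref{subsecproposition: rational boundary components, morphism of reference group}, $(\Lie Q_2)_\C$ must be the sum of the non-positive weight spaces of $\mathrm{Ad}\circ h_{x_2}\circ\omega$ on $\Lie P$, where $h_{x_2}=\alpha^{(1)}_{x_1}\circ h_\infty$. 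No construction inside $Q$ can produce this.

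\paragraph{What (4) still requires.} Even with the right $Q_2$, the whole content of (4) remains. You must show $Q_2$ is admissible. You must also construct $\alpha^{(2)}_x\colon H_{0,\C}\to P_\C$ satisfying (i)--(iii) for $Q_2$, with $\alpha^{(2)}_x\circ h_0=h_x$ and $\alpha^{(2)}_x\circ h_\infty=\alpha^{(1)}_{x_1}\circ h_\infty$. Only then does uniqueness give $\zeta^{\mX_2}_{\mX}=\zeta^{\mX_2}_{\mX_1}\circ\zeta^{\mX_1}_{\mX}$ and the inclusion $\mX^+_{(P_2,\mX_2)}\subset\mX^+_{(P_1,\mX_1)}$.

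Your sentence about restricting a morphism $H_0\times H_0\to P$ to the diagonal does not supply this. If that morphism is $(g,g')\mapsto\alpha_x(g)\,\alpha^{(1)}_{x_1}(g')$, its diagonal composed with $h_0$ is $h_x\cdot h_{x_1}$, not $h_x$, because the $\DS$-factor is counted twice. What is needed is a real compatibility between $\alpha_x$ and $\alpha^{(1)}_{x_1}$. In the Siegel example their $SL_2$-parts commute, and the $SL_2$-part of $\alpha^{(2)}_x$ is the diagonal one. Proving this in general is the substance of \cite[4.19]{Pink}, which the paper cites rather than reproves.

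\paragraph{A smaller problem in (1).} Your irreducibility argument does not work as written. Irreducibility involves only the images of $h_{x_1}$ for $x_1\in\mX_1$, a single $P_1(\R)\cdot U_1(\C)$-orbit that is not a priori $Q(\Q)$-stable. Also, the fact that $qNq^{-1}$ contains the same images does not give $qNq^{-1}=N$. A correct route:
\begin{itemize}
\item The smallest $\Q$-normal $N\unlhd P_1$ through which $h_{x_1}$ factors contains $W_1$, by the weight argument used for $W\subset P_1$ in Subsection \ref{subsec: mixed shimura data}.
\item The connected group $Q$ acts on the reductive group $P_1/W_1$ through inner automorphisms, so $N$ is normal in $Q$.
\item Minimality of $P_1$ then gives $N=P_1$.
\end{itemize}
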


Loosely speaking, a rational boundary component of a rational boundary component is a rational boundary component. From now on, we will write $(P,\mX) \geq  (P_1, \mX_1) \geq (P_2, \mX_2)$ to mean that $(P_1, \mX_1)$ is a rational boundary component of $(P, \mX)$ and $(P_2, \mX_2)$ is a rational boundary component of $(P_1, \mX_1)$. We call this a chain of rational boundary components. (Note that $(P, \mX)$ itself may not be a genuine one if it is not irreducible.) By definition, we see that $ U \subset U_1 \subset U_2$.\\

\begin{subsecexample}\label{subsecexample: rational boundary components, gl2 part 2}

Continuing Example \ref{subsecexample: rational boundary components, gl2 part 1}, we have seen that for the rational boundary component $(P_1, \mX_1)$ corresponding to the parabolic subgroup $B$, the group $P_1$ consists of matrices of the form $  \begin{bmatrix}* &*\\ 0&1\end{bmatrix} $. For $\mathcal{H}_2 \ni h_x \colon z=a+bi \mapsto \begin{bmatrix} a&-b\\b & a \end{bmatrix}$, its image under
\begin{IEEEeqnarray*}{cccCl}
\mathcal{H}_2 &\rightarrow& \pi_0(\mathcal{H}_2) \times \Hom(\DS_\C, P_{1, \C})&=&\{\pm 1\} \times  \Hom(\DS_\C, P_{1, \C})
\end{IEEEeqnarray*}
is equal to
\[
\mX_1 \ni x_1=  (+1, h_{x_1}=\alpha_x \circ h_\infty \colon z_1 z_2\mapsto \begin{bmatrix} z_1z_2 & i(z_1z_2-1)\\ 0 &1\end{bmatrix}    ).
\]
Consider the adjoint representation of $GL_2$. The two Hodge structures induced by the two morphisms $h_x$ and $h_{x_1}$ both have the Hodge filtration below.
\begin{IEEEeqnarray*}{rCl}
  F^1 \left(\Lie GL_2 \right)&=&\left < \begin{bmatrix}-i & 1\\ 1 & i \end{bmatrix} \right >\\
F^0\left(\Lie GL_2 \right)&=&\left < \begin{bmatrix}-i & 1\\ 1 & i \end{bmatrix}, \begin{bmatrix}1 & i\\ 0 & 0 \end{bmatrix}, \begin{bmatrix}0 & 1\\ 0 & i \end{bmatrix} \right >\\
F^{-1} \left (\Lie GL_2 \right)&=&\left (\Lie GL_2 \right )_\C
\end{IEEEeqnarray*}
We leave the details to the reader.
\end{subsecexample}

 We now turn to the last significant ingredient. Again let $(P, \mX)$ be a mixed Shimura datum. For some $x_0\in \mX$, the corresponding $h_{x_0}$ is already defined over $\R$. Since $U$ is a commutative unipotent group over $\Q$, we may identify it with a $\Q$-vector space. Define $U(\R)(-1)\subset U(\C)$ to be the subspace $(2\pi i)^{-1}U(\R)$. Thus $U(\C)=U(\R) \times U(\R)(-1)$. It follows that $P(\R)\cdot U(\C)= P(\R)\cdot U(\R)(-1)$. Any other point $x \in \mathcal{X}$ is of the form $up\cdot x_0$ for some $up\in U(\R)(-1)\cdot P(\R)$.  Therefore, $u^{-1}\cdot x=p \cdot x_0$ is defined over $\R$. Since $U(\C)$ acts freely on $\mX$, we have shown that for any point $x \in \mX$, there exists a unique $u_x \in U(\R)(-1)$ such that $h_{u^{-1}_x\cdot x}= u^{-1}_x \cdot h_x$ is defined over $\R$. The map 
\begin{IEEEeqnarray*}{cCcCl}
\mathrm{im}_\mX&\colon & \mX &\longrightarrow & U(\R)(-1)\\
                        &  &  x     &  \longmapsto& u_x
\end{IEEEeqnarray*}
is called the map of imaginary part. Let $P(\R)\cdot U(\C)= U(\R)(-1)\rtimes P(\R)$ act on $U(\R)(-1)=\frac{U(\R)(-1) \rtimes P(\R)}{P(\R)}$ via left multiplication. Thus $P(\R)$ acts by conjugation and $U(\R)(-1)$ acts by translation on itself. Then $\mathrm{im}_\mX$ is $ P(\R)\cdot U(\C)$-equivariant.\ It is also compatible with a morphism of mixed Shimura data $(P, \mX) \rightarrow (P', \mX ')$.

If $(P_1, \mX_1)$ is a rational boundary component of $(P, \mX)$ for the parabolic subgroup $Q$, we see that $\mathrm{im}_{\mX_1}$ is $Q(\R)^0\cdot P_1(\R)\cdot U_1(\C)$-equivariant.

The following proposition will be useful in the proof of our main theorem.

\begin{subsecproposition}[{\cite[4.15]{Pink}}]\label{subsecproposition: rational boundary components, pink 4.15} Let $(P_1, \mX_1)$ be a rational boundary component of $(P, \mX)$ with the map $\mX_{(P_1, \mX_1)}^+ \rightarrow \mX_1$ defined above. Let $\mX^0$ be a connected component of $\mX$ and $\mX_1^0$ the corresponding connected component of $\mX_1$. 

\begin{enumerate}
\item $\dps \mX_{(P_1, \mX_1)}^+ \rightarrow \mX_1$ is an open embedding.

\item There exists an open convex cone $C(\mX^0, P_1) \subset U_1(\R)(-1)$ such that the image of $\mX^0 $ in $\mX_1^0$ is the preimage of $C(\mX^0, P_1) $ under $\mathrm{im}_{\mX_1} |_{\mX_1^0}$. Thus we have a pullback diagram.
\begin{equation*}
\begin{tikzcd}
\mX^0 \arrow[r, rightarrowtail]   \arrow[d, twoheadrightarrow] & \mX^0_1 \arrow[d, "{\mathrm{im}_{\mX_1}|_{\mX_1^0}}", twoheadrightarrow]  \\
C(\mX^0, P_1) \arrow[r, rightarrowtail]        & U_1(\R)(-1)
\end{tikzcd}
\end{equation*}

\item $C(\mX^0, P_1)$ is an orbit under conjugation by $Q(\R)^0$ and translation by $U(\R)(-1)$. It is also invariant under translation by $(U_1\cap W)(\R)(-1)$.

\item Under $U_1(\R)(-1) \rightarrow \left ( U_1/(U_1\cap W) \right) (\R)(-1)$, $C(\mX^0, P_1)$ projects onto a non-degenerate homogeneous self-adjoint open cone (in the sense of \cite{AMRT}, Chapter II.) 
\end{enumerate}
\end{subsecproposition}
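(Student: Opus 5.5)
We would prove the four parts in order, reducing the harder statements to the pure case and to the classical theory of Siegel domains in \cite{AMRT}. For (1), we already know from Proposition \ref{subsecproposition: rational boundary components, rational boundary components} (2) that $\mX^+_{(P_1,\mX_1)}\rightarrow \mX_1$ is injective and holomorphic. So it suffices to show that the two sides have the same complex dimension, since an injective holomorphic map between complex manifolds of equal dimension is an open embedding.
\begin{itemize}
\item By Proposition \ref{subsecproposition: rational boundary components, rational boundary components} (3), $h_x$ and $h_{x_1}=\alpha_x\circ h_\infty$ induce the same Hodge filtration on $\Lie P$.
\item Both tangent spaces are computed as $\Lie(\text{group})_\C/F^0$: for $\mX$ with the group $P(\R)\cdot U(\C)$, and for $\mX_1$ with $P_1(\R)\cdot U_1(\C)$.
\item Proposition \ref{subsecproposition: rational boundary components, two hodge structures} describes $\Lie P$ as an $H_0$-representation. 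From it one checks that the weight pieces of $\Lie P$ lying outside $\Lie Q$ are exactly compensated by the added $U_1(\C)$-directions, so the dimensions agree.
\end{itemize}

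For (2) and (3), we first describe $\mX_1^0$ concretely. Choose a base point $x_1$ with $h_{x_1}$ defined over $\R$. Then the decomposition $P_1(\R)\cdot U_1(\C)=U_1(\R)(-1)\rtimes P_1(\R)$ shows that $\mathrm{im}_{\mX_1}$ is the projection onto the $U_1(\R)(-1)$-factor. Its fibres in $\mX^0_1$ are orbits of $P_1(\R)^0$, after translating by an element of $U_1(\R)(-1)$.

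Next we use the equivariance of the embedding. The image of $\mX^0$ is stable under $P_1(\R)^0\cdot Q(\R)^0\cdot U(\C)$ by Proposition \ref{subsecproposition: rational boundary components, rational boundary components} (1). In particular it is stable under $P_1(\R)^0$, and hence it is a union of fibres of $\mathrm{im}_{\mX_1}$. Setting $C(\mX^0,P_1):=\mathrm{im}_{\mX_1}(\text{image of }\mX^0)$ therefore gives the pullback square in (2).

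We then read off the properties of $C(\mX^0,P_1)$.
\begin{itemize}
\item Openness follows from (1).
\item $\mX^0$ is a single $Q(\R)^0\cdot U(\C)$-orbit, since $Q(\R)\cdot U(\C)$ acts transitively on $\mX$ and $Q(\R)^0$ preserves components. Because $\mathrm{im}_{\mX_1}$ is $Q(\R)^0\cdot P_1(\R)\cdot U_1(\C)$-equivariant, $C$ is one orbit under $Q(\R)^0$ acting by conjugation together with $U(\R)(-1)$ acting by translation.
\item Invariance under translation by $(U_1\cap W)(\R)(-1)$ comes from the fact that $(U_1\cap W)(\R)\subset W(\R)$ acts on $\mX^0$. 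Its imaginary translates are recovered inside $Q(\R)^0\cdot U(\C)$ by combining $W(\R)$-conjugation with the $U(\C)$-translation.
\end{itemize}
This last point is the step whose bookkeeping we would check most carefully.

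For (4) and convexity, we pass to the pure quotient $(P,\mX)/W$. Its rational boundary component for $Q/W$ has $U$-group equal to $U_1/(U_1\cap W)$, and the imaginary-part maps are compatible with morphisms of data. The projection of $C$ is therefore the cone attached to the pure Hermitian symmetric domain $\mathcal{H}^0$ and its rational boundary component. There the claim is the classical realization of $\mathcal{H}^0$ as a Siegel domain of the third kind over this boundary component (\cite{AMRT}, Chapter III). In that realization the cone is the $Q(\R)^0$-orbit of $\mathrm{im}$ of the base point, which is a non-degenerate homogeneous self-adjoint open cone. For $(GL_2,\mathcal{H}_2)$ one checks directly from Examples \ref{subsecexample: rational boundary components, gl2 part 1} and \ref{subsecexample: rational boundary components, gl2 part 2} that this gives the positive imaginary half-line.

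Convexity of $C$ itself then follows. By (3), $C$ is the full preimage of a convex cone under the linear projection $U_1(\R)(-1)\rightarrow (U_1/(U_1\cap W))(\R)(-1)$, because it is invariant under the kernel $(U_1\cap W)(\R)(-1)$ and surjects onto the image. The main obstacle is the identification in the pure case of the $Q(\R)^0$-orbit with a self-adjoint cone. Our first attempt would be to use the $H_0$-description: decompose $(\Lie U_1)_\R$ into copies of $\wedge^2M_0$ under $\alpha_x$, and identify the orbit with the interior of the set of positive elements of the corresponding Jordan algebra, as in \cite{AMRT}.
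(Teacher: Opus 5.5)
The paper gives no proof of this proposition; it quotes \cite[4.15]{Pink}, so your outline has to stand on its own. Its overall plan is sound, and reducing (4) to the pure quotient and the Siegel-domain realization of \cite{AMRT} is the standard route. Two steps, however, fail as written.

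The first is a fixable slip in (2). The inference ``stable under $P_1(\R)^0$, hence a union of fibres of $\mathrm{im}_{\mX_1}$'' does not follow. Since $(P_1,\mX_1)$ is irreducible, $P_1$ acts on $U_1$ through a character $c$ (Proposition \ref{subsecproposition: irreducible data}), so $\mathrm{im}_{\mX_1}(p\cdot y)=c(p)\,\mathrm{im}_{\mX_1}(y)$. Thus $P_1(\R)^0$ permutes the fibres rather than preserving them: the fibre $u\cdot P_1(\R)^0x_1$ is an orbit of $uP_1(\R)^0u^{-1}$, not of $P_1(\R)^0$. The repair is to use $\ker(c)(\R)\cap P_1(\R)^0$. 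It commutes with $U_1$ and is still transitive on each fibre in $\mX_1^0$, because for your real base point $x_1$ the subgroup $h_{x_1}(\omega(\R_{>0}))$ fixes $x_1$ while $c\circ h_{x_1}\circ\omega$ maps $\R_{>0}$ onto $\R_{>0}$. (Similarly, the dimension count in (1) is really the identity $(\Lie P)_\C=(\Lie P_1)_\C+F^0$. This holds because $\Lie P/\Lie Q$ has only types with $p\geq 0$, and $\Lie Q/\Lie P_1$ is of type $(0,0)$ for $h_{x_1}$ since $P_1\unlhd Q$ acts trivially on it. The $U_1(\C)$-directions play no role there.)

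The genuine gap is the invariance of $C$ under $(U_1\cap W)(\R)(-1)$, on which your proof of convexity of $C$ rests. The fact you invoke, that $(U_1\cap W)(\R)$ acts on $\mX^0$, gives nothing: real elements of $U_1$ act trivially on $U_1(\R)(-1)$, so they never change $\mathrm{im}_{\mX_1}$. The imaginary translations must come from conjugation by elements $w=\exp X\in W(\R)$ lying outside $P_1$. Modulo $\Lie U$ one has $\mathrm{Ad}(w)c-c\equiv[X,c]$, so you need that $X\mapsto[X,c]$ maps $(\Lie V)_\R$ onto $\bigl(\Lie(U_1\cap W)/\Lie U\bigr)_\R(-1)$ for every $c\in C$. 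This surjectivity genuinely uses that $c$ lies in the cone. In Subsection \ref{subsec: the case of kuga}, conjugation by $v$ shifts the $V_0(\R)(-1)$-coordinate of $c$ by a multiple of $v_y\lambda$, where $\lambda$ is the $U_1(\R)(-1)$-coordinate of $c$; it fills out $V_0(\R)(-1)$ only because $\lambda\neq 0$ on $C$.

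One way to close the gap is to check surjectivity at $c_0=\mathrm{im}_{\mX_1}(\zeta(x))$ and then spread it over $C$. As in Example \ref{subsecexample: rational boundary components, gl2 part 3}, the image of $c_0$ in $U_1/(U_1\cap W)$ is $\pi\circ\alpha_x$ of the unipotent element $\begin{bmatrix}1&-i\\0&1\end{bmatrix}$ that conjugates $\mathrm{pr}_{GL_2}\circ h_\infty$ to a real morphism. So by Proposition \ref{subsecproposition: rational boundary components, two hodge structures} it acts on every copy of $M_0$ in $(\Lie V)_\R$ by a nonzero multiple of $\begin{bmatrix}0&1\\0&0\end{bmatrix}$, mapping onto that copy's weight $-2$ line, while the copies of $N_0$ contribute nothing to $U_1\cap W$. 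Then transport to all of $C$ using the first half of (3): $\mathrm{Ad}(Q(\R)^0)$ preserves $\Lie V$, $\Lie(U_1\cap W)$ and $\Lie U$, and translations by $U(\R)(-1)$ do not change $[X,c]$. As written, neither the second half of (3) nor the convexity of $C$ in the mixed case is established.
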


\begin{subsecproposition}[{\cite[4.21]{Pink}}]\label{subsecproposition: rational boundary components, properties of the cone} Suppose we have a chain of rational boundary components $(P,\mX) \geq (P_1, \mX_1) \geq (P_2, \mX_2)$.\ Under the map $\mathcal{X}^+_{(P_2, \mathcal{X}_2)}\rightarrow (\mathcal{X}_1)^+_{(P_2, \mathcal{X}_2)} \rightarrow (P_2, \mX_2)$, let $\mX^0 \rightarrowtail \mX_1^0 \rightarrowtail \mX_2^0$ be a chain of corresponding connected components. We have $U \subset U_1 \subset U_2$. Then

\begin{enumerate}
\item $C(\mX_1^0, P_2)=C(\mX^0, P_2)+U_1(\R)(-1)$.

\item $C(\mX^0, P_1)$ is contained in the closure of $C(\mX^0, P_2)$.

\item If $(P,\mX) \geq (P'_1, \mX_1') \geq (P_2, \mX_2)$ is another chain of rational boundary components, then viewing $C(\mX^0, P_1)$ and $C(\mX^0, P_1')$ as subsets of $U_2(\R)(-1)$, we have 
\[
C(\mX^0, P_1) \cap C(\mX^0, P_1') \neq \varnothing \iff (P'_1, \mX_1')=(P_1,\mX_1).
\]
\end{enumerate}
\end{subsecproposition}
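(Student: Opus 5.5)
The plan is to deduce all three parts from Proposition \ref{subsecproposition: rational boundary components, pink 4.15} applied twice: once to $(P,\mX)\geq(P_2,\mX_2)$ and once to $(P_1,\mX_1)\geq(P_2,\mX_2)$. I would combine this with the compatibility of the maps $\zeta$ in Proposition \ref{subsecproposition: rational boundary components, rational boundary components}(4) and the equivariance of the maps of imaginary part. Throughout I identify $U_1(\R)(-1)\subset U_2(\R)(-1)$ via $U_1\subset U_2$. Since $\mathrm{im}_{\mX_2}$ is equivariant for translations by $U_2(\R)(-1)$, in particular by $U_1(\R)(-1)$, translating a subset of $\mX_2^0$ by $u\in U_1(\R)(-1)$ translates its image under $\mathrm{im}_{\mX_2}$ by $u$.

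For (1), write $\iota\colon\mX^0\rightarrowtail\mX_1^0$ and $\iota_1\colon\mX_1^0\rightarrowtail\mX_2^0$. By Proposition \ref{subsecproposition: rational boundary components, rational boundary components}(4), $\iota_1\circ\iota$ is the embedding attached to $(P,\mX)\geq(P_2,\mX_2)$. Hence Proposition \ref{subsecproposition: rational boundary components, pink 4.15}(2) gives
\[
\mathrm{im}_{\mX_2}\bigl(\iota_1\iota(\mX^0)\bigr)=C(\mX^0,P_2)
\quad\text{and}\quad
\mathrm{im}_{\mX_2}\bigl(\iota_1(\mX_1^0)\bigr)=C(\mX_1^0,P_2),
\]
since these images are full preimages of the cones. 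Next I would check that $\mX_1^0=U_1(\R)(-1)\cdot\iota(\mX^0)$. Indeed, $\iota(\mX^0)=\mathrm{im}_{\mX_1}^{-1}(C(\mX^0,P_1))$, and $\mathrm{im}_{\mX_1}$ is surjective and $U_1(\R)(-1)$-equivariant, while $U_1(\R)(-1)$ acts simply transitively on $U_1(\R)(-1)$. Applying $\iota_1$, which is $U_1(\C)$-equivariant, and then $\mathrm{im}_{\mX_2}$ yields $C(\mX_1^0,P_2)=C(\mX^0,P_2)+U_1(\R)(-1)$.

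For (2), I would use the cocharacter $\lambda_1$ attached to $Q_1$ via Proposition \ref{subsecproposition: rational boundary components, morphism of reference group}. By Proposition \ref{subsecproposition: irreducible data} it acts on $U_1$ by a scalar, and on $\Lie U_2$ it induces a weight grading in which $\Lie U_1$ is the extreme piece (weight $-2$ for $\alpha\circ h_\infty\circ\omega$). Fix $y\in C(\mX^0,P_1)$ and take $x\in\mX^0$ with $\mathrm{im}_{\mX_1}(\iota x)=y$. Then $\mathrm{im}_{\mX_2}(\iota_1\iota x)=y+c$ with $c$ in a complement of $\Lie U_1$.

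The aim is to move $x$ by the real points of the torus $\pi_{Q_1}\circ\lambda_1$, which lie in $Q_1(\R)^0$ (after possibly squaring). This preserves $C(\mX^0,P_2)$ by Proposition \ref{subsecproposition: rational boundary components, pink 4.15}(3) applied to $P_2$, since $Q_2\subset Q_1$ is not needed there: only conjugation by $Q(\R)^0$-type elements normalizing the data. Conjugating by $t\in\R_{>0}$ scales the $U_1$-component by $t^{a}$ and the complementary components by strictly smaller powers. Rescaling with the positive-homothety invariance of the cone (the cone is a cone), one gets $y+t^{-\epsilon}c'\in C(\mX^0,P_2)$ with $t^{-\epsilon}c'\to0$, hence $y\in\overline{C(\mX^0,P_2)}$.

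This weight-comparison step is the main obstacle. To carry it out I would need to verify two things: that the relevant element of $Q_1(\R)^0$ really stabilizes $C(\mX^0,P_2)$, and that the weights on $\Lie U_2/\Lie U_1$ are strictly less negative than those on $\Lie U_1$. I would try to extract the second from Proposition \ref{subsecproposition: rational boundary components, two hodge structures} applied to $(P_1,\mX_1)\geq(P_2,\mX_2)$.

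For (3), the direction $\Leftarrow$ is trivial since the cones are nonempty. For $\Rightarrow$, I would project to $(U_2/(U_2\cap W))(\R)(-1)$ as in Proposition \ref{subsecproposition: rational boundary components, pink 4.15}(4), where the image of $C(\mX^0,P_2)$ is a homogeneous self-adjoint cone. There the images of $C(\mX^0,P_1)$ and $C(\mX^0,P_1')$ are relative interiors of rational boundary components (faces) of that cone, by the classical correspondence in \cite{AMRT} between $\Q$-admissible parabolics containing $Q_2$ and rational faces. Distinct faces have disjoint relative interiors. So a nonempty intersection forces $Q_1=Q_1'$, and then the chosen connected component forces $(P_1,\mX_1)=(P_1',\mX_1')$. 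The remaining check is that passing to the quotient by $U_2\cap W$ loses no information distinguishing $P_1$ from $P_1'$. I expect this to follow from $P_1\supseteq W$-type containments in Lemma \ref{subseclemma: rational boundary components, P1 W1 U1}.
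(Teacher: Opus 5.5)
The paper gives no proof of this proposition; it only quotes Pink's 4.21. Your part (1) is correct. Part (2), however, has a genuine gap. It rests on the claim that $\mathrm{im}_{\mX_2}(\iota_1\iota x)=y+c$ with $c$ in a complement of $\Lie U_1$, i.e.\ that the $U_1$-component of $\mathrm{im}_{\mX_2}(\iota_1\iota x)$ equals $\mathrm{im}_{\mX_1}(\iota x)$. Nothing you cite gives this, and it fails for the complement singled out by the weights of $\lambda_1$.

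The Siegel case shows the failure. Take isotropic $V_0\subset V_0'$ of dimensions $r_1<r_2$. Up to normalization, $\mathrm{im}_{\mX_2}$ of a point $Z$ is the Schur complement of the last $(n-r_2)$-block of $\mathrm{Im}\,Z$. Write it as $\begin{bmatrix}A&B\\ B^t&D\end{bmatrix}$, where the $r_1\times r_1$ block $A$ is the $U_1$-coordinate. Then $\mathrm{im}_{\mX_1}(Z)$ is the further Schur complement $y=A-BD^{-1}B^t$. So the $U_1$-block of your $c$ is $BD^{-1}B^t$, which is nonzero whenever $B\neq 0$. Your conjugate-and-rescale limit therefore lands at $y+BD^{-1}B^t$, not at $y$.

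The stability issue you flagged is also unresolved as written. Proposition~\ref{subsecproposition: rational boundary components, pink 4.15}(3) gives stability of $C(\mX^0,P_2)$ under $Q_2(\R)^0$, not under $Q_1(\R)^0$. Moreover, $Q_1$ and $Q_2$ are not nested: in the Siegel case they are the stabilizers of $V_0$ and $V_0'$. You would have to choose $\lambda_1$ inside a maximal split torus of $Q_1\cap Q_2$.

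The cocharacter is unnecessary, because the translation argument you already use in (1) proves (2) directly:
\begin{itemize}
\item Take $x\in\mX^0$ with $\mathrm{im}_{\mX_1}(\iota x)=y$, and set $c\coloneq\mathrm{im}_{\mX_2}(\iota_1\iota x)-y$.
\item For $\lambda>0$, the point $\bigl((\lambda-1)y\bigr)\cdot\iota x\in\mX_1^0$ has imaginary part $\lambda y\in C(\mX^0,P_1)$, so it lies in $\iota(\mX^0)$.
\item By the $U_1(\C)$-equivariance of $\iota_1$ and of $\mathrm{im}_{\mX_2}$, its image in $U_2(\R)(-1)$ is $\lambda y+c$. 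As in your (1), this lies in $C(\mX^0,P_2)$.
\item Dividing by $\lambda$ gives $y+\lambda^{-1}c\in C(\mX^0,P_2)$, and letting $\lambda\to\infty$ gives $y\in\overline{C(\mX^0,P_2)}$.
\end{itemize}

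For (3), reducing modulo $U_2\cap W$ to the pure case and invoking the classical theory is a reasonable route. The worry you raise is harmless: every parabolic contains $W$, so $Q\mapsto Q/W$ is a bijection on $\Q$-admissible parabolics, and $Q_1$ together with $\mX^0$ determines $(P_1,\mX_1)$.

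You do need to state the correspondence correctly, though. The relevant faces are indexed by the intermediate components in chains $(P,\mX)\geq(P_1,\mX_1)\geq(P_2,\mX_2)$, so that $U_1\subset U_2$; they are not indexed by parabolics containing $Q_2$. You also need to justify that the projection to $(P,\mX)/W$ carries each $C(\mX^0,P_i)$ onto the corresponding cone of the quotient datum.
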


\begin{subsecremark}
From (3) above, in particular, if $(P_1, \mX_1) \geq (P_2, \mX_2)$ are distinct rational boundary components of $(P, \mX)$, then $C(\mX^0, P_1) \cap C(\mX^0, P_2)=\varnothing$.
\end{subsecremark}

The disjoint $C(\mX^0, P_1)$ can be combined to define another cone. Let $(P_2, \mX_2)$ be a rational boundary component of $(P, \mX)$. Fix $\mX^0\subset \mX_{(P_2, \mX_2)}^+$. Then for any chain $(P,\mX)\geq (P_1, \mX_1) \geq (P_2, \mX_2)$, $\mX^0 \subset \mX_{(P_1, \mX_1)}^+$ and thus $C(\mX^0, P_1)$ makes sense. Define 
\[
C^*(\mX^0, P_2)\coloneq \bigsqcup_{(P_1, \mX_1)} C(\mX^0, P_1) \subset U_2(\R)(-1),
\]
where $(P_1, \mX_1)$ runs through all rational boundary components between $(P, \mX)$ and $(P_2, \mX_2)$. 

\begin{subsecproposition}[{\cite[4.22]{Pink}}] \label{subsecproposition: rational boundary components, properties of the star cone}
Fix a chain of rational boundary components $(P,\mX)\geq (P_1, \mX_1) \geq (P_2, \mX_2)$ and $\mX^0 \subset \mX_{(P_2, \mX_2)}^+$. Let $\mX^0 \rightarrowtail \mX^0_1 \rightarrowtail \mX^0_2$ be a chain of corresponding connected components.
\begin{enumerate}
\item $C^*(\mX^0, P_2)$ is a convex cone. (In general neither open nor closed.)
\item $C^*(\mX^0_1, P_2)\,{=}\,C^*(\mX^0, P_2)+U_1(\R)(-1)$.
\item  $C^*(\mX^0, P_1)\,{=}\,C^*(\mX^0, P_2) \cap   U_1(\R)(-1) \,{=}\,C^*(\mX^0, P_2)\cap \left(  \left (U_2\cap W_1 \right )(\R)(-1) \right)$.
\end{enumerate}
\end{subsecproposition}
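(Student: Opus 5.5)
The plan is to reduce everything to the face structure of the closure of the open cone $C(\mX^0, P_2)\subset U_2(\R)(-1)$, in the spirit of \cite{AMRT}. By Proposition \ref{subsecproposition: rational boundary components, properties of the cone}(2), each $C(\mX^0,P_1)$ with $(P,\mX)\geq (P_1,\mX_1)\geq (P_2,\mX_2)$ lies in $\overbar{C(\mX^0,P_2)}$. By part (3) of the same proposition, these cones are pairwise disjoint. The first step is to show that $C(\mX^0,P_1)$ is exactly the relative interior of a face $F_1$ of $\overbar{C(\mX^0,P_2)}$, and that this face spans $U_1(\R)(-1)$ (or at least lies in it, modulo $U(\R)(-1)$-translations).

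For this I would use Proposition \ref{subsecproposition: rational boundary components, pink 4.15}(3)--(4). Modulo $(U_2\cap W)(\R)(-1)$, the cone $C(\mX^0,P_2)$ becomes a non-degenerate homogeneous self-adjoint cone. For such cones the classical theory gives a decomposition of the closure into relative interiors of faces, and the rational faces correspond to $\Q$-parabolics. Translation invariance under $(U_2\cap W)(\R)(-1)$ lets one lift the face decomposition back to $U_2(\R)(-1)$. Then $C^*(\mX^0,P_2)$ is the union of the relative interiors of the rational faces, i.e.\ the ``rational closure''.

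Part (1), convexity, then follows from a standard fact. Take $a\in \operatorname{relint}F$ and $b\in\operatorname{relint}F'$ with $F,F'$ rational faces. Every point of the open segment between them lies in the relative interior of the smallest face $F\vee F'$ containing both. It remains to see that $F\vee F'$ is again rational, i.e.\ comes from a rational boundary component between $(P,\mX)$ and $(P_2,\mX_2)$. I expect this to be the main obstacle. I would try to prove it by realizing $F\vee F'$ via the $\Q$-parabolic generated by, or intersecting, the parabolics attached to $F$ and $F'$. The link to rational boundary components goes through Proposition \ref{subsecproposition: rational boundary components, rational boundary components}(4), and one checks rationality by noting that the face is cut out by rational linear conditions, since $U_1,U_1'$ are $\Q$-subspaces.

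For (3), a rational face $F_3$ lies in $U_1(\R)(-1)$ if and only if $F_3\subset \overbar{F_1}$, where $F_1$ is the face attached to $(P_1,\mX_1)$. By transitivity of chains (Proposition \ref{subsecproposition: rational boundary components, rational boundary components}(4)), this happens precisely when the corresponding component $(P_3,\mX_3)$ satisfies $(P,\mX)\geq(P_3,\mX_3)\geq (P_1,\mX_1)$. This gives $C^*(\mX^0,P_1)=C^*(\mX^0,P_2)\cap U_1(\R)(-1)$. The second equality uses that $C^*(\mX^0,P_1)\subset (U_2\cap W_1)(\R)(-1)$, because $U_3\subset U_1\subset W_1$, together with the inclusion $(U_2\cap W_1)\cap \overbar{C(\mX^0,P_2)}\subset U_1(\R)(-1)$. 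That inclusion I would deduce from the weight decomposition of $\Lie U_2$ under $\alpha\circ h_\infty$, via Proposition \ref{subsecproposition: rational boundary components, two hodge structures}.

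For (2), apply the definition to $\mX_1^0$. The rational boundary components between $(P_1,\mX_1)$ and $(P_2,\mX_2)$ are, by Proposition \ref{subsecproposition: rational boundary components, rational boundary components}(4), exactly those $(P_3,\mX_3)$ between $(P,\mX)$ and $(P_2,\mX_2)$ that lie below $(P_1,\mX_1)$. For each of them, Proposition \ref{subsecproposition: rational boundary components, properties of the cone}(1) gives $C(\mX_1^0,P_3)=C(\mX^0,P_3)+U_1(\R)(-1)$, which yields the inclusion ``$\subseteq$''. For ``$\supseteq$'', given an arbitrary $(P_3,\mX_3)$ between $(P,\mX)$ and $(P_2,\mX_2)$, I would show that $C(\mX^0,P_3)+U_1(\R)(-1)$ lands in $C(\mX_1^0,P_{3}')$, where $P_3'$ corresponds to the join face $F_3\vee F_1$. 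This again rests on the rationality of joins established for (1).
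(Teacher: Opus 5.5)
Your face-theoretic reformulation fails as soon as $W\neq 1$, and part of your proof of (3) depends on it. The cone $C(\mX^0,P_2)$ is invariant under translation by $(U_2\cap W)(\R)(-1)$. So this subspace lies in the lineality space of $\overbar{C(\mX^0,P_2)}$, and every face of the closure, and the relative interior of every face, is invariant under it. But $C(\mX^0,P_1)$ sits inside $U_1(\R)(-1)$ and is only invariant under the smaller $(U_1\cap W)(\R)(-1)$, and in general $U_2\cap W\not\subset U_1$. Three consequences follow. First, $C(\mX^0,P_1)$ is not the relative interior of a face. Second, lifting the face decomposition of the self-adjoint quotient cone back to $U_2(\R)(-1)$ gives full preimages, not the sets $C(\mX^0,P_1)$. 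Third, $C^*(\mX^0,P_2)$ is not the union of relative interiors of rational faces of the closure.

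The paper's own example in Subsection~\ref{subsec: the case of kuga} shows this. For $(V\rtimes GL_{2,\Q},\mY_2)$ we have $U=1$, so the improper component contributes $C(\hX^0,\hP)=\{0\}$ and $C^*(\hX^0,\hP_1)=\{0\}\cup(\R_{>0}\times\R)(-1)$. The minimal face of the closed half-plane, however, is the line $V_0(\R)(-1)$, so $\{0\}$ is not a face.

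The same example refutes the inclusion $(U_2\cap W_1)(\R)(-1)\cap\overbar{C(\mX^0,P_2)}\subset U_1(\R)(-1)$, on which your proof of the second equality in (3) rests. Take $(P_2,\mX_2)=(\hP_1,\hX_1)$ and $(P_1,\mX_1)=(\hP,\hX)$. Then $U_1=1$, $W_1=V$, $U_2\cap W_1=V_0$, and the left side is all of $V_0(\R)(-1)$. In the mixed case you must intersect with $C^*(\mX^0,P_2)$, not with the closure, and that is exactly what has to be shown.

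The correct description is $C(\mX^0,P_1)=U_1(\R)(-1)\cap\mathrm{pr}^{-1}(\mathrm{relint}\,\bar F_1)$. Here $\mathrm{pr}$ is the projection to $(U_2/(U_2\cap W))(\R)(-1)$, and $\bar F_1$ is a rational face of the closure of the self-adjoint cone $\mathrm{pr}(C(\mX^0,P_2))$. This uses Proposition~\ref{subsecproposition: rational boundary components, pink 4.15}(3),(4) for $P_1$ together with $U_1\cap W=U_1\cap(U_2\cap W)$. With this description, convexity also needs the monotonicity $U_1+U_1'\subset U_1''$ whenever the face of $P_1''$ contains those of $P_1$ and $P_1'$. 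This follows from the chain relations, but your proposal does not set it up.

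Independently, the step you flag as the main obstacle, rationality of the join $F\vee F'$, is left open, and your sketch does not supply it. The span of $F\vee F'$ is in general strictly larger than $U_1(\R)(-1)+U_1'(\R)(-1)$, so the join is not cut out by the $\Q$-subspaces $U_1,U_1'$. For example, in $CSp_4$ take the isotropic lines $\Q e_1$ and $\Q e_2$. Their join corresponds to $\langle e_1,e_2\rangle$ and spans the $3$-dimensional $\mathrm{Sym}^2\langle e_1,e_2\rangle$, while $U_1+U_1'$ is $2$-dimensional. Moreover, a rational span would not by itself give a $\Q$-admissible parabolic whose boundary component lies in the chain between $(P,\mX)$ and $(P_2,\mX_2)$.

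What you need is the classical pure-case fact: a face of the closed self-adjoint cone whose relative interior contains a rational point is the face of a rational boundary component. Equivalently, the rational closure is the convex hull of the rational points of the closed cone, as in the treatment of rational closures in \cite{AMRT}. Apply this to $a+b$ with $a\in C(\mX^0,P_1)$ and $b\in C(\mX^0,P_1')$ rational to get the join. Combined with the corrected description above, this gives (1). Parts (2) and (3) then go through along the lines you indicate, provided (3) is proved by intersecting with $C^*(\mX^0,P_2)$ rather than with the closure.
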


\begin{subsecexample}\label{subsecexample: rational boundary components, gl2 part 3}

Resuming Example \ref{subsecexample: rational boundary components, gl2 part 2}, $\mathcal{H}_2$ has two connected components $\mathcal{H}_2^{\pm}$ by definition, containing morphisms $h_x \colon \DS \rightarrow GL_{2,\R}$ inducing positive definite (resp. negative definite) forms with respect to the alternating form $\begin{bmatrix}0 & 1 \\ -1 & 0\end{bmatrix}$. We now compute the cone 
\[
C(\mathcal{H}_2^{\pm}, P_1) \subset U_1(\R)(-1)=\left \{ \left . \begin{bmatrix}1 & r\cdot (2\pi i)^{-1}\\ 0 & 1\end{bmatrix}  \right | r\in \R \right \}.\] 

We know that for the $h_x$ such that $h_x(i)= \begin{bmatrix}0 & -1\\ 1 & 0 \end{bmatrix}  $, its image in $\mX_1$ is $\bigl (1,  \alpha_x \circ h_\infty \colon (z_1, z_2) \mapsto \begin{bmatrix} z_1 z_2& i (z_1z_2-1)\\ 0 & 1 \end{bmatrix} \bigr)$. However,
\[
 \begin{bmatrix} z_1 z_2& i (z_1z_2-1)\\ 0 & 1 \end{bmatrix}= \begin{bmatrix} 1 & -i  \\ 0 &1 \end{bmatrix} \begin{bmatrix} z_1 z_2 & 0  \\ 0 &1 \end{bmatrix} \begin{bmatrix} 1 & -i  \\ 0 &1 \end{bmatrix}^{-1}.
\]
Thus $h_x \mapsto \alpha_x \circ h_\infty \overset{\mathrm{im}}{ \mapsto}   \begin{bmatrix} 1 & -i  \\ 0 &1 \end{bmatrix}$. 

For the parabolic subgroup $B$, $B(\R)^0=\left \{ \left. \begin{bmatrix} a & b \\ 0 &d \end{bmatrix} \right | a, d >0 \right \}$. By the $B(\R)^0$-equivariance of the maps $\zeta^{\mX_1}_{\mathcal{H}_2} \colon (\mathcal{H}_2)_{(P_1, \mX_1)}^+\rightarrow \mathcal{X}_1 $ and $\mathrm{im}_{\mX_1}\colon \mathcal{X}_1 \twoheadrightarrow U_1(\R)(-1)$, we see that if $q=  \begin{bmatrix} a & b \\ 0 &d \end{bmatrix} $, then the image of $q\cdot h_x$ under $\mathrm{im}_{\mX_1} \circ \zeta^{\mX_1}_{\mathcal{H}_2}$ is just 
\[
q  \begin{bmatrix} 1 & -i  \\ 0 &1 \end{bmatrix}   q^{-1}= \begin{bmatrix} 1 & (ad^{-1}) (-i)  \\ 0 &1 \end{bmatrix}= \begin{bmatrix} 1 & (ad^{-1})2\pi (2\pi i)^{-1}  \\ 0 &1 \end{bmatrix}.
\]
 Hence $C(\mathcal{H}_2^+, P_1)$ is just the ``positive" half-axis, while  $C^*(\mathcal{H}_2^+, P_1)$, being the closure of $C(\mathcal{H}_2^+, P_1)$, is the set of non-negative numbers. 

Similarly, for the $h_{x'}$ such that $h_{x'}(i)= \begin{bmatrix} 0 &  1\\ -1 & 0 \end{bmatrix} $, since $h_{x'}=
\mathrm{int}(b)\circ h_x$ where $b=  \begin{bmatrix}1 &0 \\0&-1 \end{bmatrix}  $, its image in $\mX_1$  is $\left (-1,  \alpha_{x'}\circ h_\infty  \right )$, where the second factor is 
\begin{IEEEeqnarray*}{rCl}
(z_1, z_2) &\mapsto& b  \begin{bmatrix} 1 &- i  \\ 0 &1 \end{bmatrix} \begin{bmatrix} z_1 z_2 & 0  \\ 0 &1 \end{bmatrix} \begin{bmatrix} 1 & - i  \\ 0 &1 \end{bmatrix}^{-1}b^{-1}\\
&=& b  \begin{bmatrix} 1 & -i  \\ 0 &1 \end{bmatrix}b^{-1} b \begin{bmatrix} z_1 z_2 & 0  \\ 0 &1 \end{bmatrix} b^{-1}b \begin{bmatrix} 1 & - i  \\ 0 &1 \end{bmatrix}^{-1}b^{-1}\\
&=&   \begin{bmatrix} 1 & i  \\ 0 &1 \end{bmatrix} \begin{bmatrix} z_1 z_2 & 0  \\ 0 &1 \end{bmatrix} \begin{bmatrix} 1 & - i  \\ 0 &1 \end{bmatrix}^{-1}.
\end{IEEEeqnarray*}  
Hence $\mathrm{im}_{\mX_1}(\alpha_{x'}\circ h_\infty)=  \begin{bmatrix} 1 & i  \\ 0 &1 \end{bmatrix}  $ and the cone 
\[ 
C(\mathcal{H}_2^-, P_1)=\left \{ \left .  \begin{bmatrix} 1 & (ad^{-1}) (i)  \\ 0 &1 \end{bmatrix} =\begin{bmatrix} 1 &- (ad^{-1})2\pi (2\pi i)^{-1}  \\ 0 &1 \end{bmatrix}\right| a, d > 0\right \}.
\] This is the ``negative" half-axis and $C^*(\mathcal{H}_2^-, P_1)$ is the set of non-positive numbers.

From the above, we see that the cone $C(\mX^0, P_1)$ does depend on the connected component $\mX^0$.
\end{subsecexample}

\begin{subsecdefinition}\label{subsecdefinition: rational boundary components, conical complex} The \emph{conical complex} of a mixed Shimura datum $(P, \mX)$ is defined as  
\[
\mathcal{C}(P, \mX) \coloneq \bigsqcup_{\mX^0\subset \mX} 
 \Biggl (  
     \left . 
             \Bigl (  \dps \bigsqcup_{  \mX^0\subset \mX^+_{(P_1,\mX_1)}  } 
               \hspace{-14pt}C^*(\mX^0, P_1) \Bigr ) 
      \right /  \sim 
\Biggr ),
\]
where $\mX^0 \subset \mX$ runs over all connected components, $(P_1, \mX_1)$ runs over all rational boundary components such that $ \mX^0\subset \mX^+_{(P_1,\mX_1)} $, and the equivalence relation is generated by all (closed) embeddings $C^*(\mX^0, P_1) \rightarrowtail C^*(\mX^0, P_2)$ coming from chains $(P,\mX) \geq (P_1, \mX_1) \geq (P_2, \mX_2)$.
\end{subsecdefinition}

Set-theoretically, $\mathcal{C}(P, \mX)$ is the disjoint union of all $C(\mX^0, P_1)$. With the quotient topology, the image of each $C(\mX^0, P_1)$ is a locally closed subset with closure the image of $C^*(\mX^0, P_1)$. The canonical map $C^*(\mX^0, P_1) \rightarrow \mathcal{C}(P, \mX)$ is a closed  homeomorphism onto its image.

The definition of a $K_f$-admissible cone decomposition will make use of the product $\mathcal{C}(P, \mX) \times P(\fadele)$. But before that, we still need some background on torus embeddings. This is the topic of the next section.


          \subsection{Torus embeddings} \label{subsec: torus embeddings}

In this subsection, we briefly introduce the notion of torus embeddings and gather some propositions for later use. For a comprehensive reference, the reader may consult, say, \cite{toricvarieties11}. In \cite{Pink}, the author also studies morphisms of torus embeddings in terms of line bundles and an ampleness criterion. We will mostly focus on the definitions and preliminary results.

Let $V$ be a finite-dimensional $\Q$-vector space and $V^\vee$ its dual space. A subset $\sigma$ of $V_\R$ is called a rational convex polyhedral cone if there exists a finite subset $\{l_i\}_{i \in I} \subset V^\vee$ such that $\sigma = \{v \in V_\R \,|\, \langle l_i, v \rangle \geq 0, \forall i \in I \}$. In other words, $\sigma$ is the intersection of finitely many closed half-spaces defined over $\Q$. Equivalently, there exists a finite subset $\{v_i\}_{i\in I}\subset V$ such that $\sigma=\{v \in V_\R |  v= \sum_{i\in I} x_i v_i, x_i \geq 0\}$. 

Given such a $\sigma$ defined by $\{l_i\}_{i\in I}\subset V^\vee$, a face $\tau $ of $\sigma$ is a subset of the form $\sigma \cap \bigcap_{i\in J} V(l_i)$, where $J \subseteq I$ and $V(l_i)=\{ v \in V_\R| \langle l_i, v \rangle=0\}$. Thus a face of $\sigma$ is the zero locus of finitely many $l_i$ on $\sigma$. A face of $\sigma$ is also a rational convex polyhedral cone (by adding $\{-  l_i\}_{i \in J}$ to the defining set). A proper face is a face not equal to $\sigma$. We write $\tau \leq \sigma$ to mean $\tau$ is a face of $\sigma$. The interior $\sigma^0 $ of $\sigma$ is the complement of the union of all proper faces of $\sigma$. It is open in $\R\cdot \sigma$, but not open in $V_\R$ in general. A cone $\sigma$ is \emph{of top dimension} if $\R\cdot \sigma=V_\R$.

The \emph{dual cone} of $\sigma$ is the cone $\check{\sigma} \coloneq \{ l \in V_\R^\vee \,\lvert \, l|_\sigma\geq 0 \}$. The dual cone of a rational convex polyhedral cone is also a rational convex polyhedral cone and $\check{\check{\sigma}}=\sigma$. Moreover, $\tau \subseteq \sigma \iff \check{\tau} \supseteq \check{\sigma}$.

\begin{subsecproposition}
 $\{l_i\}_{i\in I}$ defines $\sigma$ if and only if $\check{\sigma}=\sum_{i\in I}\R_{\geq 0} \cdot l_i$. 
\end{subsecproposition}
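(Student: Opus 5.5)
The plan is to reduce both directions to the biduality statement $\check{\check{\sigma}}=\sigma$ recalled just above, applied to the finitely generated cone $C\coloneq\sum_{i\in I}\R_{\geq 0}\cdot l_i\subset V^\vee_\R$. The one identity that drives everything is
\[
\check{C}=\{v\in V_\R \mid \langle l,v\rangle\geq 0 \ \forall\, l\in C\}=\{v\in V_\R\mid \langle l_i,v\rangle\geq 0\ \forall\, i\in I\}.
\]
It holds because a linear functional is non-negative on all non-negative combinations of the $l_i$ exactly when it is non-negative on each $l_i$.

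For the direction ``$\Leftarrow$'', suppose $\check{\sigma}=C$. Taking duals and using $\check{\check{\sigma}}=\sigma$ gives $\sigma=\check{C}=\{v\mid\langle l_i,v\rangle\geq 0\ \forall i\}$. So $\{l_i\}_{i\in I}$ defines $\sigma$.

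For the direction ``$\Rightarrow$'', suppose $\sigma=\{v\mid\langle l_i,v\rangle\geq 0\ \forall i\}$. The displayed identity then reads $\sigma=\check{C}$, and dualizing once more gives $\check{\sigma}=\check{\check{C}}$. It therefore suffices to prove $\check{\check{C}}=C$ for this particular $C$. The inclusion $C\subseteq\check{\check{C}}$ is formal. For the reverse inclusion, I would take $l\notin C$ and separate: since $C$ is a closed convex cone, Hahn--Banach (or simply the nearest-point projection onto $C$ in a Euclidean structure on $V^\vee_\R$) produces $v\in V_\R$ with $\langle c,v\rangle\geq 0$ for all $c\in C$ and $\langle l,v\rangle<0$. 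Then $v\in\check{C}$, and $v$ witnesses $l\notin\check{\check{C}}$.

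The only non-formal input is that the finitely generated cone $C$ is closed, and this is the step I expect to need the most care. I would prove it via Carathéodory's theorem for cones. Every element of $C$ is a non-negative combination of a linearly independent subfamily of $\{l_i\}$, since a linear dependence among the vectors used can be exploited to drive one coefficient to zero while keeping the others non-negative. Consequently $C$ is a finite union of cones spanned by linearly independent vectors. Each such cone is the image of a closed orthant under an injective linear map, hence closed, and a finite union of closed sets is closed. With closedness established, the separation argument applies and both directions follow.
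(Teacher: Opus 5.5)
Your proof is correct and follows the same route as the paper's: both rest on the identity $\check{C}=\{v \mid \langle l_i,v\rangle\geq 0\ \forall i\}$ for $C=\sum_{i\in I}\R_{\geq 0}\cdot l_i$, and then dualize once more using biduality. The only difference is that the paper simply quotes $\check{\check{\tau}}=\tau$ for $C$ as well (a finitely generated cone being a rational convex polyhedral cone by the equivalent description given in the definition), whereas you prove $\check{\check{C}}=C$ directly via Carath\'eodory and a separation argument, which makes your version self-contained.
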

\begin{proof}
We may assume $l_i\in \check{\sigma}$. Let $\sigma '=\sum_{i\in I}\R_{\geq 0} \cdot l_i$. Then $\{l_i\}_{i\in I}\subseteq \sigma ' \subseteq \check{\sigma}$ and $  (\{l_i\}_{i\in I})^\vee= \check{\sigma '} \supseteq \sigma $, where $ (\{l_i\}_{i\in I})^\vee$  denotes the cone defined by $\{l_i\}_{i\in I}$. Thus $\{l_i\}_{i\in I}$ defines $\sigma$  $ \iff (\{l_i\}_{i\in I})^\vee=\sigma \iff \check{\sigma ' } = \sigma \iff \sigma '=\check{\sigma}$.
\end{proof}

\begin{subsecproposition}
The following statements are equivalent for a rational convex polyhedral cone $\sigma$.

\begin{enumerate}
\item $\{ 0\} $ is a face of $\sigma$ (in which case $\sigma$ is said to be \emph{strongly convex}).
\item $\sigma$ does not contain a non-trivial linear subspace of $V_\R$.
\item $\check{\sigma}$ contains a basis of $V^\vee$. 
\end{enumerate}
\end{subsecproposition}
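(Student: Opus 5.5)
The statement to prove is the last proposition: for a rational convex polyhedral cone $\sigma \subset V_\R$, the three conditions (1) $\{0\}$ is a face of $\sigma$, (2) $\sigma$ contains no non-trivial linear subspace, and (3) $\check{\sigma}$ contains a basis of $V^\vee$ are equivalent. I will prove the cycle (1) $\Rightarrow$ (2) $\Rightarrow$ (3) $\Rightarrow$ (1), using throughout the preceding proposition that $\{l_i\}_{i\in I}$ defines $\sigma$ if and only if $\check{\sigma}=\sum_{i\in I}\R_{\geq 0}\, l_i$. I will also use that rationality lets us take the defining set $\{l_i\}_{i\in I}\subset V^\vee$.

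For (1) $\Rightarrow$ (2), suppose $\{0\}=\sigma\cap\bigcap_{i\in J}V(l_i)$ for some $J\subseteq I$, and let $L\subset\sigma$ be a linear subspace. For $v\in L$ both $v$ and $-v$ lie in $\sigma$, so $\langle l_i,v\rangle\geq 0$ and $\langle l_i,-v\rangle\geq 0$. Hence $\langle l_i,v\rangle=0$ for every $i\in I$, in particular for $i\in J$. Therefore $v$ lies in the face $\{0\}$, and $L=0$.

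For (2) $\Rightarrow$ (3), let $W\subset V^\vee$ be the $\Q$-span of the $l_i$. By the preceding proposition, $\check{\sigma}=\sum_{i\in I}\R_{\geq 0}\, l_i$ spans $W_\R$. Every $v$ in the annihilator $W^\perp\subset V_\R$ satisfies $\langle l_i,\pm v\rangle=0\geq 0$, so $W^\perp\subset\sigma$. By (2), $W^\perp=0$, hence $W=V^\vee$. A spanning set of $V^\vee$ contains a basis, so some subset of the $l_i$ is a $\Q$-basis of $V^\vee$, and these vectors lie in $\check{\sigma}$.

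For (3) $\Rightarrow$ (1), take a basis $m_1,\dots,m_n\in\check{\sigma}$ of $V^\vee$. The only real subtlety is that a face must be cut out by members of a defining set of $\sigma$. To handle this, enlarge the defining set to $\{l_i\}\cup\{m_k\}$; by the preceding proposition it still defines $\sigma$, since the cone it generates is still $\check{\sigma}$ (each $m_k\in\check{\sigma}$). Then $\sigma\cap\bigcap_k V(m_k)$ is a face of $\sigma$ and equals $\{0\}$, because a vector killed by a basis of $V^\vee$ is zero. If one insists that faces be computed with respect to a fixed defining set, I would instead use $m=\sum_k m_k$: every $l\in\check{\sigma}$ is a non-negative combination of the $l_i$, so $\sigma\cap V(m)$ is cut out by the $l_i$ with positive coefficient in some expression of $m$, and this set is again $\{0\}$. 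Either way the steps are routine, and this bookkeeping about defining sets is the only point needing care.
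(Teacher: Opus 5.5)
Your proof is correct. The paper gives no proof of this proposition; it is recorded as standard background on cones, with the toric varieties reference of that subsection as the source. So there is no competing route, and yours is the usual argument: all three conditions amount to the vanishing of the linear subspace $\sigma\cap(-\sigma)=\bigcap_{i\in I}V(l_i)$.

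One simplification removes the defining-set bookkeeping you flag in (3)$\Rightarrow$(1): close the cycle as (3)$\Rightarrow$(2)$\Rightarrow$(1) instead.
\begin{itemize}
\item For (3)$\Rightarrow$(2), if $\pm v\in\sigma$ then $\langle m_k,v\rangle=0$ for every basis vector $m_k\in\check{\sigma}$, so $v=0$.
\item For (2)$\Rightarrow$(1), the face obtained with $J=I$ is exactly the linear subspace $\bigcap_{i\in I}V(l_i)\subset\sigma$, which must then be $\{0\}$.
\end{itemize}
This also sidesteps whether the basis in (3) is rational. Your first variant, enlarging the defining set by the $m_k$, needs $m_k\in V^\vee$ to keep the presentation rational. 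Your second variant (with $m=\sum_k m_k$) and the route above work for a real basis of $V^\vee_\R$.

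In that second variant, state explicitly that $\langle m,v\rangle=0$ with $v\in\sigma$ forces each $\langle m_k,v\rangle=0$, because each term is non-negative. In (2)$\Rightarrow$(3), you do not need the preceding proposition, since $l_i\in\check{\sigma}$ holds directly from the definition of $\sigma$.
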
 

\begin{subsecdefinition}
Let $\Sigma\subseteq V_\R$ be a collection of rational convex polyhedral cones. $\Sigma$ is called a partial convex polyhedral decomposition if the following three conditions hold.

\begin{enumerate}
\item Every $\sigma \in \Sigma$ is strongly convex, i.e., $\{0\}\leq \sigma$.
\item A face of any $\sigma \in \Sigma$ is again a cone in $\Sigma$.
\item The intersection of two cones in $\Sigma$ is a face of both cones.
\end{enumerate} 
\end{subsecdefinition}

(A collection of cones satisfying only (2) and (3) is also called a fan.) We say $\Sigma$ is finite if it contains finitely many cones. Denote by $|\Sigma|$ the union of all cones in $\Sigma$. If $C\subset V_\R$ contains $|\Sigma|$, $\Sigma$ is called a partial rational polyhedral decomposition of $C$. If $C=|\Sigma|$, then $\Sigma$ is a complete rational polyhedral decomposition of $C$.

Next, let $T$ be a split torus over a field of characteristic zero. We use $X^*(T)$ and $Y_*(T)$ to denote its character and cocharacter group, respectively. Then the affine ring of $T$ is $k[X^*(T)]$. Let $\sigma \subseteq Y_*(T)_\R$  be a rational polyhedral convex cone. The \emph{torus embedding} $T_\sigma$ of $T$ with respect to $\sigma$ is defined to be $\Spec k[X^*(T) \cap {\check{\sigma}}]$. (In the rest of this article, we often use $X^*(T)_{\sigma \geq 0}$ to mean $X^*(T)\cap \check{\sigma}$.) The action of $T$ on itself extends to $T_\sigma$. As $X^*(T)\cap \check{\sigma}\rightarrowtail X^*(T)$, we have a canonical $T$-equivariant map $T \rightarrow T_\sigma$. It is an open embedding if and only if $\sigma$ is strongly convex. In this case, $T_\sigma$ is a normal variety. 

\begin{subsecexample}
Let $T=\mathbb{G}_{m,\C}^d$. Identify $Y_*(T)$ with $\Z^d$ and let $\sigma= \sum_{i=1}^d\R_{\geq 0}e_i$ where $e_i$ is the $i$-th standard basis vector. Then $T \rightarrow T_\sigma$ is just $\mathbb{G}_{m,\C}^d \rightarrow \mathbb{A}^d_{\C}$.
\end{subsecexample}

A $T$-variety over $k$ containing $T$ as an open dense subvariety and for which the inclusion is  $T$-equivariant is called a \emph{toric variety}.
\begin{subsectheorem}\label{subsectheorem: torus embeddings, normality} $T_\sigma$ is a normal affine toric variety. Conversely, any normal affine toric variety is of the form $T_\sigma$ for some strongly convex rational polyhedral cone $\sigma \subseteq Y_*(T)_\R$ of top dimension.
\end{subsectheorem}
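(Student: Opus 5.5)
The statement has two halves, and I will treat them separately.

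\emph{Forward direction.} Let $\sigma\subseteq Y_*(T)_\R$ be strongly convex rational polyhedral. I will show $T_\sigma=\Spec k[X^*(T)\cap\check\sigma]$ is a normal affine toric variety in three steps.
\begin{enumerate}
\item Gordan's lemma: since $\check\sigma$ is generated by finitely many rational vectors (the previous proposition), the semigroup $X^*(T)\cap\check\sigma$ is finitely generated. The integer points of the bounded zonotope $\{\sum t_i l_i : 0\le t_i\le 1\}$, together with the $l_i$, generate it. So $T_\sigma$ is an affine variety of finite type.
\item Strong convexity of $\sigma$ makes $\check\sigma$ full-dimensional in $V^\vee_\R$. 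Hence $X^*(T)\cap\check\sigma$ generates $X^*(T)$ as a group, the map $k[X^*(T)\cap\check\sigma]\to k[X^*(T)]$ is a localization (invert a character in the interior of $\check\sigma$), and $T\to T_\sigma$ is an open dense equivariant embedding.
\item Normality: write $\check\sigma=\bigcap_j\{\langle -,v_j\rangle\ge 0\}$ using the ray generators $v_j$ of $\sigma$. Then $k[X^*(T)\cap\check\sigma]=\bigcap_j k[X^*(T)\cap\{\langle -,v_j\rangle\ge0\}]$ inside $k[X^*(T)]$. Each term is a Laurent polynomial ring in $n-1$ variables tensored with a polynomial ring in one variable, hence normal, and an intersection of normal domains with the same fraction field is normal.
\end{enumerate}

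\emph{Converse.} Let $X$ be a normal affine toric variety with torus $T$. Then $T$ acts on $A=k[X]\subseteq k[T]=k[X^*(T)]$, and since $T$ is diagonalizable, $A$ is spanned by the characters it contains. So $A=k[S]$ for a finitely generated subsemigroup $S\subseteq X^*(T)$, and $S$ generates $X^*(T)$ as a group because $k(X)=k(T)$. Let $C=\R_{\ge0}S$, a rational polyhedral cone, and put $\sigma=\check C$. I then need two facts.
\begin{enumerate}
\item $\sigma$ is strongly convex. This follows from $C$ being full-dimensional, which holds because $S$ spans $X^*(T)$.
\item $S=X^*(T)\cap C$. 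This is the crux. Any $m\in X^*(T)\cap C$ has some multiple $Nm\in S$, since $C$ is rational and generated by $S$. So $\chi^m$ satisfies $t^N-\chi^{Nm}=0$ over $A$ and lies in the fraction field $k(T)$; normality of $A$ then gives $\chi^m\in A$, i.e.\ $m\in S$.
\end{enumerate}
Thus $X\cong T_\sigma$.

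The main obstacle is the statement's requirement that $\sigma$ have top dimension, which conflicts with the construction above. Top dimension of $\sigma$ corresponds to $C=\check\sigma$ being strongly convex, i.e.\ $X$ having a torus-fixed point, and this fails for instance for $X=T$, where $\sigma=\{0\}$. So the proof only works if I adopt the convention, implicit in the toroidal-singularity setup, that the normal affine toric varieties considered have a $T$-fixed point. Equivalently, I would phrase the converse as ``for some strongly convex $\sigma$'' and note that $\sigma$ is of top dimension exactly when $T_\sigma$ has a fixed point. In that case the fixed point corresponds to the ideal spanned by the nonunit characters, and such a proper ideal exists exactly when $C$ contains no line.
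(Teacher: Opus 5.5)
Your argument is correct, and so is your objection to ``top dimension'': for $X=T$ one needs $\check\sigma=X^*(T)_\R$, i.e.\ $\sigma=\{0\}$. In general, $\sigma$ is of top dimension exactly when the closed orbit $T_\sigma^\sim=\Spec k[X^*(T)_{\sigma=0}]$ is a point. The paper's proof does not supply ``top dimension'' either, since the result it cites, \cite[1.3.5]{toricvarieties11}, only yields a strongly convex $\sigma$. One minor correction to your last sentence: the ideal spanned by the non-unit characters is always proper. What singles out a fixed point is that the quotient $k[S\cap(-S)]$ equals $k$, i.e.\ $C\cap(-C)=0$.

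Your route differs from the paper's. The paper defines $T_\sigma$ for an arbitrary rational polyhedral $\sigma$, and the only argument it gives itself is a reduction. Let $T''\subseteq T$ be the subtorus with $Y_*(T'')_\R=\sigma\cap(-\sigma)$ and set $T'=T/T''$. Then $\check\sigma\subseteq X^*(T')_\R$, so $T_\sigma=T'_{\bar\sigma}$ for a strongly convex $\bar\sigma$. Both directions for strongly convex cones are then quoted from Cox--Little--Schenck.

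You assume strong convexity from the start and instead prove the cited theorem by hand:
\begin{itemize}
\item Gordan's lemma via the zonotope;
\item normality as an intersection of rings $k[x_1,x_2^{\pm1},\dots,x_n^{\pm1}]$;
\item in the converse, the weight decomposition of $k[X]\subseteq k[X^*(T)]$ followed by saturation of $S$ from normality.
\end{itemize}

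Your version is self-contained and shows exactly where each hypothesis enters. Normality is used only for saturation, and strong convexity only to make $T\to T_\sigma$ an open embedding. The paper's version is shorter and also covers non-strongly-convex $\sigma$, where $T_\sigma$ is toric only for the quotient torus $T'$. To match that scope, add the reduction above. Your finite-generation and normality arguments already go through for any rational polyhedral $\sigma$, using a finite generating set of $\sigma$ in place of ray generators.
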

\begin{proof} 
Let $T'' \subseteq T$ be the subtorus such that $Y_*(T'')_\Q$ is the maximal $\Q$-linear subspace contained in $\sigma$ and let $T\rightarrow T'=T/T''$ be the quotient map. Then $\sigma$ is the preimage under $Y_*(T)_\R \rightarrow Y_*(T')_\R$ of a strongly convex (rational!) polyhedral cone $\bar{\sigma}$ and $T_\sigma=T'_{\bar{\sigma}}$. The canonical map $T \rightarrow T_\sigma$ is just the composite map $T \rightarrow T' \rightarrow T'_{\bar{\sigma}}$. The rest follows from {\cite[1.3.5]{toricvarieties11}}.
\end{proof}

A rational polyhedral convex cone $\sigma \subseteq Y_*(T)_\R$ is smooth with respect to the lattice $Y_*(T)$ if the monoid $Y_*(T)\cap \sigma $ is generated by a subset of a $\Z$-basis of $Y_*(T)$. The variety $T_\sigma$ is smooth over $k$ if and only if $\sigma$ is smooth.  A strongly convex cone is simplicial if its one-dimensional faces are spanned by linearly independent vectors. If $f\colon T_1 \rightarrow T_2$ is a morphism of tori with convex cones $\sigma_1 \subseteq Y_*(T_1)_\R$, $\sigma_2 \subseteq Y_*(T_2)_\R$, then $f$ extends to a (necessarily $T_{1}$-equivariant) diagram below if and only if $f_*(\sigma_1)\subseteq \sigma_2$.

\begin{equation*}
\begin{tikzcd}
T_1 \arrow[r, "f"] \arrow[d] & T_2 \arrow[d]\\
T_{1,\sigma_1} \arrow[r]      &  T_{2, \sigma_2}
\end{tikzcd}
\end{equation*}

The action of $T$ on $T_\sigma$ has a unique closed orbit $T_\sigma^\sim $ defined by the ideal $I_\sigma=k[X^*(T)_{\sigma ^{0}>0}]$. The affine ring of $T_\sigma^\sim$ is also isomorphic to $k[X^*(T)_{\sigma=0}]$. If $\sigma$ is of top dimension, then $T_\sigma^\sim$ is just a point. We have a canonical surjective projection map $T_\sigma \rightarrow T_\sigma^\sim$ which fits into a diagram.

\begin{equation*}
\begin{tikzcd}
T _\sigma^\sim \arrow[r, rightarrowtail] \arrow[dr,"\sim" '] & T_\sigma  \arrow[d, "\pi_\sigma"]\\
                                                                                                                  &  T_\sigma^\sim
\end{tikzcd}
\end{equation*}
The restriction $\pi_\sigma|_T\colon T\rightarrow T_\sigma^\sim$ makes $T_\sigma^\sim$ a quotient torus of $T$.

If $\tau \leq \sigma$ is a face, then $T_\tau \rightarrowtail T_\sigma$ is an open dense subvariety. The $T$-orbits in $T_\sigma$ are just $\{T_\tau^\sim | \tau \leq \sigma \}$. Each $T_\tau^\sim$ is a locally closed subvariety. The minimal face corresponds to $T$ itself. The closure of $T_\tau^\sim$ is the union of   $T_\rho^\sim$ for all  $\rho$ such that $\tau \leq \rho \leq \sigma$.

Now consider a partial rational polyhedral decomposition $\Sigma $ of $Y_*(T)_\R$. Since $\tau \leq \sigma \implies T_\tau \rightarrowtail T_\sigma$, it is not hard to see that the various $T_\sigma$ for $\sigma \in \Sigma$ can be glued to form a variety $T_\Sigma$, equipped with open dense embeddings $T_\sigma \rightarrowtail T_\Sigma$ compatible with $T_\tau \rightarrowtail T_\sigma$. It follows that $T_\Sigma$ is the colimit of the diagram of morphisms $T_\tau \rightarrow T_\sigma$. Also, $T_\Sigma$ is of finite type over $k$ if and only if $\Sigma$ is finite. It is smooth with $T_\Sigma \backslash T$ a union of smooth divisors with normal crossings, if and only if $\Sigma$ consists of smooth cones; in this case we say $\Sigma$ is smooth. $T_\Sigma$ is proper over $k$ if and only if $\Sigma$ is a finite and complete decomposition of $Y_*(T)_\R$. 

\begin{subsecexample}
Let $T=\mathbb{G}_{m,\C}^2$. Identify $Y_*(T)$ with $\Z \times \Z$ and let $\Sigma$ consist of all the cones spanned by $\pm e_1, \pm e_2$ where $e_i$ is the $i$-th standard basis vector. In other words, the two-dimensional cones are the four quadrants. Then $T \rightarrow T_\Sigma$ is just $\mathbb{G}_{m,\C}\times \mathbb{G}_{m,\C} \rightarrow \mathbb{P}^1_{\C}\times \mathbb{P}^1_{\C}$.
\end{subsecexample}

The action of $T$ extends to $T_\Sigma$. Again, the $T$-orbits are in one-to-one correspondence with the cones in $\Sigma$. Thus a $T$-invariant subset   $S\subseteq T_\Sigma$ corresponds to a subset $\Sigma_S \subseteq \Sigma$. $S$ is open in $T_\Sigma$ if and only if every face of a cone in $\Sigma_S $ is again in $\Sigma_S$. 

Suppose that $f\colon T_1 \rightarrow T_2$ is a morphism of algebraic groups and $\Sigma_1$, $\Sigma_2$ are partial rational polyhedral decompositions of $Y_*(T_1)_\R$, $Y_*(T_2)_\R$, respectively. Then $f$ extends to a (necessarily $T_1$-equivariant) morphism $T_{1, \Sigma_1}\rightarrow T_{2, \Sigma_2}$, if and only if for all $ \sigma_1\in \Sigma_1$, $f_*(\sigma_1) \subseteq \sigma_2$ for some $\sigma_2 \in \Sigma_2$. Therefore we have a similar diagram below. A special case of this is that, if $\Gamma$ is a discrete group with a left action on $T$, then the $\Gamma$-action extends to $T_\Sigma$ if and only if $\Sigma$ is $\Gamma$-invariant, i.e., for all $\gamma \in \Gamma$ and for all $\sigma \in \Sigma$, $\gamma_*(\sigma)\in \Sigma$. 

\begin{equation*}
\begin{tikzcd}
T_1 \arrow[r, "f"] \arrow[d] & T_2 \arrow[d]\\
T_{1,\Sigma_1} \arrow[r]      &  T_{2, \Sigma_2}
\end{tikzcd}
\end{equation*}

In the construction of toroidal compactifications, we will encounter the following situation: Let $f\colon T\rightarrow T'$ be an isogeny of split tori over $k$. Then under the isomorphism $Y_*(T)_\R \rightarrow Y_*(T')_\R$, $\Sigma \subseteq Y_*(T)_\R$  is mapped to $\Sigma ' \subseteq Y_*(T')_\R$. The kernel $\Gamma$ of $f$ acts on $T$, and $T'$ is just the quotient of $T$ under this action. Further, the $\Gamma$-action extends to $T_\Sigma $. Under $T_\Sigma \rightarrow T'_{\Sigma '}$, $T'_{\Sigma '}$ becomes the scheme-theoretic quotient of the $\Gamma$-action.

Torus embeddings have relative versions. Let $X\rightarrow Y$ be a $T$-torsor over $k$ and $\Sigma \subseteq Y_*(T)_\R$ a partial rational polyhedral decomposition. As the torsor is Zariski-locally trivial, it is isomorphic to $T\times V \rightarrow V$ for various open subsets $V  \subseteq Y$. This gives a torus embedding $T \times V \rightarrowtail T_\Sigma \times V$ over $V$. Any two sections of $X$ over $V$ differ by a morphism $V \rightarrow T$. From this, it follows that the various torus embeddings can be glued to a global relative torus embedding $X\rightarrow X_\Sigma$ over $Y$, uniquely defined up to canonical isomorphisms. 

Relative torus embeddings have analogous functorial properties: Suppose $X_i \rightarrow Y_i$ is a $T_i$-torsor and $\Sigma_i\subseteq Y_*(T_i)_\R$ is a rational partial polyhedral decomposition, for $i=1, 2$. Given the left diagram below, and a morphism $f \colon T_1 \rightarrow T_2$ such that $f(\Sigma_1) \subseteq \Sigma_2$, the left diagram extends to the right one $T_1$-equivariantly.
\begin{IEEEeqnarray*}{cCc}
\begin{tikzcd}
X_1 \arrow[r] \arrow[d] & X_2 \arrow[d]\\
Y_1 \arrow[r]                      & Y_2 
\end{tikzcd}& \implies 
& \begin{tikzcd}
X_1\arrow[r] \arrow[d,rightarrowtail] & X_2 \arrow[d, rightarrowtail]\\
X_{1, \Sigma_1} \arrow[r] \arrow[d] & X_{2, \Sigma_2} \arrow[d]\\
Y_1 \arrow[r]                      & Y_2 
\end{tikzcd}
\end{IEEEeqnarray*}

Finally, we introduce the ``$\mathrm{ord}$"-map. Let $T$ be a $\C$-torus. Then the map $\mathrm{ord}_T$ is defined as follows.
\begin{IEEEeqnarray*}{cCrcccc}
\mathrm{ord}_T& \colon & T(\C) &\overset{\sim}{\longleftarrow}& Y_*(T)\otimes_\Z \C^\times &\longrightarrow& Y_*(T)\otimes_\Z \R \\
&&\prod_{j=1}^d\lambda_j(t_j) &\longmapsfrom& \sum_{j=1}^d \lambda_j\otimes t_j &\longmapsto& \sum_{j=1}^d \lambda_j\otimes (\ln |t_j|)
\end{IEEEeqnarray*} 
This map may be viewed as the quotient map of $T(\C)$ by its maximal compact subgroup. More generally, suppose $X\rightarrow S$ is an analytic $T(\C)$-torus over a complex space $S$. Let $K\subset T(\C)$ be its maximal compact subgroup. Then $X/K\rightarrow S$ is a $T/K\overset{\sim}{\rightarrow}Y_*(T)_\R$-torsor. Since we can patch together local sections via a partition of unity, it can be smoothly trivialized. Any trivialization yields a map
\begin{IEEEeqnarray*}{cCc}
 \mathrm{ord}_X& \colon & X  \rightarrow  X/K  \overset{\sim}{\rightarrow} Y_*(T)_\R \times S  \rightarrow  Y_*(T)_\R.
\end{IEEEeqnarray*}
Any two trivializations differ by a smooth map $S \rightarrow Y_*(T)_\R$ which can be lifted to $T(\C)$ (e.g., via $\R \rightarrow \C^\times, t \mapsto e^{t}$). Thus $\mathrm{ord}_X$ is defined up to translation by a smooth map $X \rightarrow S \rightarrow T(\C)$.


          \subsection{Toroidal compactifications} \label{subsec: toroidal compactifications}

We are in a position to construct the toroidal compactifications of mixed Shimura varieties. This process bears some resemblance to the construction of the Baily-Borel compactification of pure Shimura varieties, but is more involved. 

Before we dive into the technical details, let us outline the rough idea behind this project. Let $(P, \mX)$ be a mixed Shimura datum and $K_f \subset P(\fadele)$ an open compact subgroup. We will see that the mixed Shimura variety $M^{K_f}(P, \mX)(\C)$ can be realized as the quotient $\mU/\sim$ of a covering $\mU$ obtained from all rational boundary components $(P_1, \mX_1)$ of $(P, \mX)$. This covering embeds as an open dense subset into a larger space: $\mU \rightarrowtail \overbar{\mU}$. The equivalence relation $\sim$ extends to $\overbar{\mU}$ and the quotient $\overbar{\mU}/ \sim$ is the desired toroidal compactification of $M^{K_f}(P, \mX)(\C)$ with respect to a certain cone decomposition. The following diagram summarizes the situation.
\begin{equation*}
\begin{tikzcd}
\mU \arrow[r, rightarrowtail] \arrow[d, twoheadrightarrow] & \overbar{\mU} \arrow[d,twoheadrightarrow]\\
\mU/\sim \arrow[r, rightarrowtail]                                                 &  \overbar{\mU}/\sim
\end{tikzcd}
\end{equation*}

We begin with the definition of $K_f$-admissible partial cone decompositions $\Sigma$ on $\mC(P, \mX) \times P(\fadele)$ and the relevant decompositions associated with $\Sigma$. Then we construct the compactification in the case that $\Sigma$ is concentrated on the unipotent fiber. Next we obtain the covering $\mathcal{U}$ of the mixed Shimura variety associated with all its rational boundary components and  define the equivalence relation $\sim$ on this covering. It is then embedded into the larger covering $\mUbar$ with the equivalence relation extended. Finally, we briefly discuss the smoothness and completeness properties. This subsection contains materials from Chapters 6 and 9 in \cite{Pink}.

\begin{subsecdefinition}\label{subsecdefinition: toroidal compactifications, admissible cone decomposition}

Let $(P, \mX)$ be a mixed Shimura datum and $K_f \subset P(\fadele)$ an open compact subgroup. Let $\Sigma $ be a collection of subsets of $\mC(P, \mX) \times P(\fadele)$. For a rational boundary component $(P_1, \mX_1)$ and $\mX^0 \subset \mX^+_{(P_1, \mX_1)}$, define $\Sigma(\mX^0, P_1, p_f)\coloneq \{\sigma \in \Sigma \, \lvert \, \sigma \subset -C^*(\mX^0, P_1)\times \{p_f\} \}$. Then $\Sigma$ is called a \emph{$K_f$-admissible partial cone decomposition for $(P, \mX)$} if it satisfies the following conditions.

\begin{enumerate}
\item $\dps \Sigma=\hspace{-5pt}\bigcup_{(\mX^0, P_1, p_f)} \hspace{-10pt}\Sigma(\mX^0, P_1, p_f)$. In other words, any $\sigma$ is contained in some $\Sigma(\mX^0, P_1, p_f)$.

\item  Every $\Sigma(\mX^0, P_1, p_f)$  is a partial rational convex polyhedral cone decomposition of $-C^*(\mX^0, P_1)\times \{p_f\}\subset U_1(\R)(-1)\times \{p_f\}$.

\item $\Sigma$ is invariant under right multiplication by $K_f$ on the second factor.

\item $\Sigma$ is invariant under left multiplication by $P(\Q)$ on both factors.

\item For a fixed $(P_1, \mX_1)$ and $\mX^0 \subset \mX^+_{(P_1, \mX_1)}$, the subcollection $\dps \hspace{-14pt}\bigcup_{\phantom{\hspace{12pt}}p_f\in P(\fadele)} \hspace{-16pt}\Sigma(\mX^0, P_1, p_f)$ is invariant under left   multiplication by $P_1(\fadele)$ on the second factor.
\end{enumerate}

Furthermore, 

\begin{enumerate}\setcounter{enumi}{5}
\item $\Sigma$ is \emph{finite} if $P(\Q) \backslash \Sigma /K_f$ is finite.

\item $\Sigma$ is \emph{complete} if it is finite and every $\Sigma(\mX^0, P_1, p_f)$ is a complete rational convex polyhedral cone decomposition of $-C^*(\mX^0, P_1)\times \{p_f\}\subset U_1(\R)(-1)\times \{p_f\}$.

\item $\Sigma$ is \emph{smooth with respect to $K_f$} if for every rational boundary component $(P_1, \mX_1)$ and $\mX^0 \subset \mX^+_{(P_1, \mX_1)}$, $\Sigma(\mX^0, P_1, p_f)$ is smooth with respect to the arithmetic lattice $\Gamma_{U_1}(-1) \subset U_1(\Q)(-1)$, where $\Gamma_{U_1}$ is the image under the projection $Z(P) \times U_1 \rightarrow U_1$ of the group $\left ( \Id_{Z(P)(\Q)}(\mX)\times U_1(\Q) \right )\cap p_f K_f p_f^{-1}$.

\end{enumerate}
\end{subsecdefinition}

One can construct new admissible cone decompositions out of a given one. Let $(P, \mX)$ be a mixed Shimura datum, $K_f\subset P(\fadele)$ an open compact subgroup and $\Sigma$ a $K_f$-admissible partial cone decomposition for $(P, \mX)$.

\begin{enumerate}
\item  For all $p_f \in P(\fadele)$, define
\[
[\cdot p_f]^*\Sigma \coloneq \{ (u, p'_f \cdot p^{-1}_f) |(u,p'_f)\in \sigma \,\text{for some}\, \sigma \in \Sigma \}.
\]
In other words, $[\cdot p_f ]^*\Sigma(\mX^0, P_1, p'_f)=\Sigma(\mX^0, P_1, p'_fp_f)\cdot p_f^{-1} $. This is a $p_f K_f p_f^{-1}$-admissible partial cone decomposition for $(P, \mX)$. It inherits the finiteness or completeness property of $\Sigma$ if applicable.

\item If $\varphi \colon (P, \mX) \rightarrow (P, \mX)$ is an automorphism, then it induces an automorphism of $\mC(P, \mX) \times P(\fadele)$. Define $\varphi^*\Sigma \coloneq \{\varphi^{-1}(\sigma)| \sigma \in \Sigma \}$. This is a $\varphi^{-1}(K_f)$-admissible partial cone decomposition and it inherits the finiteness or completeness property of $\Sigma$ if applicable.

\item For a rational boundary component $(P_1, \mX_1)$ of $(P, \mX)$, we can obtain a $P_1(\fadele)\cap K_f$-admissible cone decomposition $\Sigma|_{(P_1,\mX_1)}$ as follows: For all rational boundary components $(P_2, \mX_2)$ and $\mX^0_1\subset (\mX_1)^+_{(P_2, \mX_2)}$, there corresponds a unique $\mX^0 \subset \mX_{(P_2, \mX_2)}^+ \subset \mX^+_{(P_1,\mX_1)}$, then 
\[
\Sigma|_{ (P_1, \mX_1) }(\mX^0_1 , P_2, p_{1,f})\coloneq \Sigma(\mX^0, P_2, p_{1,f} ).
\]
Note that $\Sigma|_{(P_1, \mX_1)}$ inherits neither finiteness nor completeness property in general.

\item The \emph{restriction of $\Sigma$ to the unipotent fiber} is defined to be $\Sigma^0=\Sigma \cap \left( \cup_{(\mX^0, (P_1, \mX_1))} (-C(\mX^0, P_1)) \times P(\fadele) \right )$ where $(P_1, \mX_1)$ runs over all improper rational boundary components and $\mX^0 $ runs over all connected components in $\mX^+_{(P_1,\mX_1)}=\mX_1\subset \mX$. Thus 
\[
 \bigcup_{(\mX^0, (P_1, \mX_1))} (-C(\mX^0, P_1)) \times P(\fadele)= \bigcup_{(\mX^0, (P_1, \mX_1))} U(\R)(-1) \times P(\fadele),
\]
whence the name. In case $\Sigma=\Sigma^0$, $\Sigma$ is said to be concentrated on the unipotent fiber. 
\end{enumerate}

Let $(P, \mX)$ be a mixed Shimura datum with a neat  open compact subgroup $K_f \subset P(\fadele)$ and a $K_f$-admissible partial cone decomposition $\Sigma$ concentrated on the unipotent fiber. Define $(P', \mX') \coloneq (P, \mX)/U$ and let $K'_f \subset P'(\fadele)$ be the image of $K_f$ under the projection $\pi_U \colon (P,\mX) \rightarrow (P', \mX')$. Fix $[x,p_f]\in M^{K_f}(P, \mX)(\C)$. Then by the results from Subsection \ref{subsec: mixed shimura varieties}, the fiber over $\pi_U([x, p_f])$ is isomorphic to $\Gamma_U \backslash U(\C)$, where $\Gamma_U$ is the image of 
\[
\left( \Id_{Z(P)(\Q)}(\mX)\times U(\Q) \right ) \cap p_f K_f p_f^{-1}
\]
under $Z(P)\times U \rightarrow U$, which is an arithmetic lattice. $\Gamma_U \backslash U(\C)$ is an algebraic torus. Regarding $C^\times $ as $\C / \Z(1)$ via the exponential map, its cocharacter group is 
\[
\Hom(\frac{\C}{\Z(1)} , \Gamma_U \backslash U(\C)) = \Hom(\Z(1), \Gamma_U)=\Gamma_U(-1) \subset \Gamma_U\otimes \R (-1)= U(\R)(-1).
\]

Let $\mX^0$ be the connected component containing $x$ and $(P_1, \mX_1)$ the unique improper boundary component containing $\mX^0$. Since $\Sigma(\mX^0, P_1, p_f) \subset U(\R)(-1)$ is a rational partial polyhedral decomposition of $U(\R)(-1)$, it defines a torus embedding of each fiber $\Gamma_U \backslash U(\C)$. It follows that we have a relative torus embedding  over $M^{K_f'}(P', \mX')(\C)$. Denote this torus embedding by $M^{K_f}(P,\mX, \Sigma)(\C)$. We thus have a diagram.
\begin{equation*}
\begin{tikzcd}[column sep=5pt]
M^{K_f}(P,\mX )(\C) \arrow[rr, rightarrowtail] \arrow[dr, "\pi_U" ', twoheadrightarrow]&&M^{K_f}(P,\mX , \Sigma)(\C) \arrow[dl, twoheadrightarrow]\\
&M^{K_f'}(P', \mX')(\C)&
\end{tikzcd}
\end{equation*}

Neatness of $K_f$ implies that $M^{K_f}(P, \mX)(\C)$ is smooth. In this case, the space $M^{K_f}(P,\mX, \Sigma)(\C)$ is smooth if and only if $\Sigma$ is smooth with respect to $K_f$. If $K_f$ is not neat, we define $M^{K_f}(P,\mX , \Sigma)(\C) $ to be the quotient of $M^{\tilde{K}_f}(P,\mX , \Sigma)(\C) $ under the right action of $K_f / \tilde{K}_f$, where $\tilde{K}_f \unlhd K_f$ is any neat open compact normal subgroup. This makes sense since we can extend the action of $K_f / \tilde{K}_f$ on $M^{\tilde{K}_f}(P , \mX)(\C)$ to  $M^{\tilde{K}_f}(P,\mX , \Sigma)(\C) $ by the $K_f$-admissibility of $\Sigma$. Up to canonical isomorphism, the resulting $M^{K_f}(P,\mX , \Sigma)(\C) \rightarrow M^{K_f'}(P', \mX')(\C)$  does not depend on the choice of $\tilde{K}_f$.

Now we  construct the covering $\mU$ mentioned at the beginning of this section. Again let $(P, \mX)$ be a mixed Shimura datum with open compact subgroup $K_f \subset P(\fadele)$ and $\Sigma$ a $K_f$-admissible partial cone decomposition. Suppose $(P_1, \mX_1)$ is a rational boundary component of $(P, \mX)$ and $p_f \in P(\fadele)$. We know that there is an open embedding $\mX^+_{(P_1, \mX_1)} \rightarrowtail \mX_1$, which is bijective on connected components and $P_1(\R)\cdot Q(\R)^0\cdot U(\C)$-equivariant with $Q$ the $\Q$-admissible parabolic subgroup related to $(P_1, \mX_1)$. Define $K^1_f \coloneq P_1(\fadele) \cap p_f K_fp_f^{-1} $ and  let 
$
\mU(P_1,\mX_1, p_f) \coloneq  P_1(\Q) \backslash \mX^+_{(P_1, \mX_1)}\times P_1(\fadele) /K_f^1 
$.
Since \[ P_1(\Q) \backslash \mX^+_{(P_1, \mX_1)}\times P_1(\fadele) /K_f^1  \rightarrowtail  P_1(\Q) \backslash \mX_1 \times P_1(\fadele)/ K_f^1, \] 
$ \mU(P_1, \mX_1, p_f) \rightarrowtail M^{K_f^1}(P_1, \mX_1)(\C)$ as an open subspace. There  is a map to $ M^{K_f}(P, \mX)(\C)$ as follows.
 \begin{IEEEeqnarray*}{cCccc}
  \beta(P_1,\mX_1, p_f)&\colon &\mU(P_1, \mX_1, p_f) &\rightarrow& M^{K_f}(P, \mX)(\C)\\
                                            &   & [x, p_{1,f}]                      &  \longmapsto     & [x, p_{1,f}\cdot p_f]
\end{IEEEeqnarray*}
We take $ \mU $ to be the disjoint union of $\mU(P_1, \mX_1, p_f)$ for all $(P_1, \mX_1)$ and $p_f$. 
\begin{IEEEeqnarray*}{rCl}
\mU& \coloneq& \bigsqcup_{(P_1, \mX_1, p_f)} \mU(P_1, \mX_1, p_f)
\end{IEEEeqnarray*}

The various $\beta(P_1, \mX_1, p_f)$ then combine to give a surjective map $\beta \colon \mU \twoheadrightarrow M^{K_f}(P, \mX)(\C)$.  As explained in {\cite[6.10]{Pink}}, $\beta$ is indeed a ramified covering and $M^{K_f}(P, \mX)(\C)$ inherits its structure of a complex space from that of $\mU$. Let $\sim$ be the equivalence relation on $\mU$ defined by this map. Since $M^{K_f}(P, \mX)(\C)$ is Hausdorff, $\sim$ is a closed subset of $\mU \times \mU$.

Let $(P'_1, \mX_1', p_f')$ be another such triple. There are several basic morphisms  $\mU(P_1, \mX_1, p_f) \rightarrow \mU(P_1', \mX_1', p_f')$.
\begin{enumerate}
\item If $(P_1', \mX_1')=(P_1, \mX_1)$ and $p'_f = p_f\cdot k_f$ for $k_f \in K_f$, then $K_f^1={K_f^1}'$ and the identity map on $M^{K_f^1}(P_1, \mX_1)(\C)$ induces
\begin{IEEEeqnarray*}{cCcCc}
[\mathrm{id}]& \colon &\mU(P_1, \mX_1, p_f) &\rightarrow& \mU(P_1, \mX_1, p_f  \cdot k_f).
\end{IEEEeqnarray*}

\item If $(P_1', \mX_1')=(P_1, \mX_1)$ and $p'_f = p^{-1}_{1,f}\cdot p_f $ for some $p_{1,f} \in P_1(\fadele)$, then right multiplication by $p_{1,f}$ induces
\begin{IEEEeqnarray*}{cCcCc}
[\cdot p_{1,f}]&\colon &\mU(P_1, \mX_1, p_f) &\rightarrow& \mU(P_1, \mX_1, p_{1,f}^{-1} \cdot p_f ).
\end{IEEEeqnarray*}

\item If $(P_1', \mX_1')=\mathrm{int}(p) (P_1, \mX_1)$ and $p'_f =p \cdot p_f $ for some $p  \in P (\fadele)$, then $\mathrm{int}(p)$ also induces
\begin{IEEEeqnarray*}{cCcCc}
[\mathrm{int}(p)]& \colon &\mU(P_1, \mX_1, p_f) &\rightarrow& \mU(p\cdot P_1,  p\cdot \mX_1, p \cdot p_f ).
\end{IEEEeqnarray*}
(Here $p\cdot P_1$ means $p \, P_1 \, p^{-1}$.)

\item If $(P_1, \mX_1)$ is a rational boundary component of  $(P_1', \mX_1') $ and $p'_f =  p_f $, then $\mX^+_{(P_1,\mX_1)} \subset \mX^+_{(P_1', \mX_1')}$ and this inclusion map induces
\begin{IEEEeqnarray*}{cCcCc}
\beta(P_1', \mX_1', P_1, \mX_1, p_f)&\colon &\mU(P_1, \mX_1, p_f) &\rightarrow& \mU(P'_1, \mX'_1,  p_f )\\
&&   [x, p_{1,f}] &\mapsto& [x, p_{1,f}].
\end{IEEEeqnarray*}
\end{enumerate} 

\begin{subsecremark}
Note that only maps of type (4) can possibly change the rational boundary component involved.
\end{subsecremark}

These maps are subject to certain commutativity laws (e.g., $[\mathrm{id}] \circ [\cdot p_{1,f}] = [\cdot p_{1,f}] \circ [\mathrm{id}]$). We will omit them and leave the verification to the reader. If $f \colon \mU(P_1, \mX_1, p_f) \rightarrow \mU(P_1', \mX_1' ,p_f')$ is one of the above four types of maps, then it is clear that $\beta(P_1', \mX_1', p_f') \circ f = \beta(P_1, \mX_1, p_f)$. Thus the equivalence relation on $\mU \times \mU$ generated by the graphs of these maps is contained in $\sim$. The lemma below states that the two equivalence relations indeed coincide.

\begin{subseclemma}
Let $u\in \mU(P_1, \mX_1, p_f)$ and $u' \in \mU(P_1', \mX_1', p_f')$. Then $u \sim u'$ if and only if there exists a rational boundary component $(P_2, \mX_2)$ of $(P, \mX)$, $p \in P(\Q)$, $p_{2,f}\in P_2(\fadele)$, and $k_f \in K_f$ such that 
\begin{enumerate}
\item $(P_1, \mX_1)$ is a rational boundary component of $(P_2, \mX_2)$;

\item $(P_1' , \mX_1')$ is a rational boundary component of $\mathrm{int}(p) (P_2, \mX_2)$;

\item $p_f'=p \cdot p_{2,f} \cdot p_f \cdot k_f$;

\item $u$ and $u'$ map to the same image under the following diagram.

\begin{equation*}
\begin{tikzcd}
\mU(P_1, \mX_1, p_f)\ni u 
         \arrow[d,"{\beta(P_2, \mX_2, P_1, \mX_1, p_f)}" ']   
& u'\in \mU(P_1', \mX_1', p_f') 
         \arrow[d, "{ \beta(p\cdot P_2, p\cdot \mX_2, P_1', \mX_1', p_f')  }", start anchor={[yshift=2pt]}]\\
\mU(P_2 , \mX_2, p_f)                                               
&  \mU(p\cdot P_2, p\cdot \mX_2, p_f')  \\
\mU(P_2, \mX_2, p_{2,f}\cdot p_f) 
         \arrow[u, "{ [\cdot p_{2,f}] }"  , " \wr" ' , start anchor={[yshift=-2pt]}] 
         \arrow[r, "{ [\mathrm{id}] }", "\sim" ' ]  
& \mU(P_2, \mX_2, p_{2,f} \cdot p_f\cdot k_f) 
          \arrow[u, " { [\mathrm{int}(p)] }" ', "\wr",  start anchor={[yshift=-2pt]}]
\end{tikzcd}
\end{equation*} 

\end{enumerate}
\end{subseclemma}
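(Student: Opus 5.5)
The ``if'' direction is formal; the real work is the ``only if'' direction. For ``if'', each of the four basic maps $[\mathrm{id}]$, $[\cdot p_{2,f}]$, $[\mathrm{int}(p)]$ and $\beta(\cdot,\cdot,\cdot,\cdot,\cdot)$ satisfies $\beta(P_1',\mX_1',p_f')\circ f=\beta(P_1,\mX_1,p_f)$. So if $u$ and $u'$ have the same image in the diagram of (4), then $\beta(u)=\beta(u')$, i.e.\ $u\sim u'$.

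For ``only if'', write $u=[x,p_{1,f}]$ and $u'=[x',p'_{1,f}]$ with $x\in\mX^+_{(P_1,\mX_1)}$ and $x'\in\mX^+_{(P'_1,\mX'_1)}$. Then $\beta(u)=\beta(u')$ gives $p\in P(\Q)$ and $k_f\in K_f$ with
\[
x'=p\cdot x, \qquad p'_{1,f}\,p'_f=p\,p_{1,f}\,p_f\,k_f .
\]
Let $\mX^0$ be the connected component containing $x$. I take $(P_2,\mX_2)$ to be the improper rational boundary component of $(P,\mX)$ with $\mX^0\subset\mX_2=\mX^+_{(P_2,\mX_2)}$, namely the irreducible piece of $(P,\mX)$ through $\mX^0$, attached to $Q=P$. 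The key claim, and the expected main obstacle, is the following: every rational boundary component $(P_1,\mX_1)$ with $\mX^0\subset\mX^+_{(P_1,\mX_1)}$ is a rational boundary component of $(P_2,\mX_2)$. In particular $P_1\subset P_2$, and $\mX^+_{(P_1,\mX_1)}$ computed in $(P_2,\mX_2)$ agrees with the one computed in $(P,\mX)$ on $\mX^0$.

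I would prove the claim as follows. First, $\alpha_x\circ h_\infty$ factors through the smallest normal $\Q$-subgroup containing the image of $h_x$, and this subgroup is $P_2$. Second, $Q\cap P_2$ is a $\Q$-admissible parabolic of $P_2$, since admissibility is read off on $G^{\mathrm{ad}}$ and $P_2$ surjects onto the relevant simple factors. Third, the uniqueness in Proposition~\ref{subsecproposition: rational boundary components, morphism of reference group} identifies the homomorphism $\alpha_x$ built inside $P_2$ with the one built inside $P$. Together these give the same $P_1$ and the same image under $\zeta$. The transitivity statement Proposition~\ref{subsecproposition: rational boundary components, rational boundary components}(4) then makes the two $\mX^+$'s agree.

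Applying the claim to $p^{-1}$ translates: since $p\cdot\mX^0\ni x'$, the component $(P'_1,\mX'_1)$ is a rational boundary component of $\mathrm{int}(p)(P_2,\mX_2)$, which is the improper component through $p\cdot\mX^0$. This gives (1) and (2). For (3), set
\[
p_{2,f}\coloneq \bigl(p^{-1}{p'_{1,f}}^{-1}p\bigr)\,p_{1,f}.
\]
Because $p^{-1}P'_1p\subset P_2$ and $P_1\subset P_2$, we have $p_{2,f}\in P_2(\fadele)$. The displayed relation rearranges to $p'_f=p\,p_{2,f}\,p_f\,k_f$.

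For (4), trace the points through the diagram. The element $u$ goes to $[x,p_{1,f}]$ in $\mU(P_2,\mX_2,p_f)$, which is the $[\cdot p_{2,f}]$-image of $[x,p_{1,f}p_{2,f}^{-1}]$. The element $u'$ goes to $[px,p'_{1,f}]$, which is the $[\mathrm{int}(p)]$-image of $[x,p^{-1}p'_{1,f}p]$. Finally, $p^{-1}p'_{1,f}p=p_{1,f}p_{2,f}^{-1}$ by the definition of $p_{2,f}$, so both points equal $[x,p_{1,f}p_{2,f}^{-1}]$ in $\mU(P_2,\mX_2,p_{2,f}p_f)\cong\mU(P_2,\mX_2,p_{2,f}p_fk_f)$. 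This last step is routine bookkeeping; all the substance lies in the claim about the improper component.
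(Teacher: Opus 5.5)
The paper gives no proof of this lemma. It is quoted from Pink's construction, and the only argument offered is the remark that the four basic maps commute with the $\beta$'s, which is your ``if'' direction.

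Your ``only if'' strategy, taking $(P_2,\mX_2)$ to be the improper boundary component through $\mX^0$, is the natural one, and the bookkeeping checks out:
\begin{itemize}
\item $\mathrm{int}(p)(P_2,\mX_2)=(P_2,p\mX_2)$, since $P_2\unlhd P$;
\item $p_{2,f}\in P_2(\fadele)$, since $P_1$ and $p^{-1}P_1'p$ lie in $P_2$;
\item in (4) both points are literally $[x,p_{1,f}p_{2,f}^{-1}]=[x,p^{-1}p'_{1,f}p]$.
\end{itemize}
So everything rests on your key claim, and there the sketch is not yet a proof.

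\textbf{The gap} is the sentence ``Together these give the same $P_1$''. Put $Q_2=Q\cap P_2$, and let $P_1^{(2)}$ be the smallest normal $\Q$-subgroup of $Q_2$ through which $\alpha_x\circ h_\infty$ factors. This is what the boundary component of $(P_2,\mX_2)$ attached to $Q_2$ uses. By contrast, $P_1$ is minimal among normal $\Q$-subgroups of the larger group $Q$. Knowing that both constructions produce the same $\alpha_x$ does not make these two minimal groups equal. In the mixed case $Q$ is $Q_2$ times a torus that may act nontrivially on $W_Q$, so normality in $Q_2$ is weaker than normality in $Q$.

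One inclusion is formal. Since $Q_2\unlhd Q$ and $\alpha_x\circ h_\infty$ factors through $Q_2$, we get $P_1\subseteq Q_2$, hence $P_1\unlhd Q_2$ and $P_1^{(2)}\subseteq P_1$.

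For the reverse inclusion you need a normal $\Q$-subgroup of all of $Q$, contained in $P_1^{(2)}$, through which $\alpha_x\circ h_\infty$ factors. One way to get it:
\begin{itemize}
\item For $q\in Q(\R)^0$ the point $qx$ stays in $\mX^0\subseteq\mX_2$. Hence $\mathrm{int}(q)\circ\alpha_x\circ h_\infty=\alpha_{qx}\circ h_\infty$ factors through $P_1^{(2)}$. This uses your identification of the two $\alpha$'s at $qx$, plus the conjugacy argument for $(P_2,\mX_2)$, i.e.\ transitivity of $Q_2(\R)\cdot U(\C)$ on $\mX_2$.
\item The set of $q\in Q(\C)$ with this property is Zariski closed and contains the Zariski-dense subgroup $Q(\R)^0$, so it is all of $Q(\C)$.
\item Therefore $\alpha_x\circ h_\infty$ factors through the largest normal subgroup of $Q$ contained in $P_1^{(2)}$, which is Galois-stable and hence defined over $\Q$.
\end{itemize}
Minimality of $P_1$ then gives $P_1\subseteq P_1^{(2)}$.

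\textbf{Two smaller steps also need a reason.}

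First, you only know that $h_x=\alpha_x\circ h_0$ factors through $P_2$, not that $\alpha_x$ does. This follows because $\alpha_x^{-1}(P_{2,\C})$ is normal in $H_{0,\C}$ (as $P_2\unlhd P$) and contains $h_0(\DS_\C)$. The normal closure of $h_0(\DS_\C)$ is all of $H_{0,\C}$: it contains $\ker(\mathrm{pr}_{\DS})=SL_{2,\C}$, because $\mathrm{pr}_{GL_2}\circ h_0$ is non-central.

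Second, comparing parabolics of $P$ and $P_2$ requires $P_2/W\supseteq G^{\mathrm{der}}$. This comes from (A6)--(A7): a $\Q$-simple factor of $G^{\mathrm{ad}}$ on which $\pi\circ h_x$ is trivial would have the identity as Cartan involution, hence be of compact type. Then $Q\mapsto Q\cap P_2$ matches $\Q$-admissible parabolics with the same unipotent radical $W_Q$ and the same cocharacter $\lambda$. That is exactly what your appeal to the uniqueness of $\alpha_x$ needs.

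With these additions your proof is complete.
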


Given $\mU$ and $\sim$, we would like to obtain a larger covering $\overbar{\mU}$ such that $\sim$ can be extended to a well-behaved equivalence relation on $\mUbar$. For this, consider a rational boundary component $(P_1,\mX_1)$ and $p_f \in P(\fadele)$. Define
\begin{IEEEeqnarray*}{rCl}
\Sigma_{(P_1, \mX_1, p_f)} &\coloneq &\left ( \left ( [\cdot p_f]^*\Sigma \right )|_{ (P_1, \mX_1)} \right)^0.
\end{IEEEeqnarray*}
Thus
\begin{IEEEeqnarray*}{rCl}
 \Sigma_{(P_1, \mX_1, p_f)}(\mX_1^0, P_2, p_{1,f})&=& \Bigl ( \Sigma(\mX^0, P_2, p_{1,f}\cdot p_f)p_f^{-1} \Bigr ) \bigcap (-C^*(\mX^0_1, P_1)) \times \{p_{1,f}\}\\
&=& \Bigl ( \Sigma(\mX^0, P_2, p_{1,f}\cdot p_f)p_f^{-1} \Bigr ) \bigcap U_1(\R)(-1) \times \{p_{1,f}\},
\end{IEEEeqnarray*}
for all rational boundary components $(P_2, \mX_2)$ of $(P_1,\mX_1)$ and $\mX^0 \rightarrowtail \mX^0_1 \subset (\mX_1)^+_{(P_2, \mX_2)}$. $\Sigma_{(P_1, \mX_1, p_f)}$ is a $K_f^1$-admissible cone decomposition for $(P_1, \mX_1)$ concentrated on the unipotent fiber. We have 
\[
M^{K_f^1}(P_1, \mX_1)(\C) \rightarrowtail M^{K_f^1}(P_1, \mX_1, \Sigma_{(P_1, \mX_1, p_f)})(\C)
\] as an open dense subset. Let $\mUbar(P_1, \mX_1, p_f)$ be the interior of the closure of $\mU(P_1,\mX_1, p_f) \subset M^{K_f^1}(P_1, \mX_1, \Sigma_{(P_1, \mX_1, p_f)})(\C)$, which contains $\mU(P_1, \mX_1, p_f)$ as an open dense subset (in the analytic topology). 

Let $\sigma \in  \Sigma_{(P_1, \mX_1, p_f)}(\mX^0_1, P_1, p_{1,f})=\Sigma(\mX^0, P_1, p_{1,f}\cdot p_f) p_f^{-1}$. We denote the $\sigma$-stratum by 
\[
M^{K_f^1}(P_1, \mX_1)(\C)_\sigma \subset  M^{K_f^1}(P_1, \mX_1, \Sigma_{(P_1, \mX_1, p_f)}).
\]
Strictly speaking, distinct connected components of $M^{K_f^1}(P_1, \mX_1)(\C)$ correspond to distinct $p_{1,f}$. However, since $\Sigma_{(P_1, \mX_1, p_f)}$ is concentrated on the unipotent fiber and $(P_1, \mX_1)$ is irreducible, the cone decomposition $\Sigma_{(P_1, \mX_1, p_f)}(\mX^0_1, P_1, p_{1,f})$
does not depend on the second factor.

As claimed in {\cite[6.13]{Pink}}, the subspace $\mUbar(P_1, \mX_1, p_f) \cap M^{K_f^1}(P_1, \mX_1)(\C)_\sigma$ has an explicit description. We will discuss this again in Subsection \ref{subsec: mUbar on each stratum}. 
 
Take 
\[
\dps \mUbar\coloneq \bigsqcup_{(P_1, \mX_1, p_f)} \hspace{-2pt} \mUbar(P_1,\mX_1, p_f)
\] to be the disjoint union of all $\mUbar(P_1, \mX_1, p_f)$. It contains $\mU$ as an open dense subset. The next step is to extend the equivalence relation $\sim$ to $\mUbar$. For this, we need to first extend the above four types of maps $\mU(P_1, \mX_1, p_f) \rightarrow \mU(P_1', \mX'_1, p_f')$. 

Each map of type (1), (2), or (3) is induced by an isomorphism 
\[
M^{K_f^1}(P_1, \mX_1, p_f )\rightarrow M^{{K^1_f}'}(P_1', \mX_1', p_f')
\]
that maps $\Sigma_{(P_1, \mX_1, p_f)}$ into $\Sigma_{(P_1', \mX_1', p_f')}$. Thus this isomorphism extends to an isomorphism 
\[ 
M^{K_f^1}(P_1, \mX_1, \Sigma_{(P_1, \mX_1, p_f)}) \rightarrow M^{{K_f^1}'}(P_1', \mX_1', \Sigma_{(P'_1, \mX'_1, p'_f)})  
\]
and hence induces a canonical map $ \mUbar(P_1, \mX_1, p_f) \rightarrow \mUbar(P_1', \mX_1', p_f')$.

To extend the type (4) map, consider $\left. \Sigma_{(P_1' ,\mX_1', p_f)} \right |_{(P_1, \mX_1)}$. By definition,
\begin{IEEEeqnarray*}{rCl}
&&\left.\Sigma_{(P'_1, \mX'_1, p_f)}\right |_{(P_1, \mX_1)}(\mX^0_1, P_1, p_{1,f}) \\
&=&\left (  
  \Sigma(\mX^0, P_1, p_{1,f}\cdot p_f)p_f^{-1} \right ) \bigcap U'_1(\R)(-1) \times \{p_{1,f}\}\\
&\subset&\left (  
  \Sigma(\mX^0, P_1, p_{1,f}\cdot p_f)p_f^{-1} \right ) \bigcap U_1(\R)(-1) \times \{p_{1,f}\}\\
&=&\Sigma_{(P_1, \mX_1, p_f)}(\mX^0_1, P_1, p_{1,f}).
\end{IEEEeqnarray*}
It follows that $\beta(P_1', \mX_1', P_1, \mX_1, p_f)\colon \mU(P_1, \mX_1, p_f) \rightarrow \mU(P_1', \mX'_1, p_f)$ extends to a map $\overbar{\beta}(P_1', \mX_1', P_1, \mX_1, p_f)$:
\begin{IEEEeqnarray*}{rCl}
\mUbar(P_1, \mX_1, p_f)\cap M^{K_f^1} (P_1, \mX_1, \left. \Sigma_{(P'_1, \mX'_1, p_f)}\right |_{(P_1, \mX_1)}  ) &\rightarrow\,& \mUbar(P_1', \mX'_1, p_f).
\end{IEEEeqnarray*}
The image of this map is a union of connected components of $\mUbar(P_1', \mX'_1, p_f)$ (cf. {\cite[6.15]{Pink}}). We denote $\mUbar(P_1, \mX_1, p_f)\cap M^{K_f^1}(P_1, \mX_1, \left. \Sigma_{(P'_1, \mX'_1, p_f)} \right |_{(P_1, \mX_1)})$ by $\mUbar(P_1, \mX_1, p_f)_{\left. \Sigma(P_1', \mX_1', p_f)\right. |_{(P_1,\mX_1)}}$.

Now with these extended maps, we may define an extended equivalence relation on $\mUbar \times \mUbar$ as follows.

\begin{subseclemma}[{\cite[6.17]{Pink}}]
Let $u\in \mUbar(P_1, \mX_1, p_f)$ and $u' \in \mUbar(P_1', \mX_1', p_f')$. Define $u \overbar{\sim} u'$ if and only if there exists a rational boundary component $(P_2, \mX_2)$ of $(P, \mX)$, $p \in P(\Q)$, $p_{2,f}\in P_2(\fadele)$, and $k_f \in K_f$ such that 
\begin{enumerate}
\item $(P_1, \mX_1)$ is a rational boundary component of $(P_2, \mX_2)$;

\item $(P_1' , \mX_1')$ is a rational boundary component of $\mathrm{int}(p) (P_2, \mX_2)$;

\item $p_f'=p \cdot p_{2,f} \cdot p_f \cdot k_f$;

\item $u$ and $u'$ map to the same image under the following diagram.
\begin{equation*}\hspace{-10pt}\adjustbox{center}{
\begin{tikzcd}[column sep = small]
\mUbar(P_1, \mX_1, p_f)_{\left. \Sigma(P_2, \mX_2, p_f)\right. |_{(P_1,\mX_1)}} \ni u 
\arrow[d,
              "{\overbar{\beta}(P_2, \mX_2, P_1, \mX_1, p_f)}" ']  
&   u'\in\mUbar(P'_1, \mX'_1, p'_f)_{\left. \Sigma(p\cdot P_2, p\cdot \mX_2, p'_f) \right |_{(P'_1,\mX'_1)}}  
\arrow[d,
               "{ \overbar{\beta}(p\cdot P_2, p\cdot \mX_2, P_1', \mX_1', p_f')  }",
              start anchor={[yshift=5pt]} ]\\
\mUbar(P_2 , \mX_2, p_f)                                               
&  \mUbar(p\cdot P_2, p\cdot \mX_2, p_f')  \\
\mUbar(P_2, \mX_2, p_{2,f}\cdot p_f) 
\arrow[u, 
             "{ [\cdot p_{2,f}] }"  , " \wr" ',
               start anchor={[yshift=-2pt]}  ] 
\arrow[r, 
              "{ [\mathrm{id}] }", 
               "\sim" ']
& \mUbar(P_2, \mX_2, p_{2,f} \cdot p_f\cdot k_f) 
\arrow[u, 
               " { [\mathrm{int}(p)] }" ', "\wr",
               start anchor={[yshift=-2pt]}  ]
\end{tikzcd} }
\end{equation*}
\end{enumerate}
 
This is indeed an equivalence relation on $\mUbar \times \mUbar$, containing $\sim~\subset \mU \times \mU$ and contained in the closure of $\sim$ in $\mUbar \times \mUbar$.
\end{subseclemma}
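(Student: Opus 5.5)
The plan is to verify the three properties of $\overbar{\sim}$ in turn (reflexive, symmetric, transitive), and then the two comparisons with $\sim$. Throughout, I use that the extended maps $[\mathrm{id}]$, $[\cdot p_{2,f}]$, $[\mathrm{int}(p)]$ and $\overbar{\beta}$ satisfy the same commutativity laws as their restrictions to $\mU$. This holds by continuity, since $\mU$ is dense in each $\mUbar(P_1,\mX_1,p_f)$ and everything is Hausdorff.

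\emph{Reflexivity and symmetry.} For reflexivity take $(P_2,\mX_2)=(P_1,\mX_1)$, $p=1$, $p_{2,f}=1$, $k_f=1$. Then $\overbar{\beta}(P_1,\mX_1,P_1,\mX_1,p_f)$ is the identity on $\mUbar(P_1,\mX_1,p_f)$, because $\Sigma_{(P_1,\mX_1,p_f)}|_{(P_1,\mX_1)}=\Sigma_{(P_1,\mX_1,p_f)}$.

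For symmetry, given data $(P_2,\mX_2,p,p_{2,f},k_f)$ witnessing $u\,\overbar{\sim}\,u'$, I use the data
\[
\bigl(\mathrm{int}(p)(P_2,\mX_2),\; p^{-1},\; p\,p_{2,f}^{-1}p^{-1},\; k_f^{-1}\bigr).
\]
The identity $p_f=p^{-1}\cdot(p\,p_{2,f}^{-1}p^{-1})\cdot p_f'\cdot k_f^{-1}$ is immediate. The required diagram is the original one read backwards, after rewriting $[\mathrm{int}(p)]^{-1}\circ[\cdot p_{2,f}]^{-1}$ via the commutation law $[\mathrm{int}(p)]\circ[\cdot p_{2,f}]=[\cdot\, p\,p_{2,f}p^{-1}]\circ[\mathrm{int}(p)]$.

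\emph{Transitivity (the main obstacle).} Suppose $u\,\overbar{\sim}\,u'$ via $(P_2,\mX_2)$ and $u'\,\overbar{\sim}\,u''$ via $(P_3,\mX_3)$. Then $(P_1',\mX_1')$ is a boundary component both of $p\cdot(P_2,\mX_2)$ and of $(P_3,\mX_3)$. I need a single rational boundary component $(P_4,\mX_4)$ dominating both, through which the two diagrams can be composed. For this I look at the stratum of $M^{K_f^{1\prime}}(P_1',\mX_1',\Sigma_{(P_1',\mX_1',p_f')})$ containing $u'$; it is the $\sigma$-stratum for a cone $\sigma$. Because $u'$ lies in both restricted domains, $\sigma$ belongs to
\[
\Sigma_{(p P_2,\,p\mX_2,\,p_f')}|_{(P_1',\mX_1')} \quad\text{and}\quad \Sigma_{(P_3,\,\mX_3,\,p_f')}|_{(P_1',\mX_1')},
\]
so $\sigma\subset U_{pP_2}(\R)(-1)\cap U_{P_3}(\R)(-1)$. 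By Proposition \ref{subsecproposition: rational boundary components, properties of the star cone}(3), $\sigma$ therefore lies in $-C^*(\mX^0,pP_2)\cap -C^*(\mX^0,P_3)$. Its relative interior $\sigma^0$ lies in $-C(\mX^0,P_4)$ for exactly one boundary component $(P_4,\mX_4)$ in the chain below $(P_1',\mX_1')$, by the disjointness in Proposition \ref{subsecproposition: rational boundary components, properties of the cone}(3). The containments above then force $(P_4,\mX_4)\geq p\cdot(P_2,\mX_2)$ and $(P_4,\mX_4)\geq (P_3,\mX_3)$, since $U_4\subset U_{pP_2}\cap U_{P_3}$ and $C(\mX^0,P_4)\subset U_4(\R)(-1)$, with Proposition \ref{subsecproposition: rational boundary components, rational boundary components}(4) identifying the resulting chains. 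Moreover $\sigma$ lies in $\Sigma_{(P_4,\mX_4,p_f')}|_{(P_1',\mX_1')}$. The maps $\overbar{\beta}$ are compatible under composition along chains, i.e.\ $\overbar{\beta}(P_4,\cdot,P_2,\cdot)\circ\overbar{\beta}(P_2,\cdot,P_1,\cdot)=\overbar{\beta}(P_4,\cdot,P_1,\cdot)$ on the appropriate domains; this again follows by density from the corresponding statement on $\mU$. Pushing both diagrams into $\mUbar(P_4,\mX_4,\cdot)$ and composing the $p\in P(\Q)$, $p_{2,f}$ and $k_f$ data yields the data for $u\,\overbar{\sim}\,u''$. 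The bookkeeping here, in particular checking that $u$ and $u''$ also lie in the domains restricted to $(P_4,\mX_4)$, is what I expect to be the delicate part. I would handle it by noting that the maps of types (1)–(3) carry strata to strata and preserve the cone $\sigma$ up to the transport by $\mathrm{int}(p)$ and right translation.

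\emph{Comparison with $\sim$.} If $u,u'\in\mU$, then conditions (1)–(4) restricted to $\mU$ are exactly those of the preceding lemma, so ${\sim}\subset\overbar{\sim}$. For the last claim, suppose $u\,\overbar{\sim}\,u'$ and choose $u_n\in\mU$ converging to $u$ inside the open domain $\mUbar(P_1,\mX_1,p_f)_{\Sigma(P_2,\mX_2,p_f)|_{(P_1,\mX_1)}}$; this is possible since $\mU$ is dense there. Transport the $u_n$ through the diagram, using that the right-hand maps are isomorphisms or open embeddings near the image point (by \cite[6.15]{Pink}), to obtain $u_n'\in\mU$ with $u_n\sim u_n'$ and $u_n'\to u'$. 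Hence $(u,u')$ lies in the closure of $\sim$.
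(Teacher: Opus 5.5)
The paper gives no proof of this lemma; it is quoted from \cite[6.17]{Pink}, so there is no in-paper route to compare with, and I judge your argument on its own terms. Its architecture is right. Reflexivity and symmetry are formal, and your symmetric data check out. The inclusion $\sim\subset\overbar{\sim}$ is immediate. Transitivity does go by passing to the coarser component $(P_4,\mX_4)$ singled out by the stratum of $u'$. Write $(P_3,\mX_3),q,p_{3,f},k_f'$ for the data witnessing $u'\,\overbar{\sim}\,u''$. Then the data for $u\,\overbar{\sim}\,u''$ are $p^{-1}\cdot(P_4,\mX_4)$, $qp$, $(p^{-1}p_{3,f}p)\,p_{2,f}$ and $k_fk_f'$. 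The adelic element lies in $(p^{-1}P_4p)(\fadele)$ because $P_3\subset P_4$ and $pP_2p^{-1}\subset P_4$, and condition (3) is a one-line computation. One justification should be repaired. The relations $(P_4,\mX_4)\geq p\cdot(P_2,\mX_2)$ and $(P_4,\mX_4)\geq(P_3,\mX_3)$ do not follow from $U_4\subset U_{pP_2}\cap U_{P_3}$, which is a consequence rather than a cause. They follow from two facts: $C^*(\mX^0,pP_2)$ and $C^*(\mX^0,P_3)$ are disjoint unions of cones $C(\mX^0,P_5)$ over components dominating $p\cdot(P_2,\mX_2)$, resp.\ $(P_3,\mX_3)$; and Proposition \ref{subsecproposition: rational boundary components, properties of the cone}(3) gives uniqueness.

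The main ingredient you use without stating is a stratum-compatibility property. You need the extended type (4) maps $\overbar{\beta}$, and not just the maps of types (1)--(3), to send the $\sigma$-stratum into the $\sigma$-stratum for every $\sigma$ in the restricted decomposition. You need this in three places. First, from equality of images in $\mUbar(pP_2,p\mX_2,p_f')$ you must conclude that the stratum of $u$ is labelled by $p^{-1}\sigma p\subset U_{p^{-1}P_4p}(\R)(-1)$, and similarly for $u''$; otherwise you do not know that $u,u''$ lie in the domains restricted to the new middle component. Second, $\overbar{\beta}(pP_2,p\mX_2,P_1',\mX_1',p_f')$ must map the domain restricted to $(P_4,\mX_4)$ into the domain of $\overbar{\beta}(P_4,\mX_4,pP_2,p\mX_2,p_f')$ before density can give the composition law. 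Third, in the closure step, lifted points lying over $\mU(pP_2,p\mX_2,p_f')$ must lie in $\mU(P_1',\mX_1',p_f')$. The property is true, since $\overbar{\beta}$ extends $\beta$ along the relative torus embedding in the $U_2$-directions and so commutes with the projections $\pi_\sigma$, but it has to be said.

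Finally, in the closure step do not claim that $\overbar{\beta}$ is a local isomorphism near $u'$. For non-neat $K_f$, already $\beta$ can ramify; compare $\Gamma_1\bs\mH_2^+\to SL_2(\Z)\bs\mH_2^+$ for $K_f=GL_2(\hat{\Z})$. What you actually need is that $\overbar{\beta}$ is open at $u'$. This holds: reduce to a neat normal subgroup and take finite quotients. It also suffices to lift $v_n\to v$ to a sequence $u_n'\to u'$.
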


\begin{subsectheorem}[{\cite[6.22-6.24]{Pink}}]\label{subsectheorem: toroidal compactifications, toroidal compactifications}
 Let $(P, \mX)$ be a mixed Shimura datum, $K_f \subset P(\fadele)$ an open compact subgroup and $\Sigma$ a $K_f$-admissible partial cone decomposition for $(P, \mX)$. Let $(P_1, \mX_1)$ be any rational boundary component of $(P, \mX)$ and $Q\subset P$ the $\Q$-admissible parabolic subgroup for $(P_1, \mX_1)$.

\begin{enumerate}
\item The equivalence relation $\overbar{\sim} \subset \mUbar \times \mUbar$ is the closure of $\sim~\subset \mU \times \mU$.  Thus the quotient space $\mUbar/\overbar{\sim}$ is Hausdorff. 

\item $\mUbar/\overbar{\sim}$ is locally isomorphic to an open subset $\Delta_1\backslash \mathcal{W}_1\subset \Delta_1 \backslash \mUbar(P_1, \mX_1, p_f)$, where $\Delta_1$ is an arithmetic subgroup of $(Q/P_1)(\Q)$ that acts properly discontinuously  and analytically. Therefore $\mUbar/\overbar{\sim}$ is a normal complex space containing $M^{K_f}(P,\mX)(\C)=\mU/\sim$ as an open dense subset.

\item If $\Sigma$ is concentrated on the unipotent fiber, it is canonically isomorphic to the $M^{K_f}(P, \mX, \Sigma)(\C)$ defined before.
\end{enumerate}

Define $M^{K_f}(P,\mX, \Sigma)(\C) \coloneq \mUbar/\overbar{\sim}$. This is the \textbf{toroidal compactification of $\bm{M^{K_f}(P,\mX)(\C)}$ with respect to $\bm{\Sigma}$}.

\begin{enumerate}\setcounter{enumi}{3}
\item $M^{K_f}(P, \mX, \Sigma)(\C)$ is compact if $\Sigma$ is a complete cone decomposition with respect to $K_f$.

\item If $K_f$ is neat and $\Sigma$ is smooth with respect to $K_f$, then $M^{K_f}(P, \mX, \Sigma)(\C)$ is smooth.
\end{enumerate}
\end{subsectheorem}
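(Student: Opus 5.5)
The plan follows Pink's strategy: reduce every assertion to a local statement near a single boundary stratum, where $\mUbar(P_1,\mX_1,p_f)$ is an explicit relative torus embedding over $M^{K_f^1}(P_1,\mX_1)(\C)/U_1$, and then control the gluing through the groups $Q(\Q)$ and $P_1(\Q)$.

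\emph{Local model, part (2).} Fix a point $u\in\mUbar(P_1,\mX_1,p_f)$ lying in the $\sigma$-stratum with $\sigma\in\Sigma_{(P_1,\mX_1,p_f)}$ of maximal depth. Using the preceding lemma describing $\overbar{\sim}$, every $u'\overbar{\sim} u$ is reached by composing maps of types (1)--(4). Those maps which preserve $(P_1,\mX_1)$ and fix the stratum of $u$ come from elements of $\Stab_{Q(\Q)}(\mX_1)\cap p_f K_f p_f^{-1}$. Such elements act on $M^{K_f^1}(P_1,\mX_1,\Sigma_{(P_1,\mX_1,p_f)})(\C)$, and modulo $P_1(\Q)$ they form an arithmetic subgroup $\Delta_1\subset (Q/P_1)(\Q)$. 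I would show that $\Delta_1$ acts properly discontinuously on $\mUbar(P_1,\mX_1,p_f)$. Away from the boundary this follows from Proposition~\ref{subsecproposition: mixed shimura varieties, topology}. Near the boundary one uses that $\Delta_1$ permutes the cones of $-C^*(\mX^0,P_1)$, acting on $U_1(\R)(-1)$ through conjugation, and that $\Sigma$ is locally finite modulo $\Delta_1$ by admissibility (4)--(5) together with reduction theory for the self-adjoint cone of Proposition~\ref{subsecproposition: rational boundary components, pink 4.15}(4). One then takes a small $\Delta_1$-stable neighbourhood $\mathcal{W}_1$ of $u$ on which $\overbar{\sim}$ reduces to the $\Delta_1$-action. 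This gives $\mUbar/\overbar{\sim}\supset\Delta_1\backslash\mathcal{W}_1$ locally. Normality follows because torus embeddings are normal (Theorem~\ref{subsectheorem: torus embeddings, normality}) and quotients of normal spaces by properly discontinuous actions are normal.

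\emph{Main obstacle: part (1).} The hardest step is (1), namely that $\overbar{\sim}$ is closed and equals the closure of $\sim$. Containment in the closure is already in the lemma. For the converse, I would take convergent sequences $u_n\sim u'_n$ with $u_n\to u$ and $u'_n\to u'$ in boundary strata. After passing to subsequences, the connecting data $(P_2,\mX_2,p,p_{2,f},k_f)$ range over a finite set modulo the groups above. The key input is the disjointness statement of Proposition~\ref{subsecproposition: rational boundary components, properties of the cone}(3) applied to the $\mathrm{ord}$-map. As $u_n$ approaches the $\sigma$-stratum, $\mathrm{ord}(u_n)$ tends to infinity inside $\sigma^0+C(\mX^0,P_1)$. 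This forces any $p\in P(\Q)$ relating $u_n$ and $u'_n$ for large $n$ to carry $(P_1,\mX_1)$ into a boundary component comparable to $(P_1',\mX_1')$, that is, to lie in a common $(P_2,\mX_2)$ as the lemma requires. Combined with a Siegel-set finiteness argument, this yields a fixed relating datum, and continuity of the extended maps $\overbar{\beta}$ then gives $u\overbar{\sim}u'$. Hausdorffness follows since $\mUbar\to\mUbar/\overbar{\sim}$ is open (the local model is a quotient by a group) and the relation is closed.

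\emph{Parts (3)--(5).} For (3), when $\Sigma=\Sigma^0$ only improper boundary components carry cones. The covering $\mUbar$ then reduces to the pieces $\mUbar(P_1,\mX_1,p_f)$ with $P_1=P$ and the relation is generated by maps of type (1)--(3). The quotient therefore matches the relative torus embedding over $M^{K_f'}(P',\mX')(\C)$ constructed earlier.

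For (4), completeness lets one cover $M^{K_f}(P,\mX)(\C)$ by finitely many Siegel sets. The image under $\mathrm{ord}$ of each Siegel set in $U_1(\R)(-1)$ is covered by finitely many cones modulo $\Delta_1$. Any sequence escaping to infinity then has a subsequence converging in some $\mUbar(P_1,\mX_1,p_f)$; this is where the complete cone decomposition is used, and it is essentially the classical AMRT argument.

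For (5), with $K_f$ neat, the group $\Delta_1$ acts freely: its elements are nontrivial only outside the center, and one applies Lemma~\ref{subseclemma: mixed shimura varieties, centralizer lemma} together with its extension to boundary points. Smoothness of $\Sigma$ with respect to $\Gamma_{U_1}(-1)$ makes each torus embedding smooth, so the local models $\Delta_1\backslash\mathcal{W}_1$ are smooth.
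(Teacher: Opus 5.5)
The paper gives no proof of this theorem; it quotes \cite[6.22--6.24]{Pink}, so your outline has to stand on its own. Your architecture is the right one: local models at boundary strata as $\Delta_1$-quotients of $\mUbar(P_1,\mX_1,p_f)$, with everything hinging on closedness of $\overbar{\sim}$. But the decisive step is only asserted, and the mechanism you offer for it does not work.

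First, the target is misidentified. A common $(P_2,\mX_2)$ as in conditions (1)--(3) of the lemma defining $\overbar{\sim}$ always exists (take the improper component), so ``comparability'' is not the issue. What must be shown is the domain condition in (4). Namely, if $u_n\sim u'_n$ via $p_n\in P(\Q)$ with $u_n\to u$ and $u'_n\to u'$, then for $n\gg 0$ the element $p_n$ (up to maps of type (1)--(3)) carries the boundary component owning the stratum of $u$ onto the one owning the stratum of $u'$. Only then are the extended maps $\overbar{\beta}$ defined at $u$ and $u'$. For $u=u'$ this is exactly your unproved claim in (2) that $\overbar{\sim}$ reduces to the $\Delta_1$-action on $\mathcal{W}_1$.

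Second, Proposition~\ref{subsecproposition: rational boundary components, properties of the cone}(3) cannot supply this. It compares $C(\mX^0,P_1)$ and $C(\mX^0,P'_1)$ only for two components sharing a given boundary component $(P_2,\mX_2)$, inside $U_2(\R)(-1)$. It says nothing about elements of $P(\Q)$ outside the parabolic.

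Nor does the depth of $\ord(u_n)$ in the cone constrain $p_n$ by itself. Take the one-dimensional cusp of $\mathcal{A}_2$, in coordinates $\tau\in\mathfrak{H}_2$ where $U_1$ translates $\tau_{11}$. Then $\im_{\mX_1}(\tau)$ is, up to normalization, $\mathrm{Im}\,\tau_{11}-(\mathrm{Im}\,\tau_{12})^2/\mathrm{Im}\,\tau_{22}$. The element of $\Gamma_{Sp}(d)$ acting by $\tau\mapsto A\tau A^{T}$ with $A=\begin{bsmallmatrix}1&0\\ d&1\end{bsmallmatrix}$ does not normalize $U_1$, so it is not in $Q$. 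Yet it sends $\mathrm{diag}(iY,iY)$, with coordinate $Y$, to $iY\begin{bsmallmatrix}1&d\\ d&d^2+1\end{bsmallmatrix}$, with coordinate $Y/(d^2+1)$. Both points are arbitrarily deep in $C(\mX^0,P_1)$ as $Y\to\infty$. The only thing excluding them is that the base coordinate $\tau_{22}$ leaves every compact set.

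So the missing ingredient is a Siegel-type separation property, localized at boundary points. A boundary point owned by $(P_1,\mX_1)$ should have a neighbourhood cut out by a deep core $\Lambda D\subset C(\mX^0,P_1)$ together with a relatively compact set in the base. Any $p\in P(\Q)$ identifying one of its points with a point of the analogous neighbourhood of $u'$ must then carry owning component to owning component, and must be accounted for by $\Delta_1$ when $u'=u$. Lemma~\ref{subseclemma: some lemmas, fundamental lemma} (extending \cite[6.13]{Pink}) shows that such core regions really are neighbourhoods of the strata. Combined with the separation property, this is what (1) and (2) rest on. Your ``Siegel-set finiteness argument'' names this content without supplying it.

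Two smaller points. In (2) you say $\Sigma$ is ``locally finite modulo $\Delta_1$ by admissibility (4)--(5)''. Those axioms are invariance conditions, and the theorem allows arbitrary, not necessarily finite, partial decompositions; in any case only the affine piece for $\sigma$ matters near a point of the $\sigma$-stratum. Proper discontinuity at boundary points has to come from the proper action of $\Delta_1$ on $C(\mX^0,P_1)$, via the self-adjoint cone of Proposition~\ref{subsecproposition: rational boundary components, pink 4.15}(4), and on the base. This includes the images of $C(\mX^0,P_1)$ in $U_1(\R)(-1)/\R\sigma$ for cones with $-\sigma^0\not\subset C(\mX^0,P_1)$.

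In (5), Lemma~\ref{subseclemma: mixed shimura varieties, centralizer lemma} concerns elements of $P(\Q)$ fixing points of $\mX$ and has no evident extension to boundary points. The clean argument runs as follows. $\Delta_1$ lies in the image of the neat group $Q(\fadele)\cap p_fK_fp_f^{-1}$ in $(Q/P_1)(\fadele)$, so it is neat and in particular torsion-free. Hence the finite stabilizers given by proper discontinuity are trivial, and $\Delta_1\bs\mathcal{W}_1$ is locally isomorphic to $\mathcal{W}_1$.
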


\begin{subsecremark} \label{subsecremark: toroidal compactifications, action of the normalizer}
Let $\delta_Q\coloneq \Stab_{Q(\Q)}(\mX_1) \cap P_1(\fadele) \cdot p_f  \cdot K_f \cdot p_f^{-1}$. As $P_1(\fadele)/K_f^1 \overset{\sim}{\rightarrow} P_1(\fadele)\cdot p_f \cdot K_f/K_f$, the group $\Delta_Q\coloneq \delta_Q /P_1(\Q)$ acts equivariantly on 
\[
\mU(P_1, \mX_1, p_f) \rightarrow M^{K_f^1}(P_1, \mX_1)(\C) 
\rightarrow M^{K_f^1}(P_1, \mX_1 ,\Sigma_{ (P_1, \mX_1, p_f) })(\C)
\]
via left multiplication on both factors. It is not hard to see that $\Delta_Q$ acts through a composition of morphisms of type (1), (2), and (3). This action thus extends to an action on $\mUbar(P_1, \mX_1, p_f)$. It follows that the quotient map $\mUbar(P_1, \mX_1, p_f) \rightarrow M^{K_f}(P, \mX, \Sigma)(\C)$ factors through $\Delta_Q \bs \mUbar(P_1, \mX_1, p_f)$. 
\end{subsecremark}

The toroidal compactification has certain functorial properties. Let $K_f', K_f \subset P(\fadele)$ be open compact subgroups with $K_f' \subset p_f K_f p_f^{-1}$ and let $\Sigma$ (resp. $\Sigma'$) be a $K_f$-admissible (resp. $K_f'$-admissible) cone decomposition. Suppose that for all $\tau \in \Sigma'$, there exists $\sigma \in [\cdot p_f]^*\Sigma$ such that $\tau \subset \sigma$. Then we have a holomorphic map
\begin{IEEEeqnarray*}{cCccccc}
[\cdot p_f]&\colon &M^{K'_f}(P, \mX, \Sigma')(\C)
&\rightarrow
&\bigl ( M^{p_fK_fp_f^{-1}}(P, \mX, [\cdot p_f]^*\Sigma)(\C)
&\overset{\sim}{\rightarrow}  \bigr)
&M^{K_f}(P, \mX, \Sigma)(\C).
\end{IEEEeqnarray*}
If $K_f' =p_f K_f p_f^{-1}$ and $\Sigma' = [\cdot p_f]^*\Sigma$, this is an isomorphism. If $K_f' =p_f K_f p_f^{-1}$ and $\Sigma' \subset [\cdot p_f]^*\Sigma$, this is an open embedding with the complement of the image a closed analytic subset. If $\Sigma' =\Sigma$ and $K_f' \unlhd K_f$ is a normal subgroup, then the finite group $K_f/K_f'$ acts from the right on $M^{K'_f}(P, \mX, \Sigma)(\C)$. The canonical projection $M^{K'_f}(P, \mX, \Sigma)(\C)\rightarrow M^{K_f}(P, \mX, \Sigma)(\C)$ is the quotient map under this action.

Consider a morphism of mixed Shimura data $\varphi \colon (P, \mX) \rightarrow (P', \mX')$ and open compact subgroups $K_f \subset P(\fadele), K_f'\subset P'(\fadele)$ with cone decompositions $\Sigma, \Sigma'$ respectively, such that $\varphi(K_f) \subset K_f'$. Suppose that for all $\sigma \in \Sigma$, there exists a $\sigma' \in \Sigma '$ such that $\varphi(\sigma)\subset \sigma'$. Then we have an induced holomorphic map
\begin{IEEEeqnarray*}{cCccc}
[\varphi]&\colon &M^{K_f}(P, \mX, \Sigma)(\C)&\rightarrow&M^{K'_f}(P', \mX', \Sigma')(\C).
\end{IEEEeqnarray*}
In case $\varphi$ is an automorphism of $(P, \mX)$, $K'_f=\varphi(K_f)$, and $\Sigma =\varphi^{*}(\Sigma')$, $[\varphi]$ is an isomorphism.

The toroidal compactification $M^{K_f}(P, \mX, \Sigma)(\C)$ is related to the Baily-Borel compactification in the following way. According to {\cite[6.2]{Pink}}, if $(P, \mX) $ is a pure Shimura datum with open compact $K_f\subset P(\fadele)$, then the Baily-Borel compactification of $M^{K_f}(P, \mX)(\C)$ can be defined as 
\begin{IEEEeqnarray*}{rCl}
M^{K_f}(P, \mX)^*(\C)&\coloneq& P(\Q)\backslash \mX^*\times P(\fadele) / K_f,
\end{IEEEeqnarray*}
where 
\[
\dps \mX^*=\bigsqcup_{(P_1, \mX_1)} {\mX_1/W_1}.
\]
$\mX^*$ as a set is the disjoint union of $\mX_1/W_1$, as $(P_1, \mX_1)$ runs over all rational boundary components of $(P, \mX)$. It carries the Satake topology generated by 
\[
\dps \bigsqcup_{(P_2, \mX_2)\geq (P_1, \mX_1)}\psi_2(\mathrm{im}^{-1}(\Lambda\cdot D)\cap \psi_1^{-1}(\mathcal{Y})),
\] 
where $\psi_i \colon \mX_i \rightarrow \mX_i /W_i$ is the projection, $\Lambda \in \R_{>0}$, and $D\subset C(\mX^0, P_1)$ is a convex core. The sum runs over all rational boundary components $(P_2,\mX_2) \geq (P_1, \mX_1)$ We will not discuss the topology in detail but the notion of a core will be introduced in Subsection \ref{subsec: cones and cores in euclidean spaces}.

\begin{subsecproposition}\label{subsecproposition: toroidal compactifications, map to baily-borel}
Let $(P, \mX)$, $K_f$, and $\Sigma$ be as above, with $\pi \colon (P, \mX) \rightarrow (P, \mX)/W$ the canonical projection. Then the canonical map 
\begin{IEEEeqnarray*}{c}
[\pi]\colon \mU \rightarrow M^{K_f}(P, \mX)(\C) \rightarrow M^{\pi(K_f)}(P/W,\mX/W)(\C)\end{IEEEeqnarray*}
extends uniquely to a  holomorphic map $[\pi]^*\colon  \mUbar \rightarrow M^{\pi(K_f)}(P/W, \mX/W)^*(\C)$. This map factors through $\mUbar/\overbar{\sim}$. In other words, we have the diagram below.

\begin{equation*}
\begin{tikzcd}
\mU \arrow[r, rightarrowtail] \arrow[d, twoheadrightarrow]   &     \mUbar \arrow[d, twoheadrightarrow]\\
M^{K_f}(P, \mX)(\C)  \arrow[r, rightarrowtail] \arrow[d, twoheadrightarrow, "{[\pi]}" ', start anchor={[yshift=2pt]}]
& M^{K_f}(P, \mX, \Sigma)(\C) \arrow[d, "{[\pi]^*}", start anchor={[yshift=2pt]}]\\
M^{\pi(K_f)}(P/W, \mX/W)(\C)     \arrow[r, rightarrowtail]            &  M^{\pi(K_f)}(P/W, \mX/W)^*(\C)
\end{tikzcd}
\end{equation*}  
\end{subsecproposition}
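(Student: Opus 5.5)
The plan is to build the extension one chart $\mUbar(P_1,\mX_1,p_f)$ at a time. Continuity of each local extension is the only real point; holomorphy, uniqueness and descent to $\mUbar/\overbar{\sim}$ will then follow from general principles.

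\emph{Step 1: constructing the local extension.} Fix a rational boundary component $(P_1,\mX_1)$ with parabolic $Q$, and fix $p_f$. On $\mU(P_1,\mX_1,p_f)$ the composite
\[
\mU(P_1,\mX_1,p_f)\rightarrow M^{K_f}(P,\mX)(\C)\rightarrow M^{\pi(K_f)}(P/W,\mX/W)(\C)
\]
is given by $[x,p_{1,f}]\mapsto[\pi(x),\pi(p_{1,f}p_f)]$. I would use the fact that the quotient $(P_1,\mX_1)/W_1$ is a rational boundary component of the pure datum $(P/W,\mX/W)$, attached to the parabolic $\pi(Q)$. Consequently $\mX_1/W_1$ is one of the pieces of $(\mX/W)^*$, and there is a natural map
\[
\beta^*_1\colon M^{K_f^1}(P_1,\mX_1)(\C)\rightarrow M^{\pi_{W_1}(K_f^1)}\bigl((P_1,\mX_1)/W_1\bigr)(\C)\rightarrow M^{\pi(K_f)}(P/W,\mX/W)^*(\C)
\]
onto a boundary stratum. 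Since $\Sigma_{(P_1,\mX_1,p_f)}$ is concentrated on the unipotent fiber, $M^{K_f^1}(P_1,\mX_1,\Sigma_{(P_1,\mX_1,p_f)})(\C)$ is a relative torus embedding over $M(P_1/U_1,\mX_1/U_1)$. That base maps to $M((P_1,\mX_1)/W_1)$ because $U_1\subset W_1$. Composing with the torus-embedding projection gives a holomorphic map on all of $\mUbar(P_1,\mX_1,p_f)$; on the complement of the open part it takes values in boundary strata.

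\emph{Step 2: continuity and holomorphy.} On the open part this map need not agree with $[\pi]$: the point $[\pi(x),\cdot]$ lies in the interior, not on the stratum of $(P_1,\mX_1)/W_1$. So I would define $[\pi]^*$ to be $[\pi]$ on $\mU$ and $\beta_1^*$ on the added boundary. The main obstacle is proving that this map is continuous for the Satake topology. The approach is as follows.
\begin{itemize}
\item Take a sequence in $\mU(P_1,\mX_1,p_f)$ converging to a point of a $\sigma$-stratum with $\sigma\subset -C^*(\mX^0,P_1)$.
\item In the torus coordinates $\Gamma_{U_1}\backslash U_1(\C)$, the $\mathrm{ord}$-map of the sequence, which up to sign and bounded error is $\mathrm{im}_{\mX_1}$, tends to infinity inside the relative interior of $\sigma$. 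This uses the compatibility of the $\mathrm{ord}$-map with $\mathrm{im}_{\mX_1}$ and Proposition \ref{subsecproposition: rational boundary components, pink 4.15}.
\item Meanwhile the image in $\mX_1/U_1$ converges.
\item By Proposition \ref{subsecproposition: rational boundary components, properties of the cone}, $\sigma$ meets some $C(\mX^0,P_2)$ with $(P_2,\mX_2)\leq(P_1,\mX_1)$. So for any convex core $D\subset C(\mX^0,P_2)$ and any $\Lambda$, the points eventually lie in $\mathrm{im}^{-1}(\Lambda\cdot D)$ intersected with a preimage of a neighbourhood.
\item This is exactly a basic Satake neighbourhood of the limit point in $\mX_2/W_2$.
\end{itemize}
Matching the limit stratum (which $P_2$ the limit lands in) with $\sigma$ is the delicate bookkeeping; I expect it to be the hardest part. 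Once continuity is established, holomorphy follows from the Riemann extension theorem. The map is continuous, holomorphic on the dense open $\mU(P_1,\mX_1,p_f)$, and the target $M^*$ is a normal complex space.

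\emph{Step 3: uniqueness and factorization.} Uniqueness holds because $\mU$ is dense in $\mUbar$ and the target is Hausdorff. For factorization, the maps $\mUbar(P_1,\mX_1,p_f)\rightarrow M^*$ for different triples are compatible on $\mU$ with $\sim$, since $[\pi]$ factors through $M^{K_f}(P,\mX)(\C)$. By Theorem \ref{subsectheorem: toroidal compactifications, toroidal compactifications}(1), $\overbar{\sim}$ is the closure of $\sim$. Continuity into a Hausdorff space then forces $\overbar{\sim}$-equivalent points to have equal images, so $[\pi]^*$ descends to $\mUbar/\overbar{\sim}=M^{K_f}(P,\mX,\Sigma)(\C)$. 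This gives the diagram.
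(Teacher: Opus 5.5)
\noindent The paper does not prove this proposition; it is quoted from Pink's construction. Judged on its own, your outline has sound ideas in Steps~2--3, but Step~1 defines the wrong map, and the gap is exactly the ``bookkeeping'' you defer.

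Your $\beta_1^*$ factors through the projection of the relative torus embedding to $M^{\pi_{U_1}(K_f^1)}(P_1/U_1,\mX_1/U_1)(\C)$. It therefore sends every added boundary point of $\mUbar(P_1,\mX_1,p_f)$ into the single stratum $\mX_1/W_1$. That is the correct boundary value only on $\sigma$-strata with $-\sigma^0\subset C(\mX^0,P_1)$.

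A cone $\sigma\in\Sigma_1^0$ only satisfies $-\sigma\subset C^*(\mX^0,P_1)$. So in general $-\sigma^0\subset C(\mX^0,P_2)$ for an intermediate component $(P,\mX)\geq(P_2,\mX_2)\geq(P_1,\mX_1)$, possibly the improper one with $C=U(\R)(-1)$. Note the direction: $U_2\subset U_1$, so your ``$(P_2,\mX_2)\leq(P_1,\mX_1)$'' is reversed in the paper's convention. On such a stratum the limits of $[\pi]$ lie in $\mX_2/W_2$, or in the interior $M(P/W)$ if $P_2$ is improper. They also depend on the coordinate along the quotient torus of the stratum, not only on the image in $M(P_1/U_1)$.

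Concretely, take $\mathcal{A}_2$ with $(P_1,\mX_1)$ a $0$-dimensional cusp. Then $\mX_1/W_1$ is a point, so $\beta_1^*$ is constant on the whole boundary of the chart. But a rank-one ray $\sigma$, which every complete $\Sigma$ contains, has $-\sigma^0$ in the cone of a $1$-dimensional cusp. Its stratum must map non-constantly into the corresponding modular-curve stratum near its cusp. Likewise, whenever $U\neq 1$, rays in $U(\R)(-1)$ give strata that must go to the interior. So the glued map is discontinuous, and your Step~2 bullets (limit landing in $\mX_2/W_2$) contradict Step~1.

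The repair is to define the value stratum by stratum. For $-\sigma^0\subset C(\mX^0,P_2)$, send $\pi_\sigma(x)$, with $x\in\mX^0$, to the class of $\zeta_\mX^{\mX_2}(x)$ in $\mX_2/W_2$. This is well defined on the stratum for the following reasons.
\begin{enumerate}
\item By Proposition~\ref{subsecproposition: rational boundary components, rational boundary components}(4), it equals the inverse of the $U_2(\C)$-equivariant open embedding $(\mX_2)^+_{(P_1,\mX_1)}\rightarrowtail\mX_1$, applied to the image of $x$ in $\mX_1$, followed by $\mX_2\to\mX_2/W_2$.
\item This composite is invariant under the subtorus spanned by $\sigma$, since $\R\sigma\subset U_2(\R)(-1)$ and $U_2\subset W_2$.
\item By Lemma~\ref{subseclemma: some lemmas, fundamental lemma}(1), the part of the stratum inside $\mUbar$ is exactly $\pi_\sigma(\Gamma_1\bs\mX^0)$.
\end{enumerate}
Equivalently, push the stratum via $\overbar{\beta}(P_2,\mX_2,P_1,\mX_1,p_f)$ into $\mUbar(P_2,\mX_2,p_f)$. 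There $\sigma$ is a deepest cone and your formula is right.

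Continuity must then be checked against Satake neighbourhoods of points of $\mX_2/W_2$ built from cores $D\subset C(\mX^0,P_2)$. You must track the full stratum coordinate, since the image in $\mX_1/U_1$ alone does not determine the point of $\mX_2/W_2$. This is where a core version of Lemma~\ref{subseclemma: some lemmas, fundamental lemma}(2) enters.

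Step~3 is fine. One small correction: the Riemann extension argument uses normality of the source $\mUbar$, which is open in a normal torus embedding, not normality of the target.
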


We have now constructed the toroidal compactification, though we assumed there exists a $K_f$-admissible partial cone decomposition $\Sigma$. Actually, the existence of a suitable $\Sigma$ is a non-trivial question and in Chapters 8 and 9 of \cite{Pink} it takes considerable efforts to obtain one. We conclude this subsection by listing the main theorems therein.

\begin{subsectheorem}[{\cite[9.21]{Pink}}]\label{subsectheorem: toroidal compactifications, projective structure} 
Let $(P, \mX)$ be a mixed Shimura datum and $K_f\subset P(\fadele)$ a neat open compact subgroup. There exists a $K_f$-admissible complete cone decomposition $\Sigma$ for $(P, \mX)$ such that $M^{K_f}(P, \mX, \Sigma)(\C)$ is smooth, projective, and the complement $M^{K_f}(P, \mX, \Sigma)(\C) \backslash M^{K_f}(P, \mX)(\C)$ is a union of smooth divisors with only normal crossings. 

If $\Sigma '$ is any $K_f$-admissible complete cone decomposition for $(P, \mX)$, then $\Sigma$ can be chosen to be a refinement of $\Sigma '$. 
\end{subsectheorem}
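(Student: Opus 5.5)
The statement is \cite[9.21]{Pink}, so the plan is to follow Pink's route. There are three stages: build some $K_f$-admissible complete cone decomposition, refine it equivariantly to a smooth one, and then refine once more to one carrying a convex ``polarization function'' that yields an ample line bundle. The refinement clause for a given $\Sigma'$ comes for free if each stage is phrased as ``refine the given decomposition''.

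\emph{Step 1 (existence of a complete admissible decomposition).} I would work one $P(\Q)$-orbit of pairs $(\mX^0,(P_1,\mX_1))$ at a time; there are finitely many modulo $K_f$ by reduction theory. For each such pair, Proposition \ref{subsecproposition: rational boundary components, pink 4.15} shows that $C(\mX^0,P_1)$ is invariant under translation by $(U_1\cap W)(\R)(-1)$. It also shows that it projects onto a non-degenerate homogeneous self-adjoint cone in $(U_1/(U_1\cap W))(\R)(-1)$. For the pure part I would take an AMRT-type $\Gamma$-invariant rational polyhedral decomposition of the closure of that self-adjoint cone, which has finitely many orbits. I would then pull it back and subdivide it in the $(U_1\cap W)$-directions, again $\Gamma$-equivariantly and with finitely many orbits. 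Compatibility along chains $(P,\mX)\geq(P_1,\mX_1)\geq(P_2,\mX_2)$ is needed because of Proposition \ref{subsecproposition: rational boundary components, properties of the star cone}(3). To get it, I would proceed by induction along chains, defining the decomposition on $C^*(\mX^0,P_1)$ first and then extending to $C^*(\mX^0,P_2)\supset C^*(\mX^0,P_1)$. Finally I would transport everything by $P(\Q)$, $P_1(\fadele)$ and $K_f$ to obtain conditions (1)--(7) of Definition \ref{subsecdefinition: toroidal compactifications, admissible cone decomposition}.

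\emph{Step 2 (smoothness and normal crossings).} Given a complete admissible $\Sigma'$, I would refine it to cones that are smooth with respect to the lattices $\Gamma_{U_1}(-1)$ by the standard toric resolution, namely repeated stellar subdivision. This must be done equivariantly. Neatness of $K_f$ means the stabilizer in the relevant arithmetic group of each cone acts trivially on it, so I can subdivide one representative per orbit, in order of increasing dimension, and propagate the result. Compatibility with faces is automatic because subdivisions of lower-dimensional cones are fixed first. Then Theorem \ref{subsectheorem: toroidal compactifications, toroidal compactifications}(4),(5) gives a smooth compact space. Locally it is a quotient of $\mUbar(P_1,\mX_1,p_f)$, which is a relative torus embedding for smooth cones. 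Hence the boundary is a union of smooth divisors with normal crossings, the components being the closures of strata of rays.

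\emph{Step 3 (projectivity), which I expect to be the main obstacle.} Here I would refine further so that $\Sigma$ admits a $K_f$- and $P(\Q)$-invariant function on $\bigsqcup -C^*(\mX^0,P_1)\times P(\fadele)$. This function should be continuous, rational, and linear on each cone. It should also be strictly convex on each $\Sigma(\mX^0,P_1,p_f)$, meaning it is not linear across any two distinct maximal cones, and compatible with the embeddings $C^*(\mX^0,P_1)\rightarrowtail C^*(\mX^0,P_2)$. Such a function defines a boundary divisor $D$. The next step is to show that $\mathcal{O}(-D)$ is relatively ample for the map $[\pi]^*$ of Proposition \ref{subsecproposition: toroidal compactifications, map to baily-borel} to the Baily-Borel compactification of $(P,\mX)/W$. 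On torus-embedding charts this follows from the toric ampleness criterion. Over the $W/U$-direction one would use the ample line bundle on the abelian-scheme part. The target is projective by Baily-Borel, and a proper map that is relatively ample over a projective base gives a projective total space. This also upgrades the compact analytic space to an algebraic one via GAGA.

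The delicate points in Step 3 are the two constructions. First, the strictly convex function must be built compatibly across all boundary components while keeping finiteness modulo $K_f$. I would attempt this by averaging a locally chosen convex function over the finite orbit data and then subdividing. Second, the relative ampleness must be checked uniformly near the deepest strata.
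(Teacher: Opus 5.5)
The paper does not prove this statement; it quotes \cite[9.21]{Pink}. So the only question is whether your outline closes. Steps 1 and 2 are plausible in outline (modulo (iii) below), but Step 3 fails as formulated, and not just for lack of detail. You ask for a $\phi$ that is $P(\Q)$- and $K_f$-invariant, linear on cones, and strictly convex on each $\Sigma(\mX^0,P_1,p_f)$. No such $\phi$ exists once elements of $W(\Q)\cap p_fK_fp_f^{-1}$ act on $U_1$ by nontrivial shears along $(U_1\cap W)(\R)(-1)$.

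Take the Kuga datum of Subsection~\ref{subsec: the case of kuga}. There $C(\hX^0,\hP_1)=(\R_{>0}\times\R)(-1)$, and translation by $(0,d)\in V(\Q)$ lies in $K_f^{\hP}(d)$, preserves $\hX^0$, and acts by $(\lambda,v)\mapsto(\lambda,v+d\lambda)$. $P(\Q)$-invariance on both factors plus right $K_f$-invariance then force $\phi(\lambda,v+d\lambda)=\phi(\lambda,v)$, for every admissible $\Sigma$. On a slice $\lambda=\lambda_0$, the function $v\mapsto\phi(\lambda_0,v)$ is convex and periodic, hence constant. By homogeneity $\phi$ is then linear on the whole half-plane, so it cannot bend between the (necessarily several) maximal cones.

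Geometrically, the fibre over a cusp is a N\'eron polygon, as in the Tate-curve picture at the end of the paper. Any line bundle supported on the boundary has total degree $0$ on that fibre, so no boundary divisor is relatively ample there. Positivity must come from a non-boundary bundle extending a polarization of the abelian-scheme part $M((P,\mX)/U)\to M((P,\mX)/W)$. Its combinatorial shadow is a quasi-periodic ``quadratic'' piecewise-linear function, with $\phi\circ\gamma-\phi$ linear rather than zero; this is Mumford's construction. What is missing, then, is to work with such quasi-invariant polarization data built from the polarization of $V=W/U$. You would need to construct them compatibly along chains of boundary components with finitely many orbits, and prove the ampleness criterion for them near the deepest strata. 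Your remark about ``the ample line bundle on the abelian-scheme part'' points this way, but it is exactly the step that decides projectivity for genuinely mixed data.

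Three further points need repair.
(i) Even in the pure case, ``averaging over the finite orbit data'' cannot produce invariance under the infinite groups $\Delta_Q$. You need an intrinsically invariant construction, e.g.\ the gauge function $y\mapsto\sup\{s>0: y\in sD_0\}$ of the core $D_0$ of Subsection~\ref{subsec: cones and cores in euclidean spaces}. It is invariant by construction and linear exactly on the cones over the faces of $\partial D_0$.
(ii) Refining for projectivity after Step 2 destroys smoothness. You must first produce a polarized refinement of $\Sigma'$ and then smooth it by equivariant star subdivisions that preserve a polarization function; small uniform corrections suffice because there are finitely many orbits. So the refinement clause is not free: a polarized refinement of an arbitrary admissible $\Sigma'$ is an equivariant toric Chow lemma, and the usual hyperplane-arrangement trick would involve infinitely many hyperplanes.
(iii) Smooth cones give only local normal crossings. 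For the boundary components to be \emph{smooth} divisors, you also need that no two rays of one cone are identified by $\Delta_Q$ or by $\overbar{\sim}$. Neatness does not exclude this, since a unipotent element of the Levi quotient can carry one ray of a cone to another ray of the same cone. It must therefore be built into the choice of refinement.
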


\begin{subsectheorem}[{\cite[9.24, 9.25]{Pink}}]\label{subsectheorem: toroidal compactifications, quasi-projective structure} 
Let $(P, \mX)$ be a mixed Shimura datum and $K_f\subset P(\fadele)$ an open compact subgroup. 

\begin{enumerate}
\item The mixed Shimura variety $M^{K_f}(P, \mX)(\C)$ admits a canonical structure of a normal quasi-projective algebraic variety over $\C$, such that all the functorial maps discussed in Subsection \ref{subsec: mixed shimura varieties} become algebraic morphisms. 

\item If $\Sigma$ is a complete $K_f$-admissible cone decomposition such that the space $M^{K_f}(P, \mX, \Sigma)(\C)$ is projective, then the open embedding $M^{K_f}(P, \mX)(\C) \rightarrowtail M^{K_f}(P, \mX, \Sigma)(\C)$ is algebraic. 
\end{enumerate}

We denote by $M^{K_f}_\C(P, \mX)$ the normal complex algebraic variety above underlying $M^{K_f}(P, \mX)(\C)$. Similarly, $M^{K_f}_\C(P, \mX, \Sigma)$ denotes the variety underlying $M^{K_f}(P, \mX, \Sigma)(\C)$ if there exists one. When $(P, \mX)$ is a pure Shimura datum, $M^{K_f}_\C(P, \mX)^*$ denotes the normal projective variety underlying the Baily-Borel compactification $M^{K_f}(P, \mX)^*(\C)$.

\begin{enumerate}\setcounter{enumi}{2}
\item $M^{K_f}_\C(P, \mX)$, $M^{K_f}_\C(P, \mX, \Sigma)$ (if it exists), and $M^{K_f}_\C(P, \mX)^*$ are unique up to canonical isomorphism.
\end{enumerate}
\end{subsectheorem}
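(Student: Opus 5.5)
The plan is to obtain every algebraic structure from the smooth projective toroidal compactifications of Theorem \ref{subsectheorem: toroidal compactifications, projective structure}, using Serre's GAGA and Chow's theorem. I would prove (2) first in the neat case and then deduce (1) and (3) from it.

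\textbf{Neat case.} Let $K_f$ be neat. By Theorem \ref{subsectheorem: toroidal compactifications, projective structure} there is a complete, smooth, $K_f$-admissible $\Sigma$ for which $M^{K_f}(P,\mX,\Sigma)(\C)$ is a projective complex manifold.
\begin{enumerate}
\item By GAGA this manifold is the analytification of a unique smooth projective variety $M^{K_f}_\C(P,\mX,\Sigma)$.
\item The boundary $M^{K_f}(P,\mX,\Sigma)(\C)\setminus M^{K_f}(P,\mX)(\C)$ is a closed analytic subset. By Chow's theorem it is Zariski closed.
\item Hence $M^{K_f}(P,\mX)(\C)$ acquires the structure of a quasi-projective variety. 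This variety is normal, since the analytic space is normal by Proposition \ref{subsecproposition: mixed shimura varieties, topology}, and the open embedding is algebraic by construction.
\end{enumerate}

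\textbf{Independence of $\Sigma$.} Let $\Sigma_1,\Sigma_2$ be two such decompositions. Their common refinement is again a complete $K_f$-admissible decomposition, and Theorem \ref{subsectheorem: toroidal compactifications, projective structure} lets me refine it further to a smooth projective $\Sigma_3$. The maps $[\cdot 1]\colon M^{K_f}(P,\mX,\Sigma_3)(\C)\to M^{K_f}(P,\mX,\Sigma_i)(\C)$ are holomorphic maps between projective varieties. By GAGA they are algebraic, and they restrict to the identity on $M^{K_f}(P,\mX)(\C)$. Therefore the two induced algebraic structures coincide.

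This argument also gives (2) for any complete $\Sigma$ whose compactification is projective: refine it to a smooth projective $\Sigma_3$ and use the algebraic surjection $M^{K_f}(P,\mX,\Sigma_3)\to M^{K_f}(P,\mX,\Sigma)$. The uniqueness statement (3) then amounts to this: every structure produced this way is canonically isomorphic to the one defined by a single fixed $\Sigma$.

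\textbf{Non-neat case.} For general $K_f$, choose a neat normal open compact subgroup $\tilde K_f\unlhd K_f$.
\begin{enumerate}
\item The finite group $K_f/\tilde K_f$ acts holomorphically on $M^{\tilde K_f}(P,\mX,\Sigma)(\C)$, with $\Sigma$ chosen $K_f$-admissible. By GAGA the action is algebraic.
\item The quotient of a projective variety by a finite group is projective, and its analytification is the analytic quotient. This gives the structure on $M^{K_f}(P,\mX)$ and $M^{K_f}(P,\mX,\Sigma)$.
\item Independence of $\tilde K_f$ follows by passing to the intersection of two choices.
\end{enumerate}

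\textbf{Functorial maps.} For $[\cdot p_f]$ and $[\varphi]$, given target data $(K_f',\Sigma')$, I would choose a smooth projective $K_f$-admissible $\Sigma$ refining $\varphi^{*}\Sigma'$, respectively $[\cdot p_f]^*\Sigma'$. The map then extends to a holomorphic map between projective toroidal compactifications, which is algebraic by GAGA, so its restriction to the open parts is algebraic.

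\textbf{Baily--Borel case.} For the projective variety $M^{K_f}_\C(P,\mX)^*$ I would invoke the Baily--Borel theorem, which gives projectivity of the Satake-topologized quotient. Proposition \ref{subsecproposition: toroidal compactifications, map to baily-borel} then makes $[\pi]^*$ algebraic, again by GAGA.

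\textbf{Expected main obstacle.} The hard step is the refinement step: producing, for a pullback decomposition $\varphi^*\Sigma'$ that is merely admissible and possibly non-finite in the image sense, a complete smooth projective $K_f$-admissible refinement. This needs the second half of Theorem \ref{subsectheorem: toroidal compactifications, projective structure}. First, however, I must check that $\varphi^*\Sigma'$, or a coarsening of it, is itself a complete admissible decomposition for $(P,\mX)$. I would attempt this using the compatibility of the cones $C^*(\mX^0,P_1)$ with morphisms, via the equivariance of the map $\mathrm{im}$.
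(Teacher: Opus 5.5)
The paper does not prove this statement; it quotes Pink [9.24, 9.25]. So your outline has to stand on its own.

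At neat level its core is sound:
\begin{itemize}
\item GAGA and Chow applied to a smooth projective compactification from Theorem~\ref{subsectheorem: toroidal compactifications, projective structure};
\item independence of $\Sigma$, and part (2), by refinement;
\item algebraicity of $[\cdot p_f]$ by refining the $p_fK_fp_f^{-1}$-admissible decomposition $[\cdot p_f]^*\Sigma$.
\end{itemize}
You should, however, justify that a common refinement of two complete admissible decompositions is again finite modulo $P(\Q)$ and $K_f$. This follows from the standard fact that a rational polyhedral cone contained in $C^*(\mX^0,P_1)$ meets the relative interiors of only finitely many cones of an admissible decomposition.

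\textbf{The main gap.} Part (1) requires that $[\varphi]$ be algebraic for every morphism $\varphi\colon(P,\mX)\to(P',\mX')$, and the route you suggest cannot establish this. In general $\varphi^*\Sigma'$ is not a cone decomposition, and no coarsening of it helps. If $\varphi$ kills part of $U$, then $\varphi^{-1}(\{0\})$ is the whole line $U(\R)(-1)$, which is not strongly convex. Examples are the projection $(P_0,\mX_0)\to(\bG_{m,\Q},\mH_0)$ of Example~\ref{subsecexample: mixed shimura data, simplest example}, or $(P_{2n},\mX_{2n})\to(V\rtimes G,\mY_{2n})$.

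Two ingredients are missing.
\begin{enumerate}
\item Functoriality of rational boundary components. Each $(P_1,\mX_1)$ must determine a rational boundary component $(P_1',\mX_1')$ of $(P',\mX')$ with $\varphi(U_1)\subset U_1'$ and $\varphi(C(\mX^0,P_1))\subset C(\xi(\mX^0),P_1')$, so that $\varphi$ acts on conical complexes. The paper's functoriality statement for $[\varphi]$ on toroidal compactifications presupposes exactly this. Equivariance of $\mathrm{im}$ gives the cone inclusion only after $(P_1',\mX_1')$ has been produced. The natural candidate comes from $\varphi\circ\alpha_x$ and Proposition~\ref{subsecproposition: rational boundary components, morphism of reference group}, but you must check that it yields a $\Q$-admissible parabolic of $P'$.
\item Intersect rather than pull back. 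Start from any complete $K_f$-admissible $\Sigma_0$ for $(P,\mX)$ and take all $\sigma\cap\varphi^{-1}(\sigma')$ with $\sigma\in\Sigma_0$, $\sigma'\in\Sigma'$.
\begin{itemize}
\item Strong convexity comes from $\sigma$.
\item The invariance conditions come from those of $\Sigma_0$ and $\Sigma'$, together with $\varphi(K_f)\subset K_f'$.
\item Finiteness comes from the fact above, applied to $\varphi(\sigma)$.
\end{itemize}
Only then can you refine by Theorem~\ref{subsectheorem: toroidal compactifications, projective structure} and apply GAGA.
\end{enumerate}

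\textbf{A smaller issue at non-neat level.} Theorem~\ref{subsectheorem: toroidal compactifications, projective structure} produces projective decompositions that are only $\tilde K_f$-admissible. Symmetrizing over $K_f/\tilde K_f$ loses the projectivity guarantee, so your ``$\Sigma$ chosen $K_f$-admissible'' is not available.

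You do not need it:
\begin{itemize}
\item Define $M^{K_f}_\C(P,\mX)$ as the quotient of $M^{\tilde K_f}_\C(P,\mX)$ by $K_f/\tilde K_f$ acting through the maps $[\cdot k]$. These are algebraic by the $[\cdot p_f]$ case.
\item For (2), note that $M^{\tilde K_f}(P,\mX,\Sigma)(\C)$ is finite over the projective $M^{K_f}(P,\mX,\Sigma)(\C)$, hence projective, as in the proof of Theorem~\ref{sectheorem: introduction, main theorem}(3). Apply the neat case there and descend.
\end{itemize}
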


\begin{subsectheorem}[{\cite[9.21]{Pink}}] \label{subsectheorem: toroidal compactifications, canonical model} 
Every mixed Shimura datum $(P, \mX)$ admits a canonical model (in the sense of {\cite[11.5]{Pink}}) over its reflex field $E(P, \mX)$.
\end{subsectheorem}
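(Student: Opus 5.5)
The plan is to reduce to the pure case, where existence of canonical models is known by Deligne, Milne and Borovoi, and then build the mixed case in two unipotent layers. Each layer should be a group-theoretic torsor over the previous one whose structure is defined over the reflex field. Uniqueness of canonical models, characterized by the reciprocity law at special points, will be used throughout to glue and descend.

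\emph{Step 1 (reductions).} By the decomposition into irreducible components in Subsection \ref{subsec: mixed shimura data}, and since canonical models are compatible with disjoint unions, I may assume $(P,\mX)$ is irreducible. By Proposition \ref{subsecproposition: mixed shimura data, properties}(4), there is the chain of quotients
\[
(P,\mX)\longrightarrow (P,\mX)/U \longrightarrow (P,\mX)/W=(G,\mX_G).
\]
The reflex fields satisfy $E(P,\mX)\supseteq E(G,\mX_G)$. For $(G,\mX_G)$ I invoke the existence of canonical models of pure Shimura varieties. I also check that the finite covering $\mX\to\mathcal H$ allowed here is harmless: on the pure level it only changes the connected components by a finite \'etale cover, and that cover descends via the action on $\pi_0$.

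\emph{Step 2 (the abelian layer).} Fix a splitting $P=W\rtimes G$. By Proposition \ref{subsecproposition: mixed shimura data, unipotent extension} and the description of unipotent fibers in Proposition \ref{subsecproposition: mixed shimura varieties, unipotent fiber}, $M^{K_f}(P/U,\mX/U)$ is, over $M(G,\mX_G)$, a torsor under an abelian scheme. This abelian scheme comes from the representation $V$ of type $\{(-1,0),(0,-1)\}$ of $G$. After replacing $K_f$ by a neat normal subgroup, one realizes $V$ inside a symplectic representation and so pulls back a Kuga family. Siegel-type moduli (PEL/Hodge-type descent) show that this abelian scheme is defined over the reflex field. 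The torsor class is given by a torsion section determined by the level structure, so it descends too. I then verify the reciprocity law at special points $x$, i.e.\ those with $h_x$ factoring through a torus $T\subset P$ defined over $\Q$. For these the Galois action on the fiber is computed by CM theory (Shimura--Taniyama) applied to the fibers over special points.

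\emph{Step 3 (the toric layer).} By the remark after Proposition \ref{subsecproposition: mixed shimura varieties, unipotent fiber}, $M^{K_f}(P,\mX)\to M(P/U,\mX/U)$ is a torsor under the torus $\Gamma_U\backslash U(\C)$, with $P$ acting on $U$ by scalars by Proposition \ref{subsecproposition: irreducible data}. The cocharacter group of this torus is a Tate twist $\Gamma_U(-1)$. Hence the torus is $\Q$-split up to the cyclotomic character, and the torsor is given by a biextension of the abelian scheme of Step 2. I would descend this via the canonical descent of Poincar\'e-type biextensions (line bundles on the Kuga family), exactly as for $\mZ_{g,d}\to\mY_{g,d}$.

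\emph{Step 4 (compatibility and main obstacle).} Finally, I check that the resulting models are compatible with the Hecke action $[\cdot p_f]$ and with all morphisms of data. Here uniqueness does the work: two models agreeing on the Zariski-dense set of special points coincide.

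I expect the main obstacle to be density of special points and the reciprocity computation in the non-reductive setting. Special points must be shown to exist in every fiber of the unipotent layers, namely torsion translates, and the Galois action there must be identified with the action prescribed by the reciprocity morphism $r(T,h_x)$ composed with the unipotent part. I would first try to handle this by reducing, through an embedding into $(P_{2n},\mX_{2n})$ of Example \ref{subsecexample: mixed shimura data, extension of symplectic group}, to an explicit moduli interpretation (abelian varieties with a point and a trivialized line-bundle fiber). Then I would transport the model back using Galois descent along the finite-index stabilizers.
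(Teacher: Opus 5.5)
The paper gives no proof of this statement: it is quoted from Pink without argument, so your outline has to be judged on its own. Its architecture is the standard one: the pure case as a black box, the abelian layer as the pullback of a Siegel Kuga family along the canonical morphism $M(G,\mX_G)\to M(CSp(V,\psi),\mathcal{H}_V)$, and reciprocity at special points via Shimura--Taniyama. Step~2 works provided $\psi$ is a $G$-equivariant polarization of $V$ itself, which the axioms (notably (A8)) supply, rather than of some larger representation containing $V$.

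\textbf{Step~3 is the genuine gap.} The torus torsor $M^{K_f}(P,\mX)\to M^{\pi_U(K_f)}(P/U,\mX/U)$ is a torsor over the abelian scheme $A$; it is not a biextension.
\begin{enumerate}
\item Its commutator biextension of $A\times_S A$ (given by $\Psi$) determines it only up to a $\mathrm{Pic}^0$-type twist. Pulled back along the diagonal, the biextension only yields a torsor of class $2\Psi$. So you must pin down the normalized totally symmetric torsor; compare the evenness of $d$ needed for $\mZ_{g,d}$.
\item More seriously, when $\dim U>1$ the coordinates $\psi_i$ of $\Psi$ are arbitrary $G$-equivariant Hodge classes, not the polarization used in Step~2. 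Nothing in your argument shows that $\phi_{\psi_i}\colon A\to A^\vee$ is defined over $E(P,\mX)$, and there is no Poincar\'e-type object available to descend.
\end{enumerate}
One way to close this uses the irreducible case. There all $\psi_i$ lie in $\Hom_G(\wedge^2V,\Q(\chi))$, where $\chi$ is the character by which $P$ acts on $U$, and the polarizations form a non-empty open cone in this space. Hence $\Psi=\alpha\circ\Psi'$ with $\Psi'$ made of polarizations. Then $(P,\mX)$ is the quotient by $\ker\alpha$ of a fibre product over $(P/U,\mX/U)$ of pullbacks of Siegel-type data with one-dimensional $U$. Each of these has the moduli model of an abelian scheme with a normalized totally symmetric $\bG_m$-torsor, and uniqueness identifies the resulting models of $A$.

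\textbf{The fallback in Step~4 also fails in general.} A morphism of mixed Shimura data respects weight filtrations. So an embedding $(P,\mX)\hookrightarrow(P_{2n},\mX_{2n})$ has $P\cap W_{2n}=W$ and induces an embedding of $(G,\mX_G)$ into $(CSp_{2n},\mathcal{H}_{2n})$. This is impossible whenever the pure part is not of Hodge type, e.g.\ for $U\rtimes G$ with $G^{\mathrm{ad}}$ having a factor of type $E_6$ or $E_7$ and $G$ acting on $U$ through a character. You therefore have to stay with the fibre-product structure over $(G,\mX_G)$ that Step~2 already uses. Reciprocity at a special point should be proved by combining the pure reciprocity law on the base with Shimura--Taniyama on the CM fibre.

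Finally, special points do not lie in every fibre of the unipotent layers. A special point of $M(P,\mX)$ lies over a special point of $M(G,\mX_G)$, and over such a point the $W(\Q)$-translates are special. Zariski density comes from density in the base together with density of these torsion translates in each CM fibre.
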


This finishes our quick tour into the theory of mixed Shimura varieties and their toroidal compactifications. We start the proof of our main theorem in the next section, after establishing several technical lemmas.


\section{Proof of the main theorem} \label{sec: proof of main theorem}


A crucial step in the proof of our main theorem is to demonstrate that some neighborhood of a certain stratum in a torus embedding maps into the toroidal compactification. For this purpose, we need some preparation. 

The essential goal of the first two subsections is to establish Lemma \ref{subseclemma: some lemmas, fundamental lemma}. It is a mild generalization of the claim in \cite[6.13]{Pink}, which states that the intersection of $\mUbar(P_1, \mX_1, p_f)$ with each $\sigma$-stratum is equal to $\pi_\sigma(\mUbar(P_1, \mX_1, p_f))$, where $\pi_\sigma$ is the canonical projection onto the $\sigma$-stratum. We provide a proof of this characterization and extend it to subspaces of $\mUbar(P_1, \mX_1, p_f)$ that are preimages of open convex cores $D\subset C(\mX^0, P_1)$ under the map of imaginary part. 

The proof of Theorem \ref{sectheorem: introduction, main theorem} requires the case of $\mUbar(P_1, \mX_1, p_f)$. Readers willing to accept the statement of Lemma \ref{subseclemma: some lemmas, fundamental lemma} may skip its proof and move on to Subsection \ref{subsec: proof of main theorem} directly.

\subsection{Cones and cores in Euclidean spaces}
\label{subsec: cones and cores in euclidean spaces}

Since a toroidal compactification is constructed relative to some $K_f$-admissible cone decomposition, we first study cones and cores in Euclidean spaces. 
The result below may be well known, but we include a proof here for the convenience of the reader.

\begin{subseclemma}\label{subseclemma: some lemmas, projection of open cones}
Let $V$ be a finite-dimensional real vector space and $C\subset V$ an open convex subset. Suppose $\pi \colon V \rightarrow W$ is a surjective linear map of $\R$-vector spaces. Denote by $\overbar{C}$ the closure of $C$ in $V$.

\begin{enumerate}
\item $\pi(C)$ is the interior of $\overbar{\pi(C)}$ in $W$.

\item If $C$ is an open convex cone, then $\pi(\overbar{C})=\overbar{\pi(C)}$.
\end{enumerate}
\end{subseclemma}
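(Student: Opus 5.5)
The plan is to reduce both parts to elementary facts about convex sets in finite-dimensional spaces. Throughout, $\pi$ is a surjective linear map, hence open by the open mapping theorem (or simply because it is a projection after a choice of complement). In particular $\pi(C)$ is an open convex subset of $W$.

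For (1), I first observe that $\pi(C)$ is open and contained in $\overbar{\pi(C)}$, so $\pi(C)\subset \mathrm{Int}\,\overbar{\pi(C)}$. For the reverse inclusion I will use the standard fact that a convex set $A$ with nonempty interior in a finite-dimensional space satisfies $\mathrm{Int}\,\overbar{A}=\mathrm{Int}\,A$. Applying it to $A=\pi(C)$, which is open, gives $\mathrm{Int}\,\overbar{\pi(C)}=\pi(C)$. If $C=\varnothing$ the statement is trivial.

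To justify the fact directly, take $w\in \mathrm{Int}\,\overbar{A}$ and a point $a\in A=\mathrm{Int}\,A$. Choose $\varepsilon>0$ so that $w'=w+\varepsilon(w-a)$ still lies in $\overbar{A}$. Then $w$ is a convex combination $w=\tfrac{1}{1+\varepsilon}w'+\tfrac{\varepsilon}{1+\varepsilon}a$ with positive weight on the interior point $a$. Since $A$ is open and $w'\in\overbar{A}$, the segment argument places $w$ in $A$: an open ball $B$ around $a$ in $A$ gives $tw'+(1-t)B$, which is open and meets $A$ near any approximation of $w'$. This step is the only place that needs care, and it is routine.

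For (2), the inclusion $\pi(\overbar{C})\subset\overbar{\pi(C)}$ holds by continuity. The reverse inclusion is the main obstacle, since linear images of closed sets need not be closed. The cone structure is what saves the argument. Take $w\in\overbar{\pi(C)}$ and fix any $c_0\in C$. For $t\in(0,1]$, the point $(1-t)w+t\pi(c_0)$ lies in $\pi(C)$ by the segment argument above, because $\pi(C)$ is open and convex. Hence $w+t(\pi(c_0)-w)\cdot$ lies in $\pi(C)$; using that $C$ is a cone, rescale to write this as $\pi(c_t)$ with $c_t\in C$. The issue is that $c_t$ need not converge, since the kernel directions can run off.

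Instead, I will use the cone property more cleverly. Since $C$ is an open convex cone, $C+\overbar{C}\subset C$ and $\overbar{C}$ is a closed convex cone. So $\pi(\overbar{C})$ is the image of a closed convex cone. I will show it is closed by splitting $\overbar{C}$ relative to $\ker\pi$. If $\overbar{C}\cap\ker\pi$ is a linear subspace $L$, I pass to $V/L$, where the image of $\overbar{C}$ meets the kernel only in $0$. A compactness argument on the unit sphere then shows the image is closed: if $\pi(x_n)\to w$ with $x_n\in\overbar{C}$ unbounded, normalized limits give a nonzero element of $\overbar{C}\cap\ker\pi$.

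In the remaining case, $\overbar{C}\cap\ker\pi$ is a convex cone that is not a subspace. Here I would use openness of $C$. If $k\in\ker\pi\cap\overbar{C}$, then $C+k\subset C$, so $C+\R_{\ge0}k=C$ and $\pi(C)$ is unchanged. This suggests replacing $C$ by $C+\mathrm{span}(\ker\pi\cap\overbar{C})$, which is still an open convex cone with the same image. Showing this enlarged set keeps the same closure-image (including the $-k$ directions) is the step I expect to need the most checking. I would handle it by induction on $\dim\ker\pi$, reducing to the case where $\ker\pi$ is a line, where the dichotomy ``$\ker\pi\cap\overbar{C}=0$ or $\neq 0$'' can be checked by hand.
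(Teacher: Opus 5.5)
Your argument for (1) is the paper's argument and is fine. For (2), the step you flag as needing the most checking cannot be completed, because (2) is false as stated, and it fails precisely in your Case 2.

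Take $V=\R^3$, $C=\{(x,y,z): z>\sqrt{x^2+y^2}\}$, $W=\R^2$ and $\pi(x,y,z)=(x,z-y)$. Then $\ker\pi=\R(0,1,1)$ is a line, and $\ker\pi\cap\overbar{C}=\R_{\ge0}(0,1,1)$ is a ray.

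\begin{itemize}
\item For every $\varepsilon>0$ the point $(1,1/\varepsilon,1/\varepsilon+\varepsilon)$ lies in $C$. So $(1,\varepsilon)\in\pi(C)$, and hence $(1,0)\in\overbar{\pi(C)}$.
\item But $\pi(x,y,z)=(1,0)$ forces $x=1$ and $z=y$. Then $z\ge\sqrt{1+y^2}>|y|$ is impossible, so $(1,0)\notin\pi(\overbar{C})$.
\end{itemize}

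In terms of your reduction, the enlarged cone is $C+\R(0,1,1)=\{z>y\}$. Its closure $\{z\ge y\}$ is strictly larger than $\overbar{C}+\R(0,1,1)$, and that is exactly where ``same closure-image'' breaks. Doing the line case by hand, as you planned, would have produced this example. Nor is it exotic: in the coordinates $a=z+y$, $b=x$, $c=z-y$, $C$ is the cone $\{a>0,\ ac>b^2\}$ of positive definite symmetric $2\times 2$ matrices, and $\pi$ simply forgets $a$.

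The paper's own proof of (2) has the same hole. It asserts that the cone generated by the compact set $\pi(\overbar{C}\cap B)$ is closed. But that set contains $0$, and $\R_{\ge0}K$ for compact $K$ is guaranteed closed only when $0\notin K$. With the unit sphere in place of the ball, the argument goes through when $\ker\pi\cap\overbar{C}=\{0\}$. So you located the real obstacle, which the paper passes over.

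What is true is what your Case 1 proves, correctly: if $\ker\pi\cap\overbar{C}$ is a linear subspace, then $\pi(\overbar{C})$ is closed and therefore equals $\overbar{\pi(C)}$. Without such a hypothesis one only has $\pi(C)\subseteq\pi(\overbar{C})\subseteq\overbar{\pi(C)}$, together with $\Int\,\overbar{\pi(C)}=\pi(C)$ from (1). So (2) must either carry that hypothesis or be restated in terms of $\overbar{\pi(C)}$.
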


\begin{proof}~

\begin{enumerate}
\item $\pi(C)$ is an open convex subset of $W$ as the projection is open and the image of a convex subset under a linear map is also convex. Thus we are reduced to showing that the interior of the closure of an open convex subset $D\subset \R^m$ is $D$ itself.  Let $y$ be a point in the interior of $\overbar{D}$. Take $x\in D$ to be an arbitrary point. We may assume $y\neq x$. There exists an open ball neighborhood of $y$ contained in  $\overbar{D}$. Thus by convexity, for $\epsilon>0$ small enough, $z\coloneq x + (1+\epsilon)(y-x) \in \overbar{D}$. Then $y $ lies on the segment connecting $x$ and $z$. Any small open ball neighborhood $B_\delta \ni z$ contains a point $w$ of $D$. Fix an open ball neighborhood $B_x$ of $x$ contained in $D$. As $\delta$ goes to zero, $w$ approaches $z$.  Since the convex hull of $B_x \cup \{z\}$ contains $y$, so does the convex hull of $B_x\cup \{w\}$ if $w$ is close enough to $z$. But this latter convex hull is contained in $D$ by convexity.

\item Since in this case $\pi(C)$ is an open convex cone, it is equal to the interior of $\overbar{\pi(C)}$ and thus dense in $\overbar{\pi(C)}$. We just have to show that $\pi(\overbar{C})$ is closed. 

Consider the unit balls $B^{n+m}\subset \R^{n+m}$ and $B^m \subset \R^m$ respectively. Any convex cone is determined by its intersection with the unit ball. $\overbar{C} \cap B^{n+m}$ is a closed and hence compact subset. Thus $\pi(\overbar{C}\cap B^{n+m})$ is compact and hence closed. As $\pi(\overbar{C})$ is the cone spanned by $\pi(\overbar{C}\cap B^{n+m})$, it is closed.
\end{enumerate}
\end{proof}

Before the next lemma, we introduce a new notion.

\begin{subsecdefinition}
Let $V$ be a finite-dimensional $\Q$ vector space. Fix a lattice $V(\Z) \subset V$. Let $C\subset V_\R$ be an open convex cone. Define $D_0\subset C$ to be the convex closure of $C \cap V(\Z)$. A subset $D\subset C$ is called a \emph{core} if there exists $\lambda_0, \lambda_0'\in \R_{>0}$ such that $\lambda_0\cdot D_0 \subset D \subset \lambda_0'\cdot D_0$. 
\end{subsecdefinition}

It is not hard to see that $D_0 \supseteq \lambda_0 \cdot D_0 $ for all $\lambda_0 \geq 1$. Thus it can be shown that the above definition does not depend on the choice of the lattice. Further, if $D\subset C$ is a core, then so are the following: the interior of $D$, the closure $\overbar{D}$ of $D$, the convex closure of $D$, $\R_{\geq 1}\cdot D$ and $D+C$. Also, $C=\R_{>0}\cdot D$ (cf. {\cite[6.1]{Pink}}). 

\begin{subsecremark}
Observe that if $D$ is convex, then $D=\R_{\geq 1}\cdot D$. If $D$ is open, then $D+C=D+\overbar{C}$. For any $v\in C$, we have $D+C+v \subset D+C$. In this case, we say the core $D+C$ is $C$-invariant. Hence, when $D$ is open and $C$-invariant, we have $D+C=D+\overbar{C}=D$. In fact, every open convex core $D$ is $C$-invariant. (For all $v_D \in D$ and all $v\in C$, there exists a small enough $1>\varepsilon_1>0$ and a large enough $\varepsilon_2>1$ such that $v_D+\varepsilon_1\cdot v\in D$ and $v_D+\varepsilon_2 \cdot v \in D$. By convexity, $v_D+v \in D$. So $D+C \subset D \subset D+\overbar{C}=D+C$.)
\end{subsecremark}

For the product space $\R^n\times \R^m$, we take  the standard lattice $\Z^n \times \Z^m$ formed by integral points. For a subset $S\subset \R^l$, $\Int(S)$ denotes its interior. Define $\pi_n \colon \R^n \times \R^m \rightarrow \R^n$ and $\pi_m \colon \R^n \times \R^m \rightarrow \R^m$ to be the projections onto the left and the right factor, respectively. For a subset $S\subset \R^n\times \R^m$, a subset $\sigma^0 \subset \R^n$, and a point $z_0 \in \R^m$, denote the level set $S\cap \R^n \times \{z_0\}$ by $S_{z_0}$ while $S_{\sigma, z_0} \coloneq S \cap \sigma^0 \times \{z_0\}$ and $S(\sigma^0)\coloneq S \cap \sigma^0 \times \R^m$.

\begin{subseclemma}\label{subseclemma: some lemmas, analytic geometric lemma}
Let $C\subset \R^n \times \R^m$ be an open convex cone with $D\subset C$ an open convex core. Write $\overbar{C}$ for the closure of $C$ in $\R^n \times \R^m$ and $\sigma^0$ for the interior of the intersection $\overbar{C}\cap \R^n \times \{0\}$ in $\R^n \times \{0\}$.  Suppose for the open convex cone $\sigma^0\subset \R^n\times \{0\}$, either of the following two conditions are satisfied. 
\begin{enumerate}[label=(\Alph*)]
\item $C\cap \R^n \times \{0\}=\varnothing$.
\item $\sigma^0 \subset C$.
\end{enumerate} 
Then
\begin{enumerate}
\item If $C_{z_0} 
 \neq \varnothing$, then the open convex subset $\pi_n(C_{\sigma^0, z_0}) \subset \sigma^0 \times \{0\}$ generates the open cone $\sigma^0$. That is,  if $(v,z_0) \in C$ for some $v$, then for any $v_\sigma \in \sigma^0$, there exists a $\lambda >0$ such that $(\lambda\cdot v_\sigma, z_0) \in C $.

In particular, $ \pi_m(C)=\pi_m( C(\tau^0))$ for any open convex cone $\tau^0 \subset \sigma^0$. 

In Case (B), $\pi_m(C)=\R^m$.

\item For all $(v, z_0) \in C$ and $(v', 0) \in \overbar{C} \cap \R^n \times \{0\}$, $(v, z_0) +\lambda(v', 0)\in C$ for all $\lambda \geq 1$. 

$D$ generates $C$ as an open convex cone, i.e.  for all $(v, z_0)\in C$ there exists $\lambda \geq 1$, such that $\lambda (v, z_0) \in D$.
\end{enumerate}

Let $f \colon C \rightarrow \R\cup\{\pm \infty\}$ be the function $f(v,z)\coloneq \inf\left \{ t \,\lvert \,(v, t\cdot z)\in D \right \}$. ($f(v,z)=\infty$ if $v + \R \cdot z$ does not meet $D$.) We have 
\begin{enumerate}\setcounter{enumi}{2}
\item Let $(v_\sigma, z_0)\in C(\sigma^0)$. For all bounded subset $K\subset \R^n \times \{0\}$, there exists a $\mu>0$ such that, for all $v'\in \mu \cdot v_\sigma+K$ and all $t \geq 1$, $f(t \cdot v', z_0)< \mu$. 

\item If $(v_0, z_0)\in D$, then for all $v_\sigma \in \sigma^0$, there exists a $\lambda>0$, such that $(\lambda v_\sigma, z_0)\in D$. Equivalently, $ D_{z_0} \neq \varnothing$ implies that $\pi_n(D_{\sigma^0, z_0})$ generates the open cone $\sigma^0$. 

In particular, $\pi_m(D)=\pi_m( D(\tau^0) )$ for any open convex cone $\tau^0 \subset \sigma^0$.

In Case (B), $\pi_m(D)=\R^m$.

\item If $(v_\sigma, z_0)\in D(\sigma^0)$, then the function $f_{z_0}(\lambda)\coloneq f(\lambda v_\sigma,  z_0)$ with $\lambda \geq 1 $ is a decreasing convex continuous function. In Case (A), it is positive for $\lambda \geq 1$. In Case (B), either $f_{z_0}(\lambda)=-\infty$  for all $\lambda \geq 1$ or $f_{z_0}(\lambda)\neq -\infty$ (and thus real-valued) for all $\lambda \geq 1$.

In particular, $(\lambda v_\sigma, z_0) \in D$ for all $\lambda \geq 1$.
\end{enumerate}
\end{subseclemma}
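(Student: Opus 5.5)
The lemma is elementary convex geometry, and I would prove it by repeatedly using convexity, the cone property $\R_{>0}\cdot C=C$, and the fact, recorded in the remark above, that an open convex core $D$ satisfies $D+C=D$ and $C=\R_{>0}\cdot D$.

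\emph{Part (2).} This comes first, because the other parts rest on it. Take $(v,z_0)\in C$ and $(v',0)\in\overbar{C}$. Write
\[
(v,z_0)+\lambda(v',0)=2\bigl(\tfrac12(v,z_0)+\tfrac12\lambda(v',0)\bigr).
\]
Approximate $\lambda(v',0)$ by points of $C$, and use openness of $C$ at $\tfrac12(v,z_0)$: a point $p+q$ with $p\in C$ and $q\in\overbar{C}$ lies in $C$, since $\overbar{C}+C\subset C$ for an open convex cone. So the sum lies in $C$. The second statement follows because the ray through any point of $C$ meets $D$, and $D=\R_{\geq1}D$.

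\emph{Part (1).} Given $(v,z_0)\in C$ and $v_\sigma\in\sigma^0$, part (2) gives $(v+\lambda v_\sigma,z_0)\in C$ for all $\lambda\ge0$. Set $w=v+\lambda v_\sigma$. For $\lambda$ large, $w/\lambda$ is close to $v_\sigma$ and hence lies in $\sigma^0$, so $w\in\sigma^0$.

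To move from $(w,z_0)$ to $(\lambda' v_\sigma,z_0)$, I would use that $\sigma^0$ is open in $\R^n$. Choose $\mu$ large enough that $\mu v_\sigma - v\in\sigma^0$. Then
\[
(\mu v_\sigma,z_0)=(v,z_0)+(\mu v_\sigma-v,0)\in C
\]
by part (2). This already gives (1), and the statements about $\pi_m$ follow at once.

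In Case (B), $(v_\sigma,0)\in C$. Openness then gives $(v_\sigma,\epsilon z)\in C$ for every $z$ with $\epsilon$ small, and scaling gives $\pi_m(C)=\R^m$.

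\emph{Part (4).} This is the same argument with $D$ in place of $C$, using $D+C=D$: from $(v_0,z_0)\in D$ and $(\mu v_\sigma-v_0,0)\in\overbar{C}$ we get $D+\overbar{C}=D$ since $D$ is open.

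\emph{Part (5).} First, $(\lambda v_\sigma,z_0)\in D$ for $\lambda\ge1$ by $D+\overbar{C}\subset D$ applied to the increment $((\lambda-1)v_\sigma,0)$. Monotonicity of $f_{z_0}$ follows because $\{t:(\lambda v_\sigma,tz_0)\in D\}$ increases with $\lambda$, again by adding $((\lambda'-\lambda)v_\sigma,0)$. Convexity of $f_{z_0}$ follows from convexity of $D$: the set $\{(\lambda,t)\}$ is convex, so its lower boundary is a convex function. Continuity of a finite convex function on an open interval is standard; at the endpoint $\lambda=1$, openness of $D$ handles it.

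Positivity in Case (A) holds because if $t\le0$ were allowed, combining with a nearby point on the $t>0$ side would force $D$, and hence $C$, to meet $\R^n\times\{0\}$. The dichotomy in Case (B) comes from convexity: a convex function that is decreasing and takes the value $-\infty$ at one point is $-\infty$ everywhere beyond it, and backward as well through convex combinations.

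\emph{Part (3).} This is the main obstacle: the bound must be uniform in $v'\in\mu v_\sigma+K$ and in $t\ge1$. The plan is:
\begin{enumerate}
\item By (4), choose $\lambda_0$ with $(\lambda_0 v_\sigma,z_0)\in D$.
\item Since $D$ is open, pick $\mu_0$ with $(\lambda_0 v_\sigma,\mu_0 z_0)\in D$, so $f(\lambda_0v_\sigma,z_0)\le\mu_0$.
\item Take $\mu\geq\mu_0+1$ so large that $\mu v_\sigma+K-\lambda_0v_\sigma\subset\sigma^0$. This is possible because $K$ is bounded and $\sigma^0$ is an open cone.
\item For any $v'\in\mu v_\sigma+K$ and $t\ge1$, the difference $tv'-\lambda_0v_\sigma$ lies in $\sigma^0\subset\overbar{C}$.
\item Hence $(tv',s z_0)\in D$ for every $s$ with $(\lambda_0v_\sigma,sz_0)\in D$, using $D+\overbar{C}=D$. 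In particular $f(tv',z_0)\le f(\lambda_0 v_\sigma,z_0)<\mu$, with a strict inequality obtained by slightly enlarging $\mu$.
\end{enumerate}
The care needed is in checking that the $\overbar{C}$-invariance of the open core $D$ really applies to increments lying only in $\overbar{C}\cap\R^n\times\{0\}$. That is exactly the content of $D+\overbar{C}=D$, so the plan is to establish that identity cleanly at the outset and use it throughout.
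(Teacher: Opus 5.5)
Your proposal is correct in substance, but one citation in (3) is wrong as written.

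\textbf{The slip in (3).} Step 1 invokes (4), whose hypothesis is that the level set $D_{z_0}$ is nonempty. The hypothesis of (3) only gives $(v_\sigma,z_0)\in C(\sigma^0)$, and $D_{z_0}$ can be empty. For example, take $C=\R_{>0}\times\R_{>0}\subset\R\times\R$, the open convex core $D=(1,\infty)^2$, $v_\sigma=1$ and $z_0=\tfrac12$.

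What steps 1--2 actually need is some point $(\lambda_0v_\sigma,\mu_0z_0)\in D$, and (2) supplies it. There is $\lambda\ge1$ with $(\lambda v_\sigma,\lambda z_0)\in D$, so take $\lambda_0=\mu_0=\lambda$. Steps 3--5 then go through unchanged, because $(tv',\mu_0z_0)=(\lambda_0v_\sigma,\mu_0z_0)+(tv'-\lambda_0v_\sigma,0)\in D+\overbar{C}=D$ gives $f(tv',z_0)\le\mu_0<\mu$.

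\textbf{Positivity in (5).} Your argument shows that every admissible $t$ is positive, i.e.\ $f_{z_0}\ge 0$, not $f_{z_0}>0$. Strict positivity would need $\overbar{D}$ to avoid $\R^n\times\{0\}$. Finiteness is all your continuity and dichotomy arguments use, so this does not affect the rest.

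\textbf{Comparison with the paper.} The paper proves (1) in Case (A) with a segment from the boundary point $(v_\sigma,0)$ to $(v,z_0)$, plus the fact that an open convex cone is the interior of its closure. It proves (3) directly from the definition of a core by a lattice-cube argument. It then gets (4) in Case (A), and the monotonicity in (5), from (3) combined with convexity of $f$ along rays and a limit $t\to\infty$.

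You instead derive everything from two absorption properties, $C+\overbar{C}\subset C$ and $D+\overbar{C}=D$, together with $\mu v_\sigma-v\in\sigma^0$ for $\mu\gg0$. The second property is exactly the $C$-invariance of open convex cores recorded in the remark before the lemma. This has several effects:
\begin{itemize}
\item (1) and (4) become one-liners that do not depend on the case.
\item Monotonicity in (5) becomes an inclusion of $t$-sets.
\item The logical order is reversed: you obtain (3) from (2), rather than (4) from (3).
\item Cases (A) and (B) only enter for positivity, the dichotomy, and $\pi_m=\R^m$.
\end{itemize}
The paper's route works straight from the lattice definition of a core. Yours is shorter once the invariance is granted.

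Your extension of $f_{z_0}$ to an open interval around $\lambda=1$, using openness of $D$, is also the right way to get continuity at the endpoint. A decreasing convex function on $[1,\infty)$ can jump at $1$, and the paper's one-line justification passes over this.
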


\begin{proof} We first prove everything in Case (A).
\begin{enumerate}
\item 
        Recall that $C({\sigma^0})\coloneq C \cap \sigma^0\times \R^m$. 
Given $(v, z_0)\in C$, let $v_\sigma\in \sigma^0$ be any point. Consider the line segment $\zeta(t)=(1-t)(v_\sigma, 0)+t(v, z_0)=((1-t)v_\sigma+ t v, t z_0)$ ($0\leq t \leq 1$) connecting the two points. As $C$ is an open convex cone and $(v_\sigma, 0)$ lies on the boundary, $\zeta(t)\in C$ for all $0<t \leq 1$. If $t>0$ is sufficiently small, as $\sigma^0$ is open in $\R^n\times \{0\}$, $\zeta(t)$ will lie in $C(\sigma^0)$. Then $t^{-1}\zeta(t)\in C_{\sigma^0, z_0}$. Indeed, we have shown that any $v_\sigma$ can be approached by the open (convex) cone $\mathrm{Cone}(\pi_n(C_{\sigma^0, z_0}))=\R_{>0}\cdot \pi_n(\sigma^0 \times \{z_0\}\cap C)$. So $\sigma^0$ is contained in the closure of $\mathrm{Cone}(\pi_n(C_{\sigma^0, z_0}))$ in $\R^n$ and therefore must be actually contained in $\mathrm{Cone}(\pi_n(C_{\sigma^0, z_0}))$ as any open convex cone is the interior of its closure. Since $\mathrm{Cone}(\pi_n(C_{\sigma^0, z_0})) \subset \sigma^0$, we have the equality as desired.

\item As $\overbar{C}$ is a closed convex cone, $\lambda (v', 0) \in \overbar{C}$ and so $\frac{1}{2}(v, z_0) + \frac{1}{2}\lambda( v', 0) \in C$.  The second assertion follows from the fact that, for all $(v,z)\in C$, there exists a $\lambda>1$ such that $\lambda\cdot (v,z) \in D_0$ and hence $\lambda_0\cdot \lambda \cdot (v,z)\in \lambda_0\cdot D_0 \subset D$.

\item Replace $K$ with any closed ball centered at $(0,0)$ containing $K$. Recall that for some fixed $\lambda_0, \lambda_0' > 0$,  we have $\lambda_0 \cdot D_0 \subset D \subset \lambda_0' \cdot D_0$ where $D_0$ is the convex closure of $C \cap \Z^n \times \Z^m$. A subset of the form $v+ \lambda_0 (I^n \times I^m)$ (for $I^n, I^m$ the unit cubes in $ \R^n$, $\R^m$, respectively) will be called a lattice cube if  $v\in \lambda_0 \cdot (\Z^n \times \Z^m)$.

Since $C$  and $\sigma^0$ are open convex cones, we may choose $\mu>0$ large enough so that (1) any lattice cube meeting $(\mu \cdot v_\sigma, \mu \cdot z_0)+K$ is contained in $C$ and (2) any lattice cube in $\R^n \times \{0\}$ meeting $\mu \cdot v_\sigma +K$ is contained in $\sigma^0$. Now take any $v' \in \mu\cdot v_\sigma +K$ and any $t \geq 1$. As $(t \cdot v' , \mu \cdot z_0)= (v',  \mu \cdot z_0)+ ((t-1) v', 0)$, any lattice cube meeting $(t \cdot v' , \mu \cdot z_0)$ will also be contained in $C$ (and thus contained in $D$) by (2) just proved. In particular, $f(t\cdot v', \mu\cdot z_0)<1$. But $f(t \cdot v', \mu \cdot z_0)= \mu^{-1}f(t \cdot v', z_0)$, and so $f(t \cdot v',  z_0)< \mu$ as needed.

\item First note that since $ D\subset \lambda_0' \cdot D_0=\overbar{\lambda_0' \cdot D_0}$ is convex and $C$ does not meet $\R^n \times \{0\}$, $f(v,z)$ is a positive function on $C$ and convex on the level set $C_z$.

Let $(v_0, z_0)\in D$ (which implies $f(v_0, z_0)<1-\varepsilon$  for some $\varepsilon>0$ small enough). Let $v_\sigma \in \sigma^0$. By (1), we may assume (after extending $v_\sigma$) that $(v_\sigma, z_0)\in C$. By (2), and the fact that $\sigma^0$ is an open convex cone, we may further assume that $v_\sigma' \coloneq v_\sigma-v_0 \in \sigma^0$.

Let $K$ be a closed ball in $\R^n\times \{0\}$ centered at $(0,0)$ containing $(v_0,0)$.  By (3), we may take $\lambda \geq 1$ large enough so that for all $v\in \lambda \cdot v_\sigma'+K$ and $t\geq 1$, $f(t \cdot v, z_0) \leq M$ for some fixed $M>0$. Now that $t\lambda v_\sigma'+v_0$ is on the segment connecting $t\lambda v_\sigma'$ and $t(\lambda v_\sigma'+v_0)$, $f(t\lambda v_\sigma'+v_0, z_0)\leq M$ for all $t \geq 1$ as well.

Let $u_{t\lambda}=(1-t\lambda) v_0+t\lambda v_\sigma$. Then $u_{t\lambda}=t\lambda v_\sigma' + v_0$ and $f(u_{t\lambda}, z_0)\leq M$. However, $v_\sigma= \frac{1}{t\lambda}u_{t\lambda}+(1-\frac{1}{t\lambda})v_0$. By convexity, we have, for all $t\geq 1$,
\begin{IEEEeqnarray*}{rCl}
f(v_\sigma, z_0)&=&f(\frac{1}{t\lambda}u_{t\lambda}+(1-\frac{1}{t\lambda})v_0, z_0)\\
                                 &\leq& \frac{1}{t\lambda} f(u_{t\lambda}, z_0) + (1-\frac{1}{t\lambda})f(v_0, z_0)\\
                                 &\leq&  \frac{1}{t\lambda} M + (1-\frac{1}{t\lambda})(1-\varepsilon).
\end{IEEEeqnarray*}
Let $t \rightarrow \infty$ and we obtain $f(v_\sigma, z_0)\leq 1-\epsilon$. Thus $(v_\sigma, \delta z_0)\in D$ for some $1-\epsilon < \delta <1$. Then $(\delta^{-1}v_\sigma, z_0)\in D$.  

\item We have seen above that $f_{z_0}(\lambda)$ is convex positive. Suppose there exist $\lambda_1 <\lambda_2$ such that $f_{z_0}(\lambda_1)< f_{z_0}(\lambda_2)$, then for every $\lambda>\lambda_2$, $(\lambda, f_{z_0}(\lambda))$ must be above the line passing through $(\lambda_1, f_{z_0}(\lambda_1))$ and $(\lambda_2, f_{z_0}(\lambda_2))$, which has positive slope. This contradicts (3). Therefore $f_{z_0}$ is decreasing. A decreasing convex function on $[1,\infty]$ must be continuous.
\end{enumerate}

We now turn to the proof in Case (B). 
\begin{enumerate}
\item 
           Take $(v,z_0)\in C$. For any $(v_\sigma, 0) \in \sigma^0 \subset C$, we can take $\lambda>0$ such that $\lambda (v_\sigma,0)+(0,z_0) \in C$. Then $(\lambda v_\sigma, z_0) \in C$. 
\item Note that (2) does not depend on $\sigma^0$. 
\item The proof of (3) remains the same. 
\item 
             First by (1) take $(\lambda v_\sigma , z_0) \in C$. Choose $\mu >0$ large enough so that $\mu (v_\sigma, 0)\in D$ and then further $\mu (v_\sigma, 0)+(\lambda v_\sigma , z_0)\in D$ (as $D$ is open convex). This means $((\mu+\lambda) v_\sigma, z_0)\in D$. 

\item Finally, notice that on the plane $\R \cdot v_\sigma+ \R \cdot z_0$, the points of $C$ form an open convex cone containing the positive $x$-axis while the points of $D$ form an open convex subset of this cone. The point $(1,1)$ is in $D$. That $f_{z_0}(\lambda)$ is decreasing still follows from (3). If for some $\lambda $ large enough, $f_{z_0}(\lambda)<\infty$. This implies $f_{z_0}(\lambda) <\infty$ for all $\lambda \geq 1$. If $f_{z_0}(\lambda_1)=-\infty$ for some $\lambda_1$, then for all $\lambda \geq  \lambda_1$, $f_{z_0}(\lambda)=-\infty$ since $f_{z_0}$ is decreasing. By convexity and openness of $D$ we deduce $f_{z_0}(\lambda) =-\infty$ for all $\lambda \geq 1$. Thus the inequality $f_{z_0}((1-t)\cdot \lambda_1+t \cdot \lambda_2 )\leq (1-t) f_{z_0}(\lambda_1)+tf_{z_0}(\lambda_2)$ holds for $0 \leq t\leq 1$, $\lambda_1, \lambda_2 \geq 1$ in all cases. We conclude that $f_{z_0}$ is convex and continuous. 
\end{enumerate}
This finishes the proof of the lemma.
\end{proof}

\subsection{$\mUbar(P_1, \mX_1, p_f)$ on each stratum}
\label{subsec: mUbar on each stratum}

Let $(P,\mX)$ be a mixed Shimura datum with $K_f\subset P(\fadele)$ a neat open compact subgroup and $\Sigma $ a $K_f$-admissible partial polyhedral cone decomposition for $(P,\mX)$ (cf. Definition \ref{subsecdefinition: toroidal compactifications, admissible cone decomposition}). Take any rational boundary component $(P_1, \mX_1)$ of $(P, \mX)$ (associated with a $\Q$-admissible parabolic subgroup $Q\subset P$). Let $\mX^+_{(P_1, \mX_1)}\rightarrowtail \mX_1$ be the $P_1(\R)\cdot Q(\R)^0\cdot U(\C)$-equivariant open embedding as in Proposition \ref{subsecproposition: rational boundary components, rational boundary components} and \ref{subsecproposition: rational boundary components, pink 4.15}. Define $K_f^1=P_1(\fadele)\cap p_f K_f p_f^{-1}$ with $p_f\in P(\fadele)$. This is a neat open compact subgroup of $P_1(\fadele)$.

Given any connected component $\mX_1^0\subset \mX_1$, let $\mathrm{im}\colon \mX^0_1 \rightarrow U_1(\R)(-1)$ be the restriction of the map ``imaginary part" as defined in Subsection \ref{subsec: rational boundary components}. For $p_{1,f} \in P_1(\fadele)$, $\Gamma_1 \coloneq \Stab_{P_1(\Q)}(\mX^0_1)\cap p_{1,f} K_f^1 p_{1,f}^{-1} $ is a neat arithmetic subgroup of $P_1(\Q)$ acting on $\mX_1^{0}$. Since $(P_1, \mX_1)$ is irreducible, $P_1$ acts on $U_1$ by scalars and thus $\Gamma_1$ acts trivially on $U_1$. Therefore $\mathrm{im}$ factors through a map $\imbar \colon \Gamma_1 \backslash \mX_1^0 \rightarrow U_1(\R)(-1)$. (Note that $\Gamma_1 \backslash \mX_1^0$ is a connected component of $M^{K^1_f}(P_1, \mX_1)(\C)$, being the image of $\mX_1^0 \times \{p_{1,f}\}$.) Then $\Gamma_1 \bs \mX^0=\imbar^{-1}(C(\mX^0, P_1))$. 

\begin{subsecremark}
If $K_f$ is not neat, $\Gamma_1$ acts on $U_1(\R)(-1)$ through $\{\pm 1\}$. When $(P_1, \mX_1)$ is a proper boundary component, $C(\mX^0, P_1)$ does not contain the origin and is stabilized by $\Gamma_1$. Thus $\Gamma_1$ still acts trivially on $U_1(\R)(-1)$ and we still have a map $\imbar \colon \Gamma_1 \bs \mX_1^0 \rightarrow U_1(\R)(-1)$. When $(P_1, \mX_1)$ is an improper boundary component, $C(\mX^0, P_1)=U_1(\R)(-1)$ and any core $D\subset C(\mX^0, P_1)$ is just $U_1(\R)(-1)$. In summary, the subsets $\im^{-1}(C(\mX^0, P_1))=\mX^0$ and $\im^{-1}(D)$ are always $\Gamma_1$-invariant.  
\end{subsecremark}

Since $\mX^+_{(P_1, \mX_1)} \rightarrowtail \mX_1$ is bijective on connected components, the open embedding $\mU(P_1, \mX_1, p_f)= P_1(\Q) \backslash \mX^+_{(P_1, \mX_1)} \times P_1(\fadele) /K_f^1  \rightarrowtail M^{K^1_f}(P_1, \mX_1)(\C)=P_1(\Q)\backslash \mX_1\times P_1(\fadele)/K_f^1$ (as defined in Subsection \ref{subsec: toroidal compactifications}) is also bijective on connected components. By equivariance, it follows that $\Gamma_1 \backslash \mX^0 \rightarrowtail \Gamma_1 \backslash \mX^0_1$ is the open embedding of the corresponding connected components, where $\mX^+_{(P_1, \mX_1)} \supset  \mX^0 \rightarrowtail \mX_1^0$. 

We also defined in Subsection \ref{subsec: toroidal compactifications} the cone decomposition $\Sigma_{(P_1, \mX_1, p_f)}=  ( \left . ([\cdot p_f]^*\Sigma)\right |_{(P_1, \mX_1)}  )^0$. It is $K_f^1$-admissible and concentrated on the unipotent fiber. As $(P_1, \mX_1)$ is irreducible, $\Sigma_{(P_1, \mX_1, p_f)}(\mX^0_1, P_1, p_{1,f})$ only depends on the connected component $\mX_1^0\subset \mX_1$. Moreover, note that the projection 
\[
M^{K_f^1}(P_1, \mX_1)(\C) \rightarrow M^{\pi_{U_1}(K_f^1)}((P_1, \mX_1)/U_1)(\C)\]
is bijective on connected components as it  has connected fibers (cf. Subsection \ref{subsec: mixed shimura varieties}). We see that the relative torus embedding 
\[ 
M^{K_f^1}(P_1, \mX_1)(\C) \rightarrowtail M^{K_f^1}(P_1, \mX_1, \Sigma_{(P_1, \mX_1, p_f)})(\C)\]
can be viewed as constructed componentwise.
From now on, we will denote $\Sigma_{(P_1, \mX_1, p_f)}$ by $\Sigma_1^0$ and write $\Gamma_1 \backslash \mX^0_1 \rightarrowtail \left ( \Gamma_1 \backslash \mX^0_1\right )_{\Sigma_1^0}$ for the torus embedding on the connected component $\Gamma_1 \bs \mX^0_1$. For $\sigma \in \Sigma^0_1(\mX^0_1, P_1, p_{1,f})$ (which we just abbreviate as $\sigma \in \Sigma^0_1$), $\left ( \Gamma_1 \backslash \mX^0_1\right )_{\sigma}$ is the embedding relative to $\sigma$ and $\left ( \Gamma_1 \backslash \mX^0_1\right )^\sim_{\sigma} \subset \left ( \Gamma_1 \backslash \mX^0_1\right )_{\sigma}$ is the $\sigma$-stratum on $\left ( \Gamma_1 \backslash \mX^0_1\right )_{\sigma}\subset \left ( \Gamma_1 \backslash \mX^0_1\right )_{\Sigma_1^0}$. Use $\pi_\sigma \colon  \left ( \Gamma_1 \backslash \mX^0_1\right )_{\sigma} \rightarrow  \left ( \Gamma_1 \backslash \mX^0_1\right )^\sim_{\sigma}$ for the projection onto the $\sigma$-stratum. It restricts to a surjective projection $\Gamma_1 \backslash \mX_1^0 \rightarrow \left ( \Gamma_1 \backslash \mX^0_1\right )^\sim_{\sigma}$ and to an isomorphism on $\left ( \Gamma_1 \backslash \mX^0_1\right )^\sim_{\sigma} \subset \left ( \Gamma_1 \backslash \mX^0_1\right )_{\sigma}$.  Recall that $\mUbar(P_1, \mX_1, p_f)$ is defined as the interior of the closure of $\mU(P_1, \mX_1, p_f) $ in $M^{K_f^1}(P_1, \mX_1, \Sigma_{(P_1, \mX_1, p_f)})(\C)$. From above we deduce that $\mUbar(P_1, \mX_1, p_f)$ is also constructed componentwise. Its intersection with $ \left ( \Gamma_1 \backslash \mX^0_1\right )_{\Sigma_1^0}$ is just the interior of the closure of $\Gamma_1 \bs \mX^0$ in $\left ( \Gamma_1\bs \mX^0_1 \right)_\sigma$.

Let $[x_1, p_{1,f}] \in M^{K_f^1}(P_1, \mX_1)(\C) $. We know from Subsection \ref{subsec: mixed shimura varieties} that the fiber over $\pi_{U_1}([x_1, p_{1,f}])$ under $ M^{K_f^1}(P_1, \mX_1)(\C)\rightarrow M^{\pi_{U_1}(K_f^1)}((P_1, \mX_1)/U_1)(\C) $ is isomorphic to the algebraic torus $T(\C)=\Omega_{U_1}\backslash U_1(\C)$, where  $\Omega_{U_1} $ is the image under the map $Z(P_1) \times U_1 \rightarrow U_1$ of
the group 
$
\bigl (  \Id_{Z(P_1)(\Q)}(\mX_1) \times U_1(\Q) \bigr) \cap p_{1,f}K_f^1 p_{1,f}^{-1}
$. 
In addition, for every fiber we may choose a suitable $x_1\in \mX^0_1$ such that the composition $\Omega_{U_1} \backslash U_1(\C) \rightarrowtail \Gamma_1 \backslash \mX_1^0 \overset{\imbar}{\rightarrow} U_1(\R)(-1)$ (where the first arrow is $u_1\mapsto u_1\cdot x_1$) is just the projection below.
\begin{IEEEeqnarray*}{ccrCl}
\Omega_{U_1} \backslash U_1(\C) &=& \left ( \Omega_{U_1} \backslash U_1(\R) \right ) \oplus U_1(\R)(-1)& \rightarrow &U_1(\R)(-1)\\
&&(b,a)&\mapsto& a
\end{IEEEeqnarray*}
The above map coincides with the map $\ord_T$ defined  in Subsection \ref{subsec: torus embeddings}. The induced map $\imbar$ is also just one $\ord$-map as defined there. From now on, we write $\imbar$ as $\ord$.

The next result will be crucial to the proof of our main theorem. (Again we will use $\Int(S)$ to denote the interior of a subset $S$.)

\begin{subseclemma}\label{subseclemma: some lemmas, fundamental lemma}
Let $(P,\mX)$ be a mixed Shimura datum with $K_f\subset P(\fadele)$ an open compact subgroup and $\Sigma$ a $K_f$-admissible partial polyhedral decomposition. Let $(P_1, \mX_1) $ be a rational boundary component and let $\mU(P_1, \mX_1, p_f) \rightarrowtail M^{K_f^1}(P_1, \mX_1)(\C) \rightarrowtail M^{K_f^1}(P_1, \mX_1, \Sigma_1^0)(\C)$ be the open embeddings as introduced above. Fix a connected component $\mX^0\subset \mX$ and $p_{1,f}\in P_1(\fadele)$, let $\Gamma_1 \backslash \mX^0 \rightarrowtail \Gamma_1 \backslash \mX^0_1 \rightarrowtail \left ( \Gamma_1 \backslash \mX^0_1 \right )_{\Sigma^0_1}$ be the corresponding open embeddings. Suppose $D\subset C(\mX^0, P_1)$ is any open convex core and $\sigma \in \Sigma_1^0$. (The interior of) the closure of $\ord^{-1}(D)=\Gamma_1 \bs \im^{-1}(D)$ and $\ord^{-1}(C(\mX^0, P_1))=\Gamma_1 \bs\im^{-1}(C(\mX^0, P_1))=\Gamma_1 \backslash \mX^0$ in $\left ( \Gamma_1 \backslash \mX^0_1 \right )_{\Sigma^0_1}$ meets the $\sigma$-stratum $\left (\Gamma_1 \bs \mX^0_1 \right )^\sim_\sigma$ in the following way. (We denote the closure in $\left (\Gamma_1 \bs \mX^0_1 \right )_{\Sigma_1^0}$ by $\Cl(S)$ and the closure in an $\R$-vector space by $\overbar{S}$.)
\begin{enumerate}
\item $\Int \left (\Cl \left (\ord^{-1} \left (C(\mX^0, P_1) \right ) \right ) \right )\cap \left  (\Gamma_1 \backslash \mX^0_1 \right )_\sigma^\sim 
=\pi_\sigma \left (\ord^{-1} \left (C(\mX^0, P_1)  \right ) \right )$. 

That is, $\Int\left (\Cl \left ( \Gamma_1 \bs \mX^0 \right ) \right )\cap \left (\Gamma_1 \backslash \mX^0_1 \right )_\sigma^\sim =\pi_\sigma \left (\Gamma_1 \bs \mX^0  \right )$.

\item $\Int \left ( \Cl \left (\ord^{-1}(D) \right ) \right )\cap (\Gamma_1 \backslash \mX^0_1)_\sigma^\sim =\pi_\sigma \left (\ord^{-1}(D)  \right )$.

\item $\Cl \left ( \Gamma_1 \bs \mX^0  \right )\cap \left (\Gamma_1 \backslash \mX^0_1 \right )_\sigma^\sim =\pi_\sigma \bigl (\ord^{-1}  \bigl (\overbar{ C(\mX^0, P_1) } \bigr  )  \bigr )$.
\end{enumerate}
In particular, if $-\sigma^0 \subset C(\mX^0, P_1)$, then $\left (\Gamma_1 \bs \mX^0_1 \right )^\sim_\sigma \subset \Int \left (\Cl  \left ( \Gamma_1 \bs \mX^0 \right ) \right ) \subset \mUbar(P_1, \mX_1, p_f)$. In this case even  $\left (\Gamma_1 \bs \mX^0_1 \right )^\sim_\sigma \subset \Int \left (\Cl \left (\ord^{-1}(D) \right )\right )$.
\end{subseclemma}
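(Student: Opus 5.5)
The plan is to reduce everything to a local computation on a single relative torus embedding, and then read off the three statements from Lemma \ref{subseclemma: some lemmas, analytic geometric lemma} applied to the cone $-C(\mX^0,P_1)$ and the core $-D$.

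\textbf{Local reduction.} Work over a small open $B'$ of the base $\Gamma_1\bs\mX^0_1/U_1(\C)$, that is, a connected component of $M^{\pi_{U_1}(K^1_f)}((P_1,\mX_1)/U_1)(\C)$. Over $B'$ the map $\Gamma_1\bs\mX^0_1\to B'$ is a torsor under the torus $T(\C)=\Omega_{U_1}\bs U_1(\C)$. Choose a holomorphic section to trivialize it as $B'\times T(\C)$. Then, by the discussion preceding the lemma, $\ord$ becomes $(b,t)\mapsto \ord_T(t)+s(b)$ for a continuous (indeed smooth) $s\colon B'\to U_1(\R)(-1)$. Correspondingly $(\Gamma_1\bs\mX^0_1)_\sigma$ becomes $B'\times T_\sigma$, and $\pi_\sigma$ becomes $\id\times\pi_\sigma$.

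\textbf{Coordinates and neighbourhood basis.} Split $U_1(\R)(-1)=\R^n\times\R^m$ with $\R^n\times\{0\}=\R\cdot\sigma$, and replace $C(\mX^0,P_1)$ and $D$ by $-C(\mX^0,P_1)$ and $-D$ so that the signs match the convention $\Sigma\subset -C^*$. The stratum $T^\sim_\sigma$ is the quotient torus whose $\ord$ takes values in $\R^m$, and $\ord$ commutes with $\pi_\sigma$ via $\pi_m$. A point $y$ of the stratum has a neighbourhood basis in $T_\sigma$ consisting of sets
\[
N(y;\mu,K,\varepsilon)=\{t\in T(\C):\ \pi_\sigma(t)\ \text{is $\varepsilon$-close to } y,\ \pi_n\ord_T(t)\in \mu v_\sigma+K+\overbar{\sigma}\}\cup(\text{stratum part}),
\]
where $v_\sigma\in\sigma^0$, $\mu\gg0$ and $K$ is a bounded open set. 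Taking $B'$ small, the shift $s(b)$ varies in a bounded set and can be absorbed into $K$ and $\varepsilon$. The hypotheses of Lemma \ref{subseclemma: some lemmas, analytic geometric lemma} hold: $\sigma^0$ is the interior of $\overbar C\cap\R^n\times\{0\}$, up to replacing $\sigma$ by the face cut out by its own span. We are in Case (A) when this face meets $\overbar{C}$ only in the boundary, and in Case (B) when $\sigma^0\subset C$.

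\textbf{Inclusion ``$\supseteq$'' in (1) and (2).} Let $y=\pi_\sigma(x)$ with $\ord(x)=(v_0,z_0)\in D$. By part (4) of the lemma there is $v_\sigma\in\sigma^0$ with $(v_\sigma,z_0)\in D$, and by part (5) $(\lambda v_\sigma,z_0)\in D$ for all $\lambda\ge1$. Combining this with part (3), the openness of $D$, and its $C$-invariance (the Remark after the definition of cores), we get an $\varepsilon$ such that the whole set $(\mu v_\sigma+K+\overbar\sigma)\times B_\varepsilon(z_0)$ lies in $D$. Hence $N(y;\mu,K,\varepsilon)\subset\Cl(\ord^{-1}(D))$, so $y$ lies in the interior of the closure. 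Statement (1) is the special case $D=C$; alternatively one exhausts $C$ by cores.

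\textbf{Inclusion ``$\subseteq$'' in (1) and (2).} Suppose $y$ lies in the interior of $\Cl(\ord^{-1}(D))$. Then a full neighbourhood $N(y;\dots)$ has its $T(\C)$-part inside $\ord^{-1}(\overbar D)$. So for all $z$ near $z_0=\ord(y)$, the level set $\overbar D_{z}$ meets $\sigma^0$-deep points, and $z_0$ lies in the interior of $\pi_m(\overbar{D})$. By Lemma \ref{subseclemma: some lemmas, projection of open cones}(1) (interior of the closure of an open convex set is the set itself) we get $z_0\in\pi_m(D)$, and by part (4) even $z_0\in\pi_m(D_{\sigma^0,z_0})$. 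Since the fibres of $\pi_\sigma$ are $T$-orbits on which one may freely move the $\R^n$-coordinate of $\ord$ while keeping the compact (angular) part fixed, we can pick $x$ in the fibre over $y$ with $\ord(x)\in D$. Thus $y\in\pi_\sigma(\ord^{-1}(D))$.

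\textbf{Statement (3).} The same argument applies with closures throughout: a point of $\Cl(\Gamma_1\bs\mX^0)$ on the stratum has $z_0\in\overbar{\pi_m(C)}$. Lemma \ref{subseclemma: some lemmas, projection of open cones}(2) identifies $\overbar{\pi_m(C)}$ with $\pi_m(\overbar C)$, and translating by $\overbar\sigma$ shows that $\pi_\sigma(\ord^{-1}(\overbar C))$ is closed in the stratum.

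\textbf{The ``in particular''.} If $-\sigma^0\subset C(\mX^0,P_1)$ we are in Case (B). Parts (1) and (4) of the lemma then give $\pi_m(C)=\pi_m(D)=\R^m$, so $\pi_\sigma(\ord^{-1}(D))$ is the whole stratum. The inclusion in $\mUbar(P_1,\mX_1,p_f)$ follows because $\mUbar(P_1,\mX_1,p_f)$ is by definition the interior of the closure and is constructed componentwise.

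\textbf{Main obstacle.} I expect the hardest step to be the ``$\supseteq$'' direction: producing an entire basic neighbourhood of the stratum point inside $\ord^{-1}(D)$. This is where uniformity in the $\R^n$-direction is needed, which is exactly what parts (3) and (5) of the lemma provide. Some care is also needed with the shift $s(b)$ and with the two cases (A) and (B).
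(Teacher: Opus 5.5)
Your proposal is correct for neat $K_f$, but it follows a different route from the paper.

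\textbf{The paper's route.} The paper chooses a local section compatible with a global trivialization of $X/K$, so that $\ord$ becomes exactly $\ord_T$. It then pushes everything down along the fibrewise surjective open maps $T_\sigma\to T'_\sigma/K'\times T^\sim_\sigma/K^\sim_\sigma$ and characterizes limit points on the stratum as points that can be $(-\sigma^0)$-approached. Statement (3) comes from parts (1)--(2) of Lemma~\ref{subseclemma: some lemmas, analytic geometric lemma} together with Lemma~\ref{subseclemma: some lemmas, projection of open cones}. The key inclusion $\pi_\sigma(\ord^{-1}(D))\subset\Int(\Cl(\ord^{-1}(D)))$ is obtained by citing \cite[5.9]{Pink}. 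The Case (A)/(B) dichotomy is deduced from Propositions~\ref{subsecproposition: rational boundary components, properties of the cone} and~\ref{subsecproposition: rational boundary components, properties of the star cone}.

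\textbf{Your route.} You keep a holomorphic trivialization with a continuous shift $s(b)$ and argue directly with basic neighbourhoods of a stratum point. This is more elementary and self-contained, and your ``$\supseteq$'' step needs less than you invoke. With the paper's sign convention, suppose $B_r(v_0)\times B_\varepsilon(z_0)\subset D$. Then $(B_r(v_0)-\overbar{\sigma})\times B_\varepsilon(z_0)\subset D$, because $D+\overbar{C}=D$ and $-\sigma\subset C^*(\mX^0,P_1)\subset\overbar{C(\mX^0,P_1)}$. For $\mu\gg0$ this set contains $(-\mu v_\sigma-K-\overbar{\sigma})\times B_\varepsilon(z_0)$, which is what the $T(\C)$-part of a basic neighbourhood of $y$ requires. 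So parts (3)--(5) of the analytic lemma and \cite[5.9]{Pink} are not needed. The (A)/(B) dichotomy enters only in the final assertion, where Case (B) is the hypothesis. Drop your phrase about ``replacing $\sigma$ by the face cut out by its own span'': the lemma is applied with its cone equal to the interior of $\overbar{C}\cap\R\sigma$, which merely contains $-\sigma^0$.

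\textbf{Points to add.}
\begin{enumerate}
\item The statement allows non-neat $K_f$, and then $\Gamma_1\bs\mX^0_1$ need not be a torus torsor over the base. As in the paper, pass to a neat $K_{f,n}\unlhd K_f$ and descend along the quotient by the finite group $K^1_f/K^1_{f,n}$. That quotient map is open and closed, preserves the strata and the preimages of $C(\mX^0,P_1)$ and $D$, and commutes with $\pi_\sigma$.
\item A genuine open neighbourhood of $y$ also meets the strata of the faces $\tau<\sigma$. Your argument survives because only its $T(\C)$-part matters: $T(\C)$ is dense, so an open $N$ with $N\cap T(\C)\subset\ord^{-1}(D)$ lies in $\Cl(\ord^{-1}(D))$.
\item In the converse direction, say explicitly that $\ord$ is open. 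This is what gives that the closure of $\ord^{-1}(D)$ in the open part is $\ord^{-1}(\overbar{D})$.
\item For ``$\supseteq$'' in (3), spell out two facts. First, $\pi_\sigma(\ord^{-1}(C))\subset\Cl(\ord^{-1}(C))$ by (1). Second, the closure of this set in the stratum is $\pi_\sigma(\ord^{-1}(\overbar{C}))$, since $\overbar{\pi_m(C)}=\pi_m(\overbar{C})$.
\end{enumerate}
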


\begin{subsecremark}
The analogue of (3) does not hold for $D$. That is, it is  in general not true that $\Cl \left (  \ord^{-1}(D) \right) \cap \left (\Gamma_1 \backslash \mX^0_1 \right )_\sigma^\sim =\pi_\sigma \left (\ord^{-1}(\overbar{D})  \right )$.
\end{subsecremark}

\begin{subsecremark}
The above results concerning $C(\mX^0, P_1)$ are also claimed in {\cite[6.13]{Pink}}. 
\end{subsecremark}

Before the proof of Lemma \ref{subseclemma: some lemmas, fundamental lemma}, we reduce it to a simpler case and introduce two more notions.

\begin{subseclemma}\label{subseclemma: some lemmas, reduction to open subsets}
Suppose $f \colon X\rightarrow S$ is an analytic $T(\C)$-torsor (for a $\C$-torus $T$) and $V$ is an open subset of X. Let $\overbar{V}$ be the closure of $V$ in $X_\sigma$ where $\sigma \subset Y_*(T)_\R$ is a convex rational polyhedral cone and $X_\sigma$ the relative torus embedding. Suppose $\{U_i\}$ is an open covering of $S$ (e.g., on which the torsor is trivial). In order to show that $\pi_\sigma(\overbar{V}\cap X^\sim_\sigma)=\pi_\sigma (\overbar{V}\cap X)$ and $\pi_\sigma( \Int \left ( \overbar{V}\right)\cap X^\sim_\sigma)=\pi_\sigma (V)$,  it suffices to prove the two properties for every $f^{-1}(U_i) \rightarrow U_i$.
\end{subseclemma}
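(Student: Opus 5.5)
The statement is a locality principle: both identities are assertions about subsets of $X^\sim_\sigma$, and every set involved is built from local operations that commute with restriction to saturated open subsets. The plan is to show that each side of each identity, intersected with $\pi_\sigma^{-1}$ of (or restricted over) a set $f^{-1}(U_i)$, agrees with the corresponding side computed inside $f^{-1}(U_i)_\sigma$. Then equality on every piece gives equality globally, since the $f^{-1}(U_i)^\sim_\sigma$ cover $X^\sim_\sigma$.

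First I set up the relative geometry. The relative torus embedding $X_\sigma$ is glued from the local embeddings $f^{-1}(U_i)_\sigma\cong T_\sigma\times U_i$, as in Subsection \ref{subsec: torus embeddings}. So the sets $X_{i,\sigma}\coloneq f_\sigma^{-1}(U_i)$, where $f_\sigma\colon X_\sigma\to S$ is the extended structure map, form an open cover of $X_\sigma$. Each $X_{i,\sigma}$ is canonically the relative torus embedding of $f^{-1}(U_i)\to U_i$. Moreover $X_{i,\sigma}\cap X=f^{-1}(U_i)$ and $X_{i,\sigma}\cap X^\sim_\sigma=(f^{-1}(U_i))^\sim_\sigma$. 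The projection $\pi_\sigma$ commutes with $f_\sigma$, so it maps $X_{i,\sigma}$ into $X_{i,\sigma}\cap X^\sim_\sigma$, and it restricts to the projection $\pi_{\sigma,i}$ of the local embedding.

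Next I check that closure and interior localize to the open set $X_{i,\sigma}$. For any subset $A\subset X_\sigma$ and open $W\subset X_\sigma$, the closures satisfy $\overbar{A}\cap W=\overbar{A\cap W}^{\,W}$, the closure taken in $W$. Likewise the interiors satisfy $\Int(B)\cap W=\Int_W(B\cap W)$. Applying this with $A=V$, $B=\overbar{V}$, and $W=X_{i,\sigma}$ gives
\begin{IEEEeqnarray*}{rCl}
\overbar{V}\cap X_{i,\sigma}&=&\overbar{V_i}^{\,X_{i,\sigma}},\\
\Int(\overbar{V})\cap X_{i,\sigma}&=&\Int_{X_{i,\sigma}}\bigl(\overbar{V_i}^{\,X_{i,\sigma}}\bigr),
\end{IEEEeqnarray*}
where $V_i\coloneq V\cap f^{-1}(U_i)$. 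For the second identity I use that $X_{i,\sigma}$ is open, so the closure of $V$ inside it is exactly $\overbar{V}\cap X_{i,\sigma}$.

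Finally I compare the two sides over each piece. Let $Z$ denote the left-hand side of either identity. Then
\[
Z\cap X_{i,\sigma}=\pi_{\sigma,i}\bigl(\overbar{V_i}\cap X^\sim_{i,\sigma}\bigr)
\quad\text{or}\quad
Z\cap X_{i,\sigma}=\pi_{\sigma,i}\bigl(\Int(\overbar{V_i})\cap X^\sim_{i,\sigma}\bigr).
\]
This holds because $\pi_\sigma$ preserves fibers over $S$: a point of $X^\sim_\sigma$ lying over $U_i$ has all of its $\pi_\sigma$-preimages in $X_{i,\sigma}$. The same reasoning shows that the right-hand sides $\pi_\sigma(\overbar{V}\cap X)$ and $\pi_\sigma(V)$, intersected with $X_{i,\sigma}$, become $\pi_{\sigma,i}(\overbar{V_i}\cap f^{-1}(U_i))$ and $\pi_{\sigma,i}(V_i)$. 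The hypothesis gives the local equalities for each $i$, and taking the union over $i$ gives the global equalities.

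The only subtle point is this fiber-preservation of $\pi_\sigma$. It is needed to justify that $\pi_\sigma(A)\cap X_{i,\sigma}=\pi_\sigma(A\cap X_{i,\sigma})$, and it holds because $\pi_\sigma$ is defined fiberwise over $S$ by the glued torus-embedding construction.
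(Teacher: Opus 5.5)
Your proposal is correct and is essentially the paper's argument: cover $X_\sigma$ by the open pieces lying over the $U_i$, use that closure and interior commute with restriction to an open subset, invoke the local statements, and reassemble. The only cosmetic difference is that the paper writes both sides as unions over $i$ of the local images, whereas you compare the two sides chart by chart, using that $\pi_\sigma$ respects fibers over $S$.
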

\begin{proof}
Let $X_i\coloneq f^{-1}(U_i)$; so $(X_i)_\sigma$ and $(X_i)_\sigma^\sim$ are well-defined. Suppose we are already done over each $U_i$, then
\begin{IEEEeqnarray*}{rCl}
\pi_\sigma\left ( \overbar{V} \cap X^\sim_\sigma \right )&=&\bigcup_{i} \pi_\sigma(\overbar{V} \cap (X_i)_\sigma^\sim)\\
&=&\bigcup_{i} \pi_\sigma(\overbar{V}  \cap (X_i)_\sigma \cap (X_i)_\sigma^\sim)\\
&  &(\overbar{V}  \cap (X_i)_\sigma \text{ is the closure of } V\cap X_i \text{ in } (X_i)_\sigma\\
&=&\bigcup_i \pi_\sigma(\overbar{V} \cap X_i)=\pi_\sigma(\overbar{V} \cap X).
\end{IEEEeqnarray*}
As for any subset $S$ and open subset $U$ in a total space $X$, $\Int^X(S)\cap U = \Int^U(S\cap U)$, we obtain
\begin{IEEEeqnarray*}{+rCl+x*}
\pi_\sigma\left ( \Int^{X_\sigma} \left ( \overbar{V} \right) \cap X^\sim_\sigma \right )&=&\bigcup_{i} \pi_\sigma(\Int^{X_\sigma} \left ( \overbar{V} \right)   \cap (X_i)_\sigma \cap (X_i)_\sigma^\sim)\\
&=&\bigcup_{i} \pi_\sigma( \Int^{(X_i)_\sigma} \left ( \overbar{V} \cap (X_i)_\sigma \right ) \cap (X_i)_\sigma^\sim )\\
&=&\bigcup_i \pi_\sigma( V \cap X_i)=\pi_\sigma(V). &\qedhere
\end{IEEEeqnarray*}
\end{proof}

Next, suppose we have a diagram 
\begin{equation*}
\begin{tikzcd}
X\arrow[d,"g"'] \arrow[r, "f"] & X' \arrow[d, "{g'}"]\\
Y \arrow[r, "f ' "]                         &  Y'
\end{tikzcd}
\end{equation*}
in the category of topological spaces. We say $f$ is fiberwise surjective if for all $y \in Y$, $g^{-1}(y) \rightarrow  (g'  )^{-1}(f'(y))$ is surjective. In this situation, it is easy to verify that for any $U' \subset X'$, $g (f^{-1}(U')) = ( f' )^{-1}  (g'(U') )$.

Given a product space $\R^n \times \R^m$, a subset $S\subset \R^n \times \R^m$ and an open convex cone $\sigma^0 \subset \R^n \times \{0\}$, we say that a point $(*, z_0)\in \{*\}\times \R^m$ can be \emph{$\sigma^0$-approached within $S$} if there is a sequence $\{(v_n, z_n)\}^\infty_{n=1} \subset (\sigma \times \R^m)\cap S$ such that $z_n \rightarrow z_0$ and $|v_n|\rightarrow \infty$ as $n \rightarrow \infty$. Basically, the sequence within $S$ converges to $z_0$ in the second coordinate and goes to infinity in the first coordinate along $\sigma^0$. 

\begin{proof}[Proof of Lemma \ref{subseclemma: some lemmas, fundamental lemma}]~

First assume that $K_f$ is neat. Then we have the map $\imbar=\ord \colon \Gamma_1 \bs \mX^0_1 \rightarrow U_1(\R)(-1)$ while $\Gamma_1 \bs\im^{-1} \left (C(\mX^0, P_1) \right ) =\ord^{-1}(C(\mX^0, P_1))$ and $\Gamma_1 \bs\im^{-1} \left (D \right ) =\ord^{-1}(D)$. Let $T(\C)=\Omega_{U_1}\bs U_1(\C)$ be as defined before Lemma \ref{subseclemma: some lemmas, fundamental lemma} and $K=\Omega_{U_1} \bs U_1(\R)$ be its maximal compact subgroup. Consider the analytic $T(\C)$-torsor $\Gamma_1 \bs \mX^0_1 \rightarrow \overbar{\Gamma}_1 \bs (\mX^0_1/U_1)$ which we denote by $\pi \colon X\rightarrow Y$ and where the group $\overbar{\Gamma}_1\coloneq \Stab_{(P_1/U_1)(\Q)}(\mX^0_1/U_1)\cap \pi_{U_1}(p_{1,f}K_f^1p^{-1}_{1,f})$. In fact, this is the canonical projection $M^{K_f^1}(P_1, \mX_1)(\C) \rightarrow M^{\pi_{U_1}(K_f^1)}(P_1/U_1, \mX_1/U_1)(\C)$ restricted to the connected component $\Gamma_1 \bs \mX^0_1$.

By definition, the map $\ord$ is the composition $X \rightarrow X/K \overset{\sim}{\rightarrow}  (T(\C)/K)\times Y  \rightarrow T(\C)/K \overset{\sim}{\rightarrow} Y_*(T)_\R=U_1(\R)(-1)$. Since $X/K$ as a $T/K$-torsor can be globally trivialized over $Y$, we see that it is possible to choose a local section $\mathcal{N} \rightarrow X$ over some open subset $\mathcal{N} \subset Y$ such that under $T(\C)\times X \rightarrow X$, the subset $\ord_T^{-1}(C(\mX^0, P_1)) \times \mathcal{N}$ maps isomorphically onto $(\ord|_{X})^{-1}(C(\mX^0, P_1)) \cap \pi^{-1}(\mathcal{N})$ and similarly for the core $D$.  Hence by Lemma \ref{subseclemma: some lemmas, reduction to open subsets} we just need to prove the parallel statements for $\ord_T^{-1}(C(\mX^0, P_1)) \times \mathcal{N} \subset T(\C) \times \mathcal{N}$ with respect to the torus embedding $T(\C)_\sigma \times \mathcal{N}$ for some convex rational polyhedral cone $\sigma \subset Y_*(T)_\R$. (Recall that $(\Gamma_1 \bs \mX^0_1)_{\Sigma_1^0}$ is also constructed locally.) This reduces us to the case of a single $\ord_T^{-1}(C(\mX^0, P_1)) \subset T(\C) \subset T(\C)_\sigma$.  

Let $T' \subset T$ be the subtorus such that $\R\cdot \sigma =Y_*(T')_\R$. Note that $T_\sigma^\sim \cong \Spec k[X^*(T)_{\sigma=0}]$. Thus we have a split short exact sequence $1 \rightarrow T' \rightarrow T \rightarrow T_\sigma^\sim \rightarrow 1$. After choosing a splitting, $T' \rightarrow T \rightarrow T_\sigma \overset{\pi_\sigma}{\rightarrow} T_\sigma^\sim$ may be identified with $T' \rightarrow T' \times T_\sigma^\sim \rightarrow T'_\sigma \times T_\sigma^\sim \rightarrow T_\sigma^\sim$.

Let $K' \subset T'$ (resp. $K_\sigma^\sim \subset T_\sigma^\sim$) be the maximal compact subgroup. We have a diagram as below, where we write $T$ for $T(\C)$ to save space.
\begin{equation*}
\begin{tikzcd}
T
\arrow[r, "\sim"]
\arrow[d, rightarrowtail]
&
T'\times T_\sigma^\sim 
\arrow[r, twoheadrightarrow, "q", "{\mathrm{f.s.o.}}" ']
\arrow[d, rightarrowtail] 
&
T'/K' \times T_\sigma^\sim / K_\sigma^\sim
\arrow[d, rightarrowtail ]
\arrow[r, "\sim"]
&
 Y_*(T')_\R \times Y_*(T_\sigma^\sim)_\R
\arrow[dd, twoheadrightarrow]
\\
T_\sigma
\arrow[r, "\sim"]
\arrow[d, twoheadrightarrow,"{\pi_\sigma}" '] 
&
T'_\sigma \times T_\sigma^\sim 
\arrow[r, twoheadrightarrow, "{q_\sigma}", "{\mathrm{f.s.o.}}" ']
\arrow[d, twoheadrightarrow]
& 
T'_\sigma /K' \times T_\sigma^\sim / K_\sigma^\sim
\arrow[d, twoheadrightarrow, "j_\sigma"] 
& \\
T_\sigma^\sim 
\arrow[r, "="]
&
T_\sigma^\sim \arrow[r,twoheadrightarrow]
&
T_\sigma^\sim / K_\sigma^\sim
\arrow[r, "\sim"]
&
Y_*(T_\sigma^\sim)_\R
\end{tikzcd}
\end{equation*}
Here ``$\mathrm{f.s.o.}$'' means ``fiberwise surjective and open" with respect to the map $T_\sigma^\sim \rightarrow T_\sigma^\sim/K_\sigma^\sim$.  Under the splitting, $T_\sigma^\sim$ sits in $T_\sigma$ as shown below, where $\{*\}$ is the image of $(T')_\sigma^\sim$. 
\begin{equation*}
\begin{tikzcd}
T_\sigma^\sim
\arrow[r, "\sim"]
\arrow[d, rightarrowtail]
\arrow[dd, "\wr" ', xshift=-10pt]
&
(T')_\sigma^\sim \times T_\sigma^\sim 
\arrow[r, twoheadrightarrow]
\arrow[d, rightarrowtail] 
&
\{*\} \times T_\sigma^\sim / K_\sigma^\sim 
\arrow[d, rightarrowtail]
\arrow[dd, "\wr" , xshift=40pt]
\\
T_\sigma
\arrow[r, "\sim" ]
\arrow[d, ,twoheadrightarrow, "{\pi_\sigma}"]
&T'_\sigma \times T_\sigma^\sim 
\arrow[r, twoheadrightarrow, "{q_\sigma}", "{\mathrm{f.s.o.}}" ']
\arrow[d, twoheadrightarrow]
& 
T'_\sigma /K' \times T_\sigma^\sim / K_\sigma^\sim
\arrow[d, twoheadrightarrow, "j_\sigma"]\\
T_\sigma^\sim \arrow[r, "="]
&
T_\sigma^\sim \arrow[r,twoheadrightarrow]
&
T_\sigma^\sim / K_\sigma^\sim
\end{tikzcd}
\end{equation*}
For an open map $q \colon X\rightarrow Y$, $q^{-1}(\Cl^Y(S))=\Cl^X(q^{-1}(S))$ and $q^{-1}(\Int^Y(S))=\Int^X(q^{-1}(S))$. Let $A\coloneq \{*\} \times T_\sigma^\sim/K_\sigma^\sim$. A little diagram chasing shows that we just have to prove both of the following statements
\begin{IEEEeqnarray*}{rCl}
\Int(\Cl(S)) \cap A
&=& j_\sigma(S),\\
\Cl(S)\cap A  
&=&j_\sigma(\overbar{S}),
\end{IEEEeqnarray*}
for $S=C(\mX^0, P_1)$, and only the first statement for $S=D$ (an open convex core), as a subset of $T'/K' \times T_\sigma^\sim /K_\sigma^\sim \cong  Y_*(T')_\R \times Y_*(T_\sigma^\sim)_\R$. 
We will write $\Cl(S)$ for the closure in $T'_\sigma /K' \times T_\sigma^\sim /K_\sigma^\sim$ and $\overbar{S}$ for the closure in the Euclidean space $T' /K' \times T_\sigma^\sim /K_\sigma^\sim \cong  Y_*(T')_\R \times Y_*(T_\sigma^\sim)_\R$. Note $\overbar{S}=\Cl(S) \cap (T' /K' \times T_\sigma^\sim /K_\sigma^\sim)$.

We consider how points $\{(\overbar{t_n}, z_n)\}^\infty_{n=1} \subset T'/K' \times T_\sigma^\sim /K_\sigma^\sim $ approach a point $(*, z_0) \in \{*\}\times T_\sigma^\sim /K_\sigma^\sim=A$. Certainly, the second coordinate must converge: $z_n \rightarrow z_0$. For the first coordinate, choose $\{t_n\}_{n=1}^\infty \subset T'$ lifting $\overbar{t_n}$. Note that under the open map $T_\sigma '\rightarrow T_\sigma' /K'$, the fiber over $\infty$ is the singleton $(T')_\sigma^\sim$, as every $\sigma$-stratum is a $T'$-orbit and stable under $K'$. Since $K'$ is compact and $T_\sigma'$ is Hausdorff, it follows that any such lifting $\{t_n\}_{n=1}^\infty$ must converge to the point $(T')_\sigma^\sim$. Suppose $\chi_1, \ldots, \chi_r$ is a set of generators for $X^*(T')_{\sigma\geq 0}$ as a monoid. As $T_\sigma' =\Spec k[X^*(T')_{\sigma \geq 0}]$, $(\chi_1, \ldots, \chi_r)$ defines a closed embedding $T_\sigma' \rightarrowtail \mathbb{A}^r_\C$ sending $(T')^\sim_\sigma$ to the origin. Thus $t_n \rightarrow (T')_\sigma^\sim$ if and only if $|\chi_i(t_n)| \rightarrow 0$ for all $1\leq i \leq r$. 

Recall that the map $\ord$ is defined as $T(\C)\cong Y_*(T)\otimes \C^\times \rightarrow Y_*(T)\otimes \R$, $\lambda\otimes z \mapsto \lambda \otimes (\ln|z|)$. Thus in terms of the splitting, it is $T' \times T_\sigma^\sim \rightarrow T'/K' \times T_\sigma^\sim/K_\sigma^\sim \overset{\sim}{\rightarrow} Y_*(T')_\R \times Y_*(T_\sigma^\sim)_\R$. Let $\lambda_1, \ldots, \lambda_l$ be a basis of $Y_*(T')$. Given $t\in T'(\C)$, suppose its image in $Y_*(T) \otimes \C^\times$ is $\sum_{j=1}^l \lambda_j\otimes t_j$, then $\ord(t)=\sum_{j=1}^l \lambda_j\otimes (\ln |t_j|)$. 
\begin{IEEEeqnarray*}{rCl}
\chi_i (\ord(t))&=&\sum_{j=1}^l \langle\chi_i, \lambda_j \rangle \ln|t_j|\\
                            &=& \ln|\prod_{j=1}^l t_j^{\langle \chi_i, \lambda_j \rangle}|=\ln|\chi_i(t)|
\end{IEEEeqnarray*}
Thus $\overbar{t_n} \rightarrow *$ if and only if any lifting $t_n \rightarrow (T')_\sigma^\sim$ if and only if $|\chi_i(t_n)| \rightarrow 0$ if and only if $\chi_i (\ord (t_n) )\rightarrow  -\infty \,(1\leq i\leq r)$. Denote $\ord(t_n)$ by $v_n (\in Y_*(T')_\R)$. This means $\{v_n\}$ goes to $\infty$ along $-\sigma^0$.  In summary, 
\begin{IEEEeqnarray*}{cl}
&\text{points in }\{*\}\times T_\sigma^\sim/K_\sigma^\sim\\
&\text{that can be approached by } S \subset T'/K' \times T_\sigma^\sim/K_\sigma^\sim \\
\iff& \text{points in }\{*\}\times Y_*(T_\sigma^\sim)_\R \\
      &\text{that can be }(-\sigma^0)\text{-approached within } S \subset Y_*(T')_\R \times Y_*(T_\sigma^\sim)_\R.
\end{IEEEeqnarray*}

Our $S$ in $ Y_*(T')_\R \times Y_*(T_\sigma^\sim)_\R$ is $C(\mX^0, P_1)$ or $D$.  But $\sigma \in \Sigma_1^0(\mX_1^0, P_1, p_{1,f})=\Sigma(\mX^0, P_1, p_{1,f}\cdot p_f)p_f^{-1}$, which is a complete cone decomposition of $-C(\mX^0, P_1)$. And so $-\sigma \subset C^*(\mX^0, P_1)\subset \overbar{C(\mX^0, P_1)}$. By Proposition \ref{subsecproposition: rational boundary components, properties of the cone}, there is a unique rational boundary component $(P_2, \mX_2) \geq (P_1, \mX_1)$ such that $-\sigma^0 \subset C(\mX^0, P_2)(\subset U_2(\R)(-1))$ and so $-\sigma\subset U_2(\R)(-1)$. If $(P_2, \mX_2) \neq (P_1, \mX_1)$, we note that $C^*(\mX^0, P_1) \cap U_2(\R)(-1)=C^*(\mX^0, P_2)$ by Proposition \ref{subsecproposition: rational boundary components, properties of the star cone}. Hence 
\begin{IEEEeqnarray*}{rCl}
C(\mX^0, P_1) \cap (-\sigma) &=& C(\mX^0, P_1)\cap C^*(\mX^0, P_1) \cap (-\sigma)\\
&\subset& C(\mX^0, P_1)\cap C^*(\mX^0, P_1) \cap U_2(\R)(-1)\\
&=&C(\mX^0, P_1)\cap C^*(\mX^0, P_2)=\varnothing,
\end{IEEEeqnarray*} and we are in Case (A) of Lemma \ref{subseclemma: some lemmas, analytic geometric lemma} ($-\sigma^0$ contained in the interior of $\overbar{C(\mX^0, P_1)}\cap Y_*(T')_\R$). If $(P_2, \mX_2)=(P_1, \mX_1)$, $-\sigma^0 \subset C(\mX^0, P_1)$ and we are in Case (B) of Lemma \ref{subseclemma: some lemmas, analytic geometric lemma}. 

Let $C\coloneq C(\mX^0, P_1)$. With the above analysis,  we claim that the set  $E\coloneq \Cl(C)\cap A$ (of points in $A=\{*\} \times Y_*(T_\sigma^\sim)_\R$ that can be $(-\sigma^0)$-approached within $C(\mX^0, P_1) \subset Y_*(T')_\R \times Y_*(T_\sigma^\sim)_\R$) is exactly $\overbar{j_\sigma (C)}$. Namely, $E\subset \overbar{j_\sigma(C)}$ by definition of $(-\sigma^0)$-approaching and $E \supset \overbar{j_\sigma(C)}$ by (1) and (2) of Lemma \ref{subseclemma: some lemmas, analytic geometric lemma}. By Lemma \ref{subseclemma: some lemmas, projection of open cones},  $E =\overbar{j_\sigma (C)}=j_\sigma(\overbar{C})$ and we have done (3). 

In particular, $j_\sigma(C) \subset \Cl(C)$. Thus we have shown, for all $\sigma \in \Sigma_1^0$, that $\pi_\sigma(\ord^{-1}(C)) \subset \Cl (\ord^{-1}(C))$.

Points in $F\coloneq  \Int (\Cl(C)) \cap A $ should lie in the interior 
\[
 \Int^A( \Cl(C) \cap A )=\Int^A(j_\sigma(\overbar{C}))=\Int^A(\overbar{j_\sigma(C)})=j_\sigma(C),
\] 
 by Lemma \ref{subseclemma: some lemmas, projection of open cones} and so $F\subset j_\sigma(C)$.

 Let $\tau \leq \sigma$ be any face of $\sigma$. Then $-\tau\subset \overbar{C}$ and so $-\tau+C=-\tau^0+C=C$. By {\cite[5.9]{Pink}},  $\bigcup_{\tau \leq \sigma} \pi_\tau(\ord^{-1}(C))$ is an open subset, contained in $\Cl(\ord^{-1}(C))$. This implies $\pi_\sigma(\ord^{-1}(C)) \subset \Int(\Cl (\ord^{-1}(C)))$, or equivalently, $j_\sigma(C) \subset \Int(\Cl(C))$. Thus $j_\sigma(C)\subset \Int (\Cl(C))\cap A=F$ and $F=j_\sigma(C)$ as desired. We have done (1).

In the case of an open convex core $D$, a similar argument using (4) and (5) of Lemma \ref{subseclemma: some lemmas, analytic geometric lemma} gives $ \Cl(D)\cap A=\overbar{j_\sigma(D)} $ and in particular $j_\sigma(D)\subset \Cl(D)$. By {\cite[5.9]{Pink}} again $j_\sigma(D) \subset \Int(\Cl(D))$. 
Thus 
\begin{IEEEeqnarray*}{rCl}
j_\sigma(D)&\subset& \Int(\Cl(D))\cap A\\
&\subset &\Int^A  \bigl (  \Cl(D)\cap A  \bigr  )\\
&=&\Int^A (\overbar{j_\sigma(D)})=j_\sigma(D),
\end{IEEEeqnarray*}
by Lemma \ref{subseclemma: some lemmas, projection of open cones}. We have done (2).

If $-\sigma^0 \subset C(\mX^0, P_1)$, then $j_\sigma(D)=j_\sigma(C)=T_\sigma^\sim /K_\sigma^\sim$. In other words, 
\[
\Int( \Cl (D) ) \cap T_\sigma^\sim=\Int( \Cl (C) ) \cap T_\sigma^\sim=T_\sigma^\sim.
\]
This proves the last statement for a neat $K_f$.

When $K_f$ is not assumed neat, take a neat open compact normal subgroup $K_{f,n} \unlhd K_f$. By the $K_f$-admissibility of $\Sigma$, the action of the finite group $K^1_f /K^1_{f,n}$ on $M^{K^1_{f,n}}(P_1,\mX_1)(\C)$ 
(over $M^{\pi_{U_1}(K^1_{f,n})}(P_1/U_1,\mX_1/U_1)(\C)$) 
extends to the relative torus embedding $M^{K^1_{f,n}}(P_1,\mX_1, \Sigma_1^0)(\C)$. This action preserves the $\sigma$-stratum and is compatible with $\pi_\sigma$. Therefore all the assertions follow from taking quotients. This finishes the proof of Lemma \ref{subseclemma: some lemmas, fundamental lemma}.
\end{proof}

We will also need the following result from \cite{BrosnanFakhruddin}.

\begin{subseclemma}[{\cite[Lemma 49]{BrosnanFakhruddin}}]\label{subseclemma: some lemmas, polydisc intersection}

Let $U$ be a real vector space with $C\subset U$ an open convex cone and $\Gamma \subset U$ a lattice. Let $p\colon U_\C \rightarrow \Gamma \bs U_\C  \eqcolon T(\C)
$ be the quotient map. ($\Gamma \bs U_\C =(\Gamma \bs U) \oplus (i\cdot U)$.) Suppose $\sigma \subset C$ is a smooth rational polyhedral convex cone of top dimension. Let $T \rightarrow T_\sigma$ be the torus embedding associated with $\sigma$. Given $c\in U$, define $D_c \coloneq U +   i\cdot (c+C) $. Then for sufficiently small polydisc $D$ centered at $T_\sigma^\sim$, $D^\circ\coloneq D \cap p(D_c)$ is a product of punctured discs. In particular, this applies to $D^\circ=D\cap p(U+iC)$.
\end{subseclemma}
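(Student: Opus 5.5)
The plan is to reduce everything to explicit coordinates on $T_\sigma$, in which the small polydisc becomes a condition on the imaginary part alone, and then to compare that condition with membership in $c+C$.

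\textbf{Coordinates.} Since $\sigma$ is smooth and of top dimension, it is generated by a $\Z$-basis $e_1,\ldots,e_n$ of the cocharacter lattice; with our identification this lattice is $\Gamma$. Let $\ell_1,\ldots,\ell_n\in U^\vee$ be the dual basis, so that $\sigma^\vee\cap X^*(T)$ is the monoid freely generated by the characters $\chi_j$ corresponding to the $\ell_j$. Writing a point of $U_\C$ as $w=u+iy$, I will normalize signs so that
\[
\chi_j(p(w))=\exp\bigl(2\pi i\,\ell_j(w)\bigr),\qquad |\chi_j(p(w))|=\exp\bigl(-2\pi\,\ell_j(y)\bigr).
\]
With this normalization $\chi_j\to 0$ exactly when $\ell_j(y)\to+\infty$, i.e.\ when the imaginary part runs off to infinity inside $\sigma$. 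This matches the condition $\sigma\subset C$, and also the sign analysis in the proof of Lemma \ref{subseclemma: some lemmas, fundamental lemma}, where cones were approached along $-\sigma^0$ under the opposite convention for $\ord$. Then $T_\sigma\cong\mathbb{A}^n_\C$ via $(\chi_1,\ldots,\chi_n)$, and $T_\sigma^\sim$ is the origin.

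\textbf{The polydisc condition.} Take $D=\{|\chi_j|<\varepsilon \text{ for all } j\}$. Then $D\cap T=\prod_j\{0<|\chi_j|<\varepsilon\}$ is a product of punctured discs. Set $R\coloneq -\log\varepsilon/(2\pi)$ and $s\coloneq e_1+\cdots+e_n$. Since $\ell_j(Rs)=R$, a point $p(u+iy)$ lies in $D\cap T$ if and only if $\ell_j(y)>R$ for all $j$, that is, if and only if
\[
y\in Rs+\sigma^0 .
\]
The set $p(D_c)$ also depends only on the imaginary part: it consists of the points with $y\in c+C$, the real part being unrestricted. So it suffices to show that $Rs+\sigma^0\subset c+C$ for $R$ large. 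Then $D^\circ=D\cap p(D_c)=D\cap T$, which is the required product of punctured discs.

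\textbf{The inclusion.} This is the only step needing an argument. Because $s$ lies in the interior of $\sigma\subset C$ and $C$ is an open cone, $Rs-c=R\,(s-c/R)$ lies in $C$ once $R\gg0$. Moreover $\sigma\subset C\subset\overbar{C}$, and for an open convex cone one has $C+\overbar{C}\subset C$. Hence
\[
Rs+\sigma^0-c\subset C+\sigma\subset C .
\]
The case $D^\circ=D\cap p(U+iC)$ is the special case $c=0$. I do not expect a real obstacle. The only care needed is fixing the sign convention, so that the stratum $T_\sigma^\sim$ corresponds to the imaginary part tending to infinity in $+\sigma$ rather than $-\sigma$, and checking that shrinking $\varepsilon$ is the same as enlarging $R$.
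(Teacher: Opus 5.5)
Your argument is correct and gives a complete, self-contained proof. The paper gives no proof of this lemma; it imports it verbatim from \cite[Lemma 49]{BrosnanFakhruddin}, so there is no internal argument to compare yours against.

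Your reduction is the right one. In the toric coordinates $\chi_j$ dual to a basis $e_1,\dots,e_n$ of $\Gamma$ generating $\sigma$, the punctured polydisc $D\cap T$ is cut out by the imaginary part alone, namely $y\in Rs+\sigma^0$. The statement then becomes the inclusion $Rs+\sigma^0\subset c+C$ for $R\gg 0$, and you prove this correctly from $Rs-c\in C$ and $C+\overbar{C}\subset C$. You actually obtain the stronger equality $D\cap p(D_c)=D\cap T$. That equality is what part (2) of the main theorem needs, since there $\pi_1(D^\circ)\cong\Z^n$ is compared with $\Gamma_{U_1}$.

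Two small points are worth making explicit.

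First, the hypothesis ``$\sigma\subset C$'' cannot be meant literally for a proper open cone $C$, because $0\in\sigma$. The intended hypothesis is $\sigma^0\subset C$, equivalently $\sigma\setminus\{0\}\subset C$. This is how the paper uses the lemma: it has $-\sigma^0\subset C(\mX^0,P_1)$ and then passes to a smooth top-dimensional $\sigma_s\subset\sigma$. Your proof only uses $s\in C$ and $\sigma\subset\overbar{C}$, and both follow from $\sigma^0\subset C$, so nothing changes.

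Second, ``sufficiently small polydisc'' should cover polydiscs with unequal radii. This follows at once from your equal-radius case. If $D\cap p(D_c)=D\cap T$, then any polydisc $D'\subset D$ satisfies $D'\cap p(D_c)=D'\cap D\cap p(D_c)=D'\cap T$.

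Your sign normalization is the one under which the lemma is non-vacuous. It identifies $Y_*(T)$ with $\Gamma$ via $\gamma\mapsto\bigl(e^z\mapsto p(z\gamma/2\pi i)\bigr)$, so the boundary stratum is reached as $y\to\infty$ inside $+\sigma$. The paper instead identifies $Y_*(T)$ with $\Gamma(-1)\subset U(\R)(-1)$. That identification flips the sign, which is why the paper decomposes $-C^*$ rather than $C^*$.
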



         \subsection{Proof of Theorem \ref{sectheorem: introduction, main theorem}} \label{subsec: proof of main theorem}

We now turn to the proof of Theorem \ref{sectheorem: introduction, main theorem}.

Let $(P, \mX)$ be a mixed Shimura datum. Let $\Sigma$ be a $K_f$-admissible complete polyhedral cone decomposition such that $M^{K_f}(P, \mX, \Sigma)(\C)$ is a projective variety as in Theorem \ref{subsectheorem: toroidal compactifications, projective structure}. Consider the rational boundary component $(P_1, \mX_1)$ related to a $\Q$-admissible parabolic subgroup $Q\subset P$ together with the $\Stab_{Q(\R)}(\mX_1)\cdot U(\C)$-equivariant open embedding $\mX^+_{(P_1,\mX_1)}\rightarrow \mX_1$. Suppose $\mX^0_1 \subset \mX_1$ is the connected component corresponding to $\mX^0\subset \mX^+_{(P_1,\mX_1)}$ and $C(\mX^0, P_1) \subset U_1(\R)(-1)$ is the open convex cone as in Proposition \ref{subsecproposition: rational boundary components, pink 4.15}. An element of $P_1(\Q)$ stabilizes $\mX^0$ if and only if it stabilizes $\mX^0_1$. We defined the following groups in the introduction.
\begin{IEEEeqnarray*}{rCl}
\Gamma           &=&\Stab_{P(\Q)}(\mathcal{X}^0)\cap p_f K_fp_f^{-1}\\
\Gamma_Z      &=& \left(  \Id_{Z(P)(\Q)}(\mathcal{X})\right)\cap  p_f K_fp_f^{-1}\\
\Delta                &=&\Gamma/\Gamma_Z\\
\Gamma_{ZU_1}&=&\left ( \Id_{Z(P)(\Q)} (\mathcal{X}) \times U_1(\Q)\right ) \cap p_f K_fp_f^{-1}\\
\Gamma_{U_1}&=&\frac{\Gamma_{ZU_1}}{\Gamma_Z} \subset \frac{\Gamma}{\Gamma_Z}\\
&&\bigl ( \Gamma_{U_1}=\pr_{U_1}(\Gamma_{Z U_1}) \text{ for }\pr_{U_1}\colon Z(P) \times U_1 \rightarrow U_1 \bigr)
\end{IEEEeqnarray*}
We need a few more analogous groups.
\begin{IEEEeqnarray*}{rCl}
\Gamma_1      &\coloneq&\Stab_{P_1(\Q)}(\mX^0)\cap p_fK_fp^{-1}_f \\
                              &  =          & \Gamma \cap P_1(\Q) \subset \Gamma\\
\Lambda_{ZU_1}&\coloneq&\bigl ( (\Id_{Z(P)(\Q)}(\mX)\cap P_1(\Q))\times U_1(\Q) \bigr )\cap p_fK_fp^{-1}_f\\
                            & =&\Gamma_{ZU_1}\cap P_1(\Q)=\Gamma_{ZU_1}\cap \Gamma_1\\
\Lambda_{U_1} &\coloneq& \pr_{U_1}(\Lambda_{ZU_1}) \text{ for }\pr_{U_1}\colon Z(P) \times U_1 \rightarrow U_1\\
\Omega_{Z_1 U_1}&\coloneq&(\Id_{Z(P_1)(\Q)}(\mX_1)\times U_1(\Q) )\cap p_fK_fp_f^{-1}\\
\Omega_{U_1}&\coloneq& \pr_{U_1}(\Omega_{Z_1 U_1}) \text{ for }\pr_{U_1}\colon Z(P_1) \times U_1 \rightarrow U_1
\end{IEEEeqnarray*}
Note that  $\Gamma_{ZU_1} \supset \Lambda_{ZU_1} \subset \Omega_{Z_1U_1}$ as $\Id_{Z(P)(\Q)}(\mX)\cap P_1(\Q)\subset \Id_{Z(P_1)(\Q)}(\mX_1)$ by Proposition \ref{subsecproposition: rational boundary components, rational boundary components}. As a result, we have $\Gamma_{U_1}\supset \Lambda_{U_1} \subset \Omega_{U_1}$.

Let $K^1_f \coloneq P_1(\fadele)\cap p_f K_fp_f^{-1}$. Recall that 
\begin{IEEEeqnarray*}{rCl}
\mU(P_1, \mX_1, p_f)& \coloneq &P_1(\Q)\bs \mX^+_{(P_1, \mX_1)} \times P_1(\fadele) / K^1_f,\\
\Sigma_1^0 &\coloneq& \left (\left( [\cdot p_f]^*\Sigma\right)|_{(P_1, \mX_1)}\right)^0,
\end{IEEEeqnarray*}
 and $ \mUbar(P_1, \mX_1 , p_f)\coloneq \Int(\Cl(\mU(P_1, \mX_1, p_f)))$ is the interior of the closure of $\mU(P_1, \mX_1, p_f)$ in $M^{K_f^1}(P_1, \mX_1 ,\Sigma_1^0)(\C)$.

\begin{proof}[Proof of Theorem \ref{sectheorem: introduction, main theorem}]~

\begin{enumerate}
\item By definition, $\Gamma_Z$ acts trivially on $\mX^0$. Thus $\Gamma \bs \mX^0$ is actually $(\Gamma/\Gamma_Z) \bs \mX^0$. Further, let $\gamma \in \Gamma$ be an element fixing a point $x_0 \in \mX^0$. Then by Lemma \ref{subseclemma: mixed shimura varieties, centralizer lemma}, $\gamma$ lies in $Z(P)(\Q)$ and hence fixes every point $x\in \mX$. That is, $\gamma \in \Id_{Z(P)(\Q)}(\mX)\cap \Gamma =\Gamma_Z$. We have shown that $\Delta=\Gamma /\Gamma_Z$ acts freely on $\mX^0$. By Proposition \ref{subsecproposition: mixed shimura varieties, topology} and Theorem \ref{subsectheorem: toroidal compactifications, quasi-projective structure}, $\Gamma \bs \mX^0$ and $\Gamma' \bs \mX^0$ are (connected) smooth quasi-projective varieties over $\C$. Since $\Delta'=\Gamma' / \Gamma_Z' \unlhd\Gamma / \Gamma_Z=\Delta$, we see that $\Delta' \bs \mX^0 \rightarrow \Delta \bs \mX^0$ is a $\Delta/\Delta'$-torsor. 

\item 
Let $\delta_Q\coloneq \Stab_{Q(\Q)}(\mX_1) \cap \bigl ( P_1(\fadele) \cdot p_f  K_f p_f^{-1} \bigr)$ and $\Delta_Q \coloneq \delta_Q /P_1(\Q)$. By Remark \ref{subsecremark: toroidal compactifications, action of the normalizer}, the map $\mUbar(P_1, \mX_1, p_f) \rightarrow  M^{K_f }(P, \mX,\Sigma )(\C)$ factors through $\Delta_Q \bs \mUbar(P_1, \mX_1, p_f) $ (resp. $\mU(P_1, \mX_1, p_f) \rightarrow  M^{K_f }(P , \mX)(\C)$ factors through $\Delta_Q \bs \mU(P_1, \mX_1, p_f)$). The above objects fit into a diagram. 
\begin{equation*}\adjustbox{center}{%
\begin{tikzcd}[column sep=small]
\mU(P_1, \mX_1, p_f) \arrow[d, rightarrowtail] \arrow[dr,rightarrowtail] \arrow[r, twoheadrightarrow]
& \Delta_Q \bs \mU(P_1, \mX_1, p_f) \arrow[rr, "{[\cdot p_f]}"] \arrow[dr, rightarrowtail]&
&M^{K_f}(P, \mX)(\C) \arrow[d, rightarrowtail]\\
M^{K_f^1}(P_1, \mX_1)(\C) \arrow[d, rightarrowtail] 
& \mUbar(P_1, \mX_1 , p_f)  \arrow[r, twoheadrightarrow] \arrow[dl, rightarrowtail]
&  \Delta_Q \bs \mUbar(P_1, \mX_1, p_f)  \arrow[r]
&M^{K_f}(P, \mX, \Sigma)(\C)\\
M^{K_f^1}(P_1, \mX_1 ,\Sigma_1^0)(\C)
&&&
\end{tikzcd}
}
\end{equation*}

In terms of connected components, the above diagram translates into a diagram as shown below.
\begin{equation*}
\adjustbox{center}{%
\begin{tikzcd}[column sep=small ]
\Gamma_1 \bs \mX^0 \arrow[d, rightarrowtail] \arrow[dr,rightarrowtail] \arrow[r, twoheadrightarrow]
& \Gamma_Q \bs \mX^0= \frac{\Gamma_Q}{\Gamma_1} \left \bs (\Gamma_1 \bs \mX^0) \right. \arrow[rr,twoheadrightarrow] \arrow[dr, rightarrowtail]&
& \Gamma \bs \mX^0 \arrow[d, rightarrowtail]\\
\Gamma_1 \bs \mX^0_1 \arrow[d, rightarrowtail] 
& (\Gamma_1 \bs \mX^0)_{\Sigma_1^0}  \arrow[r, twoheadrightarrow] \arrow[dl, rightarrowtail]
&  \frac{\Gamma_Q}{\Gamma_1} \left \bs (\Gamma_1 \bs \mX^0)_{\Sigma_1^0} \right.  \arrow[r]
&(\Gamma \bs \mX^0)_{\Sigma}\\
(\Gamma_1 \bs \mX_1^0)_{\Sigma^0_1}
&&&
\end{tikzcd}
}
\end{equation*}
Here $\Gamma_Q= \Stab_{Q(\Q)}(\mX^0) \cap p_f K_f p_f^{-1} =Q(\Q) \cap \Gamma$, and $(\Gamma_1 \bs \mX^0)_{\Sigma_1^0} \subset  \mUbar(P_1, \mX_1 , p_f)$ is the interior of the closure of $\Gamma_1 \bs \mX^0 \subset (\Gamma_1 \bs \mX_1^0)_{\Sigma^0_1}$. 

Now choose a point $x_0 \in \mX^0$. The $U_1(\C)$-orbit map $U_1(\C) \rightarrow \mX^0_1$, $u_1 \mapsto u_1 \cdot x_0$ descends to a map $\Omega_{U_1}\bs U_1(\C) \rightarrow \Gamma_1 \bs \mX^0_1$. According to Subsections \ref{subsec: mixed shimura varieties} and \ref{subsec: toroidal compactifications}, this is the inclusion of a closed fiber into the analytic $\Omega_{U_1}\bs U_1(\C)$-torsor $M^{K_f^1}(P_1, \mX_1)(\C) \rightarrow M^{\pi_{U_1}(K_f^1)}(P_1/U_1, \mX_1/U_1)(\C)$, relative to which the torus embedding $\Gamma_1 \bs \mX^0_1 \rightarrowtail (\Gamma_1 \bs \mX^0_1)_{\Sigma_1^0}$ is constructed. 

We claim that under this orbit map, $U_1(\R)+C(\mX^0, P_1)$ is mapped into $\mX^0$. Indeed, take $u_1\cdot c \in U_1(\R)+C(\mX^0, P_1)$ and let $\im \colon \mX^0_1 \rightarrow U_1(\R)(-1)$ be the map of imaginary part. Then $\im(u_1\cdot c\cdot x_0)=(u_1\cdot c)\cdot \im(x_0)= c + \im(x_0)\in C(\mX^0, P_1) + C(\mX^0, P_1) \subset C(\mX^0, P_1)$ as $C(\mX^0, P_1) $ is an open convex cone. Denote $C(\mX^0, P_1)$ by $C$. It follows that we have an induced map $\Omega_{U_1} \bs (U_1(\R)+C) \rightarrowtail \Gamma_1 \bs \mX^0$, compatible with $\Omega_{U_1}\bs U_1(\C) \rightarrowtail \Gamma_1 \bs \mX^0_1$. 

By assumption, $\Sigma_1^0(\mX^0_1, P_1, 1)=\Sigma(\mX^0, P_1, p_f)p_f^{-1}$ is a complete rational polyhedral cone decomposition of $-C^*(\mX^0, P_1)\subset U_1(\R)(-1)$. In particular, there exists a $\sigma \in \Sigma_1^0$ of top dimension such that $-\sigma^0 \subset C(\mX^0, P_1)$. Then $(\Omega_{U_1} \bs U_1(\C))_\sigma \rightarrowtail (\Gamma_1 \bs \mX^0_1)_{\sigma}\subset (\Gamma_1 \bs \mX^0_1)_{\Sigma_1^0}$. 

Define $T\coloneq \Omega_{U_1}\bs U_1(\C)$ and $T'\coloneq \Lambda_{U_1}\bs U_1(\C)$. Note that $T$ and $T'$ are algebraic tori and there are isogenies $T' \rightarrow T$ and $T'\rightarrow \Gamma_{U_1} \bs U_1(\C)$. We have maps $\Lambda_{U_1} \bs (U_1(\R)+C) \rightarrow \Omega_{U_1} \bs (U_1(\R)+C) \rightarrowtail \Gamma_1 \bs \mX^0$.

By the last assertion of Lemma \ref{subseclemma: some lemmas, fundamental lemma}, $(\Gamma_1 \bs \mX^0)_{\Sigma_1^0}$ is an open neighborhood of the entire $\sigma$-stratum $(\Gamma_1 \bs \mX^0_1)^\sim_{\sigma}$. In particular, it is a neighborhood of the point $T_\sigma^\sim$. Therefore, a neighborhood $D \subset T_\sigma$ of this point will map into $(\Gamma_1 \bs \mX^0)_{\Sigma_1^0} $, and then into $ (\Gamma \bs \mX^0)_\Sigma $, as shown above.

We take $\sigma_s \subset \sigma $ to be a rational convex polyhedral cone, smooth with respect to the lattice $\Gamma_{U_1}$. The composite morphism $T'_{\sigma_s}\rightarrow T_{\sigma_s} \rightarrow T_\sigma$ maps $(T')_{\sigma_s}^\sim$ to $T_{\sigma}^\sim$. Thus a neighborhood of $(T')_{\sigma_s}^\sim$ will also map into $(\Gamma \bs \mX^0)_\Sigma$.

However, $\Id_{Z(P)(\Q)}(\mX) \times U_1(\Q) \subset \Stab_{Q(\Q)}(\mX^0)$, so $\Gamma_{ZU_1} \subset \Gamma_Q$ and $\Gamma_{ZU_1}/\Lambda_{Z U_1} \subset  \Gamma_Q /\Gamma_1$. It follows that the maps $(T')_{\sigma_s} \rightarrow  (\Gamma_1 \bs \mX^0_1)_{\Sigma_1^0}$, $T'\rightarrow \Gamma_1 \bs \mX^0_1$, and $\Lambda_{U_1} \bs (U_1(\R)+C) \rightarrow \Gamma_1 \bs \mX^0$ all descend to the corresponding quotients, inducing the following diagram. 

\begin{equation*}\hspace{2.5pt}
\adjustbox{center}{%
\begin{tikzcd}[column sep= small]
D^\circ \arrow[r, rightarrowtail]
&
\Gamma_{U_1} \bs (U_1(\R)+C)
\arrow[r] \arrow[d, rightarrowtail] 
& \frac{\Gamma_Q}{\Gamma_1} \left. \bs (\Gamma_1 \bs \mX^0)  \right.
\arrow[rr]
\arrow[d,rightarrowtail]
\arrow[dr,rightarrowtail]
&& \Gamma \bs \mX^0 \arrow[d, rightarrowtail]\\
&
\Gamma_{U_1} \bs U_1(\C) 
\arrow[r] 
& \frac{\Gamma_Q}{\Gamma_1} \left. \bs (\Gamma_1 \bs \mX_1^0)  \right.
& \frac{\Gamma_Q}{\Gamma_1} \left. \bs (\Gamma_1 \bs \mX^0)_{\Sigma_1^0}  \right.
\arrow[r] \arrow[dl, rightarrowtail]
&
(\Gamma \bs \mX^0)_\Sigma \\
D \arrow[from=uu, rightarrowtail]
\arrow[r, rightarrowtail]
\arrow[rrru, dashed, start anchor={[yshift=3pt]}]
&(\Gamma_{U_1} \bs U_1(\C) )_{\sigma_s}
\arrow[r] \arrow[from=u, crossing over, rightarrowtail] 
&  \frac{\Gamma_Q}{\Gamma_1} \left. \bs (\Gamma_1 \bs \mX_1^0)_{\Sigma^0_1}  \right.
\arrow[from=u, rightarrowtail, crossing over]
&&
\end{tikzcd}
}
\end{equation*}

As $\Gamma_{U_1}/ \Lambda_{U_1}$ is finite, there exists a $\Gamma_{U_1}/ \Lambda_{U_1}$-invariant neighborhood of $T_\sigma^\sim$ that maps into $(\Gamma_1 \bs \mX^0)_{\Sigma_1^0}$. This shows a neighborhood of $(\Gamma_{U_1} \bs U_1(\C) )_{\sigma_s}^\sim$ maps into $\frac{\Gamma_Q}{\Gamma_1} \left. \bs (\Gamma_1 \bs \mX_1^0)_{\Sigma^0_1}  \right.$, viewing $\Gamma_{U_1} \bs U_1(\C)$ and $(\Gamma_{U_1} \bs U_1(\C))_{\sigma_s}$ as quotients of $T'$ and $(T')_{\sigma_s}$, respectively, by $\Gamma_{U_1}/ \Lambda_{U_1}\subset T'$.

Now that $\sigma_s$ is smooth of top dimension with respect to $\Gamma_{U_1}$, by Lemma \ref{subseclemma: some lemmas, polydisc intersection}, a polydisc neighborhood $D$ of $(\Gamma_{U_1} \bs U_1(\C) )_{\sigma_s}^\sim$ intersects $\Gamma_{U_1} \bs (U_1(\R)+C)$ in a product of punctured discs $D^\circ$. We may assume that $D$ is sufficiently small. Thus $D$ maps into $\frac{\Gamma_Q}{\Gamma_1} \bs (\Gamma_1 \bs \mX^0)_{\Sigma_1^0}$ and then into $(\Gamma \bs \mX^0)_\Sigma$, as indicated in the above diagram.

We compute the image of $\pi_1 (D^\circ) \rightarrow  \pi_1(\Gamma \bs \mX^0) \rightarrow \mathrm{Aut}(\Gamma ' \bs \mX^0 \rightarrow \Gamma \bs \mX^0)$. By Proposition \ref{subsecproposition: rational boundary components, pink 4.15}, $C$ is $(W\cap  U_1)(\R)(-1)$-translation invariant and the quotient $C /(W\cap  U_1)(\R)(-1)$ is a self-adjoint homogeneous open cone. Thus $\frac{U_1(\R)}{(W\cap U_1)(\R)}+\frac{C}{(W\cap  U_1)(\R)(-1)}$ is a tube domain, and, in particular, topologically simply connected. Then $U_1(\R)+ C$ is a $(W\cap  U_1)(\C)$-vector bundle over a simply connected base and so simply connected as well. The map $U_1(\R)+C \rightarrow \mX^0$ is $\Gamma_{ZU_1}$-equivariant, where $\Gamma_{ZU_1} $ acts via $\Gamma_{ZU_1} \rightarrowtail \Gamma_{U_1}$. By Proposition \ref{subsecproposition: mixed shimura varieties, topology}, $\Gamma /\Gamma_Z$ acts properly discontinuously (and freely) on $\mX^0$. Thus (cf. \cite{Hatcher02}) the image on $\pi_1$ is just 
\begin{IEEEeqnarray*}{rCl}
&=&\mathrm{Im}\left (\frac{\Gamma_{ZU_1}}{\Gamma_Z} \rightarrowtail
 \frac{\Gamma}{\Gamma_Z} \twoheadrightarrow 
\frac{\Gamma /\Gamma_Z}{\Gamma ' /\Gamma_Z'}=
\frac{\Gamma}{\Gamma_Z \cdot \Gamma '}\right )\\
&=&\frac{\Gamma_{ZU_1}}{\Gamma_Z \cdot \Gamma' _{ZU_1}}=\frac{\Gamma_{ZU_1}/\Gamma_Z}{\Gamma'_{ZU_1}/\Gamma_Z'}=\frac{\Gamma_{U_1}}{\Gamma_{U_1}'} .
\end{IEEEeqnarray*}

Apply Theorem \ref{sectheorem: fixed point method, existence of fixed points} and we finish the proof of the second part.

\item The assertion will be a direct consequence of the second part combined with Theorem \ref{sectheorem: fixed point method, fixed point method} once we show there exists a $\Gamma/\Gamma '$-equivariant smooth compactification $Y \rightarrow (\Gamma \bs \mX^0)_\Sigma$ extending $\Gamma ' \bs \mX^0 \rightarrow \Gamma \bs \mX^0$.

Since $\Sigma$ is a complete $K_f $-admissible cone decomposition, it is automatically a complete $K_f'(\unlhd K_f)$-admissible cone decomposition. Thus (by Theorem \ref{subsectheorem: toroidal compactifications, toroidal compactifications}) $M^{K_f'}(P, \mX, \Sigma)(\C)$ is a compact normal complex space with a finite morphism to $M^{K_f}(P, \mX, \Sigma)(\C)$, the quotient of $M^{K_f'}(P, \mX, \Sigma)(\C)$ by the finite group $K_f /K_f'$. Now the Generalized Riemann Existence Theorem (cf. {\cite[Appendix B, Theorem 3.2]{GTM52}}) implies that $M^{K_f'}(P, \mX, \Sigma)(\C)$ is a (proper) normal scheme, admitting a finite morphism to a projective variety, hence projective itself. An equivariant resolution of singularities then produces a smooth compactification $Y'$ with a $K_f /K_f'$-equivariant proper $p \colon Y' \rightarrow M^{K'_f}(P, \mX, \Sigma)(\C)$. 

By Proposition \ref{subsecproposition: toroidal compactifications, map to baily-borel}, $(\Gamma \bs \mX^0)_\Sigma$ is a smooth projective variety, being a connected component of $M^{K_f}(P, \mX, \Sigma)(\C)$. After taking the preimage $Y\coloneq p^{-1}((\Gamma' \bs \mX^0)_\Sigma ) \subset Y' $, we get a $\Gamma / \Gamma'$-equivariant smooth compactification $Y \rightarrow (\Gamma \bs \mX^0)_\Sigma$. Equivalently, it is $\Delta/\Delta'$-equivariant.

\item We just need to find $K'_f \unlhd K_f$ such that $\Gamma_{U_1}' \subset p\cdot \Gamma_{U_1}$. Define the following groups. 
\begin{IEEEeqnarray*}{rCl}
K_f^{ZU_1}        &\coloneq &\left  (Z(P)(\fadele) \times U_1(\fadele) \right )\cap  p_f K_f p_f^{-1}\\
K_f^{U_1}          &\coloneq &U_1(\fadele)\cap p_f K_f p_f^{-1}\\
K_f^Z                  &\coloneq &Z(P)(\fadele)\cap p_f K_f p_f^{-1}
\end{IEEEeqnarray*}
Let $\gamma_{U_1}\coloneq U_1(\Q)\cap K_f^{U_1}$. Then $\gamma_{U_1} \subset \Gamma_{U_1}$. We may choose a smaller $\widetilde{K}_f^{U_1} \subset K_f^{U_1}$ such that $p\cdot \gamma_{U_1}= U_1(\Q) \cap \widetilde{K}_f^{U_1}$.
But $K_f^Z \times \widetilde{K}_f^{U_1}$ is an open compact subgroup of $Z(P)(\fadele) \times U_1(\fadele)$. 
Since $Z(P)\times U_1 $ is a (closed) subgroup of $P$, we may find neat open compact $K_f' \unlhd K_f$ such that ${K_f^{ZU_1}}' \coloneq p_fK_f'p_f^{-1} \cap  \left ( Z(P)(\fadele) \times U_1(\fadele) \right ) \subset K_f^Z \times \widetilde{K}_f^{U_1}$. 
Then $\Gamma_{ZU_1}' 
\subset  
\bigl ( \Id_{Z(P)(\Q)}(\mX)\times U_1(\Q) \bigr ) \cap \bigl ( K_f^Z \times \widetilde{K}_f^{U_1} \bigr ) =
\Gamma_Z \times (p\cdot \gamma_{U_1})$. 
Thus $\Gamma_{U_1}' \subset  p\cdot \gamma_{U_1} \subset p\cdot \Gamma_{U_1}$. 

The inclusive relations among the groups are as follows.
\vspace{-0pt}\begin{equation*}\adjustbox{center}{%
\begin{tikzcd}[row sep=small]
\Gamma_{U_1} \arrow[from=r, twoheadrightarrow]
& \Gamma_{ZU_1} \arrow[r, rightarrowtail]
& K_f^{ZU_1} \arrow[r, rightarrowtail]
& p_f K_f p_f^{-1}\\
\gamma_{U_1} \arrow[from=r, twoheadrightarrow] \arrow[u, rightarrowtail]
& \Gamma_{Z}\times \gamma_{U_1} \arrow[r, rightarrowtail] \arrow[u, rightarrowtail]
& K_f^{Z} \times K_f^{U_1} \arrow[u, rightarrowtail]
& \\
p\cdot  \gamma_{U_1} \arrow[from=r, twoheadrightarrow] \arrow[u, rightarrowtail]
& \Gamma_{Z}\times  (p\cdot \gamma_{U_1}) \arrow[r, rightarrowtail] \arrow[u, rightarrowtail]
&K_f^{Z} \times \widetilde{K}_f^{U_1} \arrow[u, rightarrowtail]
& \\
\Gamma'_{U_1} \arrow[from=r, twoheadrightarrow] \arrow[u, rightarrowtail]
& \Gamma'_{ZU_1}  \arrow[r, rightarrowtail] \arrow[u, rightarrowtail]
&{K_f^{ZU_1}}' \arrow[u, rightarrowtail] \arrow[r, rightarrowtail]
& p_f K_f'p_f^{-1} \arrow[uuu, rightarrowtail]
\end{tikzcd}
}
\end{equation*}

The surjections $\Gamma_{U_1} \twoheadrightarrow \Gamma_{U_1}/\Gamma_{U_1}' \twoheadrightarrow \Gamma_{U_1} /(p\cdot \Gamma_{U_1})$ imply that
\begin{IEEEeqnarray*}{rcl}
\rank_p(\Gamma_{U_1}/\Gamma_{U_1}' )=\rank_p(\Gamma_{U_1} /(p\cdot \Gamma_{U_1}))=\dim U_1.
\end{IEEEeqnarray*}
This completes the proof of our main theorem. \qedhere
\end{enumerate}

\end{proof}


\section{Examples of lower bounds} \label{sec: examples}

Note that in Theorem \ref{sectheorem: introduction, main
 theorem}, if we take $(P_1, \mX_1)$ to be the (unique) improper boundary
 component with $\mX^0 \subset \mX^+_{(P_1, \mX_1)}$, then $U_1=U$ and 
$\ed_\C(\Gamma' \bs \mX^0 \rightarrow \Gamma \bs \mX^0; p)\geq \rank_p (\Gamma_{U} /\Gamma_{U}')$ where $\Gamma_U$ is the image of 
\[ \left ( \Id_{Z(P)(\Q)}(\mX) \times U(\Q) \right )\cap p_f K_f p_f^{-1}\]  under $Z(P) \times U \rightarrow U$. This gives a lower bound without computing any proper boundary components.

We now turn to a few examples. 

\subsection{The torus case of dimension $1$} \label{subsec: torus case of dimension 1}
 
Recall that in Example \ref{subsecexample: mixed shimura data, simplest example}, we defined the mixed Shimura datum $(P_0, \mX_0)$ as the unipotent extension of $(\mathbb{G}_m, \mathcal{H}_0)$, where $U_0=\mathbb{G}_{a, \Q}$ and $P_0=U_0 \rtimes \mathbb{G}_{m, \Q}$ with $\mathbb{G}_{m, \Q}$ acting via multiplication. Both points of $\mH_0$ map to the norm morphism $N \colon \DS \rightarrow \mathbb{G}_{m, \R}, z \mapsto z \overbar{z}$. Let $\iota \colon \mathbb{G}_{m, \Q} \rightarrowtail P_0$ be the inclusion. Then $\mX_0$ consists of two connected components $\mX_0^{\pm}$, each of which projects isomorphically onto $U_0(\C)\cdot (\iota \circ N)\overset{\sim}{\leftarrow} U_0(\C)$. Take a neat open compact subgroup $K_f^* \subset \bG_m(\hat{\Z})$ and take $K_f^U(d)\coloneq d \cdot U_0(\hat{\Z})$. Then $K_f^P(d)\coloneq K_f^U(d) \rtimes K^*_f \subset P_0(\fadele)$ is a neat open compact subgroup. 

Since $Z(P_0)=1$, for $p_f=(0,g_f) \in \bG_m(\hat{\Z})$, we see that 
$\Gamma=\Gamma_U=U_0(\Q) \cap K^P_f(d) = d \cdot U_0(\Z) \eqcolon \Gamma_U (d)$. The $d \Z/ nd \Z$-torsor $\Gamma_U(nd) \bs U_0(\C) \rightarrow \Gamma_U (d) \bs U_0(\C)$ is just the $\Z / n\Z$-torsor $\bG_{m,\C} \overset{n}{\rightarrow} \bG_{m, \C}$. By our main theorem, we have $\ed_\C(\bG_{m,\C} \overset{n}{\rightarrow} \bG_{m, \C} ; p ) \geq 1$ and thus it is equal to $1$ if and only if $p\, |\, n$.  This torsor $\bG_{m,\C} \overset{n}{\rightarrow} \bG_{m, \C}$ is $p$-incompressible if and only if $p\, |\, n$.

This assertion can also be proved in a more elementary way. If $p \nmid n$, then the pullback along $\bG_{m,\C} \overset{n}{\rightarrow} \bG_{m, \C}$ itself makes the torsor trivial, and thus $\ed_\C(\bG_{m,\C} \overset{n}{\rightarrow} \bG_{m, \C} ; p ) =0$. Denote $\bG_{m,\C} \overset{n}{\rightarrow} \bG_{m, \C}$ by $X' \rightarrow X$. Suppose $f \colon Y \rightarrow X$ is a dominant morphism from an integral variety $Y$ such that $\ed_\C( Y \times_{f, X} X')=0$. Then the pullback along $f$ must trivialize the torsor, and thus $f$ must lift to a dominant morphism $f' \colon Y \rightarrow X'$. It follows that the degree of $f$ must be divisible by $n$, hence also by $p$. We conclude that $\ed_\C(\bG_{m,\C} \overset{n}{\rightarrow} \bG_{m, \C} ; p ) =1$ if $p\, |\, n$.

\subsection{The torus case of dimension $n$} \label{subsec: torus case of dimension n}

We can generalize the previous example to the unipotent extension $(P, \mX) \rightarrow (\bG_{m,\Q}, \mathcal{H}_0)$ with $U=\Q^r$ and $P=U \rtimes \bG_{m,\Q}$. Take $K_f^U(d_1, \ldots, d_r) \subset U(\hat{\Z})=\hat{\Z}^r$ to  be the open compact subgroup $(d_1\hat{\Z})\times \cdots \times (d_r \hat{\Z})$, $d_i\neq 0$. Let $K_f^*\subset \bG_m(\hat{\Z})$ be a neat open compact subgroup and $K_f^P=K_f^U(d_1, \ldots, d_r) \rtimes K_f^*$. For $p_f=(0,g_f) \in \bG_m(\hat{\Z})$, $p_fK_f^Pp_f^{-1} =K_f^P$. Thus $\Gamma=\Gamma_U=U (\Q) \cap K_f^P= (d_1 \Z) \times \cdots \times (d_r \Z)\eqcolon \Gamma_U(d_1, \ldots, d_r)$. 

Again, $\mX$ has two connected components, each of which is isomorphic to $U(\C)$. The $(\Z/n_1\Z) \times \cdots \times (\Z/n_r\Z)$-torsor 
\[
\Gamma_U(n_1d_1, \ldots, n_r d_r) \bs U(\C) \rightarrow \Gamma_U(d_1,\ldots, d_r) \bs U(\C)
\] 
is just $\bG_{m,\C} \times \cdots \times \bG_{m,\C}  \xrightarrow{(n_1, \ldots, n_r)}\bG_{m,\C} \times \cdots \times \bG_{m,\C}$. Denote $(n_1, \ldots, n_r)$ by $(n)$. Theorem \ref{sectheorem: introduction, main theorem} implies that $\ed_\C(\bG_{m,\C}^r \overset{(n)}{\rightarrow} \bG_{m,\C}^r; p) \geq \rank_p ( (\Z /n_1\Z) \times \cdots \times (\Z /n_r\Z) )$.

Note that by Theorem \ref{sectheorem: essential dimension, KarpenkoMerkurjev} and Remark \ref{secremark: essential dimension, essential dimension of product groups}, we can deduce the upper bound $\ed_\C(\bG_{m,\C}^r \overset{(n)}{\rightarrow} \bG_{m,\C}^r; p) \leq \rank_p ( (\Z /n_1\Z) \times \cdots \times (\Z /n_r\Z) )$. Hence equality holds:
\[
\ed_\C(\bG_{m,\C}^r \overset{(n)}{\rightarrow}\bG_{m,\C}^r; p) = \rank_p (\frac{\Z}{n_1\Z} \times \cdots \times \frac{\Z}{n_r \Z}).
\]
Thus this torsor is $p$-incompressible if and only if  $ \rank_p (\frac{\Z}{n_1\Z} \times \cdots \times \frac{\Z}{n_r \Z})=r$, or equivalently $p \,| \,n_i$ for all $ 1\leq i \leq r$. 

\begin{secremark}
We may apply the fixed point method directly to the torsor $ (n) \colon \bG_{m,\C}^r \rightarrow  \bG_{m,\C}^r$ since $\bG_{m,\C}^r$ admits the partial compactification $\mathbb{A}^r_\C$. This torsor extends to a finite morphism $(n) \colon \mathbb{A}_\C^r \rightarrow  \mathbb{A}_\C^r$ and the origin is already a smooth fixed point. Thus we obtain the lower bound by Theorem~\ref{sectheorem: fixed point method, fixed point method}.
\end{secremark}

\subsection{Siegel modular varieties}\label{subsec: siegel modular varieties}
 
Let $(CSp_{2n}, \mH_{2n})$ be the pure Shimura datum defined as in Example \ref{subsecexample: mixed shimura data, siegel modular variety}, for $V=\Q^{2n}$ and $\psi \colon V \times V \rightarrow \Q$ the alternating form. (The standard basis $e_{\pm i} (1{\leq} i {\leq} n)$ is a symplectic basis for $\psi$. Thus $CSp_{2n}$ is naturally defined over $\Z$.) In order to get a better lower bound, we need to know more about the rational boundary components of $(CSp_{2n}, \mH_{2n})$.

Let $V_0 \subset V$ be a totally isotropic subspace and let $Q\coloneq \Stab_{CSp_{2n}}(V_0)$. Then $Q$ is a $\Q$-admissible parabolic subgroup of $CSp_{2n}$ and this construction gives a one-to-one correspondence between totally isotropic subspaces and $\Q$-admissible parabolic subgroups (cf. {\cite[4.25]{Pink}}). (We may assume without loss of generality that $V_0=\langle e_1, \ldots, e_r \rangle$ for $0\leq r\leq n$.) 

Fix such a $V_0$ and $Q$. Then the rational boundary component $(P_1, \mX_1)$ of $(P, \mX)$ associated with $Q$ can be described as follows. Let $x\in \mX^+_{(P_1, \mX_1)}$ and $x_1 \in \mX_1$ be its image. Then $h_{x_1} \colon \DS_\C \rightarrow P_{1,\C}\subset Q_\C \subset CSp_{2n, \C}$ induces a Hodge structure on $V$ with weight filtration as follows.
\begin{IEEEeqnarray*}{+rCl+x*}
W^{h_{x_1}}_{-2}V&=&V_0\\
W^{h_{x_1}}_{-1}V&=&V_0^\perp\\
W^{h_{x_1}}_{0}V&=&V
\end{IEEEeqnarray*}
(Note that this is consistent with the fact that $Q$ should be the subgroup of $CSp_{2n}$ preserving the above weight filtration.) Then we have
\begin{IEEEeqnarray*}{+rCl+x*}
P_1&=&\{g \in Q \, \lvert \, g|_{V/V_0^\perp}=\mathrm{id} \},   \\
W_1&=&\left \{g \in Q \,  \left \lvert \,  
\begin{array}{l} 
g|_{V/V_0^\perp}=\mathrm{id}\\
g|_{V_0^\perp/V_0}=\mathrm{id}\\
g|_{V_0}=\mathrm{id}
\end{array}
 \right. \right \},    \\
U_1&=&\left \{g \in Q \,  \left \lvert \,  
\begin{array}{l} 
g|_{V/V_0}=\mathrm{id}\\
g|_{V_0^\perp}=\mathrm{id}
\end{array}
 \right. \right \}.
\end{IEEEeqnarray*}

Let $G\coloneq CSp_{2n}$. Define $K_f(d)\coloneq \{g \in G(\hat{\Z}) \, \lvert \, g \equiv 1 \,\mathrm{mod}\, d\}  (d \geq 3)$. By {\cite[0.1]{Pink}}, $K_f(d)\unlhd G(\hat{\Z})$ is neat. It is immediate that $\Id_{Z(G)(\Q)}(\mH_{2n})=Z(G)(\Q)=\bG_m(\Q) \subset GL_{2n}(\Q)$. For $g_f \in G(\hat{\Z})$, we have $(\bG_m(\Q) \times U_1(\Q) )\cap g_f K_f(d)g_f^{-1}=d\cdot U_1(\Z)\eqcolon \Gamma_{U_1}(d)$. Take $\mH^0\coloneq \mH_{2n}^+$ to be the connected component that induces positive definite forms. Then $\Stab_{G(\Q)}(\mH^0) \cap K_f(d)= Sp_{2n}(\Q)\cap K_f(d)=Sp_{2n}(\Z)\cap K_f(d)\eqcolon \Gamma_{Sp}(d)$. Theorem \ref{sectheorem: introduction, main theorem} implies that 
\begin{IEEEeqnarray*}{+rCl+x*}
\ed_{\C}(\Gamma_{Sp}(md) \bs \mH^0 \rightarrow \Gamma_{Sp}(d) \bs \mH^0;p)&\geq& \rank_p \frac{d\cdot U_1(\Z)}{md\cdot U_1(\Z)}=\rank_p \frac{ U_1(\Z)}{m\cdot U_1(\Z)}.
\end{IEEEeqnarray*}

An inspection of the Lie algebra of $U_1$ reveals that $\dim U_1 = \frac{r(r+1)}{2}$. We find $\ed_{\C}(\Gamma_{Sp}(md) \bs \mH^0 \rightarrow \Gamma_{Sp}(d) \bs \mH^0;p) \geq \frac{r(r+1)}{2}$ if $p \,|\, m$. Now take $V_0=\langle e_1 , \ldots, e_n \rangle$ to be a maximal totally isotropic subspace. This gives 
\[
\ed_{\C}(\Gamma_{Sp}(md) \bs \mH^0 \rightarrow \Gamma_{Sp}(d) \bs \mH^0 ; p ) \geq \frac{n(n+1)}{2},~ p\,|\,m.
\]
It is well known that $\dim_\C \mH_{2n}=\frac{n(n+1)}{2}$. Therefore,  for $p \, | \, m$ we obtain the $p$-incompressibility of $\Gamma_{Sp}(md) \bs \mH^0 \rightarrow \Gamma_{Sp}(d) \bs \mH^0$. This example illustrates that a larger $U_1$ produces a better lower bound for the essential dimension.

Finally, let $M(d)\coloneq M^{K_f(d)}(G, \mH_{2n})(\C)$, $d \geq 3$. Then $M(d)$ is the moduli space of principally polarized abelian varieties of dimension $n$ with level-$d$ structure.  We have shown the  $p$-incompressibility of $M(pd) \rightarrow M(d)$ and thus have recovered \cite[41]{BrosnanFakhruddin} and \cite[3.2.9]{FarbKisinWolfson21} for $d \geq 3$.

\begin{subsecremark}
Note that when  $\gcd(m,d)=1$, $\dps  \Gamma_{Sp}(d)/\Gamma_{Sp}(md)$ is canonically isomorphic to $Sp_{2n}(\frac{\Z}{m\cdot \Z})$. Take $m=p$ to be an odd prime. The $p$-incompressibility of  $\Gamma_{Sp}(pd) \bs \mH^0 \rightarrow \Gamma_{Sp}(d) \bs \mH^0$ implies that $\ed_\C(Sp_{2n}(\mathbb{F}_p); p) \geq \frac{n(n+1)}{2}$. However, as computed in \cite[79]{BrosnanFakhruddin}, $\ed_\C(Sp_{2n}(\mathbb{F}_p); p)=p^{n-1}$. We see that there is an exponential difference. The reason for this is that we essentially used a subgroup $\Gamma_{U_1}(p)/\Gamma_{U_1}(pd) \subset \Gamma_{Sp}(d)/\Gamma_{Sp}(pd)=Sp_{2n}(\mathbb{F}_p)$ to deduce the lower bound. This subgroup may be relatively small compared to $Sp_{2n}(\mathbb{F}_p)$.
\end{subsecremark}

\subsection{Universal families over Siegel modular varieties} \label{subsec: universal families over siegel modular varieties}

Given $(V ,\psi)$ and $G=CSp_{2n}$ as in the previous example, let $U\coloneq \Q$ and $W\coloneq U\times V$ with multiplication defined by $(u,v)\cdot (u',v') \coloneq (u+u'+\frac{1}{2}\psi(v,v') , v+v')$. $G$ acts on $W$ via $g\cdot (u,v)\coloneq (g(u), g(v))= (\mu(g)\cdot u, g(v))$, where $\mu \colon G \rightarrow \bG_{m,\Z}$ is the multiplier morphism. We have a short exact sequence $1\rightarrow U \rightarrow W \rightarrow V \rightarrow 1$ leading to unipotent extensions
$P_{2n}\coloneq W \rtimes G $ and $V\rtimes G$. This gives unipotent extensions of mixed Shimura data
\[
(P_{2n}, \mX_{2n}) \rightarrow (V \rtimes G, \mY_{2n})  \rightarrow (G, \mH_{2n})
\]
as in Example \ref{subsecexample: mixed shimura data, extension of symplectic group}. 

For $d\geq 4$ even, let $K_f(d) \unlhd G(\hat{\Z})$ be as in Subsection \ref{subsec: siegel modular varieties}. Define $K_f^W(d)\coloneq K_f^U(d)\times K_f^V(d)= (d\cdot U(\hat{\Z})) \times (d\cdot V(\hat{\Z}))$ and $K_f^P(d)\coloneq K_f^W(d) \rtimes K_f(d)$. (The assumption $d\geq 4$ ensures that $K_f^W(d)$ is a subgroup normalized by $K_f(d)$.)  Consider the following mixed Shimura varieties.
\begin{IEEEeqnarray*}{rCl}
M(d)&\coloneq&M_\C^{K_f(d)}(G, \mH_{2n})\\
M_V(d)&\coloneq&M_\C^{K_f^V(d) \rtimes K_f(d)}(V \rtimes G, \mH'_{2n})\\
M_W(d)&\coloneq&M_\C^{K_f^P(d)}(P_{2n}, \mX_{2n})
\end{IEEEeqnarray*}

\begin{subsecremark}
According to \cite[10.9]{Pink}, $M_W(d) \rightarrow M_V(d) \rightarrow M(d)$ is the universal family of morphisms $X\rightarrow A \rightarrow S$, where $A\rightarrow S$ is an  abelian scheme   over $S$  of relative dimension $n$ with a symplectic level $d$-structure and $X\rightarrow A$ is a normalized totally symmetric $\bG_m$-torsor over $A$.
\end{subsecremark}

We have described the rational boundary components of $(G, \mH_{2n})$ in Subsection \ref{subsec: siegel modular varieties}. Indeed, this enables us to obtain the rational boundary components of $(P_{2n} , \mX_{2n})$ conveniently. Let $V_0 \subset V$ and $Q$ be as in Subsection \ref{subsec: siegel modular varieties}. Then $W \rtimes Q \subset  P $ is a $\Q$-admissible parabolic subgroup.  Recall that, by Subsection \ref{subsec: rational boundary components}, the rational boundary component $(P_{2n,1}, \mX_{2n,1})$ associated with $W \rtimes Q$ should involve the smallest $\Q$-normal subgroup $P_{2n,1} \unlhd W \rtimes Q$ containing the image of $\iota \circ h_{x_1}$. 
Here $\iota \colon G \rightarrowtail P_{2n}=W \rtimes G$ is the inclusion, $(P_1, \mX_1)$ is the rational boundary component of $(G, \mH_{2n})$ associated with $Q\subset G$, and $x_1$ is the image of $x$  under $\mX^+_{(P_1, \mX_1)} \rightarrowtail \mX_1$.

\begin{subsecproposition} \label{subsecproposition: universal families over siegel modular varieties, rational boundary components of unipotent extensions}
Let the rational boundary component  associated with the admissible $\Q$-parabolic subgroup $W \rtimes Q\subset W \rtimes G$ be $(P_{2n,1}, \mX_{2n,1})$. Then 
\begin{enumerate}
\item  $U_{2n,1}= (U\times V_0)\times U_1$.
\item $P_{2n,1}= (U\times V_0^\perp)\rtimes P_1$.
\end{enumerate}
\end{subsecproposition}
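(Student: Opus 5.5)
The plan is to compute the boundary component for $W\rtimes Q$ directly from the recipe of Subsection \ref{subsec: rational boundary components}: find the homomorphism $\alpha_x$ of Proposition \ref{subsecproposition: rational boundary components, morphism of reference group} for $(P_{2n},\mX_{2n})$ and $W\rtimes Q$, read off the weight filtration that $\alpha_x\circ h_\infty\circ\omega$ induces on $\Lie(W\rtimes Q)$, and use Lemma \ref{subseclemma: rational boundary components, P1 W1 U1}. That lemma gives $U_{2n,1}=\exp(W_{-2}\Lie(W\rtimes Q))$. It also gives $P_{2n,1}$ as the smallest $\Q$-normal subgroup of $W\rtimes Q$ through which $\alpha_x\circ h_\infty$ factors.

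\textbf{Step 1: choose $\alpha_x$.} Take $x\in\mX_{2n}$ lying over a point of $\mH_{2n}^+$ whose $h_x$ factors through the Levi $\iota(G)\subset W\rtimes G$; this is possible by the splitting remark in the proof of Proposition \ref{subsecproposition: mixed shimura varieties, surjection of open compact central intersection}. For the pure datum and $Q$ we already have $\alpha^G_x\colon H_{0,\C}\to G_\C$. I claim $\alpha_x\coloneq\iota\circ\alpha^G_x$ satisfies conditions (i)--(iii) of Proposition \ref{subsecproposition: rational boundary components, morphism of reference group} for $W\rtimes Q$. Condition (ii) is immediate. Condition (iii) concerns only the projection to $G$, so it is the same as for $Q$. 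For condition (i), $\pi_U\circ\alpha_x$ is defined over $\R$ because $\alpha^G_x$ is. Uniqueness in part (3) of that proposition then identifies $\alpha_x$.

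\textbf{Step 2: weights under $\alpha_x\circ h_\infty$.} Through $\iota$, the Hodge structure on $\Lie W=\Lie U\oplus\Lie V$ is induced by $h_{x_1}=\alpha^G_x\circ h_\infty$, acting on $U$ via $\mu$ and on $V$ by the standard representation. On $V$ the weight filtration is the one displayed in Subsection \ref{subsec: siegel modular varieties}: $V_0$ has weight $-2$, $V_0^\perp/V_0$ has weight $-1$, and $V/V_0^\perp$ has weight $0$. On $U$ we have $\mu\circ h_{x_1}=N$, so $U$ has pure weight $-2$. On $\Lie Q$ the filtration is the one giving $U_1$ and $W_1$. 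Since $\Lie(W\rtimes Q)=\Lie W\oplus\Lie Q$ as representations of the image of $\iota\circ\alpha^G_x$, we get $W_{-2}=\Lie U\oplus V_0\oplus\Lie U_1$. Exponentiating gives (1). One should check that $U\times V_0$ is a subgroup: the commutator $\psi$ vanishes on the isotropic space $V_0$, so it is abelian, and it commutes with $U_1$ because $U_1$ acts trivially on $V_0^\perp\supset V_0$ and on $U$, the latter because elements of $U_1$ have multiplier $1$.

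\textbf{Step 3: the group $P_{2n,1}$.} The image of $\iota\circ h_{x_1}$ lies in $\iota(P_1)$, so $P_{2n,1}$ is the $\Q$-normal closure of $P_1$ inside $W\rtimes Q$. For the inclusion $\supseteq$, I will use that $P_1$ acts on $V_0^\perp$ and that $\Lie(P_{2n,1})\cap\Lie W$ must contain the subspace spanned by $(\mathrm{Ad}(w)-1)\Lie P_1$ for $w\in W$. Commutators $[v,p]=v-p(v)$, with $v\in V_0^\perp$ and $p$ in the torus part of $P_1$, span $V_0^\perp$: the central cocharacter of $P_1$ acts nontrivially on $V_0$ (weight $-2$) and on $V_0^\perp/V_0$ (weight $-1$). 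Commutators in $W$ then produce $U$. For the inclusion $\subseteq$, I will verify directly that $(U\times V_0^\perp)\rtimes P_1$ is normal in $W\rtimes Q$. This follows because $V_0^\perp$ is $Q$-stable, $P_1$ acts trivially on $V/V_0^\perp$ (so $[V,P_1]\subset V_0^\perp$), and $Q$ normalizes $P_1$.

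\textbf{Main obstacle.} The hard step is the minimality in Step 3. One must show that no element of $V/V_0^\perp$ is forced into $P_{2n,1}$, and that all of $V_0^\perp$ is forced in, including its weight $-1$ part. The latter is needed for the subgroup to actually carry the weight $-1$ piece prescribed by Lemma \ref{subseclemma: rational boundary components, P1 W1 U1}(1). That lemma gives $W_{2n,1}=\exp W_{-1}$, and $W_{-1}\cap\Lie W=\Lie U\oplus V_0^\perp$, which cross-checks the answer. So I would argue minimality via the weight filtration rather than by hand.
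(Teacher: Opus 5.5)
Your proposal is correct and is essentially the paper's proof. Part (1) is read off from the weights of $\iota\circ h_{x_1}$ on $\Lie(W\rtimes Q)$. For (2), the paper likewise checks that $(U\times V_0^\perp)\rtimes P_1$ is normal in $W\rtimes Q$, obtains $W_{2n,1}=(U\times V_0^\perp)\rtimes W_1$ from the weight filtration (the route to minimality you say you prefer), and concludes by noting that $P_{2n,1}$ surjects onto $P_1$.

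If you use your direct commutator argument instead, be careful when showing $U\subset P_{2n,1}$: commute $V_0^\perp$ against all of $V$, which normality in $W\rtimes Q$ allows, or use that the weight cocharacter acts on $U$ by $\mu=t^{-2}\neq 1$. Commutators inside $V_0^\perp$ alone do not suffice, because in the case of interest $r=n$ one has $V_0^\perp=V_0$ and $\psi$ vanishes there.
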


\begin{proof}~

\begin{enumerate}
\item $U_{2n,1}= (U \times V_0)\times U_1$ by considering the weight, as described in Subsection \ref{subsec: siegel modular varieties}.

\item 
In order to compute $P_{2n,1}$, first observe that by the description of the weight filtration in Subsection \ref{subsec: siegel modular varieties},  the unipotent radical of $P_{2n,1}$ is $ W_{2n,1}= (U\times V_0^\perp)\rtimes W_1$.  ($W_1$ is the unipotent radical of $P_1$.)

Next, we claim that $(U\times V_0^\perp)\rtimes P_1$ is a $\Q$-normal subgroup of $W \rtimes Q$. In fact, it is clearly normalized by $Q$. Take $(u, v) \in W(\Q)$. Since $P_1$ acts trivially on $V /V_0^\perp$, we have 
\begin{IEEEeqnarray*}{rCl}
(u,v,1)(0,0, p_1)(-u,-v , 1)&=& \bm{\bigl (}u-p_1(u)-\frac{1}{2}\psi(v, -p_1(v) ) \bm{,} v-p_1(v) \bm{,} p_1 \bm{ \bigr )}\\
                                                &\in& \bigl (  (U \times V_0^\perp)\rtimes P_1 \bigr ) (\Q).
\end{IEEEeqnarray*} 
Thus $ (U\times V_0^\perp)\rtimes W_1 \subset P_{2n,1} \subset  (U \times V_0^\perp)\rtimes P_1$. However, $P_{2n,1}$ must project onto $P_{1}$ as its image under the projection contains $h_{x_1}(\DS_\C)$ (and $P_1$ is the smallest such $\Q$-normal subgroup in $Q$).  Hence $P_{2n,1}= (U \times V_0^\perp)\rtimes P_1$. \qedhere
\end{enumerate}

\end{proof}

Let $K_f^{P_{2n}}(d) \coloneq  K^W_f(d) \rtimes K_f(d) $  and $\mX_{2n}^+$ the connected component that maps onto $\mH^{+}_{2n}$. We have
\begin{IEEEeqnarray*}{rcl}
\Stab_{P_{2n}(\Q)}(\mX^+_{2n}) \cap K^{P_{2n}}_f(d) \,&=&\,
\bigl ( d{\cdot} U(\Z) {\times} d{\cdot} V(\Z) \bigr ) {\rtimes} \Gamma_{Sp}(d)  
{\eqcolon}  \Gamma_{P_{2n}}(d);\\
\bigl (\Id_{Z(P_{2n})(\Q)}(\mX_{2n}) {\times} U_{2n,1}(\Q) \bigr ) \cap K_f^{P_{2n}}(d)\,&=&\,
\bigl ( d{\cdot} U(\Z) {\times} d{\cdot} V_0(\Z) \bigr) {\times} (d{\cdot} U_1(\Z)), \\
&&\text{as }\Id_{Z(P_{2n})(\Q)}(\mX_{2n})=1.
\end{IEEEeqnarray*} 
If $p \, | \, m$, Theorem \ref{sectheorem: introduction, main theorem} concludes that 
\[
\ed_\C(\Gamma_{P_{2n}}(md) \bs \mX^+_{2n} \rightarrow \Gamma_{P_{2n}}(d) \bs \mX^+_{2n}; p ) \geq 1+r +\frac{r(r+1)}{2}.
\]

A maximal totally isotropic subspace $V_0$ of dimension $r=n$ gives the lower bound $1+n+\frac{n(n+1)}{2}$. Since $\dim_\C \mX_{2n} =1 +n +\frac{n(n+1)}{2}$, we obtain

\begin{subsectheorem} \label{subsectheorem: universal families over siegel modular varieties, incompressibility}
Let $d\geq 4$ be an even integer.\ The above $\Gamma_{P_{2n}}(d)/\Gamma_{P_{2n}}(md)$-torsor 
$\Gamma_{P_{2n}}(md) \bs \mX^+_{2n} \rightarrow \Gamma_{P_{2n}}(d) \bs \mX^+_{2n}$ is $p$-incompressible for $p \, | \, m$.
\end{subsectheorem}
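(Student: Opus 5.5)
The plan is to apply Theorem \ref{sectheorem: introduction, main theorem} to the datum $(P_{2n}, \mX_{2n})$ with the neat level $K_f^{P_{2n}}(d)$, the connected component $\mX^+_{2n}$, $p_f = 1$, and the rational boundary component $(P_{2n,1}, \mX_{2n,1})$ attached to $W \rtimes Q$, where $Q$ stabilizes the maximal isotropic subspace $V_0 = \langle e_1, \ldots, e_n\rangle$. First I have to check the hypotheses. Neatness of $K_f^P(d)$ should follow from neatness of $K_f(d)$ for $d \geq 3$: the unipotent part contributes no nontrivial eigenvalues, and neatness passes to extensions by unipotent congruence subgroups in the sense of \cite[0.6]{Pink}. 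Normality of $K_f^P(md)$ in $K_f^P(d)$ is a direct matrix check using $d$ even. I also need $\mX^+_{2n} \subset (\mX_{2n})^+_{(P_{2n,1}, \mX_{2n,1})}$, and by the remark following Theorem \ref{sectheorem: introduction, main theorem} an appropriate $\mX_{2n,1}$ can always be chosen so that this holds.

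Next I compute the groups. Since $\Id_{Z(P_{2n})(\Q)}(\mX_{2n}) = 1$, we get $\Gamma_Z$ trivial and $\Delta = \Gamma_{P_{2n}}(d)$. By Proposition \ref{subsecproposition: universal families over siegel modular varieties, rational boundary components of unipotent extensions}, $\Gamma_{U_{2n,1}} = d\cdot U(\Z) \times d \cdot V_0(\Z) \times d\cdot U_1(\Z)$, as displayed above, and the primed group is the same with $d$ replaced by $md$. The quotient is therefore $(\Z/m\Z)^{1+n+n(n+1)/2}$, whose $p$-rank equals $1+n+\frac{n(n+1)}{2}$ when $p \mid m$. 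Part (3) of Theorem \ref{sectheorem: introduction, main theorem} then gives $\ed_\C(\cdots;p) \geq 1 + n + \frac{n(n+1)}{2}$.

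For the upper bound, the trivial inequality $\ed \leq \dim$ of the base suffices, so it remains to verify $\dim_\C \mX^+_{2n} = \frac{n(n+1)}{2} + n + 1$. Here $\mX_{2n}$ is a bundle over $\mH_{2n}$ whose fiber is the orbit $W(\R)\cdot U(\C)$, of real dimension $2n+2$, hence complex dimension $n+1$. Combining the two bounds gives equality, which is $p$-incompressibility.

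The main obstacle is the careful identification of $\Gamma_{U_{2n,1}}$. Concretely, I must confirm that $U\times V_0$ indeed lies in the weight $-2$ part, so that it is commutative and central as required, and that intersecting with $K_f^P(d)$ inside the semidirect product really yields the product lattice $d\cdot(\cdot)$. This requires checking that the group law on $W$, with its factor $\frac12\psi$, does not distort the lattice. Since $\psi$ vanishes on $V_0$, $U \times V_0$ is an honest vector group and this check should go through.
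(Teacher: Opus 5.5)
Your proposal is correct and follows the paper's own argument. Like the paper, you apply Theorem~\ref{sectheorem: introduction, main theorem} to the boundary component of $(P_{2n},\mX_{2n})$ attached to $W\rtimes Q$ with $V_0$ maximal isotropic. You then use $\Id_{Z(P_{2n})(\Q)}(\mX_{2n})=1$ to obtain $\Gamma_{U_{2n,1}}=(d\cdot U(\Z)\times d\cdot V_0(\Z))\times d\cdot U_1(\Z)$, and compare the resulting bound $1+n+\frac{n(n+1)}{2}$ with $\dim_\C\mX_{2n}$.

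The extra checks you flag are details the paper leaves implicit, and they do go through. These are neatness, normality of $K_f^P(md)$ in $K_f^P(d)$ using that $d$ is even, and the fact that $U\times V_0$ is a vector group commuting with $U_1$.
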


\subsection{Kuga varieties}\label{subsec: kuga varieties}

A \emph{Kuga datum} $(P, \mX)$ is a mixed Shimura datum with $U=1$. That is, there is no subspace in $\Lie P$ of weight $-2$. A \emph{Kuga variety} is a mixed Shimura variety $M^{K_f}(P, \mX)(\C)$ from a Kuga datum. For more properties of Kuga data, the reader can refer to \cite{KeChen13}.

Here we consider the Kuga datum $(V\rtimes G, \mY_{2n})$ as defined in Subsection \ref{subsec: universal families over siegel modular varieties}.
Similar to Proposition \ref{subsecproposition: universal families over siegel modular varieties, rational boundary components of unipotent extensions}, we have the following

\begin{subsecproposition}\label{subsecproposition: kuga varieties, rational boundary components of kuga extensions}
Let $(P'_{2n,1}, \mY_{2n,1})$ be the rational boundary component associated with the $\Q$-admissible parabolic $V \rtimes Q\subset V \rtimes G=P_{2n}'$ . Then 
\begin{enumerate}
\item $P'_{2n,1}= V_0^\perp \rtimes P_1$.

\item  $U'_{2n,1}= V_0\times U_1$.
\end{enumerate}
\end{subsecproposition}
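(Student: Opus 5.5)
The proof will follow the proof of Proposition \ref{subsecproposition: universal families over siegel modular varieties, rational boundary components of unipotent extensions}. A cleaner route is to pass to the quotient by $U$. The projection $P_{2n}=W\rtimes G\rightarrow V\rtimes G$ identifies $(V\rtimes G,\mY_{2n})$ with $(P_{2n},\mX_{2n})/U$. Under this projection, the $\Q$-admissible parabolic $W\rtimes Q$ maps onto $V\rtimes Q$; admissibility is a condition only on the image in $G^{\mathrm{ad}}$, so $V\rtimes Q$ is again $\Q$-admissible.

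The first step is to check that $h_{x_1}$ for the boundary component of $(V\rtimes G,\mY_{2n})$ attached to $V\rtimes Q$ is the image of the corresponding $\alpha_x\circ h_\infty$ for $(P_{2n},\mX_{2n})$. This should follow from the uniqueness of $\alpha_x$ in Proposition \ref{subsecproposition: rational boundary components, morphism of reference group}, since composing with the projection preserves conditions (i)--(iii). Consequently $P'_{2n,1}$, the smallest $\Q$-normal subgroup of $V\rtimes Q$ through which $\alpha_x\circ h_\infty$ factors, should be the image of $P_{2n,1}=(U\times V_0^\perp)\rtimes P_1$, namely $V_0^\perp\rtimes P_1$.

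To make this independent of functoriality subtleties, I will also verify the identity directly, exactly as in the previous proposition. The weight filtration induced on $V$ by $h_{x_1}$ is $W_{-2}=V_0$, $W_{-1}=V_0^\perp$, $W_0=V$. Since $\Lie V$ enters $\Lie(V\rtimes Q)$ as the representation $V$ itself, $W_{-2}(\Lie(V\rtimes Q))=\Lie V_0\oplus\Lie U_1$. By Lemma \ref{subseclemma: rational boundary components, P1 W1 U1}, this gives $U'_{2n,1}=V_0\times U_1$ and unipotent radical $V_0^\perp\rtimes W_1$. Here I use that $V_0$ is $U_1$-fixed, so the product is direct; $U_1$ acts trivially on $V_0^\perp\supset V_0$.

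Next I show that $V_0^\perp\rtimes P_1$ is $\Q$-normal in $V\rtimes Q$. The group $Q$ normalizes $V_0^\perp$ and $P_1$. For $v\in V(\Q)$ one has $(v,1)(0,p_1)(-v,1)=(v-p_1(v),p_1)$, and $v-p_1(v)\in V_0^\perp$ because $P_1$ acts trivially on $V/V_0^\perp$. This is the same computation as before with the $U$-coordinate dropped.

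Finally, $P'_{2n,1}$ contains the unipotent radical $V_0^\perp\rtimes W_1$ and is contained in $V_0^\perp\rtimes P_1$. Its image in $Q$ contains $h_{x_1}(\DS_\C)$, so by minimality of $P_1$ that image is $P_1$. Hence $P'_{2n,1}=V_0^\perp\rtimes P_1$.

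The main obstacle is the claim that $V_0^\perp\rtimes W_1$, rather than something smaller, is contained in $P'_{2n,1}$. This requires identifying the unipotent radical of $P'_{2n,1}$ with $\exp W_{-1}(\Lie(V\rtimes Q))$, as in Lemma \ref{subseclemma: rational boundary components, P1 W1 U1}(1), and that lemma is what I will invoke for it.
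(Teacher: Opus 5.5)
Your direct verification is correct and is essentially the paper's argument. The paper proves this by repeating the proof of the preceding proposition for $(P_{2n},\mX_{2n})$ with the $U$-coordinate dropped: it reads off $U'_{2n,1}=V_0\times U_1$ and the unipotent radical $V_0^\perp\rtimes W_1$ from the weight filtration, checks $\Q$-normality of $V_0^\perp\rtimes P_1$ by the same conjugation computation, and then uses the sandwich together with minimality of $P_1$. (Your optional quotient-by-$U$ route also works, since the image of the smallest $\Q$-normal subgroup under $W\rtimes Q\to V\rtimes Q$ is the smallest one downstairs, but it is not needed.)
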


Let $\mY^+_{2n}$ be the connected component over $\mH^+_{2n}$. Let $K'_f(d) \coloneq K^V_f(d) \rtimes K_f(d)$.
\begin{IEEEeqnarray*}{rCl}
\Stab_{P_{2n}' (\Q)}(\mY^+_{2n}) \cap K'_f(d) &=&
\left (  d{\cdot} V(\Z) \right ) \rtimes \Gamma_{Sp}(d)  
\eqcolon  \Gamma' (d);\\
\left (\Id_{Z(P_{2n}')(\Q)}(\mY_{2n}) \times U'_{2n,1}(\Q) \right ) \cap K_f'(d)&=&
\left (  d{\cdot} V_0(\Z) \right) \times (d{\cdot} U_1(\Z)), \\
&&\text{as }\Id_{Z(P_{2n}')(\Q)}(\mY_{2n})=1.
\end{IEEEeqnarray*} 
 Again, a maximal totally isotropic subspace $V_0$ of dimension $r=n$ gives the lower bound $ n+\frac{n(n+1)}{2}$. Since $\dim_\C \mY_{2n} = n +\frac{n(n+1)}{2}$, we get

\begin{subsectheorem} \label{subsectheorem: kuga varieties, incompressibility}
Let $d\geq 3$ be any integer.\ The above $\Gamma'(d)/\Gamma'(md)$-torsor 
$\Gamma'(md) \bs \mY^+_{2n} \rightarrow \Gamma'(d) \bs \mY^+_{2n}$ is $p$-incompressible for $p \, | \, m$.
\end{subsectheorem}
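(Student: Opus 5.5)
The plan is to apply Theorem \ref{sectheorem: introduction, main theorem} to the Kuga datum $(P'_{2n},\mY_{2n})=(V\rtimes G,\mY_{2n})$ and the rational boundary component $(P'_{2n,1},\mY_{2n,1})$ attached to $V\rtimes Q$, where $Q=\Stab_G(V_0)$ for the maximal totally isotropic subspace $V_0=\langle e_1,\ldots,e_n\rangle$. The argument is the same as for Theorem \ref{subsectheorem: universal families over siegel modular varieties, incompressibility}, with $U$ removed.

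First, I check the hypotheses. For $d\geq 3$, $K_f(d)$ is neat by {\cite[0.1]{Pink}}, and $K^V_f(d)=d\cdot V(\hat{\Z})$ is normalized by $K_f(d)$. Since $V$ is abelian here, no central correction term appears, so $d\geq 3$ suffices. Hence $K'_f(d)=K^V_f(d)\rtimes K_f(d)$ is neat, being an extension of a neat group by a torsion-free unipotent group. Likewise $K'_f(md)\unlhd K'_f(d)$. Taking $p_f=1$, the group $\Gamma$ of the theorem is $\Gamma'(d)$. We need the connected component $\mY^+_{2n}$ to lie in $\mX^+_{(P'_{2n,1},\mY_{2n,1})}$; Remark 1.3 guarantees that an appropriate choice of $\mY_{2n,1}$ achieves this.

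Second, I compute the groups. By Proposition \ref{subsecproposition: kuga varieties, rational boundary components of kuga extensions}, $U'_{2n,1}=V_0\times U_1$, and $\Id_{Z(P'_{2n})(\Q)}(\mY_{2n})=1$. The reason is that a central element must commute with $V$, which forces its $G$-component to act trivially on $V$; an element of $V$ in the center is then trivial. So $\Gamma_Z=1$, and as displayed before the statement,
\[
\Gamma_{U'_{2n,1}}(d)=d\cdot V_0(\Z)\times d\cdot U_1(\Z).
\]
Replacing $d$ by $md$ gives the same group with $md$ in place of $d$. The quotient is $(\Z/m\Z)^{n+n(n+1)/2}$, using $\dim U_1=\frac{n(n+1)}{2}$ from Subsection \ref{subsec: siegel modular varieties}. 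So its $p$-rank equals $n+\frac{n(n+1)}{2}$ whenever $p\mid m$.

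Third, I conclude. Part (3) of the main theorem gives
\[
\ed_\C(\Gamma'(md)\bs\mY^+_{2n}\rightarrow\Gamma'(d)\bs\mY^+_{2n};p)\geq n+\tfrac{n(n+1)}{2}.
\]
On the other hand, $\dim_\C\mY^+_{2n}=n+\frac{n(n+1)}{2}$, because $\mY^+_{2n}$ is a rank-$n$ holomorphic vector bundle over $\mH^+_{2n}$. Since essential $p$-dimension never exceeds the dimension of the base, equality holds and the cover is $p$-incompressible.

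The main obstacle is the input Proposition \ref{subsecproposition: kuga varieties, rational boundary components of kuga extensions}, namely identifying $U'_{2n,1}$. I would handle it by the weight argument used for Proposition \ref{subsecproposition: universal families over siegel modular varieties, rational boundary components of unipotent extensions}. Under $h_{x_1}$, the weight filtration on $V$ has $V_0$ in weight $-2$. Therefore $\Lie(V\rtimes Q)$ has weight $-2$ part $V_0\oplus\Lie U_1$. The normality computation for $V_0^\perp\rtimes P_1$ is the same as before, with the $U$-coordinate dropped.
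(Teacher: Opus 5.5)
Your proposal is correct and follows the paper's argument. You apply the main theorem to $(V\rtimes G,\mY_{2n})$ with the boundary component attached to $V\rtimes Q$ for a maximal totally isotropic $V_0$, use $\Id_{Z(P'_{2n})(\Q)}(\mY_{2n})=1$ and $U'_{2n,1}=V_0\times U_1$ to get $\Gamma_{U_1}/\Gamma'_{U_1}\cong(\Z/m\Z)^{n+n(n+1)/2}$, and compare with $\dim_\C\mY^+_{2n}$. Your extra checks (neatness of $K'_f(d)$, normality of $K'_f(md)$, and the choice of $\mY_{2n,1}$ containing $\mY^+_{2n}$) fill in details the paper leaves implicit, as does the existence of the required smooth projective toroidal compactification, which follows from Pink's theorem since $K'_f(d)$ is neat.
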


\begin{subsecremark}
As $V$ is already a group, we don't need to assume that $d$ is even as in Subsection \ref{subsec: universal families over siegel modular varieties}.
\end{subsecremark}

Let us examine the structure of the underlying Kuga varieties. First, the splitting $G \rightarrowtail V\rtimes G$ yields an inclusion $\mH_{2n} \rightarrowtail \mY_{2n}$. Since $V(\R) \rtimes G(\R)$ acts transitively on $\mY_{2n}$ with $V(\R)$ acting freely, we can deduce that the map
\begin{IEEEeqnarray*}{rCl}
 V(\R)\times \mH_{2n}  &\rightarrow& \mY_{2n} \\
                 (v, x)   &\mapsto& v\cdot x
\end{IEEEeqnarray*}
is a $V(\R) \rtimes G(\R)$-equivariant analytic isomorphism. Here $V(\R) \rtimes G(\R)$ acts on $V(\R)\times \mH_{2n}$  via the formula $(v,g)\cdot (w,x)\coloneq (v+g(w), g\cdot x)$.

Consider the action of the groups $\Gamma_V(l) \rtimes \Gamma_{Sp}(l) \,\, (l=pd, d)$, we have the following diagram.

\begin{equation*}\adjustbox{center}{
\begin{tikzcd}
\Gamma_V(pd) \bs V(\R) \times \{x\} 
\arrow[r, twoheadrightarrow]
\arrow[d, rightarrowtail, "\text{fiber}"'] &
\Gamma_V(d) \bs V(\R) \times \{x\} 
\arrow[r, twoheadrightarrow, "\sim"]
\arrow[d, rightarrowtail, "\text{fiber}"'] &
\Gamma_V(d) \bs V(\R) \times \{x\} 
\arrow[d, rightarrowtail, "\text{fiber}"'] \\
\Gamma_V(pd) \rtimes \Gamma_{Sp}(pd) \bs V(\R)\times \mH_{2n}^+
\arrow[r, twoheadrightarrow]
\arrow[dr, twoheadrightarrow] &
\Gamma_V(d) \rtimes \Gamma_{Sp}(pd) \bs V(\R)\times \mH_{2n}^+
\arrow[r, twoheadrightarrow]
\arrow[d, twoheadrightarrow] &
\Gamma_V(d) \rtimes \Gamma_{Sp}(d) \bs V(\R)\times \mH_{2n}^+
\arrow[d, twoheadrightarrow] \\
&
\Gamma_{Sp}(pd) \bs \mH_{2n}^+ \arrow[r, twoheadrightarrow]&
\Gamma_{Sp}(d) \bs \mH_{2n}^+
\end{tikzcd}
}
\end{equation*}
It is not hard to see that the bottom right square is a pullback. Thus the congruence cover $M_V(pd) \rightarrow M_V(d)$ is the pullback of $M(pd) \rightarrow M(d)$ along $M_V(d) \rightarrow M(d)$ composed with the map of multiplication by $p$ on each fiber of $M_V(d)$. 

\begin{subsecremark}\label{subsecremark: examples, brosnan's conjecture}
Based on the above analysis, the incompressibility result in Theorem \ref{subsectheorem: kuga varieties, incompressibility} can be viewed as an analogue of Brosnan's conjecture that the map of multiplication by $p$ on an abelian variety (over a field of characteristic $0$) is $p$-incompressible. In \cite[6.4]{FakhruddinSaini22}, this is confirmed for abelian varieties of dimension at most three and a positive-density set of primes. \cite[2.3.12]{FarbKisinWolfson24} proves the conjecture for all sufficiently large primes $p$. With techniques from birational geometry, \cite{KollarZhuang25} shows that $[n] \colon A\rightarrow A$ is incompressible for all integers $n\geq 2$.
\end{subsecremark}

Let $\mY_{n,d}\coloneq M_V(d)$ and $\mA_{n,d} \coloneq M(d)$. The above diagram then becomes the one in the introduction.
 
\begin{equation*}\adjustbox{center}{
\begin{tikzcd}
 \mY_{n,pd} \arrow[r, "p"] \arrow[dr] &
\mY_{n,pd} \arrow[r]  \arrow[d] &
\mY_{n,d} \arrow[d]\\
 &
\mA_{n, pd} \arrow[r] &
\mA_{n,d}
\end{tikzcd}
}
\end{equation*}


\section{Examples of fixed points} \label{sec: examples of fixed points}

To further illustrate our main theorem, in this section we describe two explicit fixed points for certain toroidal compactifications of the mixed Shimura data $(GL_{2,\Q}, \mH_2)$ and $(V\rtimes GL_{2, \Q}, \mY_2)$, defined in Subsections \ref{subsec: siegel modular varieties} and \ref{subsec: kuga varieties}. 

In order not to get lost in the details, we first outline the strategy. Given a mixed Shimura datum $(P,\mX)$ with a fixed $\Q$-admissible parabolic $Q$ and associated rational boundary component $(P_1, \mX_1)$, suppose we have neat open compact subgroups $K_f \unlhd K_f' \subset P(\fadele)$ and a $K_f'$-admissible polyhedral decomposition $\Sigma$. For a fixed $\mX^0 \subset \mX^+_{(P_1, \mX_1)}$, let $\mX^0_1\subset \mX_1$ be the corresponding connected component. 
Recall from Subsection \ref{subsec: toroidal compactifications} that we have a canonical map $\mU(P_1, \mX_1, 1)  \rightarrow M^{K_f}(P, \mX)(\C)$, which is just $P_1(\Q) \bs \mX^+_{(P_1, \mX_1)}\times P_1(\fadele)/K_f^1 \rightarrow P(\Q) \bs \mX \times P(\fadele)/K_f$, where $K_f^1= P_1(\fadele) \cap K_f$. Moreover, 
\[
\mU(P_1, \mX_1, 1)  \rightarrowtail  M^{K^1_f}(P_1, \mX_1)(\C) \rightarrowtail  M^{K^1_f}(P_1, \mX_1, \Sigma_1^0)(\C),
\]
where $M^{K^1_f}(P_1, \mX_1, \Sigma_1^0)(\C)$ is the torus embedding relative to the $K^1_f$-admissible cone decomposition $\Sigma_1^0=(\Sigma|_{(P_1,\mX_1)} )^0$. Again, $\mUbar(P_1, \mX_1, 1)$ denotes the interior of the closure of $\mU(P_1, \mX_1, 1)$ in $M^{K^1_f}(P_1, \mX_1, \Sigma_1^0)(\C)$. 

To simplify notations, we will consider the connected component $\Gamma \bs \mX^0 \subset M^{K_f}(P, \mX)(\C)$ obtained from $\mX^0 \times\{1\}$. The corresponding connected component in the toroidal compactification $M^{K_f}(P, \mX, \Sigma)(\C)$ is denoted by $(\Gamma \bs \mX^0)_\Sigma$, and the corresponding connected component in $\mU(P_1, \mX_1, 1)$ is $\Gamma_1\bs \mX^0$. It embeds into $\Gamma_1 \bs \mX^0_1$, whose torus embedding relative to $\Sigma_1^0$ is denoted $(\Gamma_1 \bs \mX^0_1)_{\Sigma_1^0}$. The interior of the closure of $\Gamma_1 \bs \mX^0 \subset (\Gamma_1 \bs \mX^0_1)_{\Sigma_1^0}$ is denoted $ (\Gamma_1 \bs \mX^0)_{\Sigma_1^0}$, which maps into the toroidal compactification $ M^{K_f}(P, \mX, \Sigma)(\C)$. We obtain the following diagram.
\begin{equation*}
\begin{tikzcd}
     \Gamma_1 \bs \mX^0 \arrow[r, rightarrowtail]  \arrow[d, twoheadrightarrow, start anchor={[yshift=1pt]}]
&   (\Gamma_1 \bs \mX^0)_{\Sigma_0^1} \arrow[d, start anchor={[yshift=2.5pt]}] \\
\Gamma \bs \mX^0 \arrow[r, rightarrowtail] & (\Gamma \bs \mX^0)_\Sigma
\end{tikzcd}
\end{equation*}
We will find a fixed point of $\Gamma_{U_1}'/ \Gamma_{U_1}$ in $ (\Gamma_1 \bs \mX^0)_{\Sigma_0^1}$. Since all the maps are $\Gamma_{U_1}'/ \Gamma_{U_1}$-equivariant, this fixed point descends to a fixed point in $(\Gamma \bs \mX^0)_\Sigma$.

Note that the fibers of $ M^{K^1_f}(P_1, \mX_1)(\C) \rightarrow M^{\pi_{U_1}(K^1_f)}(P_1/U_1, \mX_1/U_1)(\C)$ are isomorphic to $\Omega_{U_1}\bs U_1(\C)$ and the torus embedding $M^{K^1_f}(P_1, \mX_1, \Sigma_1^0)(\C)$ is constructed with respect to this torus and $\Sigma_1^0$. The image of $\Gamma_1 \bs \mX^0$ in $\Gamma_1 \bs \mX^0_1$ is the inverse image of $C(\mX^0, P_1) \subset U_1(\R)(-1)$ under the map $\mathrm{im} \colon \Gamma_1 \bs \mX^0_1 \rightarrow U_1(\R)(-1)$. All the involved groups are defined as follows. 
\begin{IEEEeqnarray*}{rCl}
\Gamma&=&\Stab_{P(\Q)}(\mX^0)\cap K_f\\
\Gamma_1&=&\Stab_{P_1(\Q)}(\mX^0) \cap K_f(=\Stab_{P_1(\Q)}(\mX_1^0) \cap K_f^1)\\
\Gamma_{U_1}&=&\pr_{U_1}\Bigl( \bigl( \Id_{Z(P)(\Q)}(\mX) \times U_1(\Q) \bigr )\cap K_f \Bigr)\\
  \Omega_{U_1}&=&\pr_{U_1}\Bigl(  \bigl ( \Id_{Z(P_1)(\Q)}(\mX_1) \times U_1(\Q) \bigr) \cap K_f  \Bigr)
\end{IEEEeqnarray*}
There are also groups $\Gamma'$, $\Gamma_1'$, $\Gamma_{U_1}'$ and $\Omega_{U_1}'$, defined in a similar way.

First we do everything for $(P,\mX)=(GL_{2,\Q}, \mH_2)$ and then for $(\hP, \hX)=(V\rtimes GL_{2,\Q}, \mY_2)$. Part of the calculation presented below is also available in (and motivated by) \cite[10.19]{Pink}.

\subsection{The Case of $(P, \mX)=(GL_{2,\Q}, \mH_2)$} \label{subsec: the case of gl2}

Let $(P, \mX)=(GL_{2,\Q}, \mH_2)$ and $Q \subset P$ the parabolic subgroup of upper-triangular matrices. Let $h_x \colon \bS \rightarrow GL_{2,\R}$ induce the standard complex structure on $V=\R^2$, sending $i$ to $\begin{bmatrix}0 & -1 \\ 1 & 0 \end{bmatrix}$. Then $h_x \in \mX^0 \coloneq \mH_2^{+}$, the connected component inducing positive definite forms. In Examples \ref{subsecexample: rational boundary components, gl2 part 1} and \ref{subsecexample: rational boundary components, gl2 part 3}, we already know that the rational boundary component  associated with $Q$ is $(P_1, \mX_1)$, where
\begin{IEEEeqnarray*}{c}
P_1=\left\{  \begin{bmatrix}  * &* \\ 0 & 1\end{bmatrix}\right \} 
\supset U_1= \left \{ \begin{bmatrix}  1 &* \\ 0 & 1\end{bmatrix}\right \},
\end{IEEEeqnarray*}
and $\mX_1$ is the $P_1(\R)\cdot U_1(\C)$-conjugacy class of the morphism 
\[
\bS_\C \rightarrow P_{1,\C} \colon (z_1 ,z_2) \mapsto \begin{bmatrix}  z_1z_2 & 0 \\ 0 & 1\end{bmatrix}. \]

As $V_{h_x}^{0, -1}=  \langle \begin{bmatrix} -i \\ 1\end{bmatrix} \rangle$, under the open embedding $\mX^0 \rightarrowtail \mX_{1}^0$, the image of $h_x$ is $h_{x_1}\colon \bS_\C \rightarrow P_{1,\C}$, given by 
\begin{IEEEeqnarray*}{cccCc}
&& (z_1 , z_2) &\longmapsto& \begin{bmatrix}  1 &-i \\ 0 & 1\end{bmatrix}\begin{bmatrix}  z_1z_2 & 0 \\ 0 & 1\end{bmatrix} \begin{bmatrix}  1 & i \\ 0 & 1\end{bmatrix}.
\end{IEEEeqnarray*}
 For $q = \begin{bmatrix}  a & b \\ 0 & c\end{bmatrix}\in Q(\R)^0 ~(ac > 0)$, we have
\begin{IEEEeqnarray*}{ccrCl}
 &&h_{x_1}&\overset{\mathrm{im}}{\longmapsto}& \begin{bmatrix}  1 &-i \\ 0 & 1\end{bmatrix},\\
 &&q\cdot h_{x_1}&\overset{\mathrm{im}}{\longmapsto}& q\begin{bmatrix}  1 &-i \\ 0 & 1\end{bmatrix}q^{-1}= \begin{bmatrix}  1 &-ac^{-1}i \\ 0 & 1\end{bmatrix}.
\end{IEEEeqnarray*}
Since $Q(\R)^0$ acts transitively on $\mX^0$, the cone $C(\mX^0, P_1)\subset U_1(\R)(-1)$ is the positive axis   $(2\pi i)^{-1}\cdot \R_{>0}$ and $C^*(\mX^0, P_1)=(2\pi i)^{-1}\cdot \R_{\geq 0}$ . 

Let $K^P_f(d) \coloneq \{g \in P(\hat{\Z})\,\lvert\,g \equiv 1\,\mathrm{mod}\,d\}$. Then one can compute that 
\begin{IEEEeqnarray*}{rCl}
\Stab_{Q(\Q)}(\mX^0) \cap P_1(\fadele)\cdot K_f^P(d)
&=& \left \{ 
\begin{array}{l} 
  \left \{ \left . \pm \begin{bmatrix} a & b\\ 0 &1\end{bmatrix}  \right | \begin{array}{ll}a >0\\ b\in \Q\end{array}   \right \}, \,1\leq d \leq 2; \\
\vspace{-7pt}\\
\left \{ \left . \begin{bmatrix} a & b\\ 0 &1\end{bmatrix} \right | \begin{array}{ll}a >0\\ b\in \Q\end{array} \right \}, \,d \geq 3.
 \end{array} \right.
\end{IEEEeqnarray*}
As $q \begin{bmatrix} 1 & \lambda\\ 0 & 1\end{bmatrix} q^{-1}=\begin{bmatrix} 1 & ac^{-1}\lambda\\ 0 & 1\end{bmatrix}$ and $P(\fadele)=Q(\Q)\cdot K^P_f(1)$, we see that 
\[
(2\pi i)^{-1}\cdot (\{0\}\cup \R_{\leq 0}) \subset U_1(\R)(-1)
\]
determines a $K^P_f(1)$-admissible polyhedral decomposition $\Sigma$ for $(P,\mX)$. Define $\sigma \coloneq (2\pi i)^{-1}\cdot \R_{\leq 0}$.

The corresponding groups are computed below.
\begin{IEEEeqnarray*}{rCl}
 \Gamma_1(d)&\coloneq&\Stab_{P_1(\Q)}(\mX^0_{1})\cap K_f^P(d)=d\cdot U_1(\Z)\\
&=&\bigl ( \Id_{Z(P_1)(\Q)}(\mX_1)\times U_1(\Q) \bigr ) \cap K^{P}_f(d)\\
\Gamma_{U_1}(d)&\coloneq& \pr_{U_1} \Bigl ( \bigl ( \Id_{Z(P)(\Q)}(\mX)\times U_1(\Q) \bigr ) \cap K^{P}_f(d)  \Bigr)=d\cdot U_1(\Z)
\end{IEEEeqnarray*}
Thus, in this case $\Gamma_1(d)=\Gamma_{U_1}(d) =\Omega_{U_1}(d)=d\cdot U_1(\Z)$. We shall take $K_f^P(d) \unlhd K_f^P(1)$ as our open compact subgroups  and find a fixed point for $\Gamma_{U_1}(1)/ \Gamma_{U_1}(d)=U_1(\Z) / d\cdot U_1(\Z)$. (Although $K^P_f(1)$ is not neat, this will produce a fixed point under the action of $\Gamma_{U_1}(d')/\Gamma_{U_1}(d)$ for any $K^P_f(d) \subset K^P_f(d') \subset K^P_f(1)$ with $3\leq d' \, |\, d$.)

The morphism
\begin{IEEEeqnarray*}{cCrCl}
\varphi& \colon &  \DS_\C &\longrightarrow& P_{1,\C}\\
              &     & (z_1, z_2)& \longmapsto& \begin{bmatrix} z_1 z_2 &0 \\ 0 & 1\end{bmatrix}
\end{IEEEeqnarray*}
is defined over $\R$. The $\varphi$-orbit map $U_1(\C)\rightarrow \mX^0_1$ defined as $ u_1 \mapsto u_1 \cdot \varphi$ is an isomorphism and induces another isomorphism $\Gamma_{U_1}(d)  \bs U_1(\C) \overset{\sim}{\rightarrow} \Gamma_1(d) \bs \mX^0_1$. Let $T$ be the torus 
 $\Gamma_{U_1}(d)  \bs U_1(\C)=(d \cdot \Z) \bs \C$. The monoid $X^*(T)_{\sigma \geq 0}$ is generated by the following character.
\begin{IEEEeqnarray*}{cCrCl}
 -\chi & \colon & (d\cdot \Z) \bs \C& \overset{~}{\longrightarrow}& \Z(1)\bs \C\\
         &  &          z &\longmapsto& -\frac{2\pi i}{d}\cdot z
\end{IEEEeqnarray*}
The torus embedding $T\rightarrow T_\sigma$ can be viewed as
 \begin{equation*}
\begin{tikzcd}[column sep=21pt]
  (d\cdot \Z) \bs \C \arrow[r,"{\dps -\chi}" yshift=1pt, "\sim"']
&Z(1) \bs \C \arrow[r, "\mathrm{exp}" yshift=1pt, "\sim"']
& \C^\times \arrow[r, rightarrowtail]
&\C.
\end{tikzcd}
\end{equation*}
Note that $\Gamma_1(d) \bs \mX^0 \rightarrowtail \Gamma_1(d) \bs \mX^0_1$ is just 
\[
(d\cdot \Z) \bs (\R+ C(\mX^0, P_1))=(d\cdot \Z) \bs (\R+(2\pi i)^{-1}\cdot \R_{>0}).
\]
 Thus under the above map $z \mapsto e^{-\frac{2\pi i}{d}z}$, the image of $\Gamma_1(d) \bs \mX^0$ is $D_1^0\coloneq D_1 -\{0\}$, the open unit disk minus the origin. The interior of its closure is just the open unit disk $D_1$. The $\sigma$-stratum is just the origin, fixed by the entire $T=(d\cdot \Z) \bs \C$ and in particular by the subgroup $(d\cdot \Z) \bs \Z$. Thus we are done. The following diagram summarizes the situation.
\begin{equation*}
\begin{tikzcd}
\Gamma_1(d) \bs \mX^0_1 
&   \dps (d\cdot \Z) \bs \C 
              \arrow[l, "{\sim}"]  \arrow[r, "{\dps z \mapsto  e^{-\frac{2\pi i}{d}z}}" yshift=2pt, "{\sim}"']
& \C^\times 
             \arrow [r, rightarrowtail]
& \C\\
     \Gamma_{1}(d) \bs \mX^0  
              \arrow[u, rightarrowtail] 
&  (d\cdot \Z) \bs (\R+(2\pi i)^{-1}\cdot \R_{>0})
               \arrow[l, "\sim"]
              \arrow[u, rightarrowtail]
              \arrow[r,"{\sim}"]
& D_1^0  \arrow[u, rightarrowtail] \arrow[r, rightarrowtail]
& D_1 \arrow[u,rightarrowtail]
\end{tikzcd}
\end{equation*}

\subsection{The Case of $(\hP, \hX)=(V\rtimes GL_{2,\Q}, \mY_2)$ }\label{subsec: the case of kuga}
Let $V=\Q^{2}$ with the standard action of $GL_{2,\Q}$. The datum $(\hP, \hX)\coloneq (V\rtimes GL_{2,\Q}, \mY_2)$ is constructed as a unipotent extension of $(GL_{2,\Q}, \mH_2)$. Let $\hQ \coloneq V\rtimes Q$ be the preimage of $Q$ under $V\rtimes GL_{2,\Q} \rightarrow GL_{2, \Q}$, where $Q$ is the parabolic that  stabilizes $V_0=\langle \begin{bsmallmatrix}*\\0\end{bsmallmatrix}\rangle$. The rational boundary component associated with $\hQ$ is the unipotent extension $(\hP_1, \hX_1)$ above $(P_1, \mX_1)$, where $\hP_1=V_0\rtimes P_1$ and $\hU_1=V_0\times U_1$. 

$\hQ(\R)=V(\R)\rtimes Q(\R)$ acts on $\hU_1(\R)=V_0(\R)\times U_1(\R)$ by conjugation: $(v,q)\cdot (v_0, u_1)=((I-q u_1q^{-1})v+qv_0, qu_1q^{-1})$. More precisely, 
\begin{IEEEeqnarray*}{rCl}
\left (\begin{bmatrix} v_x \\ v_y \end{bmatrix}, \begin{bmatrix}a & b \\ 0& c\end{bmatrix}  \right ) \cdot \left (\begin{bmatrix} v_{0x} \\ 0 \end{bmatrix}, \begin{bmatrix}1 & \lambda \\ 0& 1\end{bmatrix}  \right ) &=& \left (\begin{bmatrix} ac^{-1}v_y \lambda+av_{0x}  \\ 0 \end{bmatrix}, \begin{bmatrix}1 & ac^{-1}\lambda  \\ 0& 1\end{bmatrix}  \right ).
\end{IEEEeqnarray*} 

Let  $\iota \colon GL_{2, \Q}\rightarrowtail V\rtimes GL_{2, \Q}$ be the inclusion and  $\hX^0$ the connected component above $\mX^0=\mH_2^+$. Then $\iota \circ h_x \in \hX^0$ and its image under the open embedding $\hX^0 \rightarrowtail \hX_1^0$ is just $\iota \circ h_{x_1}$. Thus 
\begin{IEEEeqnarray*}{rCl}
\him \bigl ( (v,q)\cdot (\iota \circ h_{x_1})  \bigr)&=&(v,q)\cdot \him(\iota \circ h_{x_1})\\
&=&(v,q)\cdot \left (0,  \begin{bmatrix} 1 & -i \\ 0& 1 \end{bmatrix} \right )\\
&=&\left(   \begin{bmatrix} -iac^{-1}v_y \\ 0  \end{bmatrix}, \begin{bmatrix} 1 & -iac^{-1} \\ 0& 1 \end{bmatrix}\right).
\end{IEEEeqnarray*} 

As $v_y\in \R$ and $ac>0$ for $\Stab_{\hQ(\R)}(\hX^0)$, we see that the cone $C(\hX^0, \hP_1)=(\R_{>0} \times \R)(-1) \subset (U_1\times V_0)(\R)(-1)$. The cone $C^*(\hX^0, \hP_1)=(\{0\}\cup \R_{>0} \times \R )(-1)$. (Note that here we place $U_1$ in the front.)

Consider the cone decomposition $\{0\}\cup \bigcup_{n\in \Z} \sigma_n$, where 
\[
\sigma_n\coloneq  (2\pi i)^{-1} \bigl ( \R_{\geq 0}\cdot (1, n)+ \R_{\geq 0}\cdot (1, (n+1))\bigr ) \subset U_1(\R)(-1)\times V_0(\R)(-1).
\]
Let $K_f^\hP \coloneq K_f^V(d)\rtimes K_f^P(d)$ where $K_f^V(d) \coloneq d \cdot V(\hat{\Z})$. We have
\begin{IEEEeqnarray*}{rCl}
 &&\Stab_{\hQ(\Q)}(\hX^0)\cap \hP_1(\fadele)\cdot K_f^\hP(d)\\
&=& \left \{ 
\begin{array}{l}
\begin{bmatrix}\Q \\ d\cdot \Z \end{bmatrix} \rtimes \left \{ \pm \left. \begin{bmatrix} a & q\\ 0 &1\end{bmatrix}  \right | \begin{array}{l}a\in \Q_{>0}\\ q\in \Q \end{array}\right \}, \,1\leq d \leq 2;\\
\vspace{-7pt}\\
\begin{bmatrix}\Q \\ d\cdot \Z \end{bmatrix} \rtimes \left \{   \left. \begin{bmatrix} a & q\\ 0 &1\end{bmatrix}  \right | \begin{array}{l}a\in \Q_{>0}\\ q\in \Q \end{array}\right \}, \,3\leq d.
\end{array} \right.
\end{IEEEeqnarray*} 
Since
\begin{IEEEeqnarray*}{rCl}
\left (\begin{bmatrix} v_x \\ d\cdot v_y \end{bmatrix}, \pm\begin{bmatrix}a & q \\ 0& 1\end{bmatrix}  \right ) \cdot \left (\begin{bmatrix} v_{0x} \\ 0 \end{bmatrix}, \begin{bmatrix}1 & \lambda \\ 0& 1\end{bmatrix}  \right ) &=& \left (\begin{bmatrix} ad v_y \lambda\pm av_{0x}  \\ 0 \end{bmatrix}, \begin{bmatrix}1 & a\lambda  \\ 0& 1\end{bmatrix}  \right ),
\end{IEEEeqnarray*} 
on $U_1\times V_0$ this is the same as $(\lambda, v_{0x})\mapsto a\cdot(\lambda, \pm v_{0x}+dv_y\lambda)$ with $a>0$ and $v_y\in \Z$. This action leaves invariant the above cones and thus $\{0\}\cup \bigcup_{n\in \Z} -\sigma_n$ determines a $K_f^{\hP}(1)$-admissible cone decomposition $\hSigma$ for $(\hP, \hX)$. The analogous groups $\hGamma_1(d)$ and $\hGamma_{U_1}(d)$ are computed as follows.
\begin{IEEEeqnarray*}{rCl}
 \hGamma_1(d)&=&\Stab_{\hP_1(\Q)}(\hX^0_1)\cap K_f^\hP(d)=(d\cdot V_0(\Z)) \times (d\cdot U_1(\Z)) \\
                              &=& \bigl (  \Id_{Z(\hP_1)(\Q)}(\hX_1)\times \hU_1(\Q) \bigr ) \cap K_f^\hP(d)\\
\hGamma_{U_1}(d) &=&\pr_{\hU_{1}}\Bigl (  \bigl ( \Id_{Z(\hP)(\Q)}(\hX)\times \hU_1(\Q) \bigr)\cap K_f^\hP(d) \Bigr ) = (d\cdot V_0(\Z)) \times (d\cdot U_1(\Z))
\end{IEEEeqnarray*} 
Therefore, again we have $\hGamma_{1}(d)=\widetilde{\Omega}_{U_1}(d)=\hGamma_{U_1}(d)=(d\cdot V_0(\Z)) \times (d\cdot U_1(\Z))$. We will find a fixed point for $\hGamma_{U_1}(1)/\hGamma_{U_1}(d)$. (Although $K^\hP_f(1)$ is not neat, this will produce fixed points for neat subgroups contained in it.)

Note that the $\iota \circ \varphi$-orbit map $V_0(\C)\times U_1(\C) \rightarrow \hX^0_1$ is again an isomorphism and it induces an isomorphism 
$
(d\cdot V_0(\Z)) \times (d\cdot U_1(\Z)) \bs V_0(\C) \times U_1(\C) \overset{\sim}{\rightarrow} \hGamma_1(d) \bs \hX^0_1
$. From now on, we will deal with $U_1 \times V_0$, i.e., the $U_1$-component corresponds to the $x$-axis. Let $\hT$ be the torus $(d\cdot \Z) \bs \C \times (d\cdot \Z) \bs \C$. Then $X^*(\hT)=\mathrm{Hom}((d\cdot \Z)\times (d\cdot \Z), \Z(1))$ and $X^*(\hT)_{\sigma_n\geq 0}$ is generated as a monoid by $(n+1, -1)$ and $(-n, 1)$. Since we use $-\sigma_{n}$ to construct the torus embedding, the corresponding characters should be $(-(n+1), 1)$ and $(n, -1)$. More precisely, they are given as follows.
\begin{IEEEeqnarray*}{rCl}
\dps \frac{\C}{d\cdot \Z} \times \frac{\C}{d\cdot \Z}&\longrightarrow& \frac{\C}{\Z(1)}\\
\chi_1 \colon  (z_1, z_2)&\longmapsto&\frac{2\pi i}{d}(-(n+1)z_1 +z_2)\\
\chi_2 \colon  (z_1, z_2)&\longmapsto&\frac{2\pi i}{d}( n z_1 -z_2)
\end{IEEEeqnarray*}

Thus the torus embedding $\hT\rightarrowtail \hT_{\sigma_n}$ may be viewed as
\begin{equation*}
\begin{tikzcd}[column sep=28pt]
 \dps \frac{\C}{d\cdot \Z} \times \frac{\C}{d\cdot \Z} 
          \arrow[rr, " {    \resizebox{75pt}{!}{ $\dps \frac{2\pi i}{d}  \begin{bmatrix}\dps -(n+1)& \dps 1\\ \dps n &\dps -1\end{bmatrix}  $ }  }" yshift =3pt, "\sim" ' yshift=-0.5pt]
&&\dps  \frac{\C}{\Z(1)} \times \frac{\C}{\Z(1)}
           \arrow[r, "{ \resizebox{31pt}{!}{$\dps \exp \times \exp$} }" yshift=2pt, "\sim"' yshift=-0.5]
&\dps  \C^\times \times \C^\times
           \arrow[r, rightarrowtail]
&\dps  \C  \times \C  .
\end{tikzcd}
\end{equation*}
To describe the image of $\hGamma_1(d) \bs \hX^0 \rightarrowtail \hGamma_1(d) \bs \hX^0_1$, observe that
\begin{equation*}
\begin{tikzcd}[column sep=21pt]
\hGamma_1(d) \bs \hX^0 \arrow[r, equal]
&\dps \frac{\R \times \R +C(\hX^0, \hP_1)}{(d\cdot \Z)\times (d\cdot \Z)}
  \arrow[r, "{ \frac{2\pi i}{d}}" yshift=1pt]
&  \dps  \frac{\R(1) \times \R(1) +\frac{2\pi i}{d}C(\hX^0, \hP_1)}{ \Z(1)\times \Z(1)}
\end{tikzcd}
\end{equation*}
and that $\frac{2\pi i}{d}C(\hX^0, \hP_1)=\R_{>0} \times \R$. Hence under $\begin{bmatrix}-(n+1) & 1\\ n &-1\end{bmatrix}$, it maps to the region 
\[
\R_{>0}\cdot  \begin{bmatrix}-(n+1)  \\ n  \end{bmatrix}+ \R \cdot  \begin{bmatrix}1 \\ -1  \end{bmatrix}.
\]
The image of this region under the exponential map $(e^x, e^y)$ covers $(0,1)\times (0,1)$. As the imaginary part is allowed to vary arbitrarily, we see that the image of $\hGamma_1(d) \bs \hX^0$ contains $D_1^0 \times D_1^0$. The interior of the  closure of this image contains $D_1 \times D_1$. The origin $(0,0)$ (which corresponds to the $\sigma_n$-stratum) is fixed by the entire $\hT=(d\cdot \Z)  \times (d\cdot \Z) \bs \C\times \C$ and in particular by the subgroup $(d\cdot \Z)  \times (d\cdot \Z) \bs \Z \times \Z$. Thus we are done. 

\begin{subsecremark}
The action of $\hGamma(1)/\hGamma(d)$ on $(\hGamma_1(d)\bs \hX^0_1)_{\hSigma_1^0}$ is interesting.  As $\hSigma $ projects onto $\Sigma$ under $U_1\times V_0 \rightarrow U_1$, combining our discussion above, we see that the two torus embeddings are related in the following way.

\begin{equation*}
\begin{tikzcd}
\hGamma_1(d)\bs \hX^0_1 
           \arrow[r, rightarrowtail]
           \arrow[d ]
&  (\hGamma_1(d)\bs \hX^0_1 )_{\hSigma_1^0} 
           \arrow[d, start anchor={[yshift=3pt]}]
&\arrow[r, Leftrightarrow, start anchor={[xshift=-20pt]}, end anchor= {[xshift=-15pt]}, yshift=-20pt]
& \hT=\bG_m\times \bG_m 
             \arrow[r, rightarrowtail]
              \arrow[d,  "{\pr_1}"] 
&     \hT_{\hSigma}
              \arrow[d]\\
\Gamma_1(d)\bs \mX^0_1 
         \arrow[r, rightarrowtail]
&  (\Gamma_1(d)\bs \mX^0_1 )_{\Sigma_1^0} 
&& T =\mathbb{G}_m
            \arrow[r, rightarrowtail]
&   T_\Sigma
\end{tikzcd}
\end{equation*}
The special fiber of $\hT_\hSigma \rightarrow T_\Sigma$ is a union of infinitely many $\mathbb{P}^1_\C$'s with each $\infty$  attached to the next $0$. This is the Tate curve (cf.~{\cite[{10.17}]{Pink}}). Our $\hGamma(1)/\hGamma(d)=\Z /(d\cdot \Z)\times \Z / (d\cdot \Z)$ acts on this fiber, fixing all singularities and rotating each piece of $\C^\times$ via multiplication by a $d$-th root of unity. This follows from the fact that, under the above torus embedding, the two pieces of $\C^\times$ are just the two coordinate axes in $\C \times \C$, while $(1,0) \mapsto (\zeta_d^{-(n+1)}, \zeta_d^n)$ and $(0,1) \mapsto (\zeta_d, \zeta_d^{-1})$ with $\zeta_d =e^{\frac{2\pi i}{d}}$.
\end{subsecremark}


\newpage
\vspace{10pt}
\printbibliography[heading=bibintoc, title={References}]

\end{document}